\newtheorem{theorem}{Theorem}[section]
\newtheorem{proposition}[theorem]{Proposition} 
\newtheorem{corollary}[theorem]{Corollary}
\newtheorem{lemma}[theorem]{Lemma}
\newtheorem{definition1}[theorem]{Definition}
\newtheorem{remark1}[theorem]{Remark}
\newcommand{\somme}[3]{\sum_{#1}^{#2} #3}
\newcommand{\indi}[1]{\mathds{1}_{#1}}
\newcommand{\Pf}{\mathbb{P}}
\newcommand{\func}[5]{\begin{array}{l|rcl}

#1: & #2 & \longrightarrow & #3 \\
    & #4 & \longmapsto & #5 \end{array}}
\newcommand{\ent}[1]{\lfloor #1 \rfloor}
\newcommand{\union}[3]{\bigcup_{#1}^{#2} #3}
\DeclarePairedDelimiter\floor{\lfloor}{\rfloor}
\title{Quenched invariance principle for biased random walks in random conductances in the sub-ballistic regime}
\author{Alexander Fribergh \thanks{Universit\'{e} de Montr\'{e}al, Email: \texttt{alexander.fribergh@umontreal.ca}} \and Tanguy Lions \thanks{ENS Paris-Seclay, Email: \texttt{tanguy.lions@ens-paris-saclay.fr}} \and Carlo Scali \thanks{Technische Universität München, Email: \texttt{carlo.scali@tum.de}}}
\date{\vspace{-5ex}}
\begin{document}
\maketitle

\begin{abstract}
We consider a biased random walk in positive random conductances on $\mathbb{Z}^d$ for $d\geq 5$. In the sub-ballistic regime, we prove the quenched convergence of the properly rescaled random walk towards a Fractional Kinetics.\\
\textbf{MSC2020:} Primary 60K37, 60F17; 
secondary
60K50, 
60G22
\\
\textbf{Keywords and phrases:} random conductance model, random walk in random environment, disordered media, quenched limit, trapping.
\end{abstract}

% \begin{résumé}
% We consider a biased random walk in positive random conductances on $\mathbb{Z}^d$ for $d\geq 5$. In the sub-ballistic regime, we prove the quenched convergence of the properly rescaled random walk towards a Fractional Kinetics.
% \end{résumé}

\section{Introduction}

Random walks in random environment (RWRE) have been extensively studied since the mid seventies and especially so in the last two decades. We refer the reader to~\cite{Zeitouni},
\cite{Sz1}, \cite{Sz2}, \cite{Kumagai} and~\cite{BA-Frib} for different surveys of the field.

A lot of effort has been directed towards proving invariance principles in various models. The majority of those results have been obtained in isotropic settings. We could mention as an example the numerous works done for the simple random walk on supercritical percolation clusters in $\mathbb{Z}^d$. This started with results in the annealed setting in the eighties, see~\cite{deMasi1} and~\cite{deMasi2} (based on works of~\cite{Kipnis-Var} and~\cite{Kozlov}) which was later generalized into a quenched result, initially for $d\geq 4$ in~\cite{SidoSz} and later in dimensions 2 and 3 in~\cite{MP} and~\cite{Berger_invariance}. 

In models where trapping occurs, the classical invariance principle  is not expected to hold. However, using a different time scaling, it is still possible to obtain a convergence result where the limiting Brownian motion is replaced by a Fractional Kinetics (a Brownian motion time-changed by a stable subordinator). For example, Barlow and \v{C}ern\'{y} prove in~\cite{BarlowCerny} subdiffusive scaling limit in both the Bouchaud trap model (BTM) and the random conductance model. Mourrat gives a different proof for the result of~\cite{BarlowCerny} for the Bouchaud trap model in~\cite{Mourrat}.

All the results mentioned above are for isotropic models. Other cases, such as models with directional transience, have attracted a lot of interest as well. Annealed invariance principles have been obtained for such models on $\mathbb{Z}$ (see~\cite{Sabot_Enriquez_Zindy}), trees (see~ \cite{Frib_trees,Hammond_trees}) and on $\mathbb{Z}^d$ with $d \geq 2$ (see~\cite{Frib_perco,Kious_Frib}). Moreover, there are a few results in the quenched setting as well on $\mathbb{Z}$ (see~\cite{Varadhan_large_deviation}), trees (see~\cite{Bowditch}) and $\mathbb{Z}^d$ (see~\cite{Berger_Zeitouni, RassoulAghaSeppalainen}).

The models with directional transience of interest are typically studied because several of them exhibit slowdown effects due to a trapping phenomenon, such as \cite{Sabot_Enriquez_Zindy, berger2020, Frib_trees,aidekon_trees,Hammond_trees, Frib_perco,Frib_conduc,Kious_Frib, Interlac}. The most precise results for an anisotropic model on $\mathbb{Z}^d$ with $d \geq 2$ were obtained by Fribergh and Kious in \cite{Kious_Frib}, where it is shown that under heavy tail condition for conductances we have an aging phenomenon for the walk and an annealed scaling limit related to Fractional Kinetics.

In light of Mourrat's result in~\cite{Mourrat} it is natural to expect that the annealed scaling limit obtained in~\cite{Kious_Frib} should hold in the quenched setting as well. This is the main result of this article. 

It provides a first example for a directionally transient RWRE which converges to a Fractional Kinetics under the quenched setting.

\subsection{Definition of the model and main result}

Fix $d \geq 5$. Let us consider the following random walk in random environment. Let $\mathbf{P}$ be a measure on the set of nearest neighbour edges of the square lattice $E(\mathbb{Z}^d)$, i.e.\ $e \in E(\mathbb{Z}^d)$ if $e = [x, y]$ for $x \sim y$ (adjacent vertices in $\mathbb{Z}^d$). In particular, let $\mathbf{P}(\cdot) = \mu^{\otimes E(\mathbb{Z}^d)}$, where $\mu$ is a probability measure on $(0,+\infty)$. For $e \in E(\mathbb{Z}^d)$, we denote by $c_{*}(e)$ the random variable of the marginal of the conductance of the edge $e$ under $\mathbf{P}$. Let us call the space of environments $\Omega$ and let $\omega \in \Omega$ be a realisation of the environment. We assume throughout the article that 
\begin{equation*}
    \mathbf{P}\left(c_{*}(e) \ge u \right) = \mu([u,+\infty[) = L(u) u^{-\gamma} \quad \textnormal{for any }u \ge 0,
\end{equation*}
with $\gamma \in (0,1)$ and where $L(\cdot)$ is a slowly-varying function.

We denote by $\{c^{\omega}_{*}(e)\}_{e \in E(\mathbb{Z}^d)}$ the family of conductances in the fixed environment $\omega$. To introduce the bias, consider a unit vector $\vec{\ell} \in \mathbb{S}^{d-1}$ and $\lambda > 0$, denote $\ell \coloneqq \lambda \vec{\ell}$ and set, for $e = [x,y] \in E(\mathbb{Z}^{d})$, 
\begin{equation*}
    c^{\omega}(e) = c^{\omega}_{*}(e)e^{(x+y)\cdot  \ell}.
\end{equation*}
We consider the discrete time random walk $(X_n)_{n \geq 0}$ on $\mathbb{Z}^d$, on the random conductances $(c^\omega(e))_{e \in E(\mathbb{Z}^d)}$  (RWRC). We call $P_{x}^{\omega}$ the quenched law of the random walk on the environment $\omega$, starting from $x \in \mathbb{Z}^d$ and with transition probabilities, for $z \sim y \in \mathbb{Z}^d$,
\begin{equation}\label{QuenchedLawDef}
	p^\omega(z, y) = \frac{c^\omega([z, y])}{\sum\limits_{w \sim z} c^\omega([z, w])},
\end{equation}
and $p^\omega(z, y) = 0$ whenever $z \nsim y$. Let $\mathbb{P}_{x} = \mathbf{E}[P_{x}^{\omega}(\cdot)]$ be the annealed law of the random walk $(X_n)_{n}$ ($\mathbf{E}[\cdot]$ is the expectation with respect to $\mathbf{P}$ and $\mathbf{Var}$ the corresponding variance). For any $T>0$, let $D^d([0, T])$ denote the space of c\`{a}dl\`{a}g functions from $[0, T]$ to $\mathbb{R}^d$. The main theorem of this article is the following.

\begin{theorem}\label{MainTheorem}
	Let $d \ge 5$ and fix any $T > 0$, the following statements hold for $\mathbf{P}$-almost every environment $\omega \in \Omega$. There exists a deterministic $v_0 \in \mathbb{S}^{d-1}$ with $v_0 \cdot \vec{\ell} > 0$ such that, under the quenched law $P^\omega_0(\cdot)$, it holds that
	\begin{equation}\label{FirstStatementMain}
		\left( \frac{X_{\floor{n t}}}{n^\gamma/L(n)} \right)_{0 \le t \le T} \overset{(\mathrm{d})}{\longrightarrow} \left( v_0 C_\infty^{-\gamma} \mathcal{S}_\gamma^{-1}(t) \right)_{0 \le t \le T},
	\end{equation}
 furthermore,
	\begin{equation}\label{SecondStatementMain}
		\left( \frac{X_{\floor{n t}} - (X_{\floor{n t}} \cdot v_0) v_0}{\sqrt{n^\gamma/L(n)}} \right)_{0 \le t \le T} \overset{(\mathrm{d})}{\longrightarrow}\left( M_d B_{\mathcal{S}_\gamma^{-1}(t)} \right)_{0 \le t \le T},
	\end{equation}
	for a deterministic constant $C_\infty >0$, a deterministic $d \times d$ matrix $M_d$ of rank $d-1$, a standard Brownian motion $B$ and a stable subordinator of index $\gamma$, independent of $B$, $\mathcal{S}_\gamma$. The first convergence holds in the uniform topology on $D^d([0, T])$ while the second holds in the Skorohod's $J_1$-topology on $D^d([0, T])$.
\end{theorem}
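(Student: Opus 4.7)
The plan is to adapt the quenching strategy Mourrat developed in~\cite{Mourrat} for the Bouchaud trap model to the directionally transient RWRC, using as a starting point the annealed Fractional Kinetics limit of Fribergh--Kious~\cite{Kious_Frib}. The overall philosophy is to decompose the walk into a ``bulk'' directed piece, responsible for the Brownian/drift component $v_0 t + M_d B_t$, and a ``trapping'' piece, responsible for the time-change by the inverse subordinator $\mathcal{S}_\gamma^{-1}$. Both pieces need to converge quenched, i.e.\ for $\mathbf{P}$-a.e.\ environment $\omega$, and they need to be combined.

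First I would set up a regeneration structure adapted from~\cite{Kious_Frib}: the bias makes the walk ballistic on the trap-free environment and yields regeneration times $(\tau_k)$ such that the blocks $(\tau_{k+1}-\tau_k, X_{\tau_{k+1}}-X_{\tau_k})$ are i.i.d.\ under the annealed law, with $\tau_1$ having a tail of index $\gamma$ inherited from $c_*(e)$. The displacements $X_{\tau_k}-k\,\mathbf{E}[X_{\tau_1}]$ would provide, via an annealed CLT in the hyperplane orthogonal to $v_0:= \mathbf{E}[X_{\tau_1}]/\mathbf{E}[\tau_1]$, the candidate covariance encoded by $M_d$.

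The core of the proof is the quenched convergence of the clock process $k \mapsto \tau_k$ to the stable subordinator $\mathcal{S}_\gamma$. For this I would introduce a cutoff that separates deep traps (contributing to large jumps of $\mathcal{S}_\gamma$) from shallow contributions (which should self-average). Encoding the deep traps along the trajectory as a point process in $\mathbb{R}_+ \times \mathbb{R}_+$ (regeneration index $\times$ rescaled trap depth), I would show that under $P_0^\omega$ this point process converges, for $\mathbf{P}$-a.e.\ $\omega$, to a Poisson point process of the same intensity as in the annealed limit. Concretely, the test would be a second-moment computation: writing the Laplace functional of the empirical point process as a sum $S_n(\omega) = \sum_{k\leq n} f_n(\omega, X_{\tau_{k-1}},\ldots,X_{\tau_k})$ along the path, one needs
\begin{equation*}
\mathbf{E}[S_n(\omega)] \longrightarrow \text{annealed limit}, \qquad \mathbf{Var}(S_n(\omega)) \longrightarrow 0.
\end{equation*}
The assumption $d \geq 5$ enters here to control the spatial correlations produced by edges revisited by two distinct regeneration blocks: the expected number of such collisions must be summably small, which is exactly the kind of estimate that becomes favorable in high dimension. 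A parallel but simpler second-moment argument would yield a quenched invariance principle for the fluctuations of $X_{\tau_k}$ orthogonal to $v_0$.

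Finally, I would combine the two pieces. Writing $N(t) := \max\{k : \tau_k \leq t\}$, one has $X_{\lfloor nt\rfloor}$ close to $X_{\tau_{N(nt)}}$, and the quenched convergences produce, jointly, a Brownian motion with drift in the position variable and an inverse stable subordinator in the time variable. A continuous-mapping / composition argument in the Skorohod space, together with projection onto $v_0$ and its orthogonal complement, then yields~\eqref{FirstStatementMain} (uniform topology, since $\mathcal{S}_\gamma^{-1}$ is continuous) and~\eqref{SecondStatementMain} ($J_1$-topology, because the Brownian factor is composed with the continuous $\mathcal{S}_\gamma^{-1}$ but the walk itself jumps). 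The main obstacle I anticipate is the variance estimate for $S_n(\omega)$: unlike in the BTM, the trap depths that the walker actually \emph{explores} depend on the environment through the transition probabilities, so the path and the traps are correlated. Decoupling them sufficiently to close the second-moment bound, in a way that genuinely uses $d\geq 5$ and the regeneration structure, is where the real work will be.
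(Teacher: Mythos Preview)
Your high-level strategy is correct and matches the paper's: regeneration structure from \cite{Kious_Frib}, a second-moment/variance bound to upgrade the annealed limit to a quenched one (with summability along $b^n$ and Borel--Cantelli), and a continuous-mapping/composition step at the end. The role of $d\ge 5$ is also identified correctly.

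Where your implementation diverges is in how the variance is actually controlled. You propose a Mourrat-style point-process decomposition of the clock into deep and shallow traps, bounding cross-terms via collision probabilities between regeneration blocks. The paper instead writes
\[
\mathbf{Var}\big(E_0^\omega[F(W^*_n)]\big)=\mathbb{E}_{0,0}\big[F(W^{*1}_n)F(W^{*2}_n)\big]-\mathbb{E}_0\big[F(W^{*1}_n)\big]\mathbb{E}_0\big[F(W^{*2}_n)\big]
\]
for bounded Lipschitz $F$, with $X^1,X^2$ two quenched-independent walks in the \emph{same} environment, and then builds a \emph{joint regeneration level} structure (levels that are simultaneous regeneration points for both walks) giving a Markov-type renewal property for the pair. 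An asymptotic separation lemma, proved via a heat-kernel bound $\Pf_0(X_{\tau_n}=z)\le Cn^{\varepsilon-d/2}$ and an $L^2$ occupation-measure argument in the spirit of \cite{Berger_Zeitouni}, shows the two trajectories do not come within distance $2$ beyond level $R$ except with probability $CR^{-c}$; once separated, they decouple as if in independent environments, and the covariance above is $O(n^{-c})$. This buys a uniform treatment of the clock $S_n$, the position $Z_n$, and the joint process $W_n$ with a single argument, and sidesteps the deep/shallow surgery. Your route could in principle be made to work, but the obstacle you flag at the end---that path and trap depths are correlated through the transition probabilities---is exactly what forces one to develop the joint-regeneration/two-walk machinery anyway; there is no shortcut analogous to the BTM case.
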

\noindent It may be possible to extend this result and (partially) the method of our proof to $d=4$. However, the key \textit{heat} kernel estimate of Proposition~\ref{TechnicalInequality} is not strong enough to make our method work in dimension $d=4$. Hence, we believe that proving the theorem in lower dimensions would constitute a non-trivial extension of this work.

\subsubsection{Quenched convergence via series convergence} \label{SectionIntroSeries}

Let us explain the main steps of our strategy. In \cite{Kious_Frib} one can find the definition of a regeneration structure with suitable independence properties, in particular let $(\tau_k)_{k \ge 0}$ be the regeneration times. We recall the rigorous definition of $(\tau_k)_{k \ge 0}$ in Section~\ref{definition_reg_times}. Let $v \coloneqq \mathbb{E}[X_{\tau_2} - X_{\tau_1} ]$, we introduce, for all $t \in [0, T]$,
\begin{equation}\label{EqnThreeMainQuantities}
	Y_n(t) = \frac{X_{\tau_{\floor{tn}}}}{n}, \quad Z_n(t) = \frac{X_{\tau_{\floor{tn}}} - v n t}{n^{1/2}} , \quad S_n(t) = \frac{\tau_{\floor{tn}}}{\mathrm{Inv}(n)}, \quad \textnormal{and} \quad W_n(t) = \left(Z_n(t),S_n(t)\right),
\end{equation}
where $\mathrm{Inv}(u) \coloneqq \inf \left\{ s: \mathbf{P}[c_{*} > s]\le 1/u \right\}$ is the generalised right-continuous inverse of the tail of $c_{*}$, let us point out that $\mathrm{Inv}(n) \approx n^{1/\gamma}$. Our initial aim is to prove that for $\mathbf{P}$-almost all $\omega \in \Omega$ these four quantities converge weakly, then we will use this fact to prove Theorem~\ref{MainTheorem}. We get the quenched convergence of $Y_n(t)$ for free from \cite[Lemma 11.2]{Kious_Frib}, as the limit is deterministic. For technical reasons let us define the process
\begin{equation}\label{ShiftedClock}
	W^*_n(t) = W_n\left(t+\tfrac{1}{n}\right) - W_n\left(\tfrac{1}{n}\right).
\end{equation} 
In order to prove quenched convergence of $W_n$ we will use alternative formulations considered in \cite{Szn_serie} and \cite{Mourrat} to rephrase the problem.
These conditions are stated in the following theorem.

\begin{theorem}\label{TheoremVarianceClockProcess}
	\noindent Let $d \ge 5$ and fix $T>0$, for any positive and bounded Lipschitz function $F_1 \colon (\mathbb{R}^{d+1})^{m} \rightarrow \mathbb{R}$ and $0 \le t_1 \le \cdots \le t_m \le T$ it holds that
	\begin{equation} \label{EquationBoundVarianceClock}
		\sum_{n = 1 }^{+\infty}\mathbf{Var}\left(E_{0}^{\omega}\left[F_1(W^{*}_{b^n}(t_1),\cdots,W^{*}_{b^n}(t_m))\right]\right)< +\infty,
	\end{equation}
	with $b \in (1,2)$. Moreover, by denoting with $Z_n$ the polygonal interpolant of the quantity denoted with the same letter in \eqref{EqnThreeMainQuantities}, we have
	\begin{equation} \label{EquationBoundVarianceTraj}
		\sum_{n = 1 }^{+\infty}\mathbf{Var}\left(E_{0}^{\omega}[F_2(Z_{b^n})]\right)< +\infty,
	\end{equation}
	where $F_2$ is any bounded Lipschitz function from $\mathcal{C}^d([0, T]) \to \mathbb{R}$.
\end{theorem}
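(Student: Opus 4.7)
The plan is to reduce each variance to a covariance between two independent copies of the walk sharing the same environment, then control this covariance by a geometric intersection estimate that uses the heat kernel bound of Proposition~\ref{TechnicalInequality}.

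More precisely, if $X$ and $\tilde X$ denote two walks which, conditionally on the environment $\omega$, are independent with law $P_0^\omega$, and if $F$ is any bounded measurable functional, then
\begin{equation*}
    \mathbf{Var}\bigl(E_0^\omega[F(X)]\bigr) = \mathbb{E}\bigl[F(X) F(\tilde X)\bigr] - \mathbb{E}[F(X)] \, \mathbb{E}[F(\tilde X)].
\end{equation*}
The right-hand side vanishes on the event where $X$ and $\tilde X$ explore \emph{disjoint} sets of edges, because the product measure $\mathbf{P}=\mu^{\otimes E(\mathbb{Z}^d)}$ then factorizes across the two disjoint sets. Hence, writing $I_N$ for the event that the edges visited by $X$ and $\tilde X$ up to the relevant horizon (say $N=b^n$ regeneration times) have non-empty intersection, one gets $\mathbf{Var}(E_0^\omega[F]) \le C \|F\|_\infty^2 \, \mathbb{P}\otimes\mathbb{P}(I_N)$. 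Both \eqref{EquationBoundVarianceClock} and \eqref{EquationBoundVarianceTraj} therefore follow if we show $\mathbb{P}\otimes\mathbb{P}(I_{b^n})$ is summable in $n$.

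I would then exploit the regeneration structure of \cite{Kious_Frib}. After the first regeneration time, the environment splits into i.i.d.\ slab-like pieces indexed by the successive regeneration positions $X_{\tau_k}$, and each block depends only on the edges in a bounded neighborhood of its slab. This is why we work with the shifted clock $W^*_n$ of \eqref{ShiftedClock}: it eliminates the atypical first block. Truncation then reduces the problem to blocks of moderate spatial extent, using the polynomial tail estimates on $X_{\tau_2}-X_{\tau_1}$ from \cite{Kious_Frib} together with the Lipschitz regularity of $F_1,F_2$, so that up to a negligible error the events considered depend only on edges within a tube of width polylogarithmic in $n$ around the two trajectories.

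At this point the intersection probability reduces to estimating the expected number of common vertices (or of common slab-neighborhoods) between the two trajectories on the time scale $b^n$. This is exactly where Proposition~\ref{TechnicalInequality} enters: the heat kernel bound it provides, together with an annealed Green function computation, yields a bound of order $(b^n)^{-\alpha}$ for some $\alpha>0$ when $d\ge 5$; this is the source of the dimensional restriction, since the corresponding intersection sum only converges in that regime. Geometric summability along $b^n$ then gives the desired finiteness. The main obstacle I anticipate is the interplay between the heavy-tailed conductances (which make individual regeneration blocks of unbounded duration) and the geometric intersection estimate: one must cut off the trap-time contributions and verify that the residual geometric trajectory, at the scale where intersections matter, is still controlled by the heat kernel bound of Proposition~\ref{TechnicalInequality}. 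Once this trap-truncation is in place, the scheme is identical for $F_1$ (applied to the joint trajectory-clock process $W_n^*$) and for $F_2$ (applied to the polygonal interpolant $Z_n$), and both summability statements follow.
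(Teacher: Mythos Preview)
Your variance--covariance identity is correct, and the idea that on the non-intersection event the annealed two-walk measure $\Pf_{0,0}$ coincides with the product $Q_{0,0}=\Pf_0\otimes\Pf_0$ is exactly Proposition~\ref{from_same_to_ind_env} in the paper. But the scheme as written has a fatal gap: \emph{both walks start at the origin}. Your intersection event $I_N$ therefore holds trivially, so the bound $\mathbf{Var}\le C\|F\|_\infty^2\,\Pf(I_N)$ is vacuous. The variance does not vanish because the two trajectories share a common initial portion of the environment no matter what, and nothing in your outline explains why this shared dependence is negligible.

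This is precisely the obstacle the paper spends Sections~\ref{JoiRegLev}--\ref{Section5} resolving. The argument has three layers your sketch is missing. First, one needs a \emph{joint} regeneration structure for two walks in the \emph{same} environment (Section~\ref{JoiRegLev}, culminating in Theorem~\ref{IndependenceJointReg}); the single-walk regeneration of \cite{Kious_Frib} is not enough, since the two trajectories interact through the common environment. Second, one shows (Proposition~\ref{LemmaDistantWalks}) that after reaching level $n^\vartheta$ the two walks stay at distance $\ge 3$ with probability $1-O(n^{-c})$; this is where Proposition~\ref{TechnicalInequality} is actually used, but only for walks in \emph{independent} environments (Proposition~\ref{PropositionDistantIndepEnv}), and the transfer to the same-environment measure requires the joint regeneration machinery plus an iterative separation argument (Propositions~\ref{PropositionEventS}--\ref{PropositionIntersectionThreeEvents}). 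Third, the Lipschitz property of $F_1,F_2$ is used not for block truncation but to show that the trajectory up to level $n^\vartheta$ contributes $O(n^{-c})$ to the rescaled functionals (Proposition~\ref{close_trajectories}, Lemma~\ref{lemma_time_shift}), so that one may shift both walks to the first joint regeneration level beyond $n^\vartheta$ and only then invoke the separation estimate. Your proposal jumps directly to the intersection bound without any mechanism for the walks to decouple from their common starting point, and it computes the intersection probability under the wrong measure (you write $\Pf\otimes\Pf$, but the relevant one is $\Pf_{0,0}$, and equating the two on the intersection event is exactly what is not automatic).
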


\subsubsection{Sketch of proof}
In order to prove Theorem~\ref{TheoremVarianceClockProcess}, we can consider under each environment two independent random walks $X^1_{\cdot}$ and $X^2_{\cdot}$ which have the same law as $X_{\cdot}$ under $P_0^{\omega}$. Let $\mathbb{E}_{0, 0}$ be the natural generalisation of the annealed law to two walks (see \eqref{2WalksSameEnv}). Furthermore, let $Z^1_{\cdot}$ and $Z^2_{\cdot}$ be the polygonal interpolant of the quantity $Z_n$ defined in \eqref{EqnThreeMainQuantities} but constructed, respectively, using $X^1_{\cdot}$ and $X^2_{\cdot}$. The term inside the sum in \eqref{EquationBoundVarianceTraj} is equal to
\begin{equation}\label{EquationCovarianceIndepWalks}
	\mathbb{E}_{0, 0}[F_2(Z_{b^n}^1)F_2(Z_{b^n}^2)] - \mathbb{E}_{0}[F_2(Z_{b^n}^1)]\mathbb{E}_{0}[F_2(Z_{b^n}^2)].
\end{equation}
We will show that the trajectories of the two walks do not meet each other after reaching a distance not too far from the origin, then deduce that the quantity in \eqref{EquationCovarianceIndepWalks} goes quickly to $0$. Indeed, when the two walks do not meet, they behave almost as if they evolved in independent environments.

The important ideas can be grouped in two main parts. The first one deals with \emph{joint regeneration levels} while the second consists in proving the \emph{asymptotic separation} using \emph{joint regeneration levels}. 
\vspace{0.4ex}

\noindent\textbf{Joint regeneration levels}: Section~\ref{JoiRegLev} will be devoted to extending the notion of \emph{regeneration times} to the case where we have two walks. We will construct \emph{joint regeneration levels}: a sequence of levels (in the direction $\vec{\ell}$) such that the hitting time of these levels are regeneration times for both walks. A similar structure in the ballistic setting was constructed in \cite{RassoulAghaSeppalainen}.
\vspace{0.4ex}

\noindent\textbf{Asymptotic separation}: Once we have constructed \emph{joint regeneration levels} the idea is the following: at each regeneration level, both walks have a decent chance of staying far apart for a long time, after which it is unlikely that they will ever meet again. As joint regeneration levels are common, the walks will decouple relatively quickly. In this part we use a method which is inspired by \cite{Berger_Zeitouni}. However, we cannot directly use their results because our model lacks both the uniform ellipticity property of the jump probabilities and it is not ballistic.

\section{Notations} \label{SectionNotation}

We fix an orthonormal basis $\{e_1,...,e_d\}$ of $\mathbb{Z}^d$ such that $e_1\cdot\vec{\ell} \geq e_2\cdot\vec{\ell} \geq ... \geq e_d\cdot\vec{\ell} \geq 0$. In particular, we have that $e_1\cdot\vec{\ell} \geq 1/\sqrt{d}$. We also set the convention that $e_{j + d} = - e_j$ for all $j = 1, \dots, d$. We also fix an orthonormal basis of $\mathbb{R}^d$, $\{f_1,...,f_d\}$ such that $f_1 = \vec{\ell}$.

For any $L \in \mathbb{R}$ we set
\begin{equation}\label{H+-}
    \mathcal{H}^{+}(L) = \mathcal{H}^{+}_L = \{z \in \mathbb{Z}^d \colon z\cdot\vec{\ell} > L\} \quad  \mathrm{and} \quad  \mathcal{H}^{-}(L) =\mathcal{H}^{-}_L= \{z \in \mathbb{Z}^d \colon z\cdot\vec{\ell} \leq L\}.
\end{equation}
Similarly, for $z\in \mathbb{Z}^d$
\begin{equation*}
    \mathcal{H}^{+}_{z} = \mathcal{H}^{+}\left(z\cdot\vec{\ell}\right) \quad \mathrm{and} \quad \mathcal{H}^{-}_{z} = \mathcal{H}^{-}\left(z\cdot\vec{\ell}\right).
\end{equation*}
For any $x \in \mathbb{Z}^d$ we define
\begin{equation*} \label{neighbourhood}
    \mathcal{V}_x = \{x,x+e_1,x-e_1,\cdots,x+e_d,x-e_d\}.
\end{equation*}
For $x,y \in \mathbb{Z}^d$, we recall that if $x = y \pm e_i$ for some $i \in \{1,\cdots,d\}$ then $x$ and $y$ are adjacent and we write $x \sim y$. For $x$ in $\mathbb{Z}^d$ and $e = [e^{+}, e^{-}]$ and edge, we write $x \sim e$ if $x \sim e^{-}$ or $x \sim e^{+}$ and $x \notin \{e^{+},e^{-}\}$. Two distinct edges $e$ and $e'$ are said to be adjacent and we write $e\sim e'$ if they share one endpoint.

For $x \in \mathbb{Z}^d$ we define the following sets of edges
\begin{equation}\label{EquationIncidentEdges}
    \mathcal{E}_x \coloneqq \left\{[x,y] \colon y \sim x\right\}, \,\,     \mathcal{E}^{k}_x \coloneqq \mathcal{E}_{\{x,x-e_1,\cdots,x-ke_1\}}, \,\,  \text{ and for }A\subset \mathbb{Z}^d \text{, }\mathcal{E}_A = \bigcup_{x\in A}\mathcal{E}_x.
\end{equation}
\begin{equation}\label{EquationLeftEdges}
    \mathcal{L}^x \coloneqq \left\{[y,z] \colon y  \in \mathcal{H}^-_{x} \text{ and } z \in \mathcal{H}^-_{x}\right\} \cup \mathcal{E}_x.
\end{equation}
\begin{equation}\label{EquationRightEdges}
    \mathcal{R}^x \coloneqq \left\{[y,z] \colon y\in \mathcal{H}^+_{x} \text{ or } z \in \mathcal{H}^+_{x}\right\} \cup \mathcal{E}_x.
\end{equation}
For any subset $V$ of $\mathbb{Z}^d$ we define its edge-set
\begin{equation*}
E(V) \coloneqq \{[x,y] \colon x \in V \text{ and } y \in V\},
\end{equation*}
its \emph{vertex-boundary}
\begin{equation*}
    \partial V \coloneqq \{x \notin V \colon \exists y \in V, x \sim y \},
\end{equation*}
and its \emph{edge-boundary}
\begin{equation*}
 \partial_{E} V \coloneqq \left\{[x,y]\in E(\mathbb{Z}^d) \colon y \in V, x \notin V \right\}.
\end{equation*}
For any subset $A \subset \mathbb{Z}^d$ we define its width
\begin{equation*}
    W(A)= \max_{1\leq i \leq d}\left(\max_{y \in A}y \cdot e_i - \min_{y \in A}y \cdot e_i\right).
\end{equation*}
For any $L,L' > 0$ and $y \in \mathbb{Z}^d$ we define the box
\begin{equation}\label{eqn:TiltedBox}
    \mathcal{B}_y(L,L') = \left\{x \in \mathbb{Z}^d \colon |(x-y)\cdot\vec{\ell}| \leq L \text{ and } |(x-y)\cdot f_i| \leq L' \text{ for all } i \in \{2,...,d\} \right\},
\end{equation}
and its \emph{positive boundary}
\begin{equation*}
    \partial^{+}\mathcal{B}_y(L,L') = \{ x \in \partial \mathcal{B}_y(L,L') \colon (x-y)\cdot\vec{\ell} > L\},
\end{equation*}
when $y = 0$ we use the notation $\mathcal{B}(L,L')$.

For a general random walk $(X_n)_{n}$ on a graph $(V,E)$ we define for $B \subset V$ the hitting times of $B$,
\begin{equation} \label{T_B}
    T_B = \inf\{n \geq 0 \colon X_n \in B\} \quad \text{ and } \quad T^{+}_B = \inf\{n \geq 1 \colon X_n \in B\},
\end{equation}
and the exit time of $B$
\begin{equation*}
    T^{\mathrm{ex}}_B = \inf\{n \geq 0 \colon X_n \notin B\},
\end{equation*}
when $B$ is a singleton $\{x\}$ we use the notation $T_x$. For $R \in \mathbb{R}$ we define
\begin{equation}\label{HittingHalfSpace}
    T_{R} = \inf \{ n \ \ge 0 : X_n \cdot \vec{\ell} \ge R\}.
\end{equation}
We denote by $\theta_n$ the shift by $n$ units of time, when we consider two random walks $\theta_{s, t}$ denotes the time shift of the first by $s$ time units and of the second by $t$ time units, this operator is defined formally in \eqref{eqn:DoubleTimeShift}.
For $\Pf$ a probability measure, $A_1,\cdots,A_n$ events and $X$ a random variable, the notation $\mathbb{E}[A_1,\cdots,A_n,X]$ denotes $\mathbb{E}[\indi{A_1}\cdots\indi{A_n}X]$.

In most of the paper we will add an index $i \in \{1,2\}$ as superscript when the event depends on $X^i$, where $X^i$ are two realisations of the random walk (independent with respect to the quenched distribution). For example, $T^{i}_R$ will be the hitting time of $\mathcal{H}^{+}(R)$ by the $i$-th random walk.

In the following, we will use the notations $c$, $C$, $a$ to denote constants which only depend on the dimension $d$, the strength of the bias $\lambda$, its direction $\vec{\ell}$ and a parameter $K > 0$.

\paragraph{Probability measures.} Let us recall that $P^\omega_{x}$ denotes the quenched law while $\Pf_{x}$ denotes the annealed law of the walk (started at $x$). As we already mentioned, we will often consider two walks $X^1_{\cdot}$ and $X^2_{\cdot}$, for these walks it is useful to introduce two probability measures. The first one is for two walks that evolve independently (according to the law defined in \eqref{QuenchedLawDef}) in the same environment. Let $U_1, U_2 \in \mathbb{Z}^d$, we define
\begin{align}\label{2WalksSameEnv}
    \Pf_{U_1, U_2}\left(X^1_{\cdot} \in A_1, X^2_{\cdot} \in A_2\right) \coloneqq \mathbf{E}\left[ P_{U_1}^\omega\left( X^1_{\cdot} \in A_1 \right) \times P_{U_2}^\omega\left( X^2_{\cdot} \in A_2 \right) \right].
\end{align}
It is also possible to define a measure according to which the two walks evolve with the same quenched law in two independent environments $\omega_1, \omega_2$ sampled according to $\mathbf{P}$. In formulas,
\begin{align}
    Q_{U_1, U_2}\left(X^1_{\cdot} \in A_1, X^2_{\cdot} \in A_2\right) &\coloneqq \mathbf{E}\left[ P_{U_1}^{\omega_1}\left( X^1_{\cdot} \in A_1 \right)\right] \times \mathbf{E}\left[ P_{U_2}^{\omega_2}\left( X^2_{\cdot} \in A_2 \right) \right] \label{2WalksIndEnv} \\ &= \Pf_{U_1}\left( X^1_{\cdot} \in A_1 \right) \times \Pf_{U_2}\left( X^2_{\cdot} \in A_2 \right).\nonumber
\end{align}
We denote by $\mathbb{E}_{U_1,U_2}[\cdot]$ the expectation with respect to the measure \eqref{2WalksSameEnv}. This is the same notation as the annealed law of one walk (with the exception of the two starting points) because the marginal of $X^1_{\cdot}$ (resp.\ $X^2_{\cdot}$) is the same as the law of the single walk. We may write $\mathbb{P}_{0}(\cdot) = \mathbb{P}_{0, 0}(\cdot)$ and $\mathbb{E}_{0}[\cdot] = \mathbb{E}_{0, 0}[\cdot]$ for simplicity when it is clear from the context that we are considering two walks. $\mathbb{E}^{Q_{U_1,U_2}}[\cdot]$ is the expectation associated to \eqref{2WalksIndEnv}. 

For $v \in \mathbb{R}^{2d}$ and $x \in \mathbb{Z}^d$ we write $\Pf_x^{v}$ (resp.\ $\Pf_{x,y}^{v}$) for the annealed law where we set the conductances at the starting point (resp.\ at the two starting points) to the values of $v$. More precisely, recalling \eqref{EquationIncidentEdges}, we fix for all $[x, x + e_i]\in \mathcal{E}_x$ the conductance $c_{*}^{\omega}([x, x + e_i]) = v_i$ for $i = 1, \dots, 2d$. A special case is when $v = (K,\ldots,K)$ with $K > 0$, in that case we write $\Pf_x^{K}$ (resp. $\Pf_{x,y}^{K}$). The notation $Q_{x,y}^{v}$ refers to the law $\Pf_x^{v}\otimes \Pf_y^{v}$. 

In accordance with the notation of \cite[Equation (5.3)]{Kious_Frib} (we will recall the precise meaning in Section~\ref{definition_reg_times}) we write $\Pf_x^{K}(\cdot|D=+\infty)$ for the law of the walk conditioned to regenerate at $x$ at time $0$. We also write $Q_{x,y}^K(\cdot|D^{\otimes}=+\infty) = \Pf_x^{K}(\cdot|D=+\infty)\otimes\Pf_y^{K}(\cdot|D=+\infty)$.

\section{Joint regeneration levels}\label{JoiRegLev}

For this model a regeneration structure was introduced in \cite{Frib_conduc}. It was further improved in \cite{Kious_Frib} to obtain the independence of trajectories between regeneration times. For our purpose it will be more convenient to have joint regeneration levels. The goal of this section is to introduce a joint regeneration structure for two walks in the same environment. 

\begin{figure}[H]
    \centering
    \includegraphics[scale = 0.35]{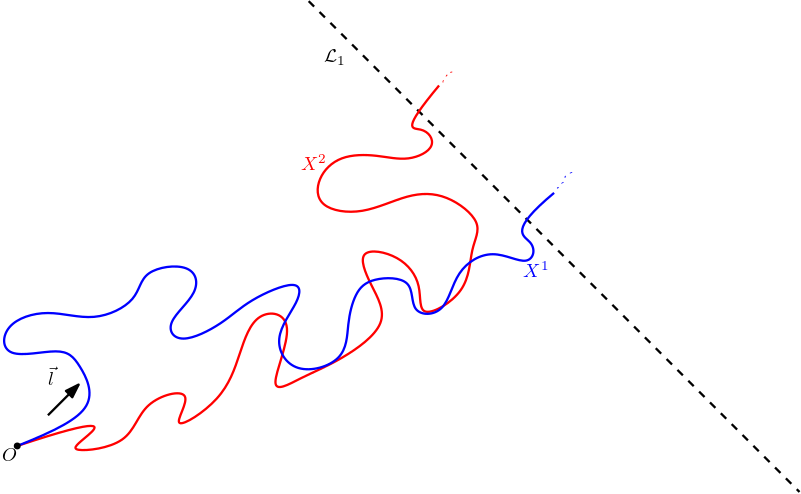}
    \caption{Two trajectories and the first joint regeneration level $\mathcal{L}_1$.}
    \label{fig:figJointRegLevel}
\end{figure}
In principle, a regeneration time is simply a new maximum in the direction of the bias which will be a minimum for the future of the trajectory. However, for technical reasons we want to work with a subset of those regeneration times: those happening on points with bounded conductances. This is why we will adapt the construction of \cite{Kious_Frib} which uses $K$-open ladder points.

The first step is to introduce the notion of \emph{joint $K$-open ladder point} and we do this in the next subsection.

\subsection{Joint $K$-open ladder points}

\subsubsection{Notations and preliminary results} 

\begin{definition1}\label{DefinitionK-Open}
Fix $K\ge 1$, we say that an edge $e$ is $K$-normal if $c_{*}(e) \in [1/K,K]$. A point $z \in \mathbb{Z}^d$ is said to be $K$-open if all the edges incident to it are $K$-normal, i.e.\ if all $e \in \mathcal{E}_z$ are such that $c_{*}(e) \in [1/K,K]$. A vertex that is not $K$-open is said to be $K$-closed.
\end{definition1}

Joint $K$-open ladder points are new maxima in the direction of the bias $\vec{\ell}$ for both walks, such that the points are $K$-open and the maxima are close in the sense that they are both at roughly the same distance, say $\approx R$, from the origin in the bias direction. Note that the distance cannot be exactly the same due to the lattice structure.

The main goal of this subsection is to prove that the first joint $K$-open ladder point is close enough to the origin in the bias direction. The main result, that will be used throughout the paper, is Proposition~\ref{PropositionRegLevelSmall} which follows from Lemma~\ref{LemmaSuccessiveAttepts}. Lemma~\ref{PropFrequentRegLevels} which is related to Proposition~\ref{PropositionShortRegLevel} will also be instrumental for the rest of the proof.

Let us first introduce some useful quantities and results from \cite[Section 6]{Frib_conduc}. We introduce the inner positive boundary of $\mathcal{B}_{x}(L, L')$
\begin{equation*}
    \partial_{\mathrm{in}}^+ \mathcal{B}_{x}(L, L') \coloneqq \{ z \in \mathcal{B}_{x}(L, L'), \text{ where } z \sim y \text{ with } y \in \partial^+ \mathcal{B}_{x}(L, L') \}.
\end{equation*}
We consider two walks $(X^i_n)_{n\geq 0}$ for $i\in \{1,2\}$, set $\alpha > 0$ and define the events
\begin{equation} \label{EventExitRightBoundary}
    A^{i}(L) \coloneqq \left\{ T^i_{\partial \mathcal{B}_{X^{i}_{0}}(L, L^\alpha)} \ge T^i_{\partial^+_{\mathrm{in}} \mathcal{B}_{X^{i}_{0}}(L, L^\alpha)}\right\}, \quad \mathrm{and} \quad A(L) \coloneqq A^{1}(L) \cap A^{2}(L).
\end{equation}
In all the following we write $\mathcal{B}_{i} = \mathcal{B}_{X_{0}^{i}}$ and $\mathcal{B} = \mathcal{B}_{0}$.

\begin{lemma}(Analog of \cite[Lemma 6.1]{Frib_conduc})\label{LemmaA(n)}
For $\alpha > d + 3$ and any $L > 0$ we have
\[
\sup_{U_1, U_2 \in \mathbb{Z}^d} \Pf_{U_1, U_2}\left( A(L)^c \right) \le C e^{-c L},
\]
where the constants $C,c$ only depend on $\lambda,\vec{\ell},d$. The same statement holds under the law $\Pf^K_{U_1, U_2}$.
\end{lemma}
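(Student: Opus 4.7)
The plan is to reduce the two-walk estimate to the corresponding single-walk statement, namely \cite[Lemma 6.1]{Frib_conduc}. First, I would apply a union bound:
\begin{equation*}
\Pf_{U_1, U_2}(A(L)^c) \le \Pf_{U_1, U_2}(A^1(L)^c) + \Pf_{U_1, U_2}(A^2(L)^c).
\end{equation*}
Since each event $A^i(L)$ depends only on the trajectory $X^i$, the definition \eqref{2WalksSameEnv} shows that the marginal of $X^i$ under $\Pf_{U_1, U_2}$ is precisely the single-walk annealed measure $\Pf_{U_i}$; hence $\Pf_{U_1, U_2}(A^i(L)^c) = \Pf_{U_i}(A^i(L)^c)$. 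Translation invariance of $\mathbf{P}$ makes $\Pf_{U_i}(A^i(L)^c)$ independent of $U_i$, and a direct invocation of \cite[Lemma 6.1]{Frib_conduc} (which is precisely the statement that $\Pf_0(A(L)^c) \le C e^{-cL}$ for $\alpha > d+3$) then yields the bound uniformly in $U_1, U_2$.

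For the version under $\Pf^K_{U_1, U_2}$, the same union-bound and marginal argument reduces the statement to estimates on the single-walk law $\Pf^K_{U_i}$. The proof of \cite[Lemma 6.1]{Frib_conduc} is robust to the deterministic choice of conductances at finitely many edges around the starting point: the usual two ingredients are a stretched-exponential drift estimate ensuring that the walk crosses a slab of width $L$ within $O(L)$ steps, and a union bound over the $O(L^{\alpha(d-1)})$ lattice sites of the lateral boundary $\partial \mathcal{B}_{X^i_0}(L, L^\alpha) \setminus \partial^+_{\mathrm{in}} \mathcal{B}_{X^i_0}(L, L^\alpha)$. The constraint $\alpha > d+3$ precisely absorbs this polynomial factor into the stretched exponential. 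Fixing the conductances $c_*$ on $\mathcal{E}_{U_i}$ to the value $K$ only modifies the very first jump probabilities by a bounded multiplicative factor (depending on $K$, $\lambda$, $d$) and does not affect these two inputs.

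The main point that requires a brief verification, rather than a genuine obstacle, is that the constants $C, c$ produced by \cite[Lemma 6.1]{Frib_conduc} are uniform in the starting point (immediate from translation invariance of $\mathbf{P}$) and in the conditioning defining $\Pf^K$. Once these uniformities are in hand, the two-walk bound follows in a single line.
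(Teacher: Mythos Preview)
Your approach is essentially the same as the paper's: union bound, pass to the single-walk marginal, and invoke translation invariance together with the known single-walk exit estimate. The only notable difference is in the $\Pf^K$ case: rather than arguing that the proof of the single-walk lemma is robust under fixing finitely many conductances, the paper simply cites \cite[Theorem~5.2]{Kious_Frib}, which is precisely the analog of \cite[Theorem~5.1]{Frib_conduc} under $\Pf^K_0$ and already exists in the literature. (Note also that the paper's proof cites Theorem~5.1 of \cite{Frib_conduc} for the exponential bound, not Lemma~6.1, despite the label on the present statement.) Your robustness argument is plausible but unnecessary given the available reference.
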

\begin{proof}
Fix any $U_1, U_2 \in \mathbb{Z}^d$. We aim to bound
\begin{align*}
    \Pf_{U_1, U_2}\left( A(L)^c \right) &= \Pf_{U_1, U_2}\left( A^1(L)^c \cup A^2(L)^c \right) \le \Pf_{U_1}\left( A^1(L)^c \right) + \Pf_{U_2}\left( A^2(L)^c \right),
\end{align*}
where the inequality is due to a union bound. By \cite[Theorem 5.1]{Frib_conduc} we have that $\Pf_{0}\left( A^{i}(L)^c \right) \le  C \exp(-c L)$, by the translation invariance of the environment we get
\begin{align*}
    \Pf_{U_1, U_2}\left( A(L)^c \right) \le C e^{-c L},
\end{align*}
as desired. The proof is identical under $\Pf^K_{U_1, U_2}$ but it uses \cite[Theorem 5.2]{Kious_Frib} (which is the analog of \cite[Theorem 5.1]{Frib_conduc} under the measure $\Pf^K_{0}$. 
\end{proof}
Fix $z \in \mathbb{Z}^d$, we say that a vertex $x \in \mathcal{B}_z(L, L^\alpha)$ is $K$-$L$-closed with respect to $z$ with if there exists a neighbour $y \not \in \mathcal{H}^+(L + z \cdot \vec{\ell})$ of $x$ such that $c_{*}([x, y]) \not \in [1/K, K]$. Let $\bar{K}_{x, z}(L)$ be the $K$-$L$-closed connected component of $x$ (with respect to $z$, set it to $\{x\}$ in case it is empty or $x \not \in \mathcal{B}_z(L, L^\alpha)$).  To lighten the notation, we will write $\bar{K}_{x}(L)$ in place of $\bar{K}_{x, z}(L)$ whenever the choice of the point $z$ is clear from the context and fixed. Let us define the event
\begin{equation} \label{event_B}
B(L) \coloneqq \left\{ \textnormal{for all } x \in \partial_{\mathrm{in}}^+ \mathcal{B}_1(L, L^\alpha) \cup \partial_{\mathrm{in}}^+ \mathcal{B}_2(L, L^\alpha), \textnormal{ } |\bar{K}_{x,X_0^1} (L) \cup \bar{K}_{x, X_0^2}(L)| \le \log(L) \right\}.
\end{equation}
\begin{lemma}(Analog of \cite[Lemma 6.2]{Frib_conduc}) \label{LemmaB(n)}
For any $M<\infty$ we can find $K_0>1$ such that for all $K \ge K_0$, 
\[
\sup_{U_1,U_2 \in \mathbb{Z}^d}\Pf_{U_1,U_2}\left( B(L)^c \right) \le C L^{-M},
\]
where $C$ only depends on $\lambda,\vec{\ell},d,K$.
\end{lemma}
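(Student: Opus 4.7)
The plan is to mimic the strategy of \cite[Lemma 6.2]{Frib_conduc}: combine a union bound over the two boundary sets with the standard fact that, for $K$ large, the $K$-$L$-closed connected components are dominated by subcritical Bernoulli site percolation, hence have stretched-exponential tails.

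First I would fix $U_1, U_2 \in \mathbb{Z}^d$ and note that, since $X_0^i = U_i$ under $\Pf_{U_1, U_2}$, the reference points defining the clusters $\bar{K}_{x, X_0^i}(L)$ are deterministic. The cardinality of $\partial_{\mathrm{in}}^+ \mathcal{B}_1(L, L^\alpha) \cup \partial_{\mathrm{in}}^+ \mathcal{B}_2(L, L^\alpha)$ is at most $C L^{\alpha(d-1)}$. Applying a union bound, it suffices to show that for each $x$ in these boundaries,
\begin{equation*}
\Pf_{U_1, U_2}\bigl(|\bar{K}_{x, X_0^1}(L) \cup \bar{K}_{x, X_0^2}(L)| > \log(L)\bigr) \le L^{-M - \alpha(d-1)}
\end{equation*}
for $K$ large enough. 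Since $\{|\bar{K}_{x, X_0^1}(L) \cup \bar{K}_{x, X_0^2}(L)| > \log(L)\}$ implies that at least one of the two clusters has size exceeding $\log(L)/2$, a further union bound reduces the task to estimating $\Pf_{U_1, U_2}(|\bar{K}_{x, X_0^i}(L)| > \log(L)/2)$ for $i = 1, 2$.

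Next, I would invoke the subcritical domination argument. Declare a site $y$ to be "bad with respect to $z$" if there exists a neighbour $y' \notin \mathcal{H}^+(L + z\cdot \vec{\ell})$ with $c_*([y,y']) \notin [1/K, K]$. Since the conductances are i.i.d.\ under $\mathbf{P}$, the badness of a site depends only on finitely many edges, and the probability that a given site is bad is bounded by $2d \, \mathbf{P}(c_*(e) \notin [1/K, K])$, which tends to $0$ as $K \to \infty$. The bad sites therefore form a finite-range dependent site percolation with arbitrarily small marking probability, and by a standard stochastic domination argument (e.g.\ Liggett--Schonmann--Stacey), they are dominated by a Bernoulli site percolation whose parameter lies below the critical threshold as soon as $K \ge K_0(M)$. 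Hence the size of the bad cluster containing $x$ has exponential tails, and
\begin{equation*}
\Pf_{U_1, U_2}(|\bar{K}_{x, X_0^i}(L)| > \log(L)/2) \le C \exp(-c(K) \log(L)/2) = C L^{-c(K)/2},
\end{equation*}
with $c(K) \to \infty$ as $K \to \infty$.

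Combining the two union bounds yields
\begin{equation*}
\Pf_{U_1, U_2}(B(L)^c) \le C L^{\alpha(d-1)} \cdot 2 C L^{-c(K)/2},
\end{equation*}
and it only remains to take $K_0$ large enough so that $c(K)/2 - \alpha(d-1) \ge M$, which is possible since $\alpha$ and $d$ are fixed. The main (and really only) subtlety is ensuring the subcritical domination step works here simultaneously for the two reference points $X_0^1, X_0^2$, but this is painless because both clusters live in the same environment and the union bound separates them; no joint-environment argument is required beyond what is done for a single walk in \cite[Lemma 6.2]{Frib_conduc}.
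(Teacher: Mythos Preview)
Your proposal is correct and follows essentially the same route as the paper: reduce to a single cluster via union bound over the two walks, then control the tail of $|\bar K_{x,z}(L)|$ by a subcritical percolation argument. The only difference is cosmetic: the paper simply cites \cite[Lemma~6.2]{Frib_conduc} as a black box for the single-walk bound $\mathbf{P}(|\bar K_{x,0}(L)|>\log(L)/2)\le CL^{-M}$ and then invokes translation invariance and a union bound, whereas you re-derive that estimate from scratch via the Liggett--Schonmann--Stacey domination.
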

\begin{proof}
Fix $U_1,U_2 \in \mathbb{Z}^d$. Using \cite[Lemma 6.2]{Frib_conduc} we have the bound for the event 
\begin{equation*}
    \widehat{B}(L) = \{ \textnormal{for all } x \in \partial_{\mathrm{in}}^+ \mathcal{B}(L, L^\alpha), \textnormal{ we have } |\bar{K}_{x, 0}(L)| \le \log(L)/2 \},
\end{equation*}
\begin{equation*}
    \mathbf{P}\left( \widehat{B}(L)^c \right) \le C L^{-M}.
\end{equation*}
As in the proof of the previous lemma, we get the result by union bound and translation invariance of the the measure $\mathbf{P}$.
\end{proof}
\noindent We recall the following useful lemma.

\begin{lemma}\cite[Lemma 6.3]{Frib_conduc}\label{LemmaBoundaryHitting}
Take $G \ne \varnothing$ to be a finite connected subset of $\mathbb{Z}^d$. Assume each edge $e$ of $\mathbb{Z}^d$ is assigned a positive conductance $c(e)$ and that there exists a constant $c_1>0$, $x \in \partial G$ and $y \in G$ such that $x \sim y$ and $c([x, y]) \ge c_1 c(e)$ for any $e \in \partial_{E} G$. We have
\begin{equation*}
    P_y\left( T_x \le T_{\partial G} \right) \ge \frac{c_1}{4d}|G|^{-1},
\end{equation*}
where $P_y$ is the law of the random walk on the family of conductances $\{c(e)\}_{e \in E(\mathbb{Z}^d)}$, starting from $y$.
\end{lemma}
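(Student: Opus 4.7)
The plan is to reduce the hitting-probability $P_y(T_x \le T_{\partial G})$ to the Green's function of the walk killed on $\partial G$, and then bound the resulting quantity by an effective-resistance estimate based on the edge boundary $\partial_E G$. Observe first that since $x \in \partial G$, the event $\{T_x \le T_{\partial G}\}$ is just the event that the walk exits $G$ through the vertex $x$.

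First I would decompose according to the last vertex of $G$ visited before the exit. Writing $g_G(y,y')$ for the expected number of visits to $y'$ before $T_{\partial G}$ when starting at $y$, and $p(z,w) = c([z,w])/\pi(z)$ with $\pi(z) = \sum_{w \sim z} c([z,w])$, this decomposition yields
$$
P_y(T_x \le T_{\partial G}) \;=\; \sum_{y' \in G,\, y' \sim x} g_G(y,y')\, p(y',x) \;\ge\; g_G(y,y)\, p(y,x),
$$
where the inequality keeps only the $y' = y$ term, which is legitimate because $y \sim x$ by hypothesis. Using the classical identity $g_G(y,y) = \pi(y)\, R_{\mathrm{eff}}(y, \partial G)$ for the reversible walk on conductances (which one proves by collapsing $\partial G$ into a single grounded sink), the right-hand side becomes $R_{\mathrm{eff}}(y, \partial G)\, c([y,x])$.

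It therefore suffices to show $R_{\mathrm{eff}}(y, \partial G)\, c([y,x]) \ge c_1/(4d|G|)$, for which I would invoke the Nash-Williams inequality with the single edge cut $\partial_E G$, which separates $y \in G$ from $\partial G$. This gives
$$
R_{\mathrm{eff}}(y, \partial G) \;\ge\; \Big(\sum_{e \in \partial_E G} c(e)\Big)^{-1}.
$$
The hypothesis $c(e) \le c([y,x])/c_1$ for every $e \in \partial_E G$, together with the crude bound $|\partial_E G| \le 2d|G|$ (each vertex of $G$ contributes at most $2d$ boundary edges), bounds the denominator by $2d|G|\, c([y,x])/c_1$ and thus proves the lemma, with even an extra factor of $2$ to spare. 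No serious difficulty is anticipated: the whole argument is a clean combination of a last-step decomposition, the Green's-function/effective-resistance identity, and a one-cut Nash-Williams bound.
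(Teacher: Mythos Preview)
Your proof is correct. The paper does not give its own proof of this lemma but merely quotes it from \cite{Frib_conduc}, so there is nothing to compare against; your last-exit decomposition combined with the identity $g_G(y,y)=\pi(y)R_{\mathrm{eff}}(y,\partial G)$ and a one-cut Nash--Williams bound is exactly the standard route, and as you observe it even yields the slightly sharper constant $c_1/(2d|G|)$.
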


\subsubsection{Properties of joint $K$-open ladder points}

For $i \in \{1,2\}$ we recall the notation $T^i_R$ defined in \eqref{HittingHalfSpace}. We introduce 
\begin{equation}\label{definition_M}
\begin{split}
    \mathcal{M}^{\bullet} = \mathcal{M}^{\bullet (K)} &= 
    \inf \Big\{ R > \max\{X^{1}_0\cdot \vec{\ell}, X^{2}_0\cdot \vec{\ell}\} \colon \text{ for } i\in \{1,2\},\text{ } X^i_{T^i_{R}} \text{ is } K\text{-open, }  \\ 
    &  X^i_{j} \cdot \vec{\ell} < X^i_{T^i_{R}-2} \cdot \vec{\ell} \text{ for } j < T^i_{R}-2, \, X^i_{T^i_{R}} = X^i_{T^i_{R}-1} + e_1 = X^i_{T^i_{R}-2} + 2e_1, \\ %\Big\}.\\ 
    & X^1_{T^1_{R}} \cdot \vec{\ell} \wedge X^2_{T^2_{R}} \cdot \vec{\ell} = R  \Big\}.
\end{split}
\end{equation}
In general, we will put a bullet when referring to quantities related to two walks evolving in the same environment to avoid confusion with similar quantities for a single walk or two walks in independent environments. Since the definition of $\mathcal{M}^{\bullet}$ corresponds to a level, the joint $K$-open ladder point is then $(X^1_{T^1_{\mathcal{M}^{\bullet}}},X^2_{T^2_{\mathcal{M}^{\bullet}}})$. However, as we will see in the rest of the article, it is much more convenient to work with levels rather than time or points when we consider two walks. We will keep the $K$ in the notation $\mathcal{M}^{\bullet (K)}$ when we want to stress the dependence on it, and drop it to lighten the notation otherwise. Let us define
\begin{equation}\label{eqn:USet}
    \mathcal{U} \coloneqq \left\{ U \in \mathbb{Z}^d : |U\cdot \vec{\ell}| < e_1 \cdot \vec{\ell} \right\}.
\end{equation}
The main proposition of this subsection is the following.
\begin{proposition} \label{PropositionRegLevelSmall}
For any $M > 0$ there exists $K_0 \ge 1$ such that for all $K\geq K_0$ we have
\begin{equation*}
    \sup_{U_1, U_2 \in \mathcal{U}}\Pf_{U_1,U_2}(\mathcal{M}^{\bullet(K)} > n) \leq Cn^{-M},
\end{equation*}
where $C$ only depends on $\lambda,\vec{\ell},d,K$.
\end{proposition}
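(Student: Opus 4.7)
The proof will follow the \emph{successive attempts} scheme of \cite[Section 6]{Frib_conduc} and \cite[Section 5]{Kious_Frib}, adapted to produce a joint $K$-open ladder point for two walks in the same environment rather than a single-walk one. The strategy is to set up an iterative procedure consisting of attempts in successive slabs of width $L = (\log n)^{\beta}$ tuned polylogarithmic in $n$; show each attempt succeeds with probability bounded below by some $p(K,L) > 0$ uniformly over starting pairs in $\mathcal{U} \times \mathcal{U}$; and iterate $\lceil n/L \rceil$ times to obtain a super-polynomially small probability of never succeeding, which beats $n^{-M}$ by a wide margin.

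For a single attempt, I would fix $U_1, U_2 \in \mathcal{U}$ and run both walks until they exit their respective boxes $\mathcal{B}_i(L, L^\alpha)$. By Lemma~\ref{LemmaA(n)}, both walks exit through the positive boundary except on an event of probability at most $Ce^{-cL}$; by Lemma~\ref{LemmaB(n)}, for $K$ large enough the $K$-closed clusters at the exit points have size at most $\log L$ except on an event of probability at most $CL^{-M'}$ for any preset $M'$. On the intersection of these favourable events, Lemma~\ref{LemmaBoundaryHitting} applied locally near each walk's exit point yields that each walk, with conditional probability at least $c(\log L)^{-1}$, steers itself through its small $K$-closed cluster to a $K$-open vertex and then performs the two consecutive $e_1$-steps required by the definition \eqref{definition_M}. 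Since the two walks are independent given the environment and the local neighbourhoods relevant to the last few steps are disjoint up to an event of small probability, the joint success probability is then bounded below by the product, namely at least $c(\log L)^{-2}$.

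The delicate ingredient specific to the joint setting is the condition $X^1_{T^1_R}\cdot\vec{\ell} \wedge X^2_{T^2_R}\cdot\vec{\ell} = R$ in \eqref{definition_M}, which forces the two walks' resulting $K$-open ladder heights to differ by at most $e_1 \cdot \vec{\ell}$ so that $R$ can be taken as the smaller of the two and satisfy the ladder configuration for both walks simultaneously. I expect this synchronization to be the main obstacle and the content of Lemma~\ref{LemmaSuccessiveAttepts}: using the flexibility of the two $e_1$-step prefix together with Lemma~\ref{LemmaBoundaryHitting} to route each walk through a small number of extra intermediate vertices, one aligns the two terminal ladder heights at matching levels, at the cost of only an additional polylog factor in the lower bound. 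The iteration is then routine: on failure, apply the strong Markov property at the exit times to restart from the current pair of positions (again in a translate of $\mathcal{U} \times \mathcal{U}$), and repeat. After $\lceil n/L \rceil$ attempts, the probability of never succeeding is at most $(1 - c(\log L)^{-c'})^{n/L}$, which for $L = (\log n)^\beta$ with $\beta$ sufficiently large and $K$ sufficiently large decays faster than any polynomial in $n$, absorbing the error contributions from Lemmas~\ref{LemmaA(n)} and~\ref{LemmaB(n)} and yielding the desired bound $C n^{-M}$.
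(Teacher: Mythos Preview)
Your overall scheme (iterated attempts in slabs, each succeeding with a uniformly positive lower bound, via Lemmas~\ref{LemmaA(n)}--\ref{LemmaBoundaryHitting}) is exactly the right one and matches the paper's approach through Lemma~\ref{LemmaSuccessiveAttepts}. However, your choice of slab width $L = (\log n)^\beta$ breaks the argument, and this is not a cosmetic issue.

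The problem is the additive error from Lemma~\ref{LemmaB(n)}. In each slab you must allow for the bad event that the $K$-closed cluster at the exit point is larger than $\log L$; Lemma~\ref{LemmaB(n)} bounds this by $CL^{-M'}$. When you iterate the recursion
\[
\Pf(\text{fail after } k \text{ slabs}) \le (1-p)\,\Pf(\text{fail after } k-1 \text{ slabs}) + CL^{-M'} + Ce^{-cL},
\]
the additive term survives as roughly $CL^{-M'}/p$. With $L=(\log n)^\beta$ this is of order $(\log n)^{-\beta M'+c'}$, which is only polylogarithmically small no matter how large you take $M'$ (equivalently, no matter how large $K$). You therefore cannot conclude $\Pf_{U_1,U_2}(\mathcal{M}^{\bullet(K)}>n)\le Cn^{-M}$; the best you get is polylogarithmic decay. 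The super-polynomial term $(1-p)^{n/L}$ is irrelevant here because it is not the dominant contribution.

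The paper resolves this by taking slabs of \emph{polynomial} width: in Lemma~\ref{LemmaSuccessiveAttepts} the slab width is $n$ and one performs $n$ attempts to reach level $n^2$, so the per-step error $Cn^{-M}$ stays polynomially small and the total after $n$ steps is $Cn^{1-M}$. The price is that the per-attempt success probability drops to $cn^{-\varepsilon'}$ (coming from the event that all $\sim\log n$ boundary vertices of the $K$-closed cluster are $K$-open, a step you also glossed over), but with $\varepsilon'=1/2$ one still has $(1-cn^{-1/2})^n\le e^{-cn^{1/2}}$, which is negligible. Your scheme would work with $L=n^\delta$ for any fixed $\delta\in(0,1)$; it fails precisely because polylogarithmic $L$ makes the $B$-error too large. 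Incidentally, the synchronization issue you flag as the ``main obstacle'' is handled almost for free in the paper by choosing the boxes $\widetilde{\mathcal{B}}^i_n$ to share a common right boundary at level $n$, so both exit points lie within $e_1\cdot\vec{\ell}$ of each other automatically.
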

\noindent In order to prove this result we require a technical lemma, for $1 \le k \le n$, let us define the events 
\begin{equation}
    R^{(K)}(n k) \coloneqq \left\{\mathcal{M}^{\bullet(K)} > kn + 3 \right\}.
\end{equation}
\begin{lemma} \label{LemmaSuccessiveAttepts}
For any $\varepsilon' > 0$ and $M > 0$, there exists $K_0 \ge 1$ such that for all $U_1,U_2 \in \mathcal{U}$, all $K \ge K_0$, $1< k \le n$ we have
\begin{equation*}
    \displaystyle \Pf_{U_1,U_2}\left(R^{(K)}(n k)\right)\leq \left(1-cn^{-\varepsilon'}\right)\Pf_{U_1,U_2}\left(R^{(K)}(n (k-1))\right) + Cn^{-M},
\end{equation*}
where the constants $C,c$ only depend on $\lambda,\vec{\ell},d,K$.
\end{lemma}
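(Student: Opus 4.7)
The approach is, after a strong Markov reduction, to show that from any admissible starting configuration in a short slab, the two walks create a joint $K$-open ladder point in the next $n$ levels with probability at least $cn^{-\varepsilon'}$, up to an additive $Cn^{-M}$ error. Precisely, set $\sigma = T^{1}_{(k-1)n+3} \vee T^{2}_{(k-1)n+3}$; on $R^{(K)}(n(k-1))$ the positions $Y^i \coloneqq X^i_{\sigma}$ satisfy $Y^i \cdot \vec{\ell} \in [(k-1)n+3, (k-1)n+4)$ since each step changes the $\vec{\ell}$-projection by at most $1$. Since $R^{(K)}(nk) \subset R^{(K)}(n(k-1))$, applying the strong Markov property at $\sigma$ gives
\[
    \Pf_{U_1,U_2}\!\left(R^{(K)}(nk)\right) = \mathbb{E}_{U_1, U_2}\!\left[\indi{R^{(K)}(n(k-1))} \Pf_{Y^1, Y^2}\!\left(\mathcal{M}^{\bullet(K)} > kn+3\right)\right].
\]
It thus suffices to prove that $\Pf_{Y^1, Y^2}(\mathcal{M}^{\bullet(K)} \le kn+3) \ge cn^{-\varepsilon'} - Cn^{-M}$ uniformly over $Y^1, Y^2$ in that slab, since the additive error is absorbed into the $Cn^{-M}$ term in the statement of the lemma.

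For the uniform lower bound, I would work on the good event $A(n/2) \cap B(n/2)$ centered at $Y^1, Y^2$. By Lemma~\ref{LemmaA(n)}, $A(n/2)$ has probability at least $1 - Ce^{-cn}$ and forces each walk $X^i$ to exit the tilted box $\mathcal{B}_{Y^i}(n/2, (n/2)^{\alpha})$ through its inner positive boundary. By Lemma~\ref{LemmaB(n)}, for $K$ large enough depending on $M$, the event $B(n/2)$ has probability at least $1 - Cn^{-M}$ and guarantees that every $K$-$L$-closed component meeting these inner positive boundaries has cardinality at most $\log(n/2)$. On this good event, both walks reach positions $Z^i$ on the inner positive boundaries of their boxes, whose $\vec{\ell}$-projections lie strictly above $Y^i \cdot \vec{\ell} + n/2 - 1$, hence well inside the target slab $((k-1)n+3, kn+3]$.

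From each $Z^i$, I would drive the walk along a short deterministic path to a $K$-open vertex $W^i$, followed by two further $+e_1$ steps ending at a $K$-open vertex, i.e.\ the fresh-level-$+2e_1$ configuration required in~\eqref{definition_M}. Since the $K$-closed component containing $Z^i$ has size at most $\log n$, Lemma~\ref{LemmaBoundaryHitting} yields that each walk can reach the boundary of that component at a $K$-open vertex with probability at least $c(\log n)^{-1}$; the subsequent bounded-length path and the two $+e_1$ steps each cost at most a constant, by the uniform ellipticity at $K$-normal edges. Choosing the pair $(W^1, W^2)$ compatibly and taking the product over the two walks yields a lower bound of $cn^{-\varepsilon'}$ for the success probability, for a suitable $\varepsilon'$ depending only on $\alpha$ and $K$.

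The main obstacle is the geometric matching condition $X^{1}_{T^{1}_{R}} \cdot \vec{\ell} \wedge X^{2}_{T^{2}_{R}} \cdot \vec{\ell} = R$ in the definition of $\mathcal{M}^{\bullet}$: the lattice constrains the accessible $\vec{\ell}$-projections, so one must ensure that on $A(n/2) \cap B(n/2)$ one can always select pairs $(W^1, W^2)$ of $K$-open vertices, accessible from $(Z^1, Z^2)$, whose projections realise the minimum condition for a common $R \in ((k-1)n+3, kn+3]$. This relies on a positive density of $K$-open vertices on each inner positive boundary (a consequence of $B(n/2)$) together with a small combinatorial adjustment to align the two projections. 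Once this is handled, combining the good-event lower bound with $\Pf(A(n/2)^c \cup B(n/2)^c) \le Cn^{-M}$ gives the claim.
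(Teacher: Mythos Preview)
Your strong Markov reduction is not valid under the annealed law. The identity
\[
    \Pf_{U_1,U_2}\!\left(R^{(K)}(nk)\right) = \mathbb{E}_{U_1, U_2}\!\left[\indi{R^{(K)}(n(k-1))}\, \Pf_{Y^1, Y^2}\!\left(\mathcal{M}^{\bullet(K)} > kn+3\right)\right]
\]
fails because $\Pf_{U_1,U_2}$ is \emph{not} Markov: the environment $\omega$ is shared between past and future. Lemma~\ref{Lemma2StoppingTimes} gives the Markov property only for the quenched law $P^{\omega}$; after applying it you obtain $P^{\omega}_{Y^1,Y^2}(\cdots)$ in the \emph{same} $\omega$, and taking $\mathbf{E}$ does not decouple this from $\indi{R^{(K)}(n(k-1))}$, since the restarted walks can backtrack into the region already explored. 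You then estimate $\Pf_{Y^1,Y^2}(\mathcal{M}^{\bullet}\le kn+3)$ via annealed bounds on $A(n/2),B(n/2)$, which amounts to re-averaging over a fresh environment --- precisely what is not allowed here. (There is also a smaller slip: with $\sigma=T^1\vee T^2$ and $Y^i=X^i_\sigma$, the walk with the earlier hitting time may have moved arbitrarily by time $\sigma$, so $Y^i\cdot\vec\ell\in[(k-1)n+3,(k-1)n+4)$ need not hold for both $i$.)

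The paper's proof avoids restarting the walks. It works under $\Pf_{U_1,U_2}$ throughout and introduces events $A(kn),B(kn),C(kn),D(kn),E(kn)$ describing how the walks cross level $kn$; the key inclusion is $A\cap C\cap D\cap E\subset\{\mathcal{M}^{\bullet}\le kn+3\}$. The multiplicative gain is extracted via \emph{spatial} $\mathbf{P}$-independence: one fixes the exit points $z^i$ and the closed clusters $F^i$, observes that the event that their positive boundary is $K$-open, the escape through $\mathcal{H}^+(kn)$, and the two $+e_1$ steps depend only on conductances with an endpoint in $\mathcal{H}^+(kn)$, and factors accordingly. Your ingredients (the box exit, small closed clusters, Lemma~\ref{LemmaBoundaryHitting}, two $+e_1$ steps) are exactly the right ones, but they must be threaded through this spatial decomposition rather than a temporal Markov restart. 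Incidentally, the level-matching issue you flag as the main obstacle is handled in the paper by the choice of the boxes $\widetilde{\mathcal{B}}^i_{kn}$ in~\eqref{eqn:defWeirdBox}, whose positive boundaries both sit just below level $kn$; this is what makes \eqref{JointLadderDiffEvents} go through.
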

\noindent We momentarily assume Lemma~\ref{LemmaSuccessiveAttepts} and use it to prove Proposition~\ref{PropositionRegLevelSmall}.
\begin{proof}[Proof of Proposition~\ref{PropositionRegLevelSmall}]
\noindent One can iterate Lemma~\ref{LemmaSuccessiveAttepts} to get a tail estimate on the quantity $\mathcal{M}^{\bullet (K)}$. Indeed, for any $M<\infty$ and fixing $\varepsilon' = 1/2$ we can inductively see that 
\begin{equation*}
     \Pf_{U_1,U_2}\left(R^{(K)}(n^2)\right)\leq \left(1-cn^{-1/2}\right)^n + Cn^{-M + 1} \le 2Cn^{-M + 1}.
\end{equation*}
Thus, we have that
\begin{equation*}
     \Pf_{U_1,U_2}\left(\mathcal{M}^{\bullet (K)} > n^2 + 3\right) \le 2Cn^{-M + 1},
\end{equation*}
or equivalently
\begin{equation*}
     \Pf_{U_1,U_2}\left(\mathcal{M}^{\bullet (K)} > n\right) \le Cn^{-(M + 1)/2},
\end{equation*}
which is enough to conclude since $M$ is arbitrary.
\end{proof} 

\begin{figure}[H]
    \centering
    \includegraphics[scale =0.25]{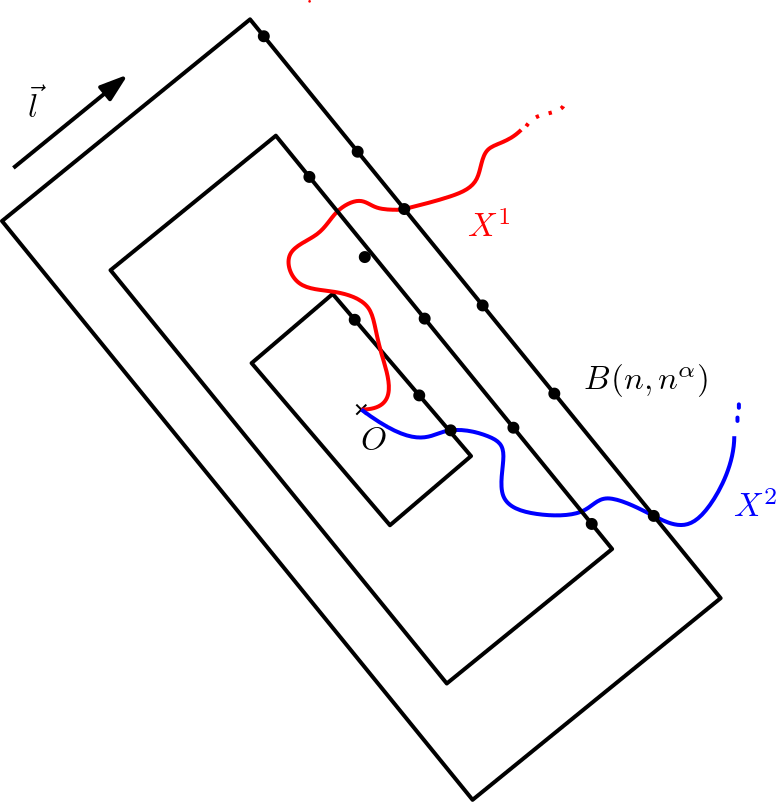}
    \caption{Black points represent the $K$-open points. We look at the successive attempts to reach a joint $K$-open ladder point when the walks exit boxes.}
    \label{fig:figK_open_points}
\end{figure}
\begin{proof}[Proof of Lemma~\ref{LemmaSuccessiveAttepts}]
For simplicity let us write $R(n k) = R^{(K)}(n k)$ in the proof. The argument is similar to the one of \cite[Lemma 6.4]{Frib_conduc}. Fix two arbitrary starting points $U_1,U_2 \in \mathcal{U}$ as in the statement. Let us recall that $\mathcal{B}_{i}(L, L') = \mathcal{B}_{X_{0}^{i}} (L, L')$ and the latter is defined in \eqref{eqn:TiltedBox}. For $i = 1, 2$ let us denote  
\begin{equation}\label{eqn:defWeirdBox}
    \widetilde{\mathcal{B}}^i_n \coloneqq \{\textnormal{largest box of the form }\mathcal{B}_i(m, m^\alpha) \subset \mathcal{H}^{-}(n), \,\, m \in \mathbb{R}_+\}.
\end{equation}
Note that $ \mathcal{B}_i(n-1, (n-1)^\alpha) \subseteq \widetilde{\mathcal{B}}^i_n \subseteq \mathcal{B}_i(n+1, (n+1)^\alpha)$.

Recall that $\bar{K}_{x}(n) $ denotes the $K$-$n$-closed connected component of $x$ defined just before \eqref{event_B}. We introduce 
\begin{equation}\label{eqn:DefinitionK(n)}
    \mathcal{K}^i(n) \coloneqq \bar{K}_{X^{i}_{T^i_{\partial_{\mathrm{in}}^+ \widetilde{\mathcal{B}}^i_n}}, U_i} (n) \subseteq \widetilde{\mathcal{B}}^i_n, \quad i \in \{1, 2\}, \quad \text{and} \quad \mathcal{K}(n) = \mathcal{K}^1(n) \cup \mathcal{K}^2(n).
\end{equation}
In case $T^i_{\partial_{\mathrm{in}}^+ \widetilde{\mathcal{B}}^i_n} =  + \infty$ we set $\mathcal{K}^i(n) = \varnothing$ and $\partial\mathcal{K}^i(n) = \varnothing$. We recall the definition of $A(L)$, $B(L)$ in \eqref{EventExitRightBoundary},\eqref{event_B}. We slightly modify them and define,
\begin{equation*}
    A^{i}(n) = \left\{ T^i_{\partial \widetilde{\mathcal{B}}^i_n} \ge T^i_{\partial^+_{\mathrm{in}} \widetilde{\mathcal{B}}^i_n}\right\}, \quad \mathrm{and} \quad A(n) \coloneqq A^{1}(n) \cap A^{2}(n),
\end{equation*}
\begin{equation*}
B(n) = \left\{ \textnormal{for all } x \in \partial_{\mathrm{in}}^+ \widetilde{\mathcal{B}}^1_n\cup \partial_{\mathrm{in}}^+ \widetilde{\mathcal{B}}^2_n, \textnormal{ we have } |\bar{K}_{x, U_1}(n) \cup \bar{K}_{x, U_2}(n)| \le \log(n) \right\}.
\end{equation*}
Note that, here, with little abuse of the notation, we called the events with the same letters as the ones in \eqref{EventExitRightBoundary},\eqref{event_B}. However, the events are very similar, and observing that $\mathcal{B}_i(n-1, (n-1)^\alpha) \subseteq \widetilde{\mathcal{B}}^i_n \subseteq \mathcal{B}_i(n+1, (n+1)^\alpha)$ guarantees that the estimates derived in Lemma~\ref{LemmaA(n)} and Lemma~\ref{LemmaB(n)} are still valid for the events considered in this proof.
Moreover, we introduce the events: 
\begin{align*}
    &\bullet \hspace{0.3em}C(n) \coloneqq \left\{ \text{for all } x \in \partial \mathcal{K}(n)\cap \mathcal{H}^+(n), \text{ the vertex } x \text{ is } K\textnormal{-open}  \right\}. \\
    \vspace{0.5ex}
    &\bullet \hspace{0.3em} D^{i}(n)  \coloneqq \left\{ T^i_{\partial\mathcal{K}^{i}(n)} \circ \theta_{T^i_{\partial_{\mathrm{in}}^+ \widetilde{\mathcal{B}}^i_n}} = T^i_{\partial\mathcal{K}^{i}(n) \cap \mathcal{H}^+(n)} \circ \theta_{T^i_{\partial_{\mathrm{in}}^+ \widetilde{\mathcal{B}}^i_n}} \right\}. \\
    \vspace{0.5ex}
    &\bullet \hspace{0.3em} E^{i}(n)  \coloneqq \left\{ X^i_{T^i_{\partial\widetilde{\mathcal{B}}^i_n} + 2} =  X^i_{T^i_{\partial\widetilde{\mathcal{B}}^i_n} + 1} + e_1 =  X^i_{T^i_{\partial\widetilde{\mathcal{B}}^i_n}} + 2 e_1 \textnormal{ and }  X^i_{T^i_{\partial\widetilde{\mathcal{B}}^i_n} + 2},  X^i_{T^i_{\partial\widetilde{\mathcal{B}}^i_n} + 1} \textnormal{ are $K$-open}\right\}.
\end{align*}
Furthermore, let $D(n) = D^{1}(n) \cap D^{2}(n)$ and $E(n) = E^{1}(n) \cap E^{2}(n)$. Let us consider the event $A(n) \cap C(n) \cap D(n) \cap E(n)$. It is possible to observe that on $A(n)$, for $i = 1, 2$, we have $T^i_{\partial \widetilde{\mathcal{B}}^i_n} \ge T^i_{\partial^+_{\mathrm{in}} \widetilde{\mathcal{B}}^i_n}$ so that
    \begin{equation*}
        T^i_{\partial^+ \widetilde{\mathcal{B}}^i_n} \ge T^i_{\partial \mathcal{K}^{i}(n)} \circ \theta_{T^i_{\partial^+_{\mathrm{in}} \widetilde{\mathcal{B}}^i_n}} +  T^i_{\partial^+_{\mathrm{in}} \widetilde{\mathcal{B}}^i_n}.
    \end{equation*}
Then, on $A(n) \cap C(n) \cap D(n) \cap E(n)$, by the definition of the events, for $i \in \{1,2\}$, $X^i_{T^i_{\partial \widetilde{\mathcal{B}}^i_n}}$ is a new maximum in the direction $\vec{\ell}$ which is also $K$-open and the next two steps are in the direction $e_1$ on $K$-open points, so that
\begin{equation}\label{JointLadderDiffEvents}
    \{A(n) \cap C(n) \cap D(n) \cap E(n)\} \subset \{ \mathcal{M} \le n + 3 \}.
\end{equation}
Applying standard union bounds we get the following inequality
\begin{align}
    \Pf_{U_1,U_2}\left( R(kn) \right) &\le \Pf_{U_1,U_2}\left( A(kn)^c \right) + \Pf_{U_1,U_2}\left( B(kn)^c \right)   \label{ProofFri6.4Final2}\\
&+ \Pf_{U_1,U_2}\left( R((k-1)n), A(kn), B(kn), C(kn)^c \right)   \label{ProofFri6.4ThreeSets}  \\
    & +\Pf_{U_1,U_2}\left( R((k-1)n), A(kn), B(kn), C(kn), D(kn)^c \right)  \label{ProofFri6.4FourSets}\\
    &+\Pf_{U_1,U_2}\left( R((k-1)n), A(kn), B(kn), C(kn), D(kn), E(kn)^c \right) \label{ProofFri6.4FiveSets}.
\end{align}
The first two terms in \eqref{ProofFri6.4Final2} are controlled, respectively by Lemma~\ref{LemmaA(n)} and Lemma~\ref{LemmaB(n)}. In order to finish the proof we need to control the terms in \eqref{ProofFri6.4ThreeSets}, \eqref{ProofFri6.4FourSets} and \eqref{ProofFri6.4FiveSets}.

\vspace{1ex}
\textit{Step 1: Control of the term \eqref{ProofFri6.4ThreeSets}.} Recall the definition of $\mathcal{K}(kn)$ in \eqref{eqn:DefinitionK(n)}. For $k \le n$, on the event $A(kn) \cap B(kn)$ it holds that
\begin{equation*}
    |\mathcal{K}(kn)| \le 4 \log(n),
\end{equation*}
since for both $i = 1, 2$ it holds that $|\mathcal{K}^i(kn)| \le 2 \log(n)$. We deduce that the term $\Pf_{U_1,U_2}( R((k-1)n), A(kn), B(kn), C(kn)^c )$ can be rewritten as
\begin{align*}
    &\sum_{i =1, 2} \sum_{\substack{F^i \subset \mathbb{Z}^d, \\ |F^i|\le 2 \log(n)}} \Pf_{U_1,U_2}\left( R((k-1)n), A(kn), B(kn), C(kn)^c, \mathcal{K}(kn) = F^1 \cup F^2 \right) \\
    & = \sum_{\substack{i =1, 2, \\F^i \subset \mathbb{Z}^d, \\ |F^i|\le 2 \log(n)}} \mathbf{E} \left[ P_{U_1,U_2}^\omega \left( R((k-1)n), A(kn), B(kn), \mathcal{K}(kn) = F^1 \cup F^2 \right) \mathds{1}_{\substack{\{ \text{some }x \in \partial (F^1 \cup F^2) \cap \mathcal{H}^+(kn)  \\ \text{ is } K\text{-closed} \}}} \right].
\end{align*}
We may notice that:
\begin{enumerate}
    \item $P_{U_1,U_2}^\omega \left( R((k-1)n), A(kn), B(kn), \mathcal{K}(kn) = F^1 \cup F^2 \right)$ is a $\mathbf{P}$-random variable measurable with respect to\ the sigma algebra $\sigma\left(c_{*}([x, y]), \,\, x, y \not \in \mathcal{H}^+(kn) \right)$.
    \item The event $\left\{ \text{some }x \in \partial(F^1 \cup F^2) \cap \mathcal{H}^+(kn) \text{ is closed} \right\}$ is measurable with respect to the sigma algebra $\sigma\left(c_{*}([x, y]), \,\, x \in \mathcal{H}^+(kn)\right)$.
\end{enumerate}
We have $\mathbf{P}$-independence between these two random objects. Hence, applying standard manipulations, the last sum can be written as
\begin{align*}
 \sum_{i =1, 2} \sum_{\substack{F^i \subset \mathbb{Z}^d,\\ |F^i|\le 2 \log(n)}} &\Pf_{U_1,U_2}\left( R((k-1)n), A(kn), B(kn), \mathcal{K}(kn) = F^1 \cup F^2 \right) \\& \times \left( 1 - \mathbf{P}\left( \text{all }x \in \partial(F^1 \cup F^2) \cap \mathcal{H}^+(kn) \text{ are } K\text{-open} \right) \right).
\end{align*}
Since $|F^{i}| \leq 2\log(n)$ and $|F^1 \cup F^2| \le |F^1| + |F^2|$ we have
\begin{align*}
    \mathbf{P}\left( \text{all }x \in \partial(F^1 \cup F^2) \cap \mathcal{H}^+(kn) \text{ are } K\text{-open}  \right) & \ge \left(1 - \varepsilon(K)\right)^{2d |F^1 \cup F^2|}\\
    & \ge \left(1 - \varepsilon(K)\right)^{8d\log(n)} \\
    & = n^{8d \log(1 - \varepsilon(K))},
\end{align*}
where $\varepsilon(K) \coloneqq \mathbf{P}\left( c_* \not \in [1/K, K] \right) $. For any $\varepsilon' > 0$ it is possible to fix $K$ large enough such that $8d \log(1 - \varepsilon(K)) \ge - \varepsilon'$. Then we have 
\begin{align*}
    \displaystyle \Pf_{U_1,U_2}\left( R((k-1)n), A(kn), B(kn), C(kn)^c \right) \le ( 1 - n^{-\varepsilon'}) \Pf_{U_1,U_2}\left( R((k-1)n), A(kn), B(kn)\right),
\end{align*}
which implies
\begin{align}
    n^{-\varepsilon'} \Pf_{U_1,U_2}\left( R((k-1)n), A(kn), B(kn)\right) \le \Pf_{U_1,U_2}\left( R((k-1)n), A(kn), B(kn), C(kn) \right). \label{Proof6.4FriPoly}
\end{align}

\vspace{1ex}
\textit{Step 2: Control of the term \eqref{ProofFri6.4FourSets}.} Notice that, on the event $A(n)$, we have that, for $i = 1, 2$, $X^{i}$ reaches $\partial^+_{\text{in}} \widetilde{\mathcal{B}}^i_{kn}$ before exiting $\widetilde{\mathcal{B}}^i_{kn}$ and by definition of $\mathcal{K}^{i}(kn)$, $X^{i}_{T^i_{\partial^{+}_{\text{in}} \widetilde{\mathcal{B}}^i_{kn}}} \in \mathcal{K}^{i}(kn)$. Moreover, on $B(kn)$ we also have that $|\mathcal{K}^{i}(kn)| \le 2 \log(n)$. Hence,
\begin{align*}
    &\Pf_{U_1,U_2}\left( R((k-1)n), A(kn), B(kn), C(kn), D(kn)^c \right) \\ 
    &\le \sum_{i =1, 2}\sum_{z^i} \sum_{F^i}  \Pf_{U_1,U_2}\left( R((k-1)n), A(kn), B(kn), \bigcap_{i = 1, 2} \{ X^{i}_{T^i_{\partial^{+}_{\text{in}} \widetilde{\mathcal{B}}_{kn}}} = z^i,  \mathcal{K}^{i}(kn) = F^i \}, C(kn), D(kn)^c \right),
\end{align*}
where $\displaystyle \sum_{z^i} \sum_{F^i}$ is short for $\displaystyle \sum_{z^i \in \partial^+_{\text{in}} \widetilde{\mathcal{B}}^i_{kn}} \sum_{\substack{F^i \subset \mathbb{Z}^d \\ |F^i| \le 2 \log(n)\\ z^i \in F^i}}$. 

For $\omega$ fixed, the events $\displaystyle R((k-1) n ), A(kn), B(kn), \bigcap_{i = 1, 2} \left\{ X^{i}_{T^i_{\partial^{+}_{\text{in}} \widetilde{\mathcal{B}}_{kn}}} = z^i,  \mathcal{K}^{i}(kn) = F^i \right\}$ are $P^\omega$-measurable with respect to $\sigma\Big(\{X^1_j, j \le T^1_{\partial^{+}_{\text{in}} \widetilde{\mathcal{B}}^1_{kn}}\},  \{X^2_k, k \le T^2_{\partial^{+}_{\text{in}} \widetilde{\mathcal{B}}^2_{kn}} \}\Big)$. Thus, by Lemma~\ref{Lemma2StoppingTimes}, that generalises the strong Markov property to two independent chains, and the quenched independence of the two walks, we can write
\begin{equation}\label{EquationD(n)^c}
    \begin{split}
    &\Pf_{U_1,U_2}\left( R((k-1)n), A(kn), B(kn), C(kn), D(kn)^c \right) \\ 
    &\le \sum_{i =1, 2} \sum_{z^i} \sum_{F^i} \mathbf{E}\bigg[ P_{U_1,U_2}^\omega \Big( R((k-1)n), A(kn), B(kn), \bigcap_{i = 1, 2} \{ X^{i}_{T^i_{\partial_{\text{in}} \widetilde{\mathcal{B}}^i_{kn}}} = z^i,  \mathcal{K}^{i}(kn) = F^i \} \Big) \\
    & P_{z^1, z^2}^\omega\left(T^1_{\partial F^1} < T^1_{\partial F^1 \cap \mathcal{H}^+(kn)} \textnormal{ or } T^2_{\partial F^2} < T^2_{\partial F^2 \cap \mathcal{H}^+(kn)}\right) \mathds{1}_{\{\text{all } x \in \partial (F^1 \cup F^2) \cap \mathcal{H}^+(kn) \text{ are } K\text{-open} \}} \bigg].  
    \end{split}
\end{equation}
For both $F^1$ and $F^2$ we observe, by the definition of $\mathcal{K}(kn)$ and the fact that we are considering the event $\{\text{all } x \in \partial (F^1 \cup F^2) \cap \mathcal{H}^+(kn) \text{ are } K\text{-open} \}$, that all the edges $e \in \partial_E (F^1 \cup F^2)$ are $K$-normal. As a remark, note that $F^1$ and $F^2$ are either the same set or disjoint sets, hence we get that $\partial_E F^1$ and $\partial_E F^2$ are two (possibly one) sets of normal edges. We obtain a bound for $P_{z^1}^\omega(T^1_{\partial F^1} \ge T^1_{\partial F^1 \cap \mathcal{H}^+(kn)})$, the case $P_{z^2}^\omega(T^2_{\partial F^2} \ge T^2_{\partial F^2 \cap \mathcal{H}^+(kn)})$ can be treated by following the same reasoning. For any $z^1 \in F^1 \cap \partial^+_{\text{in}} \widetilde{\mathcal{B}}^1_{kn}$, there exists a neighbour $y^1 \in \mathcal{H}^+(kn)$. Using the fact that $F^1 \subset \widetilde{\mathcal{B}}^1_{kn}$ we have that, for any $x \in \partial F^1$, $(y^1 - x) \cdot \vec{\ell} \ge - 1$ and $(z^1 - x) \cdot \vec{\ell} \ge - 2$. Using this observation and the definition of $K$-normal edges, we get
\begin{equation*}
    c^\omega(e) \le K^2 e^{3 \lambda} c^{\omega}([z^1, y^1]), \quad \textnormal{for } e \in \partial_E F^1.
\end{equation*}
Thus, we can apply Lemma~\ref{LemmaBoundaryHitting} with $G= F^1$ (resp.\ $F^2$) and obtain, by quenched independence,
\begin{align*}
    P_{z^1, z^2}^\omega&\left(T^1_{\partial F^1} < T^1_{\partial F^1 \cap \mathcal{H}^+(kn)} \textnormal{ or } T^2_{\partial F^2} < T^2_{\partial F^2 \cap \mathcal{H}^+(kn)}\right) \\ &\le P_{z^1, z^2}^\omega\left(T^1_{\partial F^1} < T^1_{y^1} \textnormal{ or } T^2_{\partial F^2} < T^2_{y^2}\right) \\ 
    & = \left( 1 - P_{z^1}^\omega\left( T^1_{y^1} \le T^1_{\partial F^1}\right) P_{z^2}^\omega\left( T^2_{y^2} \le T^2_{\partial F^2}\right) \right) \\
    &\le (1 - c\log(n)^{-2}),
\end{align*}
using $|F^i| \le 2 \log(n)$. By plugging it in Equation~\eqref{EquationD(n)^c} we get
\begin{align*}
    \Pf_{U_1,U_2}&\left( R((k-1)n), A(kn), B(kn), C(kn), D(kn)^c \right) \\ 
    &\le \sum_{i =1, 2} \sum_{z^i} \sum_{F^i} \mathbf{E}\bigg[ P_{U_1,U_2}^\omega \bigg( R((k-1)n), A(kn), B(kn), \bigcap_{i = 1, 2} \Big\{ X^{i}_{T^i_{\partial^{+}_{\text{in}} \widetilde{\mathcal{B}}_{kn}}} = z^i,  \mathcal{K}^{i}(kn) = F^i \Big\} \bigg) \\
    & \times \mathds{1}_{\{ x \in \partial (F^1 \cup F^2) \cap \mathcal{H}^+(kn) \text{ is } K\text{-open} \}} \bigg] (1 - c\log(n)^{-2})\\
    &\le (1 - c\log(n)^{-2}) \Pf_{U_1,U_2}\left( R((k - 1)n ), A(n), B(n), C(n) \right).
\end{align*}
Which readily implies that, for some positive constant $c>0$,
\begin{align}
    c\log(n)^{-2} & \Pf_{U_1,U_2}\left( R((k - 1)n ), A(n), B(n), C(n) \right) \nonumber \\ 
    &\le \Pf_{U_1,U_2}\left( R((k-1)n), A(kn), B(kn), C(kn), D(kn) \right). \label{Proof6.4FriLog}
\end{align}

\vspace{1ex}
\textit{Step 3: Control of the term \eqref{ProofFri6.4FiveSets}.} On the event $A(n)\cap B(n) \cap C(n) \cap D(n)$ we know that both $X^1_{T^1_{\partial \widetilde{\mathcal{B}}^1_{kn}}} \in \partial^+ \widetilde{\mathcal{B}}^1_{kn}$ and $X^2_{T^2_{\partial \widetilde{\mathcal{B}}^2_{kn}}} \in \partial^+ \widetilde{\mathcal{B}}^2_{kn}$ are $K$-open new maxima in the direction $\vec{\ell}$. Moreover, the event $E(kn)$ does not depend on what happens in the ``double box'' $\widetilde{\mathcal{B}}^1_{kn} \cup \widetilde{\mathcal{B}}^2_{kn}$. Let $x, y \in \mathbb{Z}^d$ be fixed, we say that a vertex $z \in \mathbb{Z}^d$ is $x, y$-open if all the edges in the set $\mathcal{E}_{z} \backslash (\mathcal{E}_{x} \cup \mathcal{E}_{y})$ are $K$-normal, if a vertex is not $x, y$-open then is $x, y$-closed. Let us also define
\begin{equation*}
    R'((k - 1)n) = R((k - 1)n) \cap A(n)\cap B(n) \cap C(n) \cap D(n).
\end{equation*}
Recall (see \eqref{JointLadderDiffEvents} and considerations above) that on $R'((k - 1)n)$ both walks are at new $K$-open maxima when they hit $\mathcal{H}^{+}(kn)$. We can write
\begin{align*}
    &\Pf_{U_1,U_2}\left(R((k - 1)n), A(kn), B(kn), C(kn), D(kn), E(kn)^c \right) \\
    &\le \sum_{i = 1, 2} \sum_{x^i \in \partial^+ \widetilde{\mathcal{B}}^i_{kn}} \Pf_{U_1,U_2}\left( R'((k - 1)n), X^1_{T^1_{\partial \widetilde{\mathcal{B}}^1_{kn}}} = x^1, X^2_{T^2_{\partial \widetilde{\mathcal{B}}^2_{kn}}} = x^2, \{x^1, x^2\} \textnormal{ are }K\textnormal{-open}, E(kn)^c \right)
\end{align*}
\begin{align*}
    &\le \sum_{i = 1, 2} \sum_{x^i \in \partial^+ \widetilde{\mathcal{B}}^i_{kn}} \mathbf{E}\bigg[ P_{U_1,U_2}^\omega \left( R'((k - 1)n), X^1_{T^1_{\partial \widetilde{\mathcal{B}}^1_{kn}}} = x^1, X^2_{T^2_{\partial \widetilde{\mathcal{B}}^2_{kn}}} = x^2 \right) \mathds{1}_{\{\{x^1, x^2\} \textnormal{ are }K\textnormal{-open}\}} \\
    &\big( \mathds{1}_{\{\textnormal{one vertex in }\{x^1 + e_1, x^1 + 2e_1, x^2 + e_1, x^2 + 2e_1\} \textnormal{ is }x^1, x^2\textnormal{-closed}\}} + \mathds{1}_{\{\{x^1 + e_1, x^1 + 2e_1, x^2 + e_1, x^2 + 2e_1\} \textnormal{ are }x^1, x^2\textnormal{-open}\}} \\
    &\times P^\omega_{x^1, x^2}\left( X^1_1 \ne x^1 + e_1 \text{ or } X^1_2 \ne x^1 + 2e_1 \text{ or } X^2_1 \ne x^2 + e_1 \text{ or } X^2_2 \ne x^2 + 2e_1 \right) \big) \bigg].
\end{align*}
On the event $\{\{x^1 + e_1, x^1 + 2e_1, x^2 + e_1, x^2 + 2e_1\} \textnormal{ are }x^1, x^2\textnormal{-open}\} \cap \{ \{x^1, x^2\} \textnormal{ are }K\textnormal{-open}\}$ one has that $P^\omega_{x^i}\left( X^i_1 = x^i + e_1, X^i_2 = x^i + 2e_1 \right) \ge c > 0$ for both $i = 1, 2$. Then
\begin{align*}
    \Pf&_{U_1,U_2}\left(R((k - 1)n), A(kn), B(kn), C(kn), D(kn), E(kn)^c \right) \\
    &\le \sum_{i = 1, 2} \sum_{x^i \in \partial^+ \widetilde{\mathcal{B}}^i_{kn}} \mathbf{E}\bigg[ P_{U_1,U_2}^\omega \left( R'((k - 1)n), X^1_{T^1_{\partial \widetilde{\mathcal{B}}^1_{kn}}} = x^1, X^2_{T^2_{\partial \widetilde{\mathcal{B}}^2_{kn}}} = x^2 \right) \mathds{1}_{\{\{x^1, x^2\} \textnormal{ are }K\textnormal{-open}\}} \\
    & \big( \mathds{1}_{\{\textnormal{one vertex in }\{x^1 + e_1, x^1 + 2e_1, x^2 + e_1, x^2 + 2e_1\} \textnormal{ is }x^1, x^2\textnormal{-closed}\}} \\ &+ (1-c^2)\mathds{1}_{\{\{x^1 + e_1, x^1 + 2e_1, x^2 + e_1, x^2 + 2e_1\} \textnormal{ are }x^1, x^2\textnormal{-open}\}} \big) \bigg].
\end{align*}
The $\mathbf{P}$-random variables $P_{U_1,U_2}^\omega \Big( R'((k - 1)n), X^1_{T_{\partial \widetilde{\mathcal{B}}^1_{kn}}} = x^1, X^2_{T_{\partial \widetilde{\mathcal{B}}^2_{kn}}} = x^2 \Big)$ and $\mathds{1}_{\{\{x^1, x^2\} \textnormal{ are }K\textnormal{-open}\}}$ are measurable with respect to $\sigma( e \in E(\widetilde{\mathcal{B}}^1_{kn} \cup \widetilde{\mathcal{B}}^2_{kn})\cup \mathcal{E}_{x^1} \cup \mathcal{E}_{x^2})$, we recall that $\mathcal{E}_x, x \in \mathbb{Z}^d$ is defined in \eqref{EquationIncidentEdges}. On the other hand the events $\{\textnormal{one vertex in }\{x^1 + e_1, x^1 + 2e_1, x^2 + e_1, x^2 + 2e_1\} \textnormal{ is }x^1, x^2\textnormal{-closed}\}$ and $\{\{x^1 + e_1, x^1 + 2e_1, x^2 + e_1, x^2 + 2e_1\} \textnormal{ are }x^1, x^2\textnormal{-open}\}$ are measurable with respect to $\sigma( e \not \in E(\widetilde{\mathcal{B}}^1_{kn} \cup \widetilde{\mathcal{B}}^2_{kn})\cup \mathcal{E}_{x^1} \cup \mathcal{E}_{x^2} )$, due to the fact that we are summing over $x^i \in \partial^+ \widetilde{\mathcal{B}}^i_{kn}$. This implies $\mathbf{P}$-independence between these two groups of random variables. This yields
\begin{align*}
    \Pf_{U_1,U_2}&\left(R((k - 1)n), A(kn), B(kn), C(kn), D(kn), E(kn)^c \right) \\
    & \le \sum_{i = 1, 2} \sum_{x^i \in \partial^+ \widetilde{\mathcal{B}}^i_{kn}} \mathbf{E}\left[ P_{U_1,U_2}^\omega \left( R'((k - 1)n), X^1_{T^1_{\partial \widetilde{\mathcal{B}}^1_{kn}}} = x^1, X^2_{T^2_{\partial \widetilde{\mathcal{B}}^2_{kn}}} = x^2 \right) \right]  \\ 
    & \times \bigg( \mathbf{P}\left( \textnormal{one vertex in }\{x^1 + e_1, x^1 + 2e_1, x^2 + e_1, x^2 + 2e_1\} \textnormal{ is }x^1, x^2\textnormal{-closed} \right) +  \\  
    &+ (1 - c') \mathbf{P}\left( \{x^1 + e_1, x^1 + 2e_1, x^2 + e_1, x^2 + 2e_1\} \textnormal{ are }x^1, x^2\textnormal{-open} \right) \bigg) \\
    & \le (1 - c') \Pf_{U_1,U_2}\left(R((k - 1)n), A(kn), B(kn), C(kn), D(kn) \right),
\end{align*}
since $\mathbf{P}\left( \{x^1 + e_1, x^1 + 2e_1, x^2 + e_1, x^2 + 2e_1\} \textnormal{ are }x^1, x^2\textnormal{-open} \right) > 0$ and its value does not depend on $x^1,x^2$. This implies that there exists $c>0$ such that 
\begin{align}
    c \Pf_{U_1,U_2}&\left(R((k - 1)n), A(kn), B(kn), C(kn), D(kn) \right) \nonumber \\  &\le \Pf_{U_1,U_2}\left(R((k - 1)n), A(kn), B(kn), C(kn), D(kn), E(kn)\right). \label{Proof6.4FriConst}
\end{align}

\vspace{1ex}
\textit{Step 4: Conclusion.} For any $\varepsilon' > 0$, $M > 0$, one can find a $K$ chosen large enough such that, putting together \eqref{ProofFri6.4Final2} with Lemma~\ref{LemmaA(n)} and Lemma~\ref{LemmaB(n)} and the estimates obtained in \eqref{Proof6.4FriPoly}, \eqref{Proof6.4FriLog} and \eqref{Proof6.4FriConst}, implies that, for all $k \in \{2, \dots, n \}$
\begin{equation*}
    \Pf_{U_1,U_2}\left( R(kn) \right) \le \Pf_{U_1,U_2}\left( R((k-1)n) \right)\left( 1 - c\log(n)^{-2} n^{-\varepsilon'} \right) + n^{-M},
\end{equation*}
which implies the result since $\varepsilon'$ is arbitrary.
\end{proof}
For the construction of joint regeneration times, we need that joint $K$-open ladder points are close to each other in the direction $\vec{\ell}$. We recall that $\theta_{s, t}$ denotes, in the context of two walks, the canonical time shift of the first walk by $s$ units of time and the second one by $t$ units of time. For any $R > 0$, we introduce the variable
\begin{align*}
    \mathcal{M}^{\bullet(K)}_R = \mathcal{M}^{\bullet}_R \coloneqq \mathcal{M}^{\bullet(K)} \circ \theta_{T^1_R, T^2_R} - R,
\end{align*}
where we recall that $\mathcal{M}^{\bullet (K)}$ is defined in \eqref{definition_M}. We introduce the event
\begin{equation} \label{DefinitionUpsilon}
    \Upsilon(n) = \bigcap_{R = 0}^{n}\left\{\mathcal{M^\bullet}_R\leq n^{\frac{1}{2}}\right\}.
\end{equation}
On $\Upsilon(n)$, before reaching $\mathcal{H}^{+}(n)$, the successive joint $K$-open ladder points are always at distance less than $n^{\frac{1}{2}}$ in the direction $\vec{\ell}$. 
\begin{proposition} \label{PropFrequentRegLevels}
For any $M > 0$ there exists $K_0 > 1$ such that for $K \ge K_0$ we have 
\begin{equation*}
    \sup_{U_1, U_2 \in \mathcal{U}}\Pf_{U_1, U_2}(\Upsilon(n)^{c}) \leq Cn^{-M},
\end{equation*}
where $C$ only depends on $\lambda,\vec{\ell},d,K$ and $\mathcal{U}$ is defined in \eqref{eqn:USet}.
\end{proposition}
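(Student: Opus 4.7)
The plan is to start with a union bound over the levels:
\begin{equation*}
\Pf_{U_1, U_2}(\Upsilon(n)^c) \le \sum_{R=0}^{n} \Pf_{U_1, U_2}(\mathcal{M}^{\bullet}_R > n^{1/2}),
\end{equation*}
and to bound each summand by $C n^{-(M+1)}$ uniformly in $R \in \{0, \ldots, n\}$, so that the total is $O(n^{-M})$.

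For a fixed $R$, I would apply the joint strong Markov property (Lemma~\ref{Lemma2StoppingTimes}) at the pair of stopping times $(T^1_R, T^2_R)$, combined with the quenched independence of the two walks, to obtain the decomposition
\begin{equation*}
\Pf_{U_1, U_2}(\mathcal{M}^{\bullet}_R > n^{1/2}) = \mathbf{E}\Big[\sum_{v_1, v_2} P^{\omega}_{U_1}(X^1_{T^1_R} = v_1)\, P^{\omega}_{U_2}(X^2_{T^2_R} = v_2)\, P^{\omega}_{v_1, v_2}(\mathcal{M}^{\bullet} > R + n^{1/2})\Big].
\end{equation*}
Since each step has $\vec{\ell}$-component at most $e_1 \cdot \vec{\ell}$, the hitting positions satisfy $v_i \cdot \vec{\ell} \in [R, R + e_1 \cdot \vec{\ell})$, hence $v_2 - v_1 \in \mathcal{U}$. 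By the translation invariance of $\mathbf{P}$, translating both walks by $-v_1$ (under which $\mathcal{M}^{\bullet}$ shifts by $-v_1 \cdot \vec{\ell}$) gives
\begin{equation*}
\Pf_{v_1, v_2}(\mathcal{M}^{\bullet} > R + n^{1/2}) \le \Pf_{0, v_2 - v_1}(\mathcal{M}^{\bullet} > n^{1/2} - 1) \le C n^{-M'/2},
\end{equation*}
where the final bound comes from Proposition~\ref{PropositionRegLevelSmall} with $M'$ taken arbitrarily large by choosing $K$ sufficiently large.

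The main technical obstacle I anticipate is that the three factors inside $\mathbf{E}[\cdots]$ all depend on the shared environment $\omega$, which prevents a direct factorisation of the annealed expectation. I plan to circumvent this by the crude termwise bound $P^{\omega}_{U_1}(\cdots)\, P^{\omega}_{U_2}(\cdots) \le 1$, so that each summand is dominated by $\mathbf{E}[P^{\omega}_{v_1, v_2}(\cdots)] = \Pf_{v_1, v_2}(\cdots)$, which is controlled by the previous paragraph. To keep the cardinality of the sum over $(v_1, v_2)$ polynomial in $n$, I would restrict to the event of Lemma~\ref{LemmaA(n)} that both walks remain within transverse boxes of width $n^\alpha$ up to time $T^i_R$; on that event the pair $(v_1, v_2)$ ranges over $O(n^{2(d-1)\alpha})$ sites, while its complement contributes at most $C e^{-cn}$. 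Combining,
\begin{equation*}
\Pf_{U_1, U_2}(\mathcal{M}^{\bullet}_R > n^{1/2}) \le C n^{2(d-1)\alpha - M'/2} + C e^{-cn},
\end{equation*}
and choosing $M' \ge 2(M + 1 + 2(d-1)\alpha)$ followed by summation over $R \in \{0, \ldots, n\}$ delivers the claimed bound.
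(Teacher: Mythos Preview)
Your proposal is correct and follows essentially the same route as the paper's proof: union bound over the level $R$, restriction to the box event from Lemma~\ref{LemmaA(n)} to make the sum over hitting positions polynomial, application of the two-walk strong Markov property (Lemma~\ref{Lemma2StoppingTimes}) at $(T^1_R,T^2_R)$ with the crude bound $P^\omega_{U_1}(\cdots)P^\omega_{U_2}(\cdots)\le 1$, then translation invariance and Proposition~\ref{PropositionRegLevelSmall}. The only cosmetic differences are that the paper decomposes over the \emph{minimal} $j$ with $\Theta_j(n)$ (giving disjoint events rather than a plain union bound) and uses the cruder box count $|\mathcal{B}_{n+1}|^2\le Cn^{2d\alpha}$ instead of your $n^{2(d-1)\alpha}$; neither affects the argument.
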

\begin{proof}
The proof is similar to the proof of \cite[Lemma 6.6]{Frib_conduc}. Let us consider the events, for $j \in \{1, \dots, n\}$
\begin{equation*}
    \Upsilon_j(n) = \bigcap_{R = 0}^{j - 1}\left\{\mathcal{M^\bullet}_R\leq n^{\frac{1}{2}}\right\} \quad \textnormal{and} \quad \Theta_j(n) = \left\{\mathcal{M^\bullet}_j > n^{\frac{1}{2}}\right\}.
\end{equation*}
We can split the event $\Upsilon(n)^{c}$ over the minimal $j \in \{ 1, \dots, n\}$ such that $\Theta_j(n)$ is true and get, by Lemma~\ref{LemmaA(n)}
\begin{equation} \label{EqnProofFrequentRegLevels}
    \Pf_{U_1, U_2}(\Upsilon(n)^{c}) \le \sum_{j = 0}^{n} \Pf_{U_1, U_2} \left( \Upsilon_j(n), \Theta_j(n), T^1_{\partial \widetilde{\mathcal{B}}^1_n} = T^1_{\partial^+ \widetilde{\mathcal{B}}^1_{n}} , T^2_{\partial \widetilde{\mathcal{B}}^2_n} = T^2_{\partial^+ \widetilde{\mathcal{B}}^2_n} \right) + C e^{-c n}.
\end{equation}
On the event $\Upsilon_j(n) \cap \Theta_j(n) \cap \{ T^1_{\partial \widetilde{\mathcal{B}}^1_n} = T^1_{\partial^+ \widetilde{\mathcal{B}}^1_n} \} \cap \{ T^2_{\partial \widetilde{\mathcal{B}}^2_n} = T^2_{\partial^+ \widetilde{\mathcal{B}}^2_n} \}$ one is sure that $X^i_{T_{j}^{i}} \in \widetilde{\mathcal{B}}^i_n$ for $i = 1, 2$ then by union bound argument
\begin{align} \label{ladder_point_shifted}
    \nonumber \Pf_{U_1, U_2} &\left( \Upsilon_j(n), \Theta_j(n), T^1_{\partial \widetilde{\mathcal{B}}^1_n} = T^1_{\partial^+ \widetilde{\mathcal{B}}^1_n}, T^2_{\partial \widetilde{\mathcal{B}}^2_n} = T^2_{\partial^+ \widetilde{\mathcal{B}}^2_n} \right) \\
    &\nonumber \le \sum_{x_1, x_2 \in \widetilde{\mathcal{B}}^1_n, \widetilde{\mathcal{B}}^2_n} \mathbf{E}\left[ P^{\omega}_{U_1, U_2} \left( X^1_{T^1_j} = x_1, X^2_{T^2_j} = x_2, \mathcal{M}_{j}^{\bullet (K)} > n^{1/2} \right) \right] \\
    &\le 2 |\mathcal{B}_{n+1}|^2 \sup_{x_1, x_2}\Pf_{x_1,x_2}\left(\mathcal{M}^{\bullet(K)} - j \ge n^{1/2}\right).
\end{align}
To get \eqref{ladder_point_shifted} we applied Lemma~\ref{Lemma2StoppingTimes} to the quenched law and the supremum is over all $x_1,x_2 \in \widetilde{\mathcal{B}}^1_n, \widetilde{\mathcal{B}}^2_n$ such that $x_1,x_2$ are compatible with $\{X^1_{T^1_j} = x_1, X^2_{T^2_j} = x_2\}$.
Now we conclude using the fact that
\begin{equation*}
    \Pf_{x_1,x_2}\left(\mathcal{M}^{\bullet(K)} - j \ge n^{1/2}\right) = \Pf_{0,x_2-x_1}\left(\mathcal{M}^{\bullet(K)} - j+ x_1\cdot \vec{\ell} \ge n^{1/2}\right),
\end{equation*}
by the translation invariance of the environment. Using the fact that $|x_1\cdot \vec{\ell} - j| < e_{1} \cdot \vec{\ell}$ and $|(x_2-x_1)\cdot \vec{\ell}| < e_{1} \cdot \vec{\ell}$, the assumptions of Proposition~\ref{PropositionRegLevelSmall} are verified. We conclude that
\begin{align*}
    \sup_{x_1, x_2}\Pf_{x_1,x_2}\left(\mathcal{M}^{\bullet(K)} - j \ge n^{1/2}\right) \le Cn^{-M},
\end{align*}
where $C$ only depends on $\lambda,\vec{\ell},d,K$. We conclude the domination of \eqref{ladder_point_shifted} using the fact that $|\mathcal{B}_{n+1}| \le Cn^{2d\alpha}$.
\end{proof}

\subsection{Enhanced walks}\label{enhanced_walk}

The definition of regeneration time as a ``cut point'' for the trajectory has the drawback that it does not guarantee independence. Indeed, the knowledge of the conductances incident to $x$ influences the walk at any time it reaches a neighbour of $x$ (even when $y \sim x$ is such that $y\cdot \vec{\ell}> x \cdot \vec{\ell}$). To obtain stronger independence properties, we introduce a variant in the construction of regeneration times. Such a task was already done in \cite{Kious_Frib} using a so-called ``enhanced-walk''. 

To make this work self-contained let us recall the definition of the enhanced walk. We defined the quenched law of our RWRC in \eqref{QuenchedLawDef}, for any environment $\omega$ we define the modification 
\begin{equation}\label{eqn:KtransitionsEnhanced}
   p_K^{\omega}(x,y) \coloneqq \frac{(c_{*}(x,y)\wedge K^{-1})e^{(x+y)\cdot \ell}}{\displaystyle\somme{ z \sim x}{}{(c_{*}(x,z)\lor K)e^{(x+z)\cdot \ell}}},
\end{equation}
and $0$ if $x$ and $y$ are not adjacent. Note that, for any $x,y \in \mathbb{Z}^d$, we have that $p^{\omega}_{K}(x,y) \le p^{\omega}(x,y)$ (and the probabilities $\{p^{\omega}_{K}(x,y)\}_{y \sim x}$ do not necessarily sum to $1$), moreover the family $\{p^{\omega}_{K}(x,y)\}_{y \sim x}$ is deterministic if the point $x$ is $K$-open.

For any $(x,z) \in \mathbb{Z}^d \times \{0,1\}$ we define the Markov chain $(\tilde{X}_{n})_{n \geq 0} = (X_{n}, Z_{n})_{n \geq 0}$ to have transitions probabilities $p^{\omega}((x_1,z_1),(x_2,z_2))$ defined by
\begin{align*}
    &1. \,\, \tilde{X}_{0} = (x,z), \quad P^{\omega}_{(x,z)}\text{-a.s},\\
    &2.\,\, p^{\omega}((x,z),(y,1)) = p_K^{\omega}(x,y),\\
    &3.\,\, p^{\omega}((x,z),(y,0)) = p^{\omega}(x,y)  - p_K^{\omega}(x,y),
\end{align*}
and denote its quenched law $P^\omega_{(x,z)}$. This definition ensures that the first coordinate has the same law as our random walk $(X_n)_n$ in the environment $\omega$. We refer to the annealed law of the enhanced random walk writing $\Pf_{(x,z)}$. We observe that the evolution of the first coordinate does not depend on the second one, thus we may simply write $X$ instead of $\tilde{X}$ and $P_{x}^{\omega}$ instead of $P^{\omega}_{(x,z)}$ when referring to the marginal of the first coordinate. Note that we can also define the quenched law of two independent enhanced random walks evolving in the same environment.
\begin{definition1}\label{def:2EnhancedWalks}
    For any environment $\omega$ and a starting configuration $(x_1,z_1),(x_2,z_2) \in \mathbb{Z}^d \times \{0,1\}$ we define 
    \begin{equation*}
        P^{\omega}_{\substack{(x_1,z_1)\\(x_2,z_2)}} (\cdot) \coloneqq P^{\omega}_{(x_1,z_1)} (\cdot) \otimes P^{\omega}_{(x_2,z_2)}(\cdot)
    \end{equation*}
    to be the quenched law of two independent enhanced random walks evolving in the same environment. We also denote by $\mathbb{P}_{\substack{(x_1,z_1)\\(x_2,z_2)}}$ the corresponding annealed law.
\end{definition1}

\subsection{Regeneration times of a single walk} \label{definition_reg_times}
We now recall the rigorous definition of the regeneration times introduced \cite{Kious_Frib}. This definition is essential, as we discussed in Section~\ref{SectionIntroSeries}, one of our main goals is to prove Theorem~\ref{TheoremVarianceClockProcess} which deals with quantities in which this definition plays an important role. We also highlight the fact that the definition of joint regeneration levels in Section~\ref{joint_reg_levels} is very similar to the definition of regeneration times. 

First, we introduce the time at which the single walk reaches a $K$-open ladder point,
\begin{align*}
    \mathcal{M}^{(K)} \coloneqq \inf\Big\{ &i \ge 2 \colon X_i \text{ is }K\text{-open, }X_j\cdot \vec{\ell} < X_{i-2} \cdot \vec{\ell} \text{ for any } j < i-2   \\& \text{ and } X_i = X_{i-1} + e_1 = X_{i-2} + 2e_1 \Big\}.
\end{align*}
This quantity is closely related to \eqref{definition_M}, the main difference is that while $\mathcal{M}^{\bullet}$ is a level, $\mathcal{M}^{(K)}$ is a time. We discuss the choice of working with levels and not times in Section~\ref{joint_reg_levels} (see Remark~\ref{remark:Space}). We also introduce
\begin{align*}
    & \mathbf{BACK} \coloneqq \inf \left \{ n \ge 1 \colon X_n \cdot \vec{\ell} \le X_0 \cdot \vec{\ell }\right \},\\
    & \mathbf{ORI} \coloneqq \inf \left\{ n \ge 1 \colon X_{n-1} \in \mathcal{V}_{X_0} \text{ and } Z_n = 0\right\},
\end{align*}
and finally 
\begin{equation}\label{eqn:DSingleRegeneration}
    D \coloneqq \mathbf{BACK} \wedge \mathbf{ORI}.
\end{equation}
The event $\{\mathbf{BACK} > n\}$ means that the walk $X$ does not backtrack below $X_0$ before time $n$. The event $\{\mathbf{ORI} > n\}$ is essential to get independence between the regeneration times. In particular, under $\{\mathbf{ORI} > n\}$, it is possible to build a coupling which shows that the conductances around the starting point do not affect the future of the walk if the starting point is $K$-open. That is why we will be interested in the event $\{D = +\infty\}$ to regenerate.

We define the sequence of the configuration dependent stopping times $(S_n)_{n \ge 0}$ and $(R_n)_{n \ge 0}$, together with $(M_n)_{n \ge 0}$, successive levels of the maximum of the walk in direction $\vec{\ell}$. Note that these quantities are not finite a priori.
\begin{align*}
    &\hspace{4em}S_0 = 0\text{,     }M_0 = X_{0} \cdot \vec{\ell} \text{  and }\\
    & \text{ for } n \ge 0\text{,  } S_{n+1} \coloneqq \mathcal{M}^{(K)} \circ \theta_{T_{\mathcal{H}^{+}(M_n)}} + T_{\mathcal{H}^{+}(M_n)},
\end{align*}
where 
\begin{align*}
    M_n \coloneqq \sup \{X_k \cdot \vec{\ell} \text{ with } 0 \le k \le R_n \},
\end{align*}
with 
\begin{align*}
    R_n \coloneqq S_n + D \circ \theta_{S_n}.
\end{align*}
Then, we define the first regeneration time as 
\begin{align*}
    \tau_1^{(K)} = S_N \quad \text{with} \quad N \coloneqq \inf\{n \ge 1 \colon S_n < +\infty \text{ and } M_n = +\infty\}.
\end{align*}
We highlight that $\{S_{n+1} < \infty\}$ almost surely on $\{R_{n} < \infty\}$. Conversely, on the event $\{R_{n} = \infty\}$, we get that $\{M_n = +\infty\}$ and the iteration stops, hence the quantity $N$ is well-defined.
\begin{remark1}
Most of the time we will forego the superscript $K$ in $\tau_1^{(K)}$ since it will be fixed to a finite but large enough value. 
\end{remark1}
Using \cite[Theorem 5.1]{Kious_Frib} we obtain that $\tau_1$ is $\Pf_0$-almost surely finite and we can define inductively the sequence of finite regeneration times as
\begin{align}\label{eqn:RegTimeSingleWalk}
    \tau_{k+1} = \tau_1 + \tau_{k}((X_{\tau_1+\cdot } - X_{\tau_1}),\omega( \cdot +X_{\tau_1})).
\end{align}
The key result \cite[Theorem 5.4]{Kious_Frib} states that the $\tau_k$ are $\Pf_0$-almost surely finite and that $(X_{\tau_{k}+\cdot} - X_{\tau_k})$ is $\Pf_0$-independent of $(X_i)_{0\le i \le \tau_k}$ and distributed as a random walk under $\Pf_0^K( \, \cdot \text{ | }D=+\infty)$.
We introduce the notation 
\begin{equation} \label{recall_chi}
    \chi_k \coloneqq \inf \left\{m \ge 0 \colon \{X_{j}-X_{\tau_k}\}_{\tau_k \le j \le \tau_{k+1}} \subset \mathcal{B}(m,m^{\alpha})\right\}.
\end{equation} 

In the next section we will generalize these notions to two walks. In this context, regeneration times will turn into regeneration levels and the independence property will become a Markov-type property.

\begin{remark1}
The reader should be careful in the following since we are going to define \emph{joint regeneration levels}. We will use the notation $\mathcal{D}^{^{\bullet}}$ which is not the same as $D$ and $D^{\otimes}$, which are the natural counterparts for a single walk or for two independent walks under the annealed measure.
\end{remark1}

\subsection{Enhanced walks and joint regeneration}

We consider two enhanced random walks $\tilde{X}^1$ and $\tilde{X}^2$ independent under the quenched law. We recall the notation set in Section~\ref{neighbourhood}. For $i \in \{1,2\}$ we introduce the random variables
\begin{align}
    &\mathbf{BACK}^i_{\le R} \coloneqq \inf \left \{ 0  < n < T^{i}_{\mathcal{H}^{+}_R} \colon X^i_n \cdot \vec{\ell} \leq X^{i}_0 \cdot \vec{\ell} \right\},\label{definition_BACK} \\
    & \mathbf{ORI}^i_{\le R} \coloneqq \inf \left \{ 0  < n < T^{i}_{\mathcal{H}^{+}_R} \colon X^i_{n-1} \in \mathcal{V}_{X^1_0} \cup \mathcal{V}_{X^2_0} \text{ and } Z^i_n = 0, \text{ or } X^i_{n} \in \mathcal{V}_{X^1_{0} - e_1} \cup \mathcal{V}_{X^2_{0} - e_1}  \right\}. \label{definition_ENV} 
\end{align}
Then, we introduce
\begin{equation}\label{definition_Di}
    \mathcal{D}^{\bullet i}_{\le R}=\mathbf{BACK}^i_{\le R} \wedge \mathbf{ORI}^i_{\le R}\quad \text{and} \quad \mathcal{D}^{\bullet i} \coloneqq \lim_{R \to +\infty} \mathcal{D}^{\bullet i}_{\le R}.
\end{equation}
Finally, we set
\begin{equation}\label{definition_D}
    \mathcal{D}^{\bullet} \coloneqq \mathcal{D}^{\bullet 1} \wedge \mathcal{D}^{\bullet 2}.
\end{equation}
Note that in \eqref{definition_BACK} and \eqref{definition_ENV} we assume the convention that $\inf \varnothing = + \infty$. Moreover, we observe that the function $R \mapsto \mathcal{D}^{\bullet i}_{\le R}$ is decreasing and is only taking two values at most: $\mathcal{D}^{\bullet i}_{\le R} \to +\infty$ as $R \to -\infty$ and, as $R \to +\infty$, either $\mathcal{D}^{\bullet i}_{\le R} \to \mathcal{D}^{\bullet i}_{\le R_{0}} < \infty$ if such $R_{0} \in (0, \infty)$ exists or $\mathcal{D}^{\bullet i}_{\le R} \to + \infty$. Hence, the limiting quantity $\mathcal{D}^{\bullet i} \in \mathbb{N} \cup \{+\infty\}$ is well-defined.

The notation $\mathbf{BACK}^i \coloneqq \lim_{R \to +\infty} \mathbf{BACK}^i_{\le R}$ refers to the first time the walk $X^i$ backtracks. The notation $\mathbf{ORI}^i \coloneqq \lim_{R \to +\infty} \mathbf{ORI}^i_{\le R}$ encapsulates information around the starting points in such a way that Proposition~\ref{omegaK} holds. The extra condition $ X^i_{n} \in \mathcal{V}_{X^1_{0} - e_1} \cup \mathcal{V}_{X^2_{0} - e_1} $ is needed to get the independence properties in Theorem~\ref{IndependenceJointReg}. It motivates the notation 
\begin{align}\label{definition_M_other}
    M^{i} \coloneqq \inf\left\{R \in \mathbb{R} \colon \mathcal{D}^{\bullet i}_{\le R} < +\infty \right\} \text{ and } M \coloneqq M^1 \wedge M^2.
\end{align} 
The random variable $M \in \mathbb{R}\cup \{+\infty\}$ has the following geometric interpretation (see Figure~\ref{illustation_Mk}): if one only sees the trajectories of the two walks until they reach $\mathcal{H}^{+}(R)$, then $M$ is the first level such that we can conclude that $\mathcal{D}^{\bullet} < +\infty$. We highlight the fact that $\{\mathcal{D}^{\bullet}=+\infty\} = \{M=+\infty\}$.
\begin{figure}[H] \label{illustation_Mk}
    \centering
    \includegraphics[scale = 0.4]{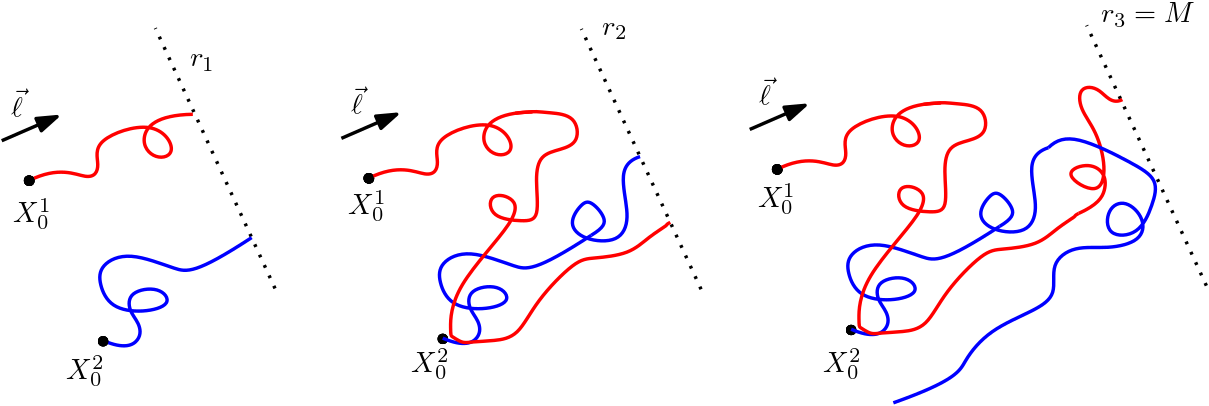}
    \caption{On the left, until they reach $\mathcal{H}^{+}_{r_1}$, the two walks did not backtrack, it follows that $M \ge r_1$. In the middle, the trajectory of $X^1$ has reached $\mathcal{V}_{X^2_0}$ but $\mathbf{ORI}^i_{\le r_2} = +\infty$, it follows that $M \ge r_2$. On the right, the trajectory of $X^2$ is backtracking below the starting point just after reaching level $r_3$, we thus conclude that $M = r_3$.}
    \label{fig:enter-label}
\end{figure}
\begin{remark1}
It may seem that the definition of the quantities with the $R$ could be avoided here. Indeed, they will be useful in Section~\ref{joint_reg_levels} when we will define joint regeneration levels.
\end{remark1}

We are interested in the case $\mathcal{D}^{\bullet} = +\infty$ and $X_0^1,X_0^2$ are $K$-open. In that case, the two walks never backtrack and they behave as if for all $e \in \mathcal{E}_{X_0^{1}} \cup \mathcal{E}_{X_0^{2}}$, $c_{*}(e) = K$. We remark that $\mathcal{D}^{\bullet 1}$ is not measurable with respect to $\sigma(X_0^{1},(\tilde{X}^{1}_{n})_{n \geq 1})$ but with respect to $\sigma(X_0^{1},X_0^{2},(\tilde{X}^{1}_{n})_{n \geq 1})$. 
\begin{remark1}
Note that under $P^{\omega}_{\substack{(x_1,z_1)\\(x_2,z_2)}}$, $\mathcal{D}^{\bullet 1}$ and $\mathcal{D}^{\bullet 2}$ are independent. Indeed, the variables $X^1_0$ and $X^2_0$ are almost surely constant under $P^{\omega}_{\substack{(x_1,z_1)\\(x_2,z_2)}}$ and the two quenched walks are independent of each other.
\end{remark1}

\begin{definition1}\label{def:OmegaK}
    For any environment $\omega$ and a starting configuration $(x_1,z_1),(x_2,z_2) \in \mathbb{Z}^d \times \{0,1\}$, we define $\omega_{K}$ to be the environment such that $c_{*}^{\omega_K}(e) = K$ if $e \in \mathcal{E}_{x_1}\cup \mathcal{E}_{x_2}$ while $c_{*}^{\omega_{K}}(e) = c_{*}^{\omega}(e)$ otherwise. In the environment $\omega_K$, we define $P^{\omega_{K}}_{\substack{(x_1,z_1)\\(x_2,z_2)}}$ to be the analogous of $P^{\omega}_{\substack{(x_1,z_1)\\(x_2,z_2)}}$ in the environment $\omega$ (see Definition~\ref{def:2EnhancedWalks}). Moreover, $p_{K}^{\omega_K}$ denotes the obvious analogous of \eqref{eqn:KtransitionsEnhanced} in the environment $\omega_{K}$ when the two special points $x_1, x_2$ are fixed. In the case of a single walk started at $x \in \mathbb{Z}$ with law $P^{\omega_{K}}_{(x,z)}$, $\omega_{K}$ fixes to the value $K$ the conductances $\{c_{*}(e)\}_{e \in \mathcal{E}_x}$.
\end{definition1}

\noindent To lighten the notation, for a vector $\mathbf{x} = (x_{1}, x_{2}) \in \mathbb{Z}^d \times \mathbb{Z}^d$ and $\mathbf{z} = (z_{1}, z_{2}) \in \{0,1\} \times \{0,1\}$, we may write
\begin{equation}\label{eqn:ShorterNotationJointLaw}
    P^{\omega}_{\substack{(x_1,z_1)\\(x_2,z_2)}} = P^{\omega}_{(\mathbf{x}, \mathbf{z})}.
\end{equation}
The same notation can be assumed for the corresponding expectation that becomes $E^{\omega}_{(\mathbf{x}, \mathbf{z})}$.

The following proposition shows that, on the event $ \{\mathcal{D}^{\bullet} = +\infty \}$, the exact values of the conductances incident to $K$-open starting points do not influence the future of the walk. 

\begin{proposition} \label{omegaK}Fix $(x_1,z_1),(x_2,z_2) \in \mathbb{Z}^d \times \{0,1\}$ and recall the notation set at \eqref{eqn:ShorterNotationJointLaw}. If $x_1$ and $x_2$ are both $K$-open points then, for any positive bounded $ \sigma \big(\tilde{X}^{1}_{n},\tilde{X}^{2}_{n}\big)$-measurable function $f(\cdot)$, we have the equality
\begin{equation*}
    E^{\omega }_{(\mathbf{x}, \mathbf{z})}\left[f(\tilde{X}^1_{\cdot},\tilde{X}^{2}_{\cdot})\indi{\mathcal{D}^{\bullet} = +\infty}\right] = E^{\omega_K }_{(\mathbf{x}, \mathbf{z})}\left[f(\tilde{X}^1_{\cdot},\tilde{X}^{2}_{\cdot})\indi{\mathcal{D}^{\bullet} = +\infty}\right].
\end{equation*}
\end{proposition}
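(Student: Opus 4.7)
The proof plan is to decompose the expectation over admissible joint trajectories of the two enhanced walks and verify that, on $\{\mathcal{D}^\bullet = +\infty\}$, every single one-step transition probability agrees under $\omega$ and $\omega_K$. Using the quenched independence of $\tilde X^1$ and $\tilde X^2$ (Definition~\ref{def:2EnhancedWalks}), I can analyse the two walks simultaneously step by step. For a joint trajectory realising $\{\mathcal{D}^\bullet = +\infty\}$, I distinguish three cases according to the current position $y$ of the walk under consideration.

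\emph{Case (a)}: $y \notin \mathcal{V}_{x_1} \cup \mathcal{V}_{x_2}$. Then no edge incident to $y$ lies in $\mathcal{E}_{x_1} \cup \mathcal{E}_{x_2}$, so $p^\omega(y,\cdot)$ and $p_K^\omega(y,\cdot)$ (and hence their difference) coincide with their $\omega_K$-analogues. \emph{Case (b)}: $y \in \{x_1, x_2\}$. The strict form of $\mathbf{BACK}^i$ together with the second clause of $\mathbf{ORI}^i$, which excludes $\mathcal{V}_{X^j_0 - e_1} \ni x_j$, shows that $x_1, x_2$ are visited only at time $0$; the first clause of $\mathbf{ORI}^i$ at $n=1$ then forces $Z^i_1 = 1$, so only the $p_K$-transition is ever taken from $x_i$. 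By the $K$-openness of $x_i$ one has $c_*(x_i, \cdot) \wedge K^{-1} \equiv K^{-1}$ and $c_*(x_i, \cdot) \vee K \equiv K$, so $p_K^\omega(x_i, y') = K^{-1} e^{(x_i + y') \cdot \ell} / \bigl( K \sum_{w \sim x_i} e^{(x_i + w)\cdot \ell} \bigr)$ is deterministic and identical under $\omega_K$. \emph{Case (c)}, the critical one: $y \in \mathcal{V}_{x_i} \setminus \{x_i\}$. Here $\mathbf{ORI}^i$ again forces $Z = 1$, so the transition is $p_K^\omega(y,\cdot)$. The only edges at $y$ whose conductances differ between $\omega$ and $\omega_K$ are $[y, x_i]$ (and possibly $[y, x_j]$ if $y$ is simultaneously a neighbour of $x_j$); in the denominator of $p_K^\omega(y, \cdot)$ the $w = x_i$ term reads $(c_*([y, x_i]) \vee K)e^{(y + x_i)\cdot\ell}$, and the $K$-openness of $x_i$ gives $c_*([y, x_i]) \in [1/K, K]$, hence the $\vee K$ truncation returns $K$, matching $\omega_K$; in the numerator the modified edge $[y, x_i]$ could only be picked up if $y' = x_i$, which is excluded by $\mathbf{BACK}^i$ together with the second clause of $\mathbf{ORI}^i$.

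Combining the three cases, the probability of any finite-dimensional cylinder event for $(\tilde X^1, \tilde X^2)$ intersected with $\{\mathcal{D}^\bullet_{\le R} = +\infty\}$ coincides under $P^\omega_{(\mathbf{x}, \mathbf{z})}$ and $P^{\omega_K}_{(\mathbf{x}, \mathbf{z})}$ for every $R$; a monotone class argument together with the passage $R \to +\infty$ (using that $\mathcal{D}^\bullet_{\le R}$ decreases to $\mathcal{D}^\bullet$) then upgrades the equality to the stated identity for any positive bounded measurable $f$. The main obstacle is Case (c): the equality at a neighbour of $x_i$ hinges on the $K$-openness of $x_i$ to make the $\vee K$ truncation swallow the modified conductance in the denominator, while the second clause of $\mathbf{ORI}$ is designed precisely to rule out the only step (a direct jump onto $x_i$) in which a modified conductance could survive in the numerator.
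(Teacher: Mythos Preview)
Your direct transition-by-transition argument is correct and gives a genuinely different proof from the paper's. The paper instead builds an explicit coupling $(U,V)$ on a common probability space, with $U$ evolving under $P^\omega$ and $V$ under $P^{\omega_K}$, designed so that $U_n = V_n$ for all $n$ on $\{\mathcal{D}^\bullet_U = +\infty\}$; the identity then follows immediately from the coupling. Both approaches rest on the same key observation --- that $p_K^\omega(y,\cdot) = p_K^{\omega_K}(y,\cdot)$ for \emph{every} $y \in \mathbb{Z}^d$ when $x_1,x_2$ are $K$-open --- which the paper only states (``We notice that $p_K^\omega(x,\cdot)=p_K^{\omega_K}(x,\cdot)$ for any $x$'') and which your Case~(c) computation actually verifies. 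Your route is more elementary since no coupling needs to be constructed or shown to be well-defined; the paper's route is closer in spirit to \cite[Proposition~5.1]{Kious_Frib} and produces a pathwise realisation of the equality, which may be conceptually useful elsewhere.

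Two minor points on your write-up. First, your remark that in Case~(c) the step $y' = x_i$ is ``excluded'' is true but redundant: even if the step $y\to x_i$ were taken, the numerator would read $c_*([y,x_i]) \wedge K^{-1} = K^{-1}$ by $K$-openness of $x_i$, which matches $\omega_K$; so $p_K^\omega(y,\cdot) = p_K^{\omega_K}(y,\cdot)$ holds for \emph{all} targets $y'$. Second, the level-based truncation via $\{\mathcal{D}^\bullet_{\le R}=+\infty\}$ is slightly awkward to justify, since that event is not a priori a finite-dimensional cylinder (one would need $T^i_{\mathcal{H}^+_R}<\infty$). A cleaner choice is the time-based event $\{\mathcal{D}^{\bullet 1} > N,\; \mathcal{D}^{\bullet 2} > N\}$, which is manifestly determined by the first $N$ steps of both walks and decreases to $\{\mathcal{D}^\bullet = +\infty\}$ as $N\to\infty$; your path-by-path computation then goes through verbatim.
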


\begin{proof}
Let us use the notation set in \eqref{eqn:ShorterNotationJointLaw}. The proof follows the lines of the proof of \cite[Proposition~5.1]{Kious_Frib} that uses a coupling argument. The idea is to construct a pair $(U^1,U^2)$ with the same law as $(\tilde{X}^1,\tilde{X}^2)$ under $P^{\omega}_{(\mathbf{x}, \mathbf{z})}$ and a pair $(V^1,V^2)$ with the same law as  $(\tilde{X}^1,\tilde{X}^2)$ under $P^{\omega_K}_{(\mathbf{x}, \mathbf{z})}$. In addition, we will construct the random variables in such a way that $\displaystyle (U_i^1,U_i^2)_{0\le i \le n} = (V_i^1,V_i^2)_{0 \le i \le n}$ as soon as $\mathcal{D}^{\bullet}(U) > n$, where $\mathcal{D}^{\bullet}(U)$ denotes the quantity $\mathcal{D}^{\bullet}$ defined in \eqref{definition_D} associated to $(U^1,U^2)$. 

We give the construction of $(U^1_{\cdot},V^{1}_{\cdot})$ under a probability measure $P^{\omega,\omega_K}_{(\mathbf{x}, \mathbf{z})}$. We can then build $(U^2_{\cdot},V^{2}_{\cdot})$ independently in the same way under $P^{\omega,\omega_K}_{(\mathbf{x}, \mathbf{z})}$. To simplify the notations and to highlight the fact that $U^1$ has the same law as $\tilde{X}^1 = (X^1,Z^1)$, we will write $U^1 = U = (X_n,Z_n)_{n \geq 0}$ and $V^1 = V = (Y_n,W_n)_{n\geq 0}$. We also call $\mathcal{D}^{\bullet}_U = \mathcal{D}^{\bullet 1}(U)$ and $\mathcal{D}^{\bullet}_V = \mathcal{D}^{\bullet 1}(V)$.

We set $((X_0,Z_0),(Y_0,W_0)) = ((x_1,z_1),(x_1,z_1))$ $P^{\omega,\omega_K}_{(\mathbf{x}, \mathbf{z})}$-almost surely. Given the trajectory up to time $n \geq 0$ the conditional law of $(U_{n+1},V_{n+1})$ is given by the following rules:
\begin{itemize}
    \item[$1.$] If $\{\mathcal{D}^{\bullet}_U > n\} \cap \{\mathcal{D}^{\bullet}_V > n\}$ and if $U_n = V_n = x \in \mathbb{Z}^d$. Then $U$ makes a step according to $P^{\omega}$ and
    \begin{itemize}
        \item[(a)] If $x \notin \{x_1,x_2\}$ and $x \notin \{x_i + e_j \text{ | } j \in \{1,...,2d\}\text{ and }i \in \{1,2\}\}$ then $V_{n+1}= U_{n+1}$.
        \item[(b)] If $x \in \{x_1,x_2\}$ or $x \in \{x_i + e_j \text{ | } j \in \{1,...,2d\}\text{ and }i \in \{1,2\}\}$ and if $Z_{n+1} = 1$ then  $V_{n+1}= U_{n+1}$.
        \item[(c)] If $x \in \{x_1,x_2\}$ or $x \in \{x_i + e_j \text{ | } j \in \{1,...,2d\}\text{ and }i \in \{1,2\}\}$ and if $Z_{n+1} = 0$ then for any $j \in \{1,...,2d\}$, $(Y_{n+1},W_{n+1}) = (x + e_j,0)$ with probability
        \begin{equation*}
            \frac{p^{\omega_K}(x,x+e_j)-p_K^{\omega_K}(x,x+e_j)}{1- \displaystyle \somme{1 \leq j \leq 2d}{}{p_{K}^{\omega_K}(x,x+e_j)}}.
        \end{equation*}
        \end{itemize}
        \item[$2.$] if $\{\mathcal{D}^{\bullet}_U \leq n\} \cup \{\mathcal{D}^{\bullet}_V \leq n\}$ then $U$ and $V$ move independently according to $P^{\omega}$ and $P^{\omega_K}$ respectively.
\end{itemize}
We have to show that the pair $(U,V)$ is well defined. By construction, if $\{\mathcal{D}^{\bullet}_U  > n\} \cap \{\mathcal{D}^{\bullet}_V > n\} \cap \{U_n = V_n\}$ then $P^{\omega,\omega_K}_{(\mathbf{x}, \mathbf{z})}$-almost surely either $\{\mathcal{D}^{\bullet}_U  > n+1\} \cap \{\mathcal{D}^{\bullet}_V > n+1\} \cap \{U_{n+1} = V_{n+1}\}$ or $\{\mathcal{D}^{\bullet}_U = \mathcal{D}^{\bullet}_V = n+1\}$.

By induction it follows that: 
 \begin{itemize}
     \item The event $\{\mathcal{D}^{\bullet}_U > n\}\cap \{\mathcal{D}^{\bullet}_V > n\}\cap\{U_n \neq V_n\}$ never occurs $P^{\omega,\omega_K}_{(\mathbf{x}, \mathbf{z})}$-almost surely. Thus $U$ and $V$ are well defined. 
     \item Under $\{\mathcal{D}^{\bullet}_U > n\}\cap \{\mathcal{D}^{\bullet}_V > n\}$ we have $X_k = Y_k$ for all $k \in \{0,...,n\}$ $P^{\omega,\omega_K}_{(\mathbf{x}, \mathbf{z})}$-almost surely.
     \item We have $\mathcal{D}^{\bullet}_U = \mathcal{D}^{\bullet}_V$   $P^{\omega,\omega_K}_{(\mathbf{x}, \mathbf{z})}$-almost surely.
 \end{itemize} 
\noindent Thus we have
\begin{align} 
      &P^{\omega,\omega_K}_{(\mathbf{x}, \mathbf{z})}\left(\{\mathcal{D}^{\bullet}_U = +\infty\}\cap \left\{\{\exists n \geq 0 \colon U_n \neq V_n\} \cup \{\mathcal{D}^{\bullet}_V < +\infty\}\right\} \right ) \label{coupling}\\
      &\leq P^{\omega,\omega_K}_{(\mathbf{x}, \mathbf{z})}\left(\mathcal{D}^{\bullet}_U \neq \mathcal{D}^{\bullet}_V\right) + \somme{n = 0}{+\infty}{P^{\omega,\omega_K}_{(\mathbf{x}, \mathbf{z})}\left(\mathcal{D}^{\bullet}_U > n+1, \mathcal{D}^{\bullet}_V >n+1, X_{n+1} \neq Y_{n+1}\right)} = 0. \nonumber
\end{align}
We notice that $p_{K}^{\omega}(x,\cdot) = p_{K}^{\omega_K}(x,\cdot)$ for any $x \in \mathbb{Z}^d$ and $p^{\omega}(x,\cdot) = p^{\omega_K}(x,\cdot)$ as soon as $x \notin \{x_1,x_2\} \cup \{x_i + e_j \text{ | } j \in \{1,...,2d\}\text{ and }i \in \{1,2\}\}$, it is easy to check that the law of $U$ (resp.\ $V$) under $P^{\omega,\omega_K}_{(\mathbf{x}, \mathbf{z})}$ is the law of $\tilde{X}^1$ under $P^{\omega}_{(\mathbf{x}, \mathbf{z})}$ (resp.\ the law of $\tilde{X}^1$ under $P^{\omega_K}_{(\mathbf{x}, \mathbf{z})}$).
 
Then take $A_0,\cdots,A_n$ where $A_i$ are in the $\sigma$-algebra generated by the subsets of $\mathbb{Z}^d \times \{0,1\}$ and call $A = A_0 \times \cdots \times A_n$. We have that
 \begin{align*}
     E^{\omega}_{(\mathbf{x}, \mathbf{z})}\left[\indi{}\{(\tilde{X}^1_{0},\cdots,\tilde{X}^1_{n}) \in A\}\indi{\mathcal{D}^{\bullet 1} = +\infty } \right] &= E^{\omega,\omega_K}_{(\mathbf{x}, \mathbf{z})}\left[\indi{}\{(U_{0},...,U_{n}) \in A\}\indi{\mathcal{D}^{\bullet}_U = +\infty } \right]\\
     &= E^{\omega,\omega_K}_{(\mathbf{x}, \mathbf{z})}\left[\indi{}\{(V_{0},...,V_{n}) \in A\}\indi{\mathcal{D}^{\bullet}_V = +\infty } \right] \\
     & = E^{\omega_K}_{(\mathbf{x}, \mathbf{z})}\left[\indi{}\{(\tilde{X}^1_{0},...,\tilde{X}^1_{n}) \in A\}\indi{\mathcal{D}^{\bullet 1} = +\infty } \right],
 \end{align*}
where the second equality follows from \eqref{coupling}. By the Monotone Class Theorem, for any positive bounded $\sigma \big(\tilde{X}^{1}_{n}\big)$-measurable function we have
 \begin{equation*}
         E^{\omega }_{(\mathbf{x}, \mathbf{z})}\left[f(\tilde{X}^1_{\cdot})\indi{\mathcal{D}^{\bullet 1} = +\infty}\right] = E^{\omega_K }_{(\mathbf{x}, \mathbf{z})}\left[f(\tilde{X}^1_{\cdot})\indi{\mathcal{D}^{\bullet 1} = +\infty}\right].
 \end{equation*}
Then, for ant positive bounded measurable functions $f$ and $g$ we have:
\begin{align*}
    E^{\omega }_{(\mathbf{x}, \mathbf{z})}\left[f(\tilde{X}^1_{\cdot})g(\tilde{X}^2_{\cdot})\indi{\mathcal{D}^{\bullet} = +\infty}\right] 
    &= E^{\omega }_{(\mathbf{x}, \mathbf{z})}\left[f(\tilde{X}^1_{\cdot})g(\tilde{X}^2_{\cdot})\indi{\mathcal{D}^{\bullet 1} = +\infty}\indi{\mathcal{D}^{\bullet 2} = +\infty}\right] \\
    & = E^{\omega }_{(\mathbf{x}, \mathbf{z})}\left[f(\tilde{X}^1_{\cdot})\indi{\mathcal{D}^{\bullet 1} = +\infty}\right]        E^{\omega }_{(\mathbf{x}, \mathbf{z})}\left[g(\tilde{X}^2_{\cdot})\indi{\mathcal{D}^{\bullet 2} = +\infty}\right]\\
    &= E^{\omega_K }_{(\mathbf{x}, \mathbf{z})}\left[f(\tilde{X}^1_{\cdot})\indi{\mathcal{D}^{\bullet 1} = +\infty}\right] E^{\omega_K }_{(\mathbf{x}, \mathbf{z})}\left[g(\tilde{X}^2_{\cdot})\indi{\mathcal{D}^{\bullet 2} = +\infty}\right] \\
    &=E^{\omega_K }_{(\mathbf{x}, \mathbf{z})}\left[f(\tilde{X}^1_{\cdot})g(\tilde{X}^2_{\cdot})\indi{\mathcal{D}^{\bullet 1} = +\infty}\indi{\mathcal{D}^{\bullet 2} = +\infty}\right] \\
    &=E^{\omega_K }_{(\mathbf{x}, \mathbf{z})}\left[f(\tilde{X}^1_{\cdot})g(\tilde{X}^2_{\cdot})\indi{\mathcal{D}^{\bullet} = +\infty}\right].
\end{align*}
To extend the previous result to bounded measurable functions of the form $f(\cdot,\cdot)$ we use again the Monotone Class Theorem.
\end{proof}

\subsection{$K$-good points}\label{SectionKGood}
We recall the notion of \emph{$K$-\text{good}} point from \cite[Definition 4.3]{Kious_Frib}. 
\begin{definition1}\label{DefKGood}
    A point $z \in \mathbb{Z}^d$ is said to be \emph{$K$-\text{good}} if there exists an infinite path $(x_0 = z, x_1,x_2,\cdots)$ such that all $x_i$ are $K$-open and $x_{2i+1}-x_{2i} = e_1$ and $x_{2i+2}-x_{2i+1} \in \{e_1,e_2,\cdots,e_d\}$.
\end{definition1}
\begin{proposition} \label{KGoodLemma}
There exists a constant $K_0>1$ and $\eta > 0$ such that, for all $K \ge K_0$ and any $U_1,U_2 \in \mathbb{Z}^d$, we have $\mathbf{P}(U_1 \text{ and }U_2\text{ are }K\text{-good points}) > \eta > 0$.
\end{proposition}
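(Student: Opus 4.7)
The plan is to reduce the two-point statement to a single-point statement via a monotonicity (Harris--FKG) argument applied to an underlying product measure of i.i.d.\ edge variables.

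First, I would pass from conductances to the Bernoulli field $\eta_e \coloneqq \mathds{1}_{\{c_{*}(e)\in[1/K,K]\}}$, $e\in E(\mathbb{Z}^d)$. Under $\mathbf{P}$ these are i.i.d.\ with parameter $p_K \coloneqq \mathbf{P}(c_{*}(e)\in[1/K,K])$, and $p_K\to 1$ as $K\to\infty$. A vertex $z$ is $K$-open iff $\eta_e=1$ for every $e\in\mathcal{E}_z$, hence the event $\{z \text{ is } K\text{-good}\}$ depends only on $(\eta_e)_{e\in E(\mathbb{Z}^d)}$. Crucially, this event is \emph{increasing} in $(\eta_e)$: opening more edges only enlarges the set of candidate paths that can realize Definition~\ref{DefKGood}. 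This monotonicity is the key structural observation that makes the rest of the argument work.

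Next, I would invoke the Harris--FKG inequality on the product measure of the $\eta_e$'s applied to the two increasing events $\{U_1 \text{ is } K\text{-good}\}$ and $\{U_2 \text{ is } K\text{-good}\}$ to obtain
$$\mathbf{P}(U_1 \text{ and } U_2 \text{ are } K\text{-good}) \ \geq\ \mathbf{P}(U_1 \text{ is } K\text{-good}) \cdot \mathbf{P}(U_2 \text{ is } K\text{-good}).$$
By translation invariance of $\mathbf{P}$, both factors on the right equal $p(K) \coloneqq \mathbf{P}(0 \text{ is } K\text{-good})$. This uniformises the bound over all $U_1,U_2\in\mathbb{Z}^d$, which is the content we need.

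It therefore suffices to show that $p(K)>0$ for $K$ sufficiently large, and this is the main obstacle. The approach is a standard stochastic-domination argument by supercritical oriented site percolation: the probability that a given vertex is $K$-open equals $p_K^{2d}$ and tends to $1$ as $K\to\infty$, while the $K$-good event is the existence of an infinite oriented path using the step set $\{e_1\} \times \{e_1,\dots,e_d\}$ on alternating layers. Since the corresponding oriented site percolation on $\mathbb{Z}^d$ is supercritical once the density of open sites is close enough to $1$, one can choose $K_0$ large enough so that domination yields $p(K)>0$ for all $K\geq K_0$; this is precisely the content of the related results in \cite{Kious_Frib} used to define and justify $K$-good points. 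Taking $\eta\coloneqq p(K)^2>0$ then concludes the proof. The FKG reduction is formal; the real work (possibly only by citation here) is the single-point positivity via oriented percolation comparison.
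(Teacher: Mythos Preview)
Your proposal is correct and follows essentially the same approach as the paper: reduce to the Bernoulli field of $K$-normal edges, observe that being $K$-good is an increasing event, apply FKG to factorise, then use translation invariance and the single-point positivity from \cite{Kious_Frib}. The paper's proof cites \cite[Lemma~4.1]{Kious_Frib} directly for $\mathbf{P}(0\text{ is }K\text{-good})>0$ rather than sketching the oriented-percolation argument, but this is the same content.
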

\begin{proof}
For $e^1,\cdots,e^n \in E(\mathbb{Z}^d)$ distinct edges, the events $(c_{*}(e^i) \in [1/K,K])_{1\le i \le n}$ are independent and have the same probability. Thus $\displaystyle (\indi{c_{*}(e) \in [1/K,K]})_{e\in E(\mathbb{Z}^d)}$ is a percolation on $\mathbb{Z}^d$. Since for all $x \in \mathbb{Z}^d$ the event $\{x \text{ is }K\text{-good}\}$ is an increasing event, we can apply FKG inequality and obtain
\begin{align*}
\mathbf{P}(U_1 \text{ and }U_2\text{ are }K\text{-good points}) \ge \mathbf{P}(U_1 \text{ is a }K\text{-good point})\mathbf{P}(U_2 \text{ is a }K\text{-good point}).
\end{align*}
We conclude using \cite[Lemma~4.1]{Kious_Frib} which states that $\mathbf{P}(0 \text{ is a }K\text{-good point})>0$ and the translation invariance property of $\mathbf{P}$.
\end{proof}

\subsection{Construction of joint regeneration levels and main result} \label{joint_reg_levels}
We recall the notations $\mathcal{M}^{\bullet}$ and $M$ defined respectively in \eqref{definition_M},\eqref{definition_M_other}. Furthermore, we recall that $\theta_{s, t}$ denotes, in the context of two walks, the canonical time shift of the first walk by $s$ units of time and the second one by $t$ units of time.

We set
\begin{equation*}
    L_0 = X^1_0 \cdot  \vec{\ell} \wedge X^{2}_{0}\cdot \vec{\ell}, \quad M_{0} = X^1_0 \cdot  \vec{\ell} \wedge X^{2}_{0} \cdot \vec{\ell}
\end{equation*}
and, for $k\geq 0$, 
\begin{equation} \label{definition_Lk}
   L_{k+1} \coloneqq \mathcal{M}^{\bullet}\circ \theta_{T^1_{\mathcal{H}^+(M_k)}, T^2_{\mathcal{H}^+(M_k)}},
\end{equation}
where we set
%\begin{equation*}
%\begin{split}
%    R_k &\coloneqq \min\left\{ R_k^1, R_k^2 \right\}, \\ 
%    R_k^{1} &\coloneqq\mathcal{D}^{\bullet 1} \circ \theta_{T^1_{L_k}, T^2_{L_k}} + T^1_{L_k}, \quad \quad R_k^{2} \coloneqq\mathcal{D}^{\bullet 2} \circ \theta_{T^1_{L_k}, T^2_{L_k}} + T^2_{L_k},
%\end{split}
%\end{equation*}
%and consequently
\begin{equation} \label{definition_Mk}
    M_k = M \circ \theta_{T^1_{L_k},T^2_{L_k}}.
\end{equation}
We define the first joint regeneration level as
\begin{equation}\label{eqn:FirstJointLevel}
    \mathcal{L}_1 = L_{N}, \quad \text{with} \quad N \coloneqq \inf\{k \geq 0 \colon L_k < +\infty \text{ and } M_k =+\infty\}.
\end{equation}
The associated joint regeneration times (respectively points) for $X^1$ and $X^2$ are $T^{1}_{\mathcal{L}_1}$ and $T^{2}_{\mathcal{L}_1}$ (resp.\ $X^{1}_{T^{1}_{\mathcal{L}_1}}$ and $X^{2}_{T^{2}_{\mathcal{L}_1}}$).
\begin{figure}[H] %\label{fig_def_reg_level}
    \centering\includegraphics[scale= 0.45]{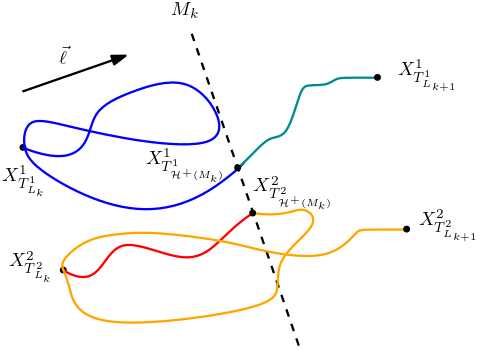}
    \caption{We illustrate the definition $M_k$ and $L_{k+1}$. The portion in blue and red are respectively the trajectory of $X^1$ and $X^2$ between the hitting time of $\mathcal{H}^{+}(M_k)$ and the hitting time of level $M_k$. The portion in cyan and orange are respectively the trajectory of $X^1$ and $X^2$ between the hitting time of $\mathcal{H}^{+}(M_k)$ and the hitting time of level $L_{k+1}$.}
    \label{fig_def_reg_level}
\end{figure}

\begin{remark1}\label{remark:Space}
We would like to highlight why we chose to define the regeneration levels using the quantities $M$ and $\mathcal{D}^{\bullet i}_{\le R}$. In essence, regeneration is more of a spatial property even for a single walk, as, to guarantee independence, we need two pieces of trajectory to evolve in two distinct portions of the space. However, for a single walk it is equivalent to define regeneration times or points as there is a bijection between the two. For two walks, we would need two different regeneration times to encapsulate the information of the joint regeneration structure. However, it seems more natural to us to use a single quantity: a level such that the hitting times (by the two walks) of that level correspond to regeneration times.
\end{remark1}

We observe that the definition is very similar to the definition in Section~\ref{definition_reg_times} but instead of defining a sequence of stopping times $(S_k)_{k}$ we define a sequence of levels $(L_k)_{k}$. The first regeneration level is the first level $L_k$ such that the walks do not backtrack and properly encode the information around the starting points in the sense of Proposition~\ref{omegaK}. 

To have a properly defined regeneration structure it is essential to show that $\mathcal{L}_1$, as defined in \eqref{eqn:FirstJointLevel}, is $\Pf_0$-almost surely finite. Proposition~\ref{PropositionShortRegLevel} addresses this issue and shows that the first joint regeneration level happens closely to the origin. After proving Proposition~\ref{PropositionShortRegLevel}, we will focus on proving the Markov property in Theorem~\ref{IndependenceJointReg}.
\begin{proposition}\label{PropositionShortRegLevel}
For any $M > 0$ there exists $K_0 > 1$ such that, for all $K \ge K_0$,
\begin{equation*}\label{L1decay}
    \sup_{U_1, U_2 \in \mathcal{U}} \Pf_{U_1,U_2}(\mathcal{L}_1 \geq n) \leq Cn^{-M},
\end{equation*}
where the constant $C$ only depends on $\lambda,\vec{\ell},d,K$ and $\mathcal{U}$ is defined in \eqref{eqn:USet}.
\end{proposition}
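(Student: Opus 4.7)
The plan is to iterate trials at successive joint $K$-open ladder levels $(L_k)_{k \ge 1}$, in close analogy with the single-walk construction of Section~\ref{definition_reg_times}. At the $k$-th ladder level, a trial \emph{succeeds} when $M_k = +\infty$ (equivalently $\mathcal{D}^{\bullet} = +\infty$ after the relevant shift). I will argue that each trial succeeds with probability bounded below by some $\eta > 0$ uniformly in the past, and that the successive levels $L_k$ grow at a polynomially controlled rate.

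For the uniform lower bound, at level $L_k$ the two walks sit at $K$-open vertices with prescribed last two steps in the direction $e_1$. By Proposition~\ref{omegaK}, the event $\{\mathcal{D}^{\bullet} = +\infty\}$ after the shift depends, up to the $K$-openness of these vertices, only on the environment strictly ahead, so its probability is insensitive to the actual conductances realised on the edges incident to the ladder points. Proposition~\ref{KGoodLemma} then gives that with $\mathbf{P}$-probability at least $\eta_0 > 0$ both shifted starting points are $K$-good; on such environments, the quenched probability that each enhanced walk follows its $K$-good path forever (moving only on $K$-open vertices and never revisiting any neighborhood of the two starting points) is uniformly positive. Integrating over the environment yields the desired $\eta > 0$.

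Combining this with the fact that $L_{k+1}$ is constructed from the trajectories strictly past level $M_k$, one gets a Markov-type recursion that dominates the index $N$ from \eqref{eqn:FirstJointLevel} by a geometric random variable of parameter $\eta$. In particular $\Pf_{U_1, U_2}(N > \log^2 n) \le (1 - \eta)^{\log^2 n}$, which decays faster than any polynomial. To bound each increment, I would split
\begin{equation*}
    L_{k+1} - L_k \le (M_k - L_k) + (L_{k+1} - M_k).
\end{equation*}
The second summand is a fresh $\mathcal{M}^{\bullet}$-type quantity started (after translation) from two points in $\mathcal{U}$, and therefore has polynomial tails by Proposition~\ref{PropositionRegLevelSmall}.

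The main obstacle is the overshoot $M_k - L_k$ on the failure event: it is the furthest level in direction $\vec{\ell}$ reached before one of the two walks first backtracks or violates the $\mathbf{ORI}$ condition. My plan is to interpose intermediate joint $K$-open ladder attempts between $L_k$ and $M_k$, which are spaced by at most $n^{1/2}$ with overwhelming probability thanks to Proposition~\ref{PropFrequentRegLevels}, and to argue via the single-walk regeneration estimates from \cite{Kious_Frib} that at each intermediate ladder point either walk has a uniform chance of never returning below its current level, so that the overshoot is dominated by a geometric number of $n^{1/2}$-sized increments. Once each increment $L_{k+1}-L_k$ is polynomially controlled, a union bound over the $O(\log^2 n)$ trials combined with the geometric tail of $N$ yields $\Pf_{U_1,U_2}(\mathcal{L}_1 > n) \le C n^{-M}$, completing the proof.
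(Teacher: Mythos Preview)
Your overall architecture matches the paper's: control $N$ geometrically, control each increment $L_{k+1}-L_k$ by splitting into $(M_k-L_k)+(L_{k+1}-M_k)$, then union-bound. The paper formalises the geometric decay of $N$ in Lemma~\ref{Ndecay} (using Lemma~\ref{LemmaDpositive} for the uniform success probability and the $\omega_K$-coupling of Proposition~\ref{omegaK} for the independence), and controls the two pieces of the increment via the events $M^K(n)$ and $S(n)$ of \eqref{definition_Mkn}--\eqref{definition_Sn}.

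Where your proposal goes astray is the overshoot $M_k-L_k$. You suggest interposing intermediate joint ladder points and arguing that at each one ``either walk has a uniform chance of never returning below its current level, so that the overshoot is dominated by a geometric number of $n^{1/2}$-sized increments.'' This does not work as stated: you are on the event $\{M_k<+\infty\}$, i.e.\ one walk \emph{does} backtrack below $L_k$, and a positive probability of \emph{not} backtracking at intermediate checkpoints gives no useful domination of the overshoot conditionally on eventual failure. There is no geometric structure here in the direction you want.

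The paper's route is both simpler and the natural one: on $\{\mathcal{D}^{\bullet i}<+\infty\}$ the overshoot $M^i$ equals $\sup_{j\le \mathcal{D}^{\bullet i}} X^i_j\cdot\vec\ell$, and this has an \emph{exponential} tail directly from the backtracking estimate of \cite[Lemma~3.1]{Kious_Frib} (see Lemma~\ref{LemmaMexpdecay}). Going far in direction $\vec\ell$ and then coming all the way back is exponentially costly, full stop; no intermediate ladder points are needed. Combined with Proposition~\ref{PropositionRegLevelSmall} for $L_{k+1}-M_k$, this gives $L_{k+1}-L_k\le 2n^{1/2}$ with high probability, so $\{\mathcal{L}_1\ge n\}\cap S(n)\cap M^K(n)$ forces $N\ge n^{1/3}$, and the three tail bounds conclude.
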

We will devote the next subsection to the proof of this result, we will need several lemmas in order to be able to prove it. 

\subsection{Proof of Proposition~\ref{L1decay}}

We recall the notation $\Pf_{U_1,U_2}^{K}$ which is the usual annealed measure with the restriction that the conductances around $U_1,U_2$ are set to have value $K$.

\begin{lemma}\label{LemmaDpositive}
There exists $K_0 > 1$ and $ \eta > 0$ such that, for all $K \geq K_0$, we have 
\begin{align*}
    & \inf_{U_1, U_2 \in \mathcal{U}} \Pf_{U_1,U_2}(\mathcal{D}^{\bullet}=+\infty) >  \eta > 0, \\
    & \inf_{U_1, U_2 \in \mathcal{U}} \Pf^{K}_{U_1,U_2}(\mathcal{D}^{\bullet}=+\infty) >  \eta > 0,
\end{align*}
where $\eta$ only depends on $\lambda,\vec{\ell},d,K$ and $\mathcal{U}$ is defined in \eqref{eqn:USet}.
\begin{proof}
We follow the lines of \cite[Lemma 5.1]{Kious_Frib}. We only prove the result for $\Pf_{U_1,U_2}$ since the proof for $\Pf_{U_1,U_2}^{K}$ is analogous.

We have 
\begin{align*}
     \Pf_{U_1,U_2}(\mathcal{D}^{\bullet}=+\infty) > c \Pf_{U_1,U_2}(\mathcal{D}^{\bullet}=+\infty \text{ | }U_1 \text{ and } U_2 \text{ are } K\text{-good}),
\end{align*}
where $c = \mathbf{P}(U_1 \text{ and } U_2 \text{ are } K\text{-good}) > \eta > 0$ using Proposition~\ref{KGoodLemma} and where $\eta$ only depends on $d$. On the event $\{U_1 \text{ and } U_2 \text{ are } K\text{-good}\}$ we fix $\mathcal{P}^i$ to be the path associated to the fact that $U_i$ is a $K$-good point (see Definition~\ref{DefKGood}). By definition we have $\mathcal{P}^{i}(1) = U_{i} + e_1$ and $\mathcal{P}^{i}(2) = U_{i} + e_1 + e_{j_2}$ with $e_{j_2} \in \{e_1,\cdots,e_d\}$. We introduce $L^i_{\partial^{+}\mathcal{B}_{U_i}(n,n^2)} \coloneqq \inf\{k \in \mathbb{N} \colon \mathcal{P}^{i}(k) \in \partial^{+}\mathcal{B}_{U_i}(n,n^2) \}$ and set $x^i = \mathcal{P}^{i}(L^{i}_{\partial^{+}\mathcal{B}_{U_i}(n,n^2)})$.

For $i = 1, 2$ we define the events
\begin{align*}
    &A_1^{i} = \{(X^{i}_1,Z^{i}_1) = (U_{i} + e_1,1)\},\\
    &A_2^{i} = \{(X^{i}_2,Z^{i}_2) = (U_{i} + e_1+e_{j_2},1)\},\\
    &A_3^{i} = \{X^{i}_{k} = \mathcal{P}^{i}(k) \text{ for } 3\leq k \leq L^i_{\partial^{+}\mathcal{B}_{U_i}(n,n^2)}\},\\
    &A_4^{i} = \{T^{i}_{\mathcal{H}^{-}(2)} \circ \theta_{T^i_{x^i}} = +\infty\}.
\end{align*}
Furthermore, we set $A_k = A^1_k \cap A^2_k$ and $ A = \bigcap_{k=1}^{4}A_{k}$. We observe that $A \subset \{\mathcal{D}^{\bullet} =+\infty\}$. As $U_1$ and $U_2$ are $K$-good we have $L^{i}_{\partial^{+}\mathcal{B}_{U_i}(n,n^2)} \leq Cn$ and since all points in $\mathcal{P}^{i}$ are $K$-open, we have 
\begin{align*}
    P_{U_1,U_2}^{\omega}(A_1,A_2,A_3) \geq c^{n},
\end{align*}
with $c > 0$ which only depends on $K$, $d$ and $\ell$. Thus, by Markov property we have
\begin{align*}
    \mathbf{E}[P_{U_1,U_2}^{\omega}&(A_1,A_2,A_3,A_4) \text{ | }U_1 \text{ and } U_2 \text{ are } K\text{-good}]\\
    &= \mathbf{E}[P_{U_1,U_2}^{\omega}(A_1,A_2,A_3)P^{\omega}_{x^1,x^2}(A^1_4,A^2_4) \text{ | }U_1 \text{ and } U_2 \text{ are } K\text{-good}]\\
    &\geq c^n\mathbf{E}[P^{\omega}_{x^1,x^2}(A_4) \text{ | }U_1 \text{ and } U_2 \text{ are } K\text{-good}]\\
    & = c^n \mathbf{E}\left[P^{\omega}_{x^1,x^2}(T^{1}_{\mathcal{H}^{-}(2)} = +\infty,T^{2}_{\mathcal{H}^{-}(2)} = +\infty) \text{ | }U_1 \text{ and } U_2 \text{ are } K\text{-good}\right].
\end{align*}
We also have 
\begin{align}
\mathbf{E}&\left[P^{\omega}_{x^1,x^2}( \{ T^1_{\mathcal{H}^{-}(2)} < +\infty \}\cup \{T^2_{\mathcal{H}^{-}(2)} < +\infty\}) \text{ | }U_1 \text{ and } U_2 \text{ are } K\text{-good}\right] \nonumber\\
     &\leq C\Pf_0\left(T^1_{\mathcal{H}^{-}(-n+2)} < +\infty\right) \nonumber\\
     &\leq C\exp(-cn).\label{eqn:ExponentialBound}
\end{align}
Here we used a union bound, the translation invariance of $\mathbf{P}$, the fact $\mathbf{P}(U_1 \text{ and } U_2 \text{ are } K\text{-good}) > \eta > 0$ and \cite[Lemma 3.1]{Kious_Frib}. 
Hence, we see that there exists $n_0$ large enough which only depends on $\lambda,\vec{\ell},d,K$ such that \eqref{eqn:ExponentialBound} is smaller than $1/2$ for all $n \ge n_0$ and conclude the proof as we showed that, uniformly over the choice of $U_1, U_2$
\begin{equation*}
    \mathbf{E}[P_{U_1,U_2}^{\omega}(\mathcal{D}^{\bullet} =+\infty) \text{ | }U_1 \text{ and } U_2 \text{ are } K\text{-good}] \ge (1/2) c^{n_{0}} > 0.
\end{equation*}
\end{proof}

\end{lemma}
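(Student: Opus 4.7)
The plan is to condition on the positive-probability event that both starting points are $K$-good (Proposition~\ref{KGoodLemma}), force each walk to follow a prescribed favourable motion for a bounded number of steps, and then invoke a single-walk non-backtracking estimate to forbid $\mathcal{D}^{\bullet} < +\infty$ thereafter. On the event $\{U_1 \text{ and } U_2 \text{ are } K\text{-good}\}$, which has $\mathbf{P}$-probability at least $\eta_0 > 0$ uniformly in $U_1, U_2$, there exist deterministic infinite $K$-open paths $\mathcal{P}^1, \mathcal{P}^2$ starting at $U_1, U_2$ and advancing only in directions $e_1, \dots, e_d$ (all with non-negative $\vec{\ell}$-component).

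I would then force each enhanced walk $\tilde{X}^i$ to follow $\mathcal{P}^i$ for a fixed number $n_0 = n_0(K, d, \lambda, \vec{\ell})$ of steps, insisting on $Z^i_j = 1$ at every step. At a $K$-open vertex the transition $p^\omega((x,z),(y,1)) = p_K^\omega(x,y)$ in any prescribed coordinate direction is deterministic and bounded below by some $c_1 = c_1(K, d, \lambda, \vec{\ell}) > 0$, so this forced-motion event has quenched probability at least $c_1^{n_0}$. Enforcing $Z^i = 1$ defuses the $Z^i_n = 0$ clause of $\mathbf{ORI}^i$ during these $n_0$ steps, while the monotonicity of $\mathcal{P}^i$ in the $\vec{\ell}$-direction keeps the walk away from $\mathcal{V}_{U_1 - e_1} \cup \mathcal{V}_{U_2 - e_1}$ after the first step.

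After step $n_0$ both walks sit at positions at $\vec{\ell}$-distance of order $n_0$ above $L_0$. By the strong Markov property, a union bound, and the single-walk non-backtracking estimate \cite[Lemma~3.1]{Kious_Frib}, the probability that either walk subsequently visits $\mathcal{H}^-(L_0)$ or the small set $\mathcal{V}_{U_1} \cup \mathcal{V}_{U_2} \cup \mathcal{V}_{U_1 - e_1} \cup \mathcal{V}_{U_2 - e_1}$ is bounded by $C e^{-c n_0}$. On the complement, $\mathcal{D}^{\bullet} = +\infty$ holds. Choosing $n_0$ large enough (but depending only on $K, d, \lambda, \vec{\ell}$) that this bound is at most $1/2$ and combining yields
\[
    \Pf_{U_1, U_2}(\mathcal{D}^{\bullet} = +\infty) \ge \tfrac{1}{2} \eta_0 c_1^{n_0} =: \eta > 0.
\]
The statement under $\Pf^K_{U_1, U_2}$ follows by the same argument, since fixing $c_*(e) = K$ on $\mathcal{E}_{U_1} \cup \mathcal{E}_{U_2}$ automatically makes $U_1, U_2$ $K$-open and is independent of the rest of the environment along $\mathcal{P}^1, \mathcal{P}^2$.

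The main obstacle is the $\mathbf{ORI}$ condition, which is more delicate than plain backtracking because it couples the enhanced coordinate $Z$ with proximity to both starting points and their shifts $U_i - e_1$. The trick is to split it into a $Z = 0$ clause, killed by prescribing $Z^i \equiv 1$ along the initial $K$-good path, and a purely geometric clause, killed by pushing both walks far enough into $\mathcal{H}^+$ so that the residual event reduces to a non-backtracking statement amenable to the existing single-walk estimate.
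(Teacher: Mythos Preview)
Your proposal is correct and follows essentially the same approach as the paper's proof: condition on the $K$-good event (Proposition~\ref{KGoodLemma}), force each enhanced walk along its $K$-good path with $Z^i=1$ at a cost of $c^{n_0}$, then apply the single-walk non-backtracking estimate \cite[Lemma~3.1]{Kious_Frib} to control the remainder, choosing $n_0$ large enough. The only cosmetic differences are that the paper runs the prescribed path until it exits $\mathcal{B}_{U_i}(n,n^2)$ rather than for a fixed number of steps, and that the paper imposes $Z^i=1$ only on the first two steps (your choice to enforce it throughout is harmless and arguably cleaner for handling the $\mathbf{ORI}$ clause).
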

\noindent The following lemmas correspond to the ones in \cite[Appendix A]{Kious_Frib}. We recall the definitions of $L_k$ and $M_k$ in \eqref{definition_Lk}, \eqref{definition_Mk}, then we introduce the events:
\begin{align}
    &\label{definition_Mkn}M^{K}(n) \coloneqq \left\{\text{for }k \text{ such that }M_k < n \text{ we have }L_{k+1} - M_k < n^{1/2} \right\}, \\
    &\label{definition_Sn}S(n) \coloneqq \left\{\text{for } k \text{ such that }L_k < n \text{ and } M_k < +\infty \text{ we have } M_k - L_k < n^{1/2}\right\}.
\end{align}
In the following we may drop the superscript $K$ to lighten the notations.

\begin{lemma}
For any $M > 0$ there exists $K_0 > 1$ and a constant $C > 0$ such that, for all $K \ge K_0$,
\begin{align*}
 \sup_{U_1, U_2 \in \mathcal{U}} \Pf_{U_1,U_2}\left(M^{K}(n)^{c}\right) \leq Cn^{-M},
\end{align*}
where the constant $C$ only depends on $\lambda,\vec{\ell},d,K$ and $\mathcal{U}$ is defined in \eqref{eqn:USet}.
\end{lemma}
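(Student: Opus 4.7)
My approach will mirror closely the proof of Proposition~\ref{PropFrequentRegLevels}. The event $M^K(n)^c$ states that some failed joint regeneration attempt $k$ with $M_k < n$ admits an overlong excursion: $L_{k+1} - M_k \ge n^{1/2}$, before reaching the next joint $K$-open ladder point. In particular, this is essentially a variant of the conclusion of Proposition~\ref{PropFrequentRegLevels} restricted to the random (rather than integer) levels $M_k$, and the same strategy should apply.

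The first step is to decompose $M^K(n)^c$ according to the first index $k^{\ast}$ at which such a failure occurs. Off an event of exponentially small probability (e.g.\ either walk exiting the box $\widetilde{\mathcal{B}}^i_n$, controlled by Lemma~\ref{LemmaA(n)}), I would further union bound over the integer level $j = \lceil M_{k^{\ast}} \rceil \in \{0, \dots, n\}$ and over the possible entry positions $x^i = X^i_{T^i_{M_{k^{\ast}}}}$, which lie in at most polynomially many locations in $\widetilde{\mathcal{B}}^1_n \cup \widetilde{\mathcal{B}}^2_n$.

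The second step is to apply Lemma~\ref{Lemma2StoppingTimes} at the joint stopping times $(T^1_{M_{k^{\ast}}}, T^2_{M_{k^{\ast}}})$: the future of the two walks conditioned on their past has, under the quenched law, the same distribution as two fresh walks started at $x^1, x^2$. By the translation invariance of $\mathbf{P}$, after centering at $x^1$, the two shifted starting points fall into $\mathcal{U}$, since the first crossing of the real level $M_{k^{\ast}}$ by each walk satisfies $|X^i_{T^i_{M_{k^{\ast}}}} \cdot \vec{\ell} - M_{k^{\ast}}| \le e_1 \cdot \vec{\ell}$, which also forces $|(x^2 - x^1) \cdot \vec{\ell}| < e_1 \cdot \vec{\ell}$. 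The conditional probability that the next joint $K$-open ladder point lies at distance at least $n^{1/2}$ is then bounded, following the same argument as in \eqref{ladder_point_shifted}, by $\sup_{U_1, U_2 \in \mathcal{U}} \Pf_{U_1, U_2}(\mathcal{M}^{\bullet(K)} \ge n^{1/2})$, which by Proposition~\ref{PropositionRegLevelSmall} is at most $C n^{-M'}$ for arbitrarily large $M'$, provided $K$ is chosen correspondingly large.

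Summing the polynomially many terms of the decomposition yields a bound of the form $C n^{C'} n^{-M'}$ with $C'$ depending only on $d$ and $\alpha$, so choosing $M' \ge M + C'$ concludes the proof. I expect the main technical point to be carefully justifying the strong Markov reduction at the random levels $M_{k^{\ast}}$ and verifying that the resulting shifted configurations satisfy the hypotheses of Proposition~\ref{PropositionRegLevelSmall}; this is essentially the same issue already handled in the proof of Proposition~\ref{PropFrequentRegLevels} via the translation invariance argument following \eqref{ladder_point_shifted}, so no genuinely new ingredient is required.
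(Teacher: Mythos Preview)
Your strategy is sound but much longer than necessary. The paper's proof is a single observation: by the definition $L_{k+1} = \mathcal{M}^{\bullet} \circ \theta_{T^1_{\mathcal{H}^+(M_k)},\,T^2_{\mathcal{H}^+(M_k)}}$, the failure $\{M_k < n,\ L_{k+1} - M_k \ge n^{1/2}\}$ forces $\mathcal{M}^{\bullet}_R \ge n^{1/2}$ for some level $R \le n$ (take $R$ the integer just above $M_k$, losing at most a harmless constant). Hence $M^K(n)^c \subset \Upsilon(n)^c$, and Proposition~\ref{PropFrequentRegLevels} gives the bound directly. You are, in effect, re-proving Proposition~\ref{PropFrequentRegLevels} rather than invoking it.

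There is also a technical slip in your write-up. Lemma~\ref{Lemma2StoppingTimes} requires the two stopping times to be adapted to the filtration of each walk \emph{separately}. The times $T^i_{M_{k^\ast}}$ do not qualify, since $M_{k^\ast}$ (through both $\mathcal{D}^{\bullet}$ and the index $k^\ast$) depends on \emph{both} trajectories. The correct move---exactly what the proof of Proposition~\ref{PropFrequentRegLevels} does at \eqref{ladder_point_shifted}---is to apply the Markov property at $T^i_j$ for a \emph{fixed} level $j$ after union-bounding over $j$; these are genuine stopping times for each walk individually. Once you make that replacement your argument goes through, but at that point you have literally rewritten the proof of Proposition~\ref{PropFrequentRegLevels}, which is why the paper simply cites it.
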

\begin{proof}
This result follows follows from Proposition~\ref{PropFrequentRegLevels}. Indeed, $M^{K}(n)^{c}$ implies that there exists $0 \leq R \leq n$ such that $\mathcal{M}^{\bullet}_R \geq n^{\frac{1}{2}}$.
\end{proof}
\noindent We recall the definition of the quantities $M^i, i = 1, 2$ which were introduced in \eqref{definition_M_other}. 
The following proposition shows that under $\mathcal{D}^{\bullet i} < +\infty$, $M^i$ cannot be too large.
\begin{lemma} \label{LemmaMexpdecay}
There exists constants $C,c$ such that
\begin{align*}
     \sup_{U_1, U_2 \in \mathcal{U}} \sup_{i = 1, 2} \Pf_{U_1,U_2}\left(M^i \geq n \text{ }|\text{ }\mathcal{D}^{\bullet i}<+\infty \right) \leq C\exp(-cn),
\end{align*}
where $C,c$ only depend on $\lambda,\vec{\ell},d,K$ and $\mathcal{U}$ is defined in \eqref{eqn:USet}.
\end{lemma}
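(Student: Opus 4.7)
The plan is to bound the numerator and denominator of the conditional probability separately: show that $\mathbb{P}_{U_1, U_2}(M^i \ge n,\, \mathcal{D}^{\bullet i} < +\infty) \le C e^{-cn}$ and that $\mathbb{P}_{U_1, U_2}(\mathcal{D}^{\bullet i} < +\infty) \ge c > 0$ uniformly, then divide. First, I would unpack the geometry: by the monotone convergence $\mathcal{D}^{\bullet i}_{\le R} \to \mathcal{D}^{\bullet i}$, the event $\{M^i \ge n\}$ says precisely that walk $i$ reaches $\mathcal{H}^+(R)$ before triggering $\mathbf{BACK}^i$ or $\mathbf{ORI}^i$ for every $R < n$; on the intersection with $\{\mathcal{D}^{\bullet i} < +\infty\}$, one such trigger does occur at a later time. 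Since $U_1, U_2 \in \mathcal{U}$ satisfy $|U_j \cdot \vec{\ell}| < e_1 \cdot \vec{\ell}$, each of the relevant sets $\mathcal{V}_{U_j}$, $\mathcal{V}_{U_j - e_1}$ and the starting-level hyperplane $\{z \cdot \vec{\ell} \le U_i \cdot \vec{\ell}\}$ has points with $\vec{\ell}$-projection bounded above by some constant $C_0 = C_0(d, \vec{\ell})$. Consequently, triggering after the walk has surpassed level $n$ forces it to descend from level $\ge n$ back to level $\le C_0$, a backtracking by at least $n - C_0$.

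Next, for the backtracking bound, I would apply the strong Markov property at $\tau = T^i_{\mathcal{H}^+(n)}$. This reduces the problem to estimating the probability that the walk, restarted from $X^i_\tau$ at level $\ge n$, revisits level $\le C_0$. Integrating out the environment after $\tau$ in the style of the derivation of~\eqref{eqn:ExponentialBound} in the proof of Lemma~\ref{LemmaDpositive}, and using the translation invariance of $\mathbf{P}$ together with the backtracking estimate from \cite[Lemma 3.1]{Kious_Frib}, one obtains
\[
\mathbb{P}_{U_1, U_2}(M^i \ge n,\, \mathcal{D}^{\bullet i} < +\infty) \le C \, \mathbb{P}_0\bigl(T_{\mathcal{H}^-(-(n - C_0))} < +\infty\bigr) \le C e^{-cn}.
\]

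For the denominator, I would exploit that $\mathbf{ORI}^i = 1$ holds whenever $Z^i_1 = 0$, since $X^i_0 = U_i \in \mathcal{V}_{U_i} \subset \mathcal{V}_{U_1} \cup \mathcal{V}_{U_2}$. A direct calculation from~\eqref{eqn:KtransitionsEnhanced}, using $c_{*}(x,y) \wedge K^{-1} \le K^{-1}$ in the numerator and $c_{*}(x,z) \vee K \ge K$ in the denominator, gives $\sum_{y \sim U_i} p^{\omega}_K(U_i, y) \le K^{-2} < 1$ pointwise in $\omega$, so that $P^{\omega}(Z^i_1 = 0) \ge 1 - K^{-2}$. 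This yields
\[
\mathbb{P}_{U_1, U_2}(\mathcal{D}^{\bullet i} < +\infty) \ge \mathbb{P}_{U_1, U_2}(\mathbf{ORI}^i = 1) \ge 1 - K^{-2} > 0
\]
uniformly in $U_1, U_2 \in \mathcal{U}$ and $i$. Dividing the two estimates produces the claimed exponential bound on the conditional probability.

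The main obstacle is making the annealed strong Markov step rigorous: after $\tau$, the environment along the past trajectory of $X^i$ has been partially revealed, so the future backtracking probability is not literally equal to an unconditioned annealed probability. Nevertheless, this is a standard issue for biased walks in random conductances and is handled exactly as in the argument leading to~\eqref{eqn:ExponentialBound} in the proof of Lemma~\ref{LemmaDpositive}, where a similar "descend from high level" event is controlled by $C e^{-cn}$ via \cite[Lemma 3.1]{Kious_Frib}. Observe that the second walk $X^{3-i}$ plays no dynamical role: once the starting points $U_1, U_2$ are fixed, both $M^i$ and $\mathcal{D}^{\bullet i}$ are measurable with respect to $\sigma(\tilde{X}^i)$, so the problem reduces, under the annealed measure, to a single-walk estimate where the machinery of~\cite{Kious_Frib} applies directly.
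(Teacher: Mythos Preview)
Your overall strategy and geometric reduction coincide with the paper's: both bound the conditional probability by controlling numerator and denominator separately, and both identify that on $\{M^i\ge n,\,\mathcal{D}^{\bullet i}<\infty\}$ the walk $X^i$ must climb to level $\approx n$ and then descend back to level $\le C_0$ (the paper records this as $M^1=\sup_{j\le\mathcal{D}^{\bullet 1}}X^1_j\cdot\vec{\ell}$). Your denominator bound via $\{Z^i_1=0\}\subset\{\mathbf{ORI}^i=1\}$ is a legitimate alternative to the paper's choice $\{X^1_1=U_1+e_1,\,X^1_2=U_1\}$, and arguably cleaner.

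The remaining gap is in the numerator. The phrase ``integrating out the environment after $\tau$'' is misleading: the descent from level $n$ to level $C_0$ takes place entirely in the environment \emph{left} of level $n$, which has already been partially explored, so nothing is fresh. Applying quenched strong Markov at $\tau=T^i_{\mathcal{H}^+(n)}$ and then $\mathbf{E}$ yields
\[
\sum_{x:\,x\cdot\vec{\ell}>n}\mathbf{E}\bigl[P^\omega_{U_i}(X^i_\tau=x)\,P^\omega_x(T_{\mathcal{H}^-(C_0)}<\infty)\bigr],
\]
and bounding the first factor by $1$ plus translation invariance gives $Ce^{-cn}$ per term---but the sum over $x$ ranges over an infinite set. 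Your appeal to~\eqref{eqn:ExponentialBound} does not close this, since there the point $x^i$ is confined to the finite set $\partial^+\mathcal{B}(n,n^2)$, which is precisely what makes that union bound harmless. The paper supplies the missing localization via boxes: it slices on dyadic scales $2^k\le M^1<2^{k+1}$, controls $\{T_{\partial\mathcal{B}(2^k,2^{\alpha k})}\ne T_{\partial^+\mathcal{B}(2^k,2^{\alpha k})}\}$ by Lemma~\ref{LemmaA(n)}, and on the complement the hitting point lies in $\partial^+\mathcal{B}(2^k,2^{\alpha k})$, so the union bound costs only a polynomial factor $2^{\alpha kd}$, absorbed by the exponential. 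The dyadic slicing is not essential---intersecting with $A^i(n)$ for a single box $\mathcal{B}_{U_i}(n,n^\alpha)$ would already repair your argument.
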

\begin{proof}
Let us show the proof for $i=1$. First, let us recall that $\Pf_{U_1,U_2}(\mathcal{D}^{\bullet1} < +\infty) > \eta > 0$ with $\eta$ which does not depend on $U_1,U_2$. Indeed
\begin{equation}
    0 < \Pf_{U_1,U_2}(X_1^1 = U_1+e_1,X^1_2 = U_1) = \Pf_{0}(X_1 = e_1 , X_2 = 0) < \Pf_{U_1,U_2}(\mathcal{D}^{\bullet 1} < +\infty),
\end{equation}
where we used translation invariance of the environment for the equality.

We have to dominate $\Pf_{U_1,U_2}(\mathcal{D}^{\bullet 1}<+\infty, M^i \geq n)$. It is not hard to check that when $\mathcal{D}^{\bullet 1}<+\infty$ then $M^1 = \sup_{j \leq \mathcal{D}^{\bullet 1}}X^1_{j}\cdot\vec{\ell}$. By union bound and symmetry, we only have to dominate
\begin{align*}
    \Pf_{U_1,U_2}\left(\mathcal{D}^{\bullet 1}<+\infty,\sup_{j \leq \mathcal{D}^{\bullet 1}}X^1_{j}\cdot\vec{\ell} \geq n\right).
\end{align*}
\noindent Without loss of generality we can suppose $U_1= U_2 =0$, indeed the proof is exactly the same in the general case but instead of considering the boxes $\mathcal{B}(2^k,2^{\alpha k})$ we should consider the boxes $\mathcal{B}_{U_i}(2^k,2^{\alpha k})$.

First, we have
\begin{align*}
    \Pf_{0}\left(2^k \leq \sup_{j \leq \mathcal{D}^{\bullet 1}}X^1_{j}\cdot\vec{\ell} < 2^{k+1} \right) \leq &\Pf_{0}\left(T^1_{\partial \mathcal{B}(2^k,2^{\alpha k})} \neq T^1_{\partial^{+} \mathcal{B}(2^k,2^{\alpha k})}\right) +\\&
    \Pf_{0}\Big(T^1_{\partial \mathcal{B}(2^k,2^{\alpha k})} = T^1_{\partial^{+} \mathcal{B}(2^k,2^{\alpha k})},\\&T^{+}_{\mathcal{H}^{-}(0)}\circ \theta_{T^{1}_{\partial^{+} \mathcal{B}(2^k,2^{\alpha k})}}< T^{+}_{\mathcal{H}^{+}(2^{k+1})}\circ \theta_{T^{1}_{\partial^{+} \mathcal{B}(2^k,2^{\alpha k})}} \Big).
\end{align*}
The first term is smaller than $C\exp(-c2^k)$ by Lemma~\ref{LemmaA(n)}. Using a union bound and translation invariance of the environment the second term is dominated by 
\begin{equation}\label{eqn:exponentialScales}
    (2^{k})^{\alpha d}\Pf_{0}\left(T^1_{\partial \mathcal{B}(2^k,2^{\alpha k})} \neq T^1_{\partial^{+} \mathcal{B}(2^k,2^{\alpha k})}\right) \leq C2^{\alpha kd}\exp(-c2^k),
\end{equation}
where we used again Lemma~\ref{LemmaA(n)}. We conclude using the following domination and \eqref{eqn:exponentialScales}
\begin{align*}
    \Pf_{0}\left(\mathcal{D}^{\bullet 1}<+\infty,\sup_{j \leq \mathcal{D}^{\bullet 1}}X^1_{j}\cdot\vec{\ell} \geq n\right) \leq C\somme{k,2^{k}\geq n}{}{\Pf_{0}\left(2^k \leq \sup_{j \leq \mathcal{D}^{\bullet 1}}X^1_{j}\cdot\vec{\ell} < 2^{k+1}\right)} \le C\exp(-cn).
\end{align*}
\end{proof}
\noindent We recall the definition of $S(n)$ in \eqref{definition_Sn} and of the set $\mathcal{U}$ in \eqref{eqn:USet}.
\begin{lemma}\label{LemmaSdecayexp}
There exists constants $C,c > 0$ such that
\begin{align*}
        \sup_{U_1, U_2 \in \mathcal{U}} \Pf_{U_1,U_2}\left(S(n)^{c}\right) \leq C\exp\left(-c \sqrt{n}\right),
\end{align*}
where $C,c > 0$ only depend on $\lambda,\vec{\ell},d,K$.
\end{lemma}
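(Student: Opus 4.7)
The plan is to decompose $S(n)^c$ as a union over indices $k$, then use the strong Markov property for two walks at the pair of stopping times $(T^1_{L_k}, T^2_{L_k})$ to reduce each summand to a tail estimate on $M$, to which Lemma~\ref{LemmaMexpdecay} applies. Concretely, write
\begin{equation*}
    S(n)^c \subseteq \bigcup_{k \geq 0} \bigl\{ L_k < n,\ M_k < +\infty,\ M_k - L_k \ge n^{1/2}\bigr\},
\end{equation*}
and apply a union bound. Because each $L_k$ is a level of the form $X^i_{T^i_{L_k}} \cdot \vec{\ell}$, and by construction $L_{k+1} > M_k \ge L_k + e_1\cdot\vec{\ell}$, consecutive levels are separated by at least $e_1\cdot\vec{\ell} > 0$. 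Hence at most $C n$ values of $k$ contribute to the union, so it suffices to prove that each term is bounded by $C e^{-c\sqrt n}$ uniformly in $k$ and in $U_1, U_2 \in \mathcal{U}$.

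For a fixed $k$, observe that the event $\{L_k < n\}$ together with the positions of the two walks at the joint hitting times is measurable with respect to the $\sigma$-algebra generated by $(X^1_j)_{j \le T^1_{L_k}}$ and $(X^2_j)_{j \le T^2_{L_k}}$. By the definition \eqref{definition_Mk} of $M_k$, the event $\{M_k < +\infty,\ M_k - L_k \ge n^{1/2}\}$ is precisely the shifted event $\{\mathcal{D}^\bullet < +\infty,\ M \ge n^{1/2}\} \circ \theta_{T^1_{L_k}, T^2_{L_k}}$. Applying Lemma~\ref{Lemma2StoppingTimes} (the two-walk strong Markov property) at $(T^1_{L_k}, T^2_{L_k})$, together with the translation invariance of $\mathbf{P}$, the conditional probability becomes
\begin{equation*}
    \Pf_{V_1, V_2}\bigl( \mathcal{D}^\bullet < +\infty,\ M \ge n^{1/2}\bigr),
\end{equation*}
where $V_1, V_2$ are the respective positions after shifting $X^1_{T^1_{L_k}}$ to the origin. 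Since each walk crosses $\mathcal{H}^+(L_k)$ via a step of size $\le e_1\cdot\vec{\ell}$ in the $\vec\ell$-direction, one checks that $V_1, V_2 \in \mathcal{U}$, so Lemma~\ref{LemmaMexpdecay} is applicable: splitting via $\{\mathcal{D}^\bullet < \infty\} \subseteq \{\mathcal{D}^{\bullet 1}<\infty\} \cup \{\mathcal{D}^{\bullet 2}<\infty\}$ and using $M \le M^1 \vee M^2$ on $\{\mathcal{D}^\bullet < \infty\}$, the above quantity is bounded by $C e^{-c\sqrt{n}}$.

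Combining the per-$k$ bound with the union bound over $O(n)$ admissible indices yields
\begin{equation*}
    \sup_{U_1, U_2 \in \mathcal{U}} \Pf_{U_1, U_2}(S(n)^c) \le C n \cdot e^{-c \sqrt n} \le C' e^{-c' \sqrt n},
\end{equation*}
which is the desired estimate. The main technical hurdle is the clean application of the strong Markov property at the two stopping times $T^1_{L_k}, T^2_{L_k}$, and the verification that after the shift the translated starting points lie in $\mathcal{U}$ so that Lemma~\ref{LemmaMexpdecay} applies uniformly; the remaining work is a geometric-series summation.
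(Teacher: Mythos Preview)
Your plan coincides with the paper's proof: bound the number of admissible $k$ by $O(n)$, decompose $S(n)^c$ over $k$, apply the two-walk strong Markov property together with translation invariance, and invoke Lemma~\ref{LemmaMexpdecay}. One technicality to make explicit is that $T^1_{L_k}$ and $T^2_{L_k}$ are not stopping times for the \emph{individual} walks (since $L_k$ depends on both trajectories), so Lemma~\ref{Lemma2StoppingTimes} does not apply to the pair $(T^1_{L_k},T^2_{L_k})$ directly; the paper handles this by first summing over the positions $x=X^1_{T^1_{L_k}}$, $y=X^2_{T^2_{L_k}}$ (restricted to boxes via Lemma~\ref{LemmaA(n)}, though that localization is not essential), after which $(T^1_x,T^2_y)$ are genuine individual stopping times---this is implicit in your phrase ``together with the positions of the two walks'' but should be spelled out.
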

\begin{proof}
We adapt the argument of \cite[Lemma 7.2]{Frib_conduc}, recall that $\mathcal{B}_i(n,n^{\alpha}) = \mathcal{B}_{X_0^{i}}(n,n^{\alpha})$ for $i$-th the random walk. First, we notice that $\Pf_{0}$-almost surely, $|\{k \geq 0 \text{ such that } L_k < n\}| \le dn$. Hence we can write 
    \begin{align*}
        \displaystyle \Pf_{U_1,U_2}\left(S(n)^{c}\right) \leq &\Pf_{U_1}\left(T^1_{\partial \mathcal{B}_1(n,n^{\alpha})} \neq T^1_{ \partial^{+} \mathcal{B}_1(n,n^{\alpha})} \right) + \Pf_{U_2}\left(T^2_{\partial \mathcal{B}_2(n,n^{\alpha})} \neq T^2_{ \partial^{+} \mathcal{B}_2(n,n^{\alpha})} \right) \\
        &+ \somme{k \leq d n}{}{\somme{\substack{x \in \mathcal{B}_1(n,n^{\alpha})\\y \in \mathcal{B}_2(n,n^{\alpha})}}{}{\Pf_{U_1,U_2}\left(M_k < +\infty, M_k - L_k \geq n^{\frac{1}{2}},X^{1}_{T^{1}_{L_k}} = x, X^{2}_{T^{2}_{L_k}} = y\right)}}.
    \end{align*}
The first two terms are controlled by Lemma~\ref{LemmaA(n)}. We can dominate the term in the sum by
\begin{align*}
      & \Pf_{U_1,U_2}\left(\sup_{j \leq \mathcal{D}^{\bullet 1} \circ \theta_{T^1_{x}} + T^1_{x}}(X^{1}_j - x)\cdot\vec{\ell} \geq n^{\frac{1}{2}},\mathcal{D}^{\bullet 1}\circ \theta_{T^{1}_{x}, T^{2}_{y}} < +\infty,X^{1}_{T^{1}_{L_k}} = x,X^{2}_{T^{2}_{L_k}} = y\right) +\\
    &\Pf_{U_1,U_2}\left(\sup_{j \leq \mathcal{D}^{\bullet 2} \circ \theta_{T^2_{y}} + T^2_{y}}(X^{2}_j - y)\cdot\vec{\ell} \geq n^{\frac{1}{2}},\mathcal{D}^{\bullet 2}\circ \theta_{T^{1}_{x}, T^{2}_{y}}< +\infty,X^{1}_{T^{1}_{L_k}} = x,X^{2}_{T^{2}_{L_k}} = y\right).
\end{align*}
Let us dominate the first term, the second is dominated the same way by symmetry. Using Lemma~\ref{Lemma2StoppingTimes} and the translation invariance of the environment it is bounded by 
\begin{align*}
\mathbf{E}\left[P_{x,y}^{\omega}\left(\sup_{j \leq \mathcal{D}^{\bullet 1}}(X^{1}_j -x)\cdot\vec{\ell} \geq n^{\frac{1}{2}},\mathcal{D}^{\bullet 1} < +\infty\right) \right]&
        \leq \mathbf{E}\left[P_{0,y-x}^{\omega}\left(\sup_{j \leq \mathcal{D}^{\bullet 1}} X^{1}_j\cdot\vec{\ell} \geq n^{\frac{1}{2}},\mathcal{D}^{\bullet 1} < +\infty\right) \right] \\&\leq \Pf_{0,y-x}\left(\sup_{j \leq \mathcal{D}^{\bullet 1}} X^{1}_j\cdot\vec{\ell} \geq n^{\frac{1}{2}}\text{ }|\text{ }\mathcal{D}^{\bullet 1}<+\infty\right)\\
        & \leq \Pf_{0,y-x}\left(M^1 > n^{\frac{1}{2}}\text{ }|\text{ }\mathcal{D}^{\bullet 1}<+\infty\right).
\end{align*}
The last term is dominated using Lemma~\ref{LemmaMexpdecay} since we have $|(y-x) \cdot \vec{\ell}| \le e_{1} \cdot \vec{\ell}$ by definition of $L_k$. Taking the sum over $k,x,y$ we conclude that we have
\begin{align*}
    \displaystyle \Pf_{U_1,U_2}(S(n)^{c}) \leq Cn^{\beta}\exp(-cn^{\frac{1}{2}}),
\end{align*}
where $\beta$ only depends on $d,\alpha$. We conclude the proof by taking $C$ large enough and $c$ small enough.
\end{proof}

\noindent The following lemma is the equivalent of \cite[Lemma A.4]{Kious_Frib}. Recall that we defined $N = \inf\{k \geq 0 \colon L_k < +\infty \text{ and } M_{k}=+\infty\}$ in \eqref{eqn:FirstJointLevel}, and the definition of $\mathcal{E}_x, \mathcal{E}^{2}_x, \mathcal{L}^x, \mathcal{R}^x$ as in Equations~\eqref{EquationIncidentEdges}-\eqref{EquationRightEdges}.
\begin{lemma}\label{Ndecay}
There exists $K_0 > 1$ such that for all $K \ge K_0$
\begin{align*}
    \sup_{U_1, U_2 \in \mathcal{U}}\Pf_{U_1,U_2}(N \geq n) \leq C\exp(-cn),
\end{align*}
where $C,c > 0$ only depend on $\lambda,\vec{\ell},d$ and $\mathcal{U}$ is defined in \eqref{eqn:USet}.
\end{lemma}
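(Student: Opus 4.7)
The plan is to prove inductively that for some $\eta \in (0,1)$ uniform in $U_1, U_2 \in \mathcal{U}$,
\[
\Pf_{U_1,U_2}(N \ge k+1) \le (1-\eta)\,\Pf_{U_1,U_2}(N \ge k), \qquad k \ge 1,
\]
which immediately yields $\Pf_{U_1,U_2}(N \ge n) \le (1-\eta)^{n-1}$ and hence the desired exponential decay. In other words, I want to dominate $N$ stochastically by a geometric random variable by showing that, at every joint $K$-open ladder level, the conditional probability of finally regenerating is bounded below uniformly in the past and in the starting positions.

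On the event $\{N \ge k\}$ (with $k \ge 1$) we have $M_{k-1} < +\infty$, so Proposition~\ref{PropositionRegLevelSmall} ensures that $L_k < +\infty$ almost surely. By the joint $K$-open ladder point construction, the positions $x_1 \coloneqq X^1_{T^1_{L_k}}$ and $x_2 \coloneqq X^2_{T^2_{L_k}}$ are both $K$-open and satisfy $|(x_2 - x_1)\cdot \vec{\ell}| < e_1 \cdot \vec{\ell}$, hence after translating $x_1$ to the origin one has $x_2 - x_1 \in \mathcal{U}$. Let $\mathcal{F}_k$ denote the sigma-algebra generated by the trajectories of both walks up to time $(T^1_{L_k}, T^2_{L_k})$ together with the conductances explored before these times. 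Applying the two-walk strong Markov property (Lemma~\ref{Lemma2StoppingTimes}) at $(T^1_{L_k}, T^2_{L_k})$, then Proposition~\ref{omegaK} to replace the partially-revealed conductances in $\mathcal{E}_{x_1} \cup \mathcal{E}_{x_2}$ by the value $K$ on $\{\mathcal{D}^{\bullet} = +\infty\}$, and finally noting that on this event neither walk backtracks below $L_k$ (by definition of $\mathbf{BACK}$) or approaches the starting edges in the forbidden way (by definition of $\mathbf{ORI}$), one obtains
\[
\Pf_{U_1,U_2}(M_k = +\infty \mid \mathcal{F}_k)\,\mathds{1}_{\{N \ge k\}} \ge \Pf^{K}_{0,\, x_2 - x_1}(\mathcal{D}^{\bullet} = +\infty)\,\mathds{1}_{\{N \ge k\}} \ge \eta\,\mathds{1}_{\{N \ge k\}},
\]
where the last inequality follows from Lemma~\ref{LemmaDpositive} (for $K$ large enough) and translation invariance of $\mathbf{P}$. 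Taking expectation yields the inductive step.

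The main obstacle is verifying rigorously the conditional independence used above: one must check that on $\{\mathcal{D}^{\bullet} \circ \theta_{T^1_{L_k}, T^2_{L_k}} = +\infty\}$, the conductances governing $\Pf^K_{0,\, x_2-x_1}(\mathcal{D}^{\bullet} = +\infty)$ are $\mathbf{P}$-independent of $\mathcal{F}_k$. Proposition~\ref{omegaK} handles the conductances incident to $x_1,x_2$ (which are $K$-normal but not deterministic) by absorbing them into the fixed boundary condition $\omega_K$; the $\mathbf{BACK}$ constraint prevents the walks from using any edge below $L_k$ in the future, and the $\mathbf{ORI}$ constraint forbids them from visiting neighborhoods of the old starting points $X^1_0-e_1,\, X^2_0-e_1$, ensuring that the remaining relevant edges belong to the fresh portion of the environment revealed after level $L_k$. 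This spatial separation, baked into the very definition of joint $K$-open ladder points together with $\mathcal{D}^\bullet$, is what makes the Markov-type decoupling work.
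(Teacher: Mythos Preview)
Your proposal is correct and follows essentially the same route as the paper: at each joint $K$-open ladder level one decouples past from future via Proposition~\ref{omegaK} and the edge-disjointness built into the definition of $\mathcal{D}^\bullet$, then invokes Lemma~\ref{LemmaDpositive} to get a uniform positive chance of regenerating, yielding geometric decay. The paper's presentation differs only cosmetically: it works with the auxiliary event $C(n)=\{\mathcal{D}^\bullet\circ\theta_{T^1_{L_k},T^2_{L_k}}<+\infty\text{ for all }k\le n\text{ with }L_k<+\infty\}\supseteq\{N\ge n\}$ and, rather than phrasing things via a conditional probability given $\mathcal{F}_k$, sums explicitly over the positions $(x,x')$ of the walks at $L_{n+1}$ and checks that the relevant $\mathbf{P}$-sigma-algebras $\sigma(c_*(e):e\in(\mathcal{L}^x\cap\mathcal{L}^{x'})\cup\mathcal{E}^2_x\cup\mathcal{E}^2_{x'})$ and $\sigma(c_*(e):e\in\mathcal{R}^x\cup\mathcal{R}^{x'}\setminus(\mathcal{E}^1_x\cup\mathcal{E}^1_{x'}))$ are generated by disjoint edge sets, which is exactly the rigorous version of the spatial separation you describe in your final paragraph.
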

\begin{proof}
We follow the lines of the proof of \cite[Lemma A.$4$]{Kious_Frib}. The idea of the proof is simple, each time we reach a level $L_k$ there is a positive probability to regenerate and, using the way we have defined $\mathcal{D}^{\bullet}$, we should have good independence properties, hence $N$ should decay at least like a geometric random variable.

We introduce event 
\begin{align*}
    C(n) = \left\{\text{for all }k \leq n \text{ such that }L_k <+\infty \text{ we have } \mathcal{D}^{\bullet} \circ \theta_{T^{1}_{L_k}, T^{2}_{L_k}} < +\infty \right\}.
\end{align*}
We have $\{N \geq n\} \subset C(n)$, hence we only have to dominate $C(n)$.
We can write 
\begin{align*}
  \Pf_{U_1,U_2}(C(n+1)) \le \somme{x,y}{}{\mathbf{E}\left[P_{U_1,U_2}^{\omega}\left(X^1_{T^{1}_{L_{n+1}}}=x,X^2_{T^{2}_{L_{n+1}}}=y,C(n),\mathcal{D}^{\bullet}\circ \theta_{T^{1}_{L_{n+1}}, T^{2}_{L_{n+1}}} < +\infty\right)\right]}.
\end{align*}
The event $\{X^1_{T^{1}_{L_{n+1}}}=x,X^2_{T^{2}_{L_{n+1}}}=y,C(n)\}$ is measurable with respect to $\sigma\{(X^{1}_n,Z_{n}^1),(X^{2}_k,Z_{k}^2)\text{ for }0 \leq n \leq T^1_x \text{ and }0\leq k \leq T^2_y\}$. We can apply Lemma~\ref{Lemma2StoppingTimes} and get 
\begin{align*}
P_{U_1,U_2}^{\omega}&\left(X^1_{T^{1}_{L_{n+1}}}=x,X^2_{T^{2}_{L_{n+1}}}=y,C(n),\mathcal{D}^{\bullet} \circ \theta_{T^{1}_{L_{n+1}}, T^{2}_{L_{n+1}}} < +\infty\right) \\&= P_{U_1,U_2}^{\omega}\left(X^1_{T^{1}_{L_{n+1}}}=x,X^2_{T^{2}_{L_{n+1}}}=y,C(n)\right)P_{x,y}^{\omega}(\mathcal{D}^{\bullet} < +\infty).
\end{align*}
The sum is also equal to 
\begin{align*}
    \somme{x,y}{}{\mathbf{E}&\left[P_{U_1,U_2}^{\omega}\left(X^1_{T^{1}_{L_{n+1}}}=x,X^2_{T^{2}_{L_{n+1}}}=y,C(n)\right)P_{x,y}^{\omega}(\mathcal{D}^{\bullet} < +\infty)\right]}\\
    &= \somme{x,y}{}{\mathbf{E}\left[P_{U_1,U_2}^{\omega}\left(X^1_{T^{1}_{L_{n+1}}}=x,X^2_{T^{2}_{L_{n+1}}}=y,C(n)\right)P_{x,y}^{\omega_{K}}(\mathcal{D}^{\bullet} < +\infty)\right]},
\end{align*}
where we used the fact that both $x$ and $y$ are $K$-open and Proposition~\ref{omegaK}.

Since $P_{U_1,U_2}^{\omega}\big(X^1_{T^1_{L_{n+1}}}=x,X^2_{T^2_{L_{n+1}}}=y,C(n)\big)$ is measurable with respect to  $\sigma\big(c_{*}(e) \text{ with } e \in ((\mathcal{L}^{x} \cap \mathcal{L}^{y}) \cup \mathcal{E}^2_{x} \cup \mathcal{E}^2_{y}\big)\big)$ 
and $P_{x,y}^{\omega_{K}}(\mathcal{D}^{\bullet} < +\infty)$ is measurable with respect to $\sigma \big(c_{*}(e) \text{ with } e \in \mathcal{R}^{x} \cup \mathcal{R}^{y} \backslash (\mathcal{E}^1_{x} \cup \mathcal{E}^1_{y})\big)$ and using the fact that those sets of edges are disjoint (see Figure~\ref{disjoint_sets}) we can write by $\mathbf{P}$ independence that the sum is equal to 
\begin{align*}
\somme{x,y}{}{\mathbf{E}\left[P_{U_1,U_2}^{\omega}\left(X^1_{T^{1}_{L_{n+1}}}=x,X^2_{T^{2}_{L_{n+1}}}=y,C(n)\right)\right]\mathbf{E}\left[P_{x,y}^{\omega_{K}}(\mathcal{D}^{\bullet} < +\infty)\right]}.
\end{align*}
Using Lemma~\ref{LemmaDpositive} with $K_0$ large enough we have $\Pf_{x,y}^{K}(\mathcal{D}^{\bullet}<+\infty) \leq \eta < 1$ with $\eta$ a constant which only depends on $\lambda,\vec{\ell},d$. It follows that
\begin{align*}
    \Pf_{U_1,U_2}(C(n+1)) \leq \eta\Pf_{U_1,U_2}(C(n)).
\end{align*}
The result follows by induction.
\end{proof}
We now have all the tools to prove Proposition~\ref{L1decay}. Indeed, combining Lemma~\ref{LemmaMexpdecay}, Lemma~\ref{LemmaSdecayexp} and Lemma~\ref{Ndecay}, it is unlikely to have, for $k \le n$, $L_{k}$ finite and $L_{k+1} - L_{k} \ge n^{\frac{1}{2}}$.
\begin{proof}[Proof of Proposition~\ref{L1decay}]
We claim that, on $\{\mathcal{L}_1 \geq n\}\cap S(n)\cap M(n)$, we have $N \geq n^{1/3}$ for $n$ large enough. Indeed, on $\{\mathcal{L}_1 \geq n\}\cap S(n)\cap M(n)$ for $k < n$ such that $L_k < n $ and $M_k < n $, we have $L_{k+1} - L_k < 2n^{1/2}$, and since $\mathcal{L}_1 \geq n$, as long as $L_{k+1} < n$ we have $M_{k+1} < +\infty$. Hence, it follows that we must have $N \geq n^{1/3}$ since, in the opposite case, we would have 
\begin{equation*}
    \mathcal{L}_1 = L_{N} = \somme{k=0}{N-1}{L_{k+1}-L_k} < N2n^{1/2} < 2n^{5/6}<n.
\end{equation*} 
The result follows by applying our estimates in Lemma~\ref{LemmaMexpdecay}, Lemma~\ref{LemmaSdecayexp} and Lemma~\ref{Ndecay}, putting them together we have
\begin{align*}
    \Pf_{U_1,U_2}(\mathcal{L}_1 \geq n) &\leq \Pf_{U_1,U_2}(M(n)^c) + \Pf_{U_1,U_2}(S(n)^{c}) + \Pf_{U_1,U_2}(N \geq n^{\frac{1}{3}})\\
    &\leq 2C\exp(-cn^{\frac{1}{3}}) + C n^{-M} \\
    &\leq Cn^{-M}.
\end{align*}
\end{proof}

\subsection{Further properties of joint regeneration levels}
We extend Proposition~\ref{L1decay} to hold under the measure $\Pf_{U_1,U_2}^{K}$. This is the analogous of \cite[Theorem 5.3]{Kious_Frib}.
\begin{proposition} \label{L1decaybis}
For any $M > 0$ there exists $K_0 > 1$ such that for all $K \ge K_0$ 
\begin{align*}
   \sup_{U_1, U_2 \in \mathcal{U}} \Pf_{U_1,U_2}^{K}(\mathcal{L}_1 > n) \leq Cn^{-M},
\end{align*}
where the constant $C$ only depends on $\lambda,\vec{\ell},d,K$ and $\mathcal{U}$ is defined in \eqref{eqn:USet}.
\end{proposition}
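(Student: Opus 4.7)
The plan is to reproduce the proof of Proposition~\ref{L1decay} word-for-word under the modified measure $\Pf^K_{U_1,U_2}$, which differs from $\Pf_{U_1,U_2}$ only in that the $4d$ conductances in $\mathcal{E}_{U_1} \cup \mathcal{E}_{U_2}$ are fixed to the value $K$ rather than sampled from $\mu$. Concretely, I would establish the $K$-analogs of the three auxiliary tail estimates Lemma~\ref{LemmaMexpdecay}, Lemma~\ref{LemmaSdecayexp} and Lemma~\ref{Ndecay}, and then combine them exactly as at the end of the proof of Proposition~\ref{L1decay}.

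The first two transports are routine. The proof of Lemma~\ref{LemmaMexpdecay} only relies on Lemma~\ref{LemmaA(n)} and a dyadic-scale summation that is insensitive to the distribution of the conductances at the starting points, so it yields $\Pf^K_{U_1,U_2}(M^i \ge n \mid \mathcal{D}^{\bullet i} < +\infty) \le Ce^{-cn}$ with no change. Lemma~\ref{LemmaSdecayexp} combines Lemma~\ref{LemmaA(n)}, the two-walk strong Markov property, translation invariance and the $K$-version of Lemma~\ref{LemmaMexpdecay}; all four inputs being available for $\Pf^K_{U_1,U_2}$, one recovers $\Pf^K_{U_1,U_2}(S(n)^c) \le Ce^{-c\sqrt n}$ verbatim.

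The only delicate step is the $K$-version of Lemma~\ref{Ndecay}. Its original proof factorises $\Pf_{U_1,U_2}(C(n+1))$ by conditioning on the joint $K$-open ladder point $(x,y)$ at level $L_{n+1}$ and exploiting the $\mathbf{P}$-independence of the \emph{past} edges $(\mathcal{L}^x \cap \mathcal{L}^y) \cup \mathcal{E}^2_x \cup \mathcal{E}^2_y$ and the \emph{future} edges $\mathcal{R}^x \cup \mathcal{R}^y \setminus (\mathcal{E}^1_x \cup \mathcal{E}^1_y)$, followed by Proposition~\ref{omegaK} at $(x,y)$ and the $K$-version of Lemma~\ref{LemmaDpositive} to produce the contraction factor $\eta < 1$. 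Under $\Pf^K_{U_1,U_2}$ the only edges whose law is altered are $\mathcal{E}_{U_1} \cup \mathcal{E}_{U_2}$. By the very definition of $\mathcal{M}^{\bullet}$, any joint ladder point satisfies $x \cdot \vec{\ell} \ge U_1 \cdot \vec{\ell} + 2 e_1 \cdot \vec{\ell}$ and $y \cdot \vec{\ell} \ge U_2 \cdot \vec{\ell} + 2 e_1 \cdot \vec{\ell}$ (both being new maxima attained via two consecutive $+e_1$-steps). A short case analysis, possibly after forcing one extra ladder attempt at the cost of a multiplicative constant, shows that for $U_1, U_2 \in \mathcal{U}$ every neighbour $U_j \pm e_i$ lies at level at most $x \cdot \vec{\ell} \wedge y \cdot \vec{\ell} = L_{n+1}$, so $\mathcal{E}_{U_1} \cup \mathcal{E}_{U_2}$ sits entirely inside the past piece. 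The $\mathbf{P}$-independence decomposition is thus preserved, Proposition~\ref{omegaK} still applies unchanged at $(x,y)$ because the set $\mathcal{E}_x \cup \mathcal{E}_y$ it fixes is disjoint from the modified $\mathcal{E}_{U_1} \cup \mathcal{E}_{U_2}$, and we obtain $\Pf^K_{U_1,U_2}(C(n+1)) \le \eta\, \Pf^K_{U_1,U_2}(C(n))$, hence $\Pf^K_{U_1,U_2}(N \ge n) \le Ce^{-cn}$.

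With the three $K$-analogs in hand, the closing argument of Proposition~\ref{L1decay} transports verbatim: on $\{\mathcal{L}_1 \ge n\} \cap M^K(n) \cap S(n)$ one still forces $N \ge n^{1/3}$ for $n$ large enough, and a union bound delivers $\Pf^K_{U_1,U_2}(\mathcal{L}_1 \ge n) \le Cn^{-M}$ uniformly in $U_1, U_2 \in \mathcal{U}$. The main obstacle is therefore the geometric/measurability bookkeeping in the adaptation of Lemma~\ref{Ndecay}, namely verifying that the local $\Pf^K$-modification stays confined to the past of every joint $K$-open ladder point; everything else amounts to re-running the earlier estimates with $\Pf$ replaced by $\Pf^K$.
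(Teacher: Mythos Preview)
Your approach is genuinely different from the paper's. The paper does \emph{not} re-prove the chain of lemmas under $\Pf^K$; instead it uses a ``push-away'' argument: let both walks exit $\mathcal{B}(n/2,(n/2)^\alpha)$ through $\partial^+$ (Lemma~\ref{LemmaA(n)} under $\Pf^K$), control the chance of backtracking to $\mathcal{H}^-_{e_1}$ afterwards, and observe that on the remaining event the walks never touch $\mathcal{E}_{U_1}\cup\mathcal{E}_{U_2}$, so $\omega^0_K$ can be replaced by $\omega$ and one invokes the already-established Proposition~\ref{L1decay} directly (after a translation). This costs only a few lines and reuses the $\Pf$-result as a black box.

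Your route can be made to work, but the ``routine transports'' are looser than you suggest. The main issue is translation invariance: under $\Pf^K_{U_1,U_2}$ the environment law is no longer shift-invariant (a Dirac at $K$ sits on $\mathcal{E}_{U_1}\cup\mathcal{E}_{U_2}$), so the steps in Lemma~\ref{LemmaMexpdecay} and Lemma~\ref{LemmaSdecayexp} that shift from $(x,y)$ to $(0,y-x)$, or that invoke box-exit estimates centred at points other than $U_1,U_2$, do not go through verbatim. You would have to argue separately that the modified edges either lie outside the relevant box or do not affect the backtracking estimate---essentially the same ``escape the modified region'' reasoning the paper performs once at the outset. Similarly, your adaptation of Lemma~\ref{Ndecay} needs $\mathcal{E}_{U_1}\cup\mathcal{E}_{U_2}\subset (\mathcal{L}^x\cap\mathcal{L}^y)\cup\mathcal{E}^2_x\cup\mathcal{E}^2_y$; for $U_1,U_2\in\mathcal{U}$ the levels satisfy $U_i\cdot\vec{\ell}+e_1\cdot\vec{\ell}<2e_1\cdot\vec{\ell}$ while $\min(x\cdot\vec{\ell},y\cdot\vec{\ell})\ge U_j\cdot\vec{\ell}+2e_1\cdot\vec{\ell}$, which does not immediately give the desired inequality without the extra-ladder-attempt patch you allude to. None of this is fatal, but it shows why the paper's strategy---localise the $K$-modification, escape it once, then apply the unmodified result---is more economical than re-threading the entire argument.
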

\begin{proof}
Using the same techniques as in previous proofs, we may restrict ourselves to the case where $U_1 = U_2 = 0$. Indeed, in the general case, we can replace the boxes $\mathcal{B}(n,n^{\alpha})$ with ${\tilde{\mathcal{B}}}^i_{n}$ introduced in \eqref{eqn:defWeirdBox} and run the same argument. We write
\begin{align*}
   &\Pf_{0}^{K}(\mathcal{L}_1 > n) \leq     \Pf^{K}_{0}\left(T^1_{\partial \mathcal{B}(n/2,(n/2)^{\alpha})} \neq T^1_{\partial^{+} \mathcal{B}((n/2),(n/2)^{\alpha})} \right) + \Pf^{K}_{0}\left(T^2_{\partial \mathcal{B}(n/2,(n/2)^{\alpha})} \neq T^2_{\partial^{+} \mathcal{B}(n/2,(n/2)^{\alpha})} \right)\\&+
    \somme{x,y \in \partial^{+} \mathcal{B}((n/2),(n/2)^{\alpha})}{}{\Pf^{K}_{0}(X^1_{T^1_{\partial \mathcal{B}((n/2),(n/2)^{\alpha})}} = x,X^2_{T^2_{\partial \mathcal{B}((n/2),(n/2)^{\alpha})}} = y,T^1_{\mathcal{H}^{-}_{e_1}} \circ \theta_{T^{1}_x} < +\infty )}\\
    &+ \somme{x,y \in \partial^{+} \mathcal{B}((n/2),(n/2)^{\alpha})}{}{\Pf^{K}_{0}(X^1_{T^1_{\partial \mathcal{B}((n/2),(n/2)^{\alpha})}} = x,X^2_{T^2_{\partial \mathcal{B}((n/2),(n/2)^{\alpha})}} = y,T^2_{\mathcal{H}^{-}_{e_1}} \circ \theta_{T^2_y} < +\infty )}\\&+\somme{x,y \in \partial^{+} \mathcal{B}((n/2),(n/2)^{\alpha})}{}{\Pf^{K}_{0}(X^1_{T^1_{\partial \mathcal{B}((n/2),(n/2)^{\alpha})}} = x,X^2_{T^2_{\partial \mathcal{B}((n/2),(n/2)^{\alpha})}} = y,\\&\hspace{4.5cm} T^1_{\mathcal{H}^{-}_{e_1}} \circ \theta_{T^1_x} = +\infty ,T^2_{\mathcal{H}^{-}_{e_1}} \circ \theta_{T^2_y} = +\infty ,\mathcal{L}_1 \circ \theta_{T^1_{x},T^2_{y}} \geq n/2)}.
\end{align*}
The first two terms are dominated using Lemma~\ref{LemmaA(n)}.

Let us dominate the third term, the fourth can be dominated in the same way by symmetry. We can re-write it using the strong Markov property
\begin{align*}
    \mathbf{E}&\left[P^{\omega_K}_{0}\left(X^1_{T^1_{\partial \mathcal{B}((n/2),(n/2)^{\alpha})}} = x,X^2_{T^2_{\partial \mathcal{B}((n/2),(n/2)^{\alpha})}} = y\right)P_{x}^{\omega^{0}_K}\left(T^1_{\mathcal{H}^{-}_{e_1}} < +\infty \right)\right] \\ &\leq \mathbf{E}\left[P_{x}^{\omega^{0}_K}\left(T^1_{\mathcal{H}^{-}_{e_1}} < +\infty \right)\right],
\end{align*}
where the notation $\omega^{0}_{K}$ means that we have set the conductances of $\mathcal{E}_0$ to $K$. Then, we notice that $P_{x}^{\omega^{0}_K}(T^1_{\mathcal{H}^{-}_{e_1}} < +\infty )$ does not depend on the conductances of $\mathcal{E}_0$, so it is equal to $P_{x}^{\omega}(T^1_{\mathcal{H}^{-}_{e_1}} < +\infty )$. Using translation invariance of the environment and \cite[Lemma 3.1]{Kious_Frib} the sum is bounded by $Cn^{2d\alpha}\exp(-cn)$.

Let us dominate the fifth term. Using Lemma~\ref{Lemma2StoppingTimes} the term in the sum can be written as
\begin{align*}
    &\mathbf{E}\left[P_{0}^{\omega_{K}}\left(X^1_{T^1_{\partial \mathcal{B}((n/2)),(n/2)^{\alpha})}} = x,X^2_{T^2_{\partial \mathcal{B}((n/2),(n/2)^{\alpha})}} = y\right)P_{x,y}^{\omega^{0}_{K}}\left(T^1_{\mathcal{H}^{-}_{e_1}} = +\infty ,T^2_{\mathcal{H}^{-}_{e_1}} = +\infty ,\mathcal{L}_1\geq n\right)\right]\\
    &\leq \mathbf{E}\left[P_{x,y}^{\omega^{0}_{K}}\left(T^1_{\mathcal{H}^{-}_{e_1}} = +\infty ,T^2_{\mathcal{H}^{-}_{e_1}} = +\infty ,\mathcal{L}_1\geq n\right)\right].
\end{align*}
On the event $\{T^1_{\mathcal{H}^{-}_{e_1}} = +\infty ,T^2_{\mathcal{H}^{-}_{e_1}} = +\infty\}$ both walks never meet conductances in $\mathcal{E}_{0}$ and so we can replace $\omega^{0}_{K}$ with $\omega$. We write
\begin{align*}
    \mathbf{E}\left[P_{x,y}^{\omega^{0}_{K}}\left(T^1_{\mathcal{H}^{-}_{e_1}} = +\infty ,T^2_{\mathcal{H}^{-}_{e_1}} = +\infty ,\mathcal{L}_1\geq n\right)\right]&\leq \Pf_{x,y}(\mathcal{L}_1\geq n) \\&\leq \Pf_{0,y-x}(\mathcal{L}_1\geq n/4) \le Cn^{-M},
\end{align*}
where we applied Proposition~\ref{L1decay} since we have $|(y-x) \cdot \vec{\ell}| \le e_{1} \cdot \vec{\ell}$ because $x,y \in \partial^{+} \mathcal{B}((n/2),(n/2)^{\alpha})$ and the translation invariance of the environment. Finally, we observe that the sum is bounded by $ C n^{2d\alpha} n^{-(M+2d\alpha)} $ by taking $K$ large enough. This concludes the proof.
\end{proof}
\noindent We can define successive joint regeneration levels as
\begin{equation*}
    \mathcal{L}_{k+1} = \mathcal{L}_{k} \circ \theta_{T^1_{\mathcal{L}_1}, T^2_{\mathcal{L}_1}}.
\end{equation*}
Let us define
\begin{equation}
\begin{split}\label{eqn:SigmaAlgebraG_k}
    \mathcal{G}_k = \sigma &\bigg(\mathcal{L}_0,\dots,\mathcal{L}_k,(\tilde{X}^1_{T^1_{\mathcal{L}_k}\wedge m})_{m \geq 0},(\tilde{X}^2_{T^2_{\mathcal{L}_k}\wedge m})_{m \geq 0}, \\ &c_{*}(e) \text{ with } e \in E(\mathcal{H}^{-}(\mathcal{L}_k)) \cup \mathcal{E}^{2}_{X^1_{T^1_{\mathcal{L}_k}}} \cup \mathcal{E}^{2}_{X^2_{T^2_{\mathcal{L}_k}}}\bigg).
\end{split}
\end{equation}
The definition of $\mathcal{G}_k$ may seem a bit complicated at first sight. It represents the information that the walks see before reaching level $\mathcal{L}_k$ and also the whole environment on the left of $\mathcal{L}_k$.
\begin{figure}[H]
    \centering
    \includegraphics[scale =0.2]{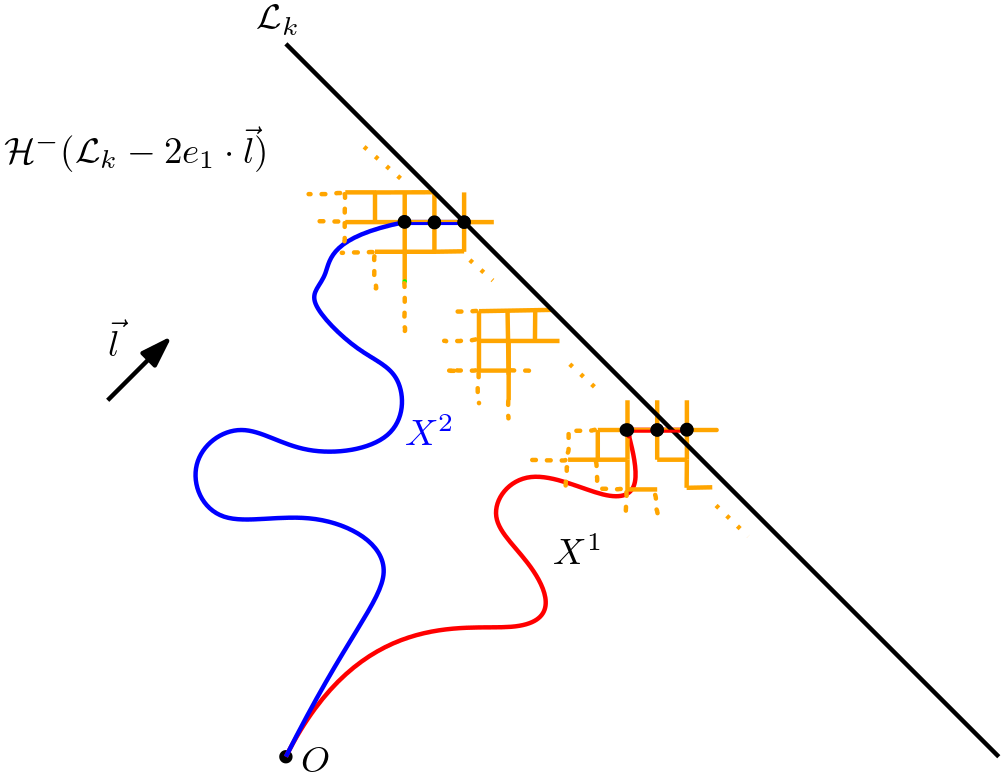}
    \caption{The sigma algebra $\mathcal{G}_k$: In red and blue the trajectories before reaching $\mathcal{L}_k$ and in orange the edges considered.}
    \label{fig:figG_k}
\end{figure}

We want to show that when we consider our walks shifted by $T^{i}_{\mathcal{L}_k}$ they behave like new walks starting from respectively $X^1_{T^1_{\mathcal{L}_k}}$ and $X^2_{T^2_{\mathcal{L}_k}}$ conditioned to never backtrack and such that $c_{*}(e) = K$ for $e \in \mathcal{E}_{X^1_{T^1_{\mathcal{L}_k}}} \cup \mathcal{E}_{X^2_{T^2_{\mathcal{L}_k}}}$. This is the purpose of the next theorem.

\begin{theorem} \label{IndependenceJointReg}
Fix $K$ large enough and $U_1,U_2 \in \mathbb{Z}^d$. For any $k \geq 1$, we have $\mathcal{L}_k < +\infty$ $\Pf_0$-almost surely. Let $f, g, h_k$ be bounded functions which are measurable with respect to $\sigma\left( (X^1_0,X^2_0), (\tilde{X}^1_n,\tilde{X}^2_n)_{n \geq 1}\right)$, $\sigma\left(c_{*}(e) \colon e \in \mathcal{R}^{X^1_0}  \cup \mathcal{R}^{X^2_0} \backslash (\mathcal{E}^1_{X^1_0} \cup \mathcal{E}^1_{X^2_0})\right)$ and $\mathcal{G}_k$, respectively. We have the following equality
\begin{equation*}
    \mathbb{E}_{\substack{(U_1,z_1)\\(U_2,z_2)}}\left[f\left(\tilde{X}^{1}_{T^{1}_{\mathcal{L}_k}+\cdot},\tilde{X}^{2}_{T^{2}_{\mathcal{L}_k}+\cdot}\right)g\circ \theta_{T^1_{\mathcal{L}_k}, T^2_{\mathcal{L}_k}}h_k\right] = \mathbb{E}_{\substack{(U_1,z_1)\\(U_2,z_2)}}\left[h_k\mathbb{E}^K_{X^1_{T^{1}_{\mathcal{L}_k}},X^2_{T^{2}_{\mathcal{L}_k}}}\left[f(\tilde{X}^1_{\cdot},\tilde{X}^{2}_{\cdot})g \text{ } | \text{ } \mathcal{D}^{\bullet} = +\infty\right]\right].
\end{equation*}
\end{theorem}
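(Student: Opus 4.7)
Theorem~\ref{IndependenceJointReg} is the two-walk analogue of the Markov-type property established at a single regeneration time in \cite[Theorem~5.4]{Kious_Frib}, and I will follow that blueprint adapted to joint regeneration levels. I proceed in three stages: (i) a.s.\ finiteness of $\mathcal{L}_k$ by induction; (ii) the desired identity for $k=1$ via an enumeration/decomposition argument combining the two-walk Markov property, Proposition~\ref{omegaK}, and $\mathbf{P}$-independence; (iii) the general $k$ by iteration.

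\textbf{Finiteness.} Proposition~\ref{L1decay} gives $\mathcal{L}_1<+\infty$ $\Pf_{U_1,U_2}$-a.s.\ for arbitrary starting points in $\mathcal{U}$. For the inductive step, I will apply the Markov-type identity at $\mathcal{L}_{k-1}$ (induction hypothesis) with $f=\mathds{1}_{\{\mathcal{L}_1<+\infty\}}$ and $g=h_{k-1}=1$; this reduces the question to whether $\mathcal{L}_1<+\infty$ under $\Pf^K_{y_1,y_2}(\,\cdot\mid\mathcal{D}^{\bullet}=+\infty)$, which follows from Proposition~\ref{L1decaybis} together with the uniform lower bound on $\Pf^K_{y_1,y_2}(\mathcal{D}^{\bullet}=+\infty)$ coming from Lemma~\ref{LemmaDpositive}.

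\textbf{Case $k=1$.} I partition the left-hand side according to $\mathcal{L}_1=L$, the regeneration endpoints $X^i_{T^i_L}=y_i$, the enhanced-walk trajectories $\pi^i$ of $\tilde X^i$ up to time $T^i_L$, and the conductance configuration $c_-$ on the edges of $E(\mathcal{H}^-(L))\cup \mathcal{E}^{2}_{y_1}\cup \mathcal{E}^{2}_{y_2}$. On such a slice, $h_1$ and the whole ``pre-$L$'' datum are fully determined, $y_1,y_2$ are $K$-open, and the only remaining requirement for $\{\mathcal{L}_1=L\}$ is $\mathcal{D}^{\bullet}\circ\theta_{T^1_L,T^2_L}=+\infty$. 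Applying the two-walk strong Markov property (Lemma~\ref{Lemma2StoppingTimes}) at $(T^1_L,T^2_L)$ factors the quenched probability of such a slice into a pre-$L$ term (measurable in $\mathcal{G}_1$) and a post-$L$ term of the form $E^\omega_{y_1,y_2}[f(\tilde X^1,\tilde X^2)\,g\,\mathds{1}_{\mathcal{D}^{\bullet}=+\infty}]$. Since $y_1,y_2$ are $K$-open, Proposition~\ref{omegaK} lets me substitute $\omega_K$ for $\omega$ in this post-$L$ term, after which it depends on $\omega$ only through conductances in $\mathcal{R}^{y_1}\cup\mathcal{R}^{y_2}\setminus(\mathcal{E}^{1}_{y_1}\cup\mathcal{E}^{1}_{y_2})$. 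These edges are $\mathbf{P}$-independent of the pre-$L$ edge data of $\mathcal{G}_1$, so the $\mathbf{E}$-expectation factorises. Summing back over $(L,y_1,y_2,\pi^i,c_-)$, the post-$L$ factor reassembles into $\mathbb{E}^K_{y_1,y_2}[fg\,;\,\mathcal{D}^{\bullet}=+\infty]$, and the normalising $\Pf^K_{y_1,y_2}(\mathcal{D}^{\bullet}=+\infty)^{-1}$ needed to match the conditional expectation on the right-hand side is produced by the geometric summation over the failed attempts $L_0,\ldots,L_{N-1}$ implicit in the enumeration, exactly as in \cite[Theorem~5.4]{Kious_Frib}.

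\textbf{Iteration in $k$ and main obstacle.} For $k\ge 2$ I use the identity $\mathcal{L}_k=\mathcal{L}_1+\mathcal{L}_{k-1}\circ\theta_{T^1_{\mathcal{L}_1},T^2_{\mathcal{L}_1}}$ and apply the $k=1$ identity with a modified integrand, then invoke the induction hypothesis inside the inner conditional expectation $\mathbb{E}^K_{y_1,y_2}[\,\cdot\mid\mathcal{D}^{\bullet}=+\infty]$. The main technical difficulty throughout is the careful $\mathbf{P}$-independence bookkeeping around the pair of regeneration points $y_1,y_2$: the $K$-open condition at both points must be combined with the asymmetric choice of $\mathcal{E}^{1}$ versus $\mathcal{E}^{2}$ in the definitions of $\mathcal{G}_k$ and of the domain of $g$, so that the pre-$L$ and post-$L$ edge sets are genuinely disjoint, especially when $y_1$ and $y_2$ lie at nearby but distinct levels (at a joint ladder point one only has $(y_1\cdot\vec\ell)\wedge(y_2\cdot\vec\ell)=L$, the other point possibly one $e_1$-step above $L$). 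Extracting the $\Pf^K(\mathcal{D}^{\bullet}=+\infty)^{-1}$ normalisation from the sum over failed attempts is the other non-trivial point that distinguishes this argument from a direct strong Markov property.
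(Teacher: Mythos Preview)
Your overall plan matches the paper's proof: decompose over the index $k$ of the successful attempt, the level $p=L_k$, and the endpoints; apply the two-walk strong Markov property (Lemma~\ref{Lemma2StoppingTimes}) at $(T^1_p,T^2_p)$; pass from $\omega$ to $\omega_K$ via Proposition~\ref{omegaK}; then use $\mathbf{P}$-independence of the left and right edge sets. Two points, however, need correction.

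First, your account of the normalisation is wrong. After the independence step one obtains, on each slice, a pre-$p$ factor $\mathbb{E}^a\big[h_1,\,L_k=p,\,\tilde X^i_{T^i_p}=\ldots\big]$ (with \emph{no} $M_k=+\infty$ restriction) times a post-$p$ factor $\mathbb{E}^K_{x,x'}\big[fg\,;\,\mathcal{D}^\bullet=+\infty\big]$. To produce the conditional expectation on the right-hand side one must divide by $\Pf^K_{x,x'}(\mathcal{D}^\bullet=+\infty)$, and this does \emph{not} come from any geometric sum over failed attempts: those attempts are already fully accounted for inside the event $\{L_k=p\}$. What the paper does instead is specialise the identity just derived to $f=g=1$, giving~\eqref{eqn:D=inftyOtherSide}, namely
\[
\mathbb{E}^a\big[h_1,\,L_k=p,\,M_k=+\infty,\ldots\big]=\mathbb{E}^a\big[h_1,\,L_k=p,\ldots\big]\cdot\Pf^K_{x,x'}(\mathcal{D}^\bullet=+\infty).
\]
Substituting back reinstates $M_k=+\infty$ in the pre-$p$ factor and yields the conditional. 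This $f=g=1$ device (the standard Sznitman--Zerner trick) is the step that should replace your ``geometric summation''.

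Second, for the induction to close you must prove the $k=1$ case under $\mathbb{P}^a$ with the conductances of $\mathcal{E}_{U_1}\cup\mathcal{E}_{U_2}$ fixed to an arbitrary $a\in[1/K,K]^{\mathcal{E}_{U_1}\cup\mathcal{E}_{U_2}}$, not only under the plain annealed law. The paper is explicit about this: at step $k$ you land inside $\mathbb{E}^K_{y_1,y_2}\big[\,\cdot\mid\mathcal{D}^\bullet=+\infty\big]$, where the starting conductances are already pinned to $K$, so the induction hypothesis in the form you state would not apply.
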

\begin{remark1}
    In the Theorem, $g$ can be written as $\tilde{g}((X^1_n)_{n \ge 0}, (X^2_n)_{n \ge 0}, \omega)$ where $\tilde{g}$ is a positive measurable function. Thus $g\circ \theta_{T^1_{\mathcal{L}_k}, T^2_{\mathcal{L}_k}} $ designates $\tilde{g}((X^1_n)_{n \ge T^1_{\mathcal{L}_k}}, (X^2_n)_{n \ge T^2_{\mathcal{L}_k}}, \omega)$.
\end{remark1}
\begin{proof}
The proof can be done by induction. We can restrict to the case $k = 1$, but we have to prove the result when the conductances in $\mathcal{E}_{U_1}\cup \mathcal{E}_{U_2}$ are fixed to a deterministic vector $a \in  [1/K,K]^{\mathcal{E}_{U_1}\cup \mathcal{E}_{U_2}}$ 
\begin{equation*}
        \mathbb{E}^a_{\substack{(U_1,z_1)\\(U_2,z_2)}}\left[f\left(\tilde{X}^{1}_{T^{1}_{\mathcal{L}_1}+\cdot},\tilde{X}^{2}_{T^{2}_{\mathcal{L}_1}+\cdot}\right)g\circ \theta_{T^1_{\mathcal{L}_k}, T^2_{\mathcal{L}_k}}h_1\right] = \mathbb{E}^a_{\substack{(U_1,z_1)\\(U_2,z_2)}}\left[h_1\mathbb{E}^K_{X^1_{T^{1}_{\mathcal{L}_1}},X^2_{T^{2}_{\mathcal{L}_1}}}\left[f(\tilde{X}^1_{\cdot},\tilde{X}^{2}_{\cdot})g \text{ } | \text{ } \mathcal{D}^{\bullet} = +\infty\right]\right].
\end{equation*}
Here the notation $\mathbb{E}^{a}$ means that we fix the conductances in $\mathcal{E}_{U_1}\cup \mathcal{E}_{U_2}$ to $a$. The proof with $\mathbb{E}$ follows the same steps.

By definition of $\mathcal{L}_1$, we have
    \begin{align*}
       \displaystyle &\mathbb{E}^a_{\substack{(U_1,z_1)\\(U_2,z_2)}}\left[f\left(\tilde{X}^{1}_{T^{1}_{\mathcal{L}_1}+\cdot},\tilde{X}^{2}_{T^{2}_{\mathcal{L}_1}+\cdot}\right)g\circ \theta_{T^1_{\mathcal{L}_1}, T^2_{\mathcal{L}_1}}h_1\right] \\
        &= \somme{k \geq 0}{}{\mathbb{E}^a_{\substack{(U_1,z_1)\\(U_2,z_2)}}\left[f\left(\tilde{X}^{1}_{T^{1}_{L_k}+\cdot},\tilde{X}^{2}_{T^{2}_{L_k}+\cdot}\right)g\circ \theta_{T^1_{L_k}, T^2_{L_k}}h_1,L_k <+\infty, M_k = +\infty\right]}\\
        &= \somme{\substack{k \geq 0 \\ p \geq 0}, \substack{(x,z)\\(x',z')}}{}{\mathbb{E}^a_{\substack{(U_1,z_1)\\(U_2,z_2)}}\left[f\left(\tilde{X}^{1}_{T^{1}_{p}+\cdot},\tilde{X}^{2}_{T^{2}_{p}+\cdot}\right)g\circ \theta_{T^1_p, T^2_p} h_1,L_k =p, M_k = +\infty,\tilde{X}^1_{T^{1}_{p}} = (x,z),\tilde{X}^2_{T^{2}_p} = (x',z')\right]}.
    \end{align*}
For $p,(x,z),(x',z')$ in the sum and following the arguments of \cite[Proposition 1.3]{Sznitman_Zerner}, there exists $h_1'$, measurable with respect to 
\begin{equation}\label{eqn:SigmaAlgebraCoupling}
    \sigma \left((\tilde{X}^1_{T^1_{p}\wedge m})_{m \geq 0},(\tilde{X}^2_{T^2_{p}\wedge m})_{m \geq 0}, c_{*}(e) \text{ with } e \in (\mathcal{L}^{x} \cap \mathcal{L}^{y}) \cup \mathcal{E}^{2}_{x} \cup \mathcal{E}^{2}_{x'}\right), 
\end{equation}
such that on the event $\{\tilde{X}^1_{T^{1}_p} = (x,z)\}\cap \{ \tilde{X}^2_{T^{2}_p} = (x',z')\} \cap \{\mathcal{L}_1 = p\}$
the functions $h_1$ and $h^{'}_1$ coincide.

Fix $k,p \geq 0$ and $(x,z),(x',z')$, in the sum. We can replace $h_1$ by $h_1'$, but to simplify notations we will still write $h_1$ and suppose it is measurable with respect to the $\sigma$-algebra in \eqref{eqn:SigmaAlgebraCoupling}. We can do the same with $g \circ \theta_{T^1_p, T^2_p}$, indeed we can consider $g$ as a function of $(X^1_0,X^2_0,\omega)$ and then under $\{\mathcal{L}_1 = p,X^1_{T^1_{\mathcal{L}_1}} = x,X^2_{T^2_{\mathcal{L}_1}} = x' \}$ we can replace $g$ by a function $g^{(x,x')}$ which is measurable with respect to
\begin{equation*}
    \sigma\left(c_{*}(e) \colon e \in \mathcal{R}^{x} \cup \mathcal{R}^{x'} \backslash (\mathcal{E}^1_{x} \cup \mathcal{E}^1_{x'})\right).
\end{equation*}
The term in the sum can be written as
    \begin{align*}
        &\mathbb{E}^a_{\substack{(U_1,z_1)\\(U_2,z_2)}}\left[f(\tilde{X}^{1}_{T^{1}_{p}+\cdot},\tilde{X}^{2}_{T^{2}_{p}+\cdot})g^{(x,x')}h_1,L_k =p, M_{k} = +\infty,\tilde{X}^1_{T^{1}_p} = (x,z),\tilde{X}^2_{T^{2}_p} = (x',z')\right]\\
        &=\mathbf{E}\left[E^{\omega_a}_{\substack{(U_1,z_1)\\(U_2,z_2)}}\left[h_1,L_k =p,\tilde{X}^1_{T^{1}_p} = (x,z),\tilde{X}^2_{T^{2}_p} = (x',z')\right]\times g^{(x,x')}E^{\omega_a}_{\substack{(x,z)\\(x',z')}}\left[f(\tilde{X}^{1}_{\cdot},\tilde{X}^{2}_{\cdot}), \mathcal{D}^{\bullet} = +\infty\right]\right].
    \end{align*}
We used Lemma~\ref{Lemma2StoppingTimes} with $\tilde{X}^1$ and $\tilde{X}^2$ and with the stopping times $T^1_{p}$ and $T^2_{p}$. We recall the notation $\omega_a$ which means we fix the conductances in $\mathcal{E}_{U_1}\cup \mathcal{E}_{U_2}$ to $a$. In the following, we will use the more compact notation defined in \eqref{eqn:ShorterNotationJointLaw} with $\mathbf{x} = (x, x')$ and $\mathbf{z} = (z, z')$.

We want to stress that in the expression $E^{\omega_a}_{(\mathbf{x}, \mathbf{z})}[f(\tilde{X}^{1}_{\cdot},\tilde{X}^{2}_{\cdot}), \mathcal{D}^{\bullet} = +\infty]$, with a little abuse of the notation, we fix to the value $a$ the conductances in $\mathcal{E}_{U_1}\cup \mathcal{E}_{U_2}$ and not the ones in $\mathcal{E}_{x}\cup \mathcal{E}_{x'}$ as one might expect. Moreover, thanks to the fact that on $\{\mathcal{D}^{\bullet} = +\infty\}$ $P^{\omega_a}_{(\mathbf{x}, \mathbf{z})}$-almost surely the walks never backtrack below $x$ and $x'$, the value of the conductances in $\mathcal{E}_{U_1}\cup \mathcal{E}_{U_2}$ does not matter in the expectation so we can replace $\omega_a$ with $\omega$ and write $E^{\omega}_{(\mathbf{x}, \mathbf{z})}[f(\tilde{X}^{1}_{\cdot},\tilde{X}^{2}_{\cdot}), \mathcal{D}^{\bullet} = +\infty]$.

Then, since under $\{L_k =p,\tilde{X}^1_{T^{1}_p} = (x,z),\tilde{X}^2_{T^{2}_p} = (x',z')\}$ the points $x$ and $x'$ are $K$-open and using Proposition~\ref{omegaK}, we can replace $\omega$ with $\omega_K$ in $E^{\omega}_{(\mathbf{x}, \mathbf{z})}[f(\tilde{X}^{1}_{\cdot},\tilde{X}^{2}_{\cdot}), \mathcal{D}^{\bullet} = +\infty]$. Notice that $f$ and $\mathcal{D}^{\bullet}$ are measurable with respect to $\sigma \big(  X^1_0,X^2_0,(\tilde{X}^1_{n})_{n \ge 1},(\tilde{X}^2_{n})_{n\ge 1} \big)$ and thus can be written as $\phi(X^1_0,X^2_0,(\tilde{X}^1_{n})_{n \ge 1},(\tilde{X}^2_{n})_{n\ge 1})$ with $\phi$ measurable. Since the law of $(  X^1_0,X^2_0,(\tilde{X}^1_{n})_{n \ge 1},(\tilde{X}^2_{n})_{n\ge 1})$ is the same under $P^{\omega}_{(\mathbf{x}, \mathbf{z})}$ and under $P^{\omega}_{(\mathbf{x}, 0)}$, we replace $z,z'$ by $0$ in $E^{\omega_K}_{(\mathbf{x}, \mathbf{z})}[f(\tilde{X}^{1}_{\cdot},\tilde{X}^{2}_{\cdot}), \mathcal{D}^{\bullet} = +\infty]$.

We observe that $E^{\omega_K}_{\substack{x,x'}}[f(\tilde{X}^{1}_{\cdot},\tilde{X}^{2}_{\cdot}), \mathcal{D}^{\bullet} = +\infty]$ is measurable with respect to 
\begin{equation*}
    \sigma\left(c_{*}(e) \colon e \in \mathcal{R}^{x} \cup \mathcal{R}^{x'} \backslash (\mathcal{E}^1_{x} \cup \mathcal{E}^1_{x'})\right).
\end{equation*}
We highlight the fact that, for this last observation to be true, an essential role is played by the condition $X^i_{n} \in \mathcal{V}_{X^1_{0} - e_1} \cup \mathcal{V}_{X^2_{0} - e_1}$ in \eqref{definition_ENV}. Indeed, without it the sigma algebra above becomes $\sigma(c_{*}(e) \colon e \in \mathcal{R}^{x} \cup \mathcal{R}^{x'} \backslash (\mathcal{E}_{x} \cup \mathcal{E}_{x'}))$ and we would lose the independence properties that follow.
Moreover, $E^{\omega_a}_{\substack{(U_1,z_1)\\(U_2,z_2)}}[h_1,L_k =p,\tilde{X}^1_{T^{1}_p} = (x,z),\tilde{X}^2_{T^{2}_p} = (x',z')]$ is measurable with respect to 
\begin{equation*}
    \sigma \left(c_{*}(e) \text{ with } e \in E(\mathcal{H}^{-}(p)) \cup \mathcal{E}^{2}_{x} \cup \mathcal{E}^{2}_{x'}\right).
\end{equation*}
In addition, $\mathcal{R}^{x} \cup \mathcal{R}^{x'} \backslash (\mathcal{E}^1_{x} \cup \mathcal{E}^1_{x'})$ and $E(\mathcal{H}^{-}(p)) \cup \mathcal{E}^{2}_{x} \cup \mathcal{E}^{2}_{x'}$ can be proved to be disjoint in an elementary way as illustrated in the figure below.
\begin{figure}[H]
    \centering
    \includegraphics[scale = 0.23]{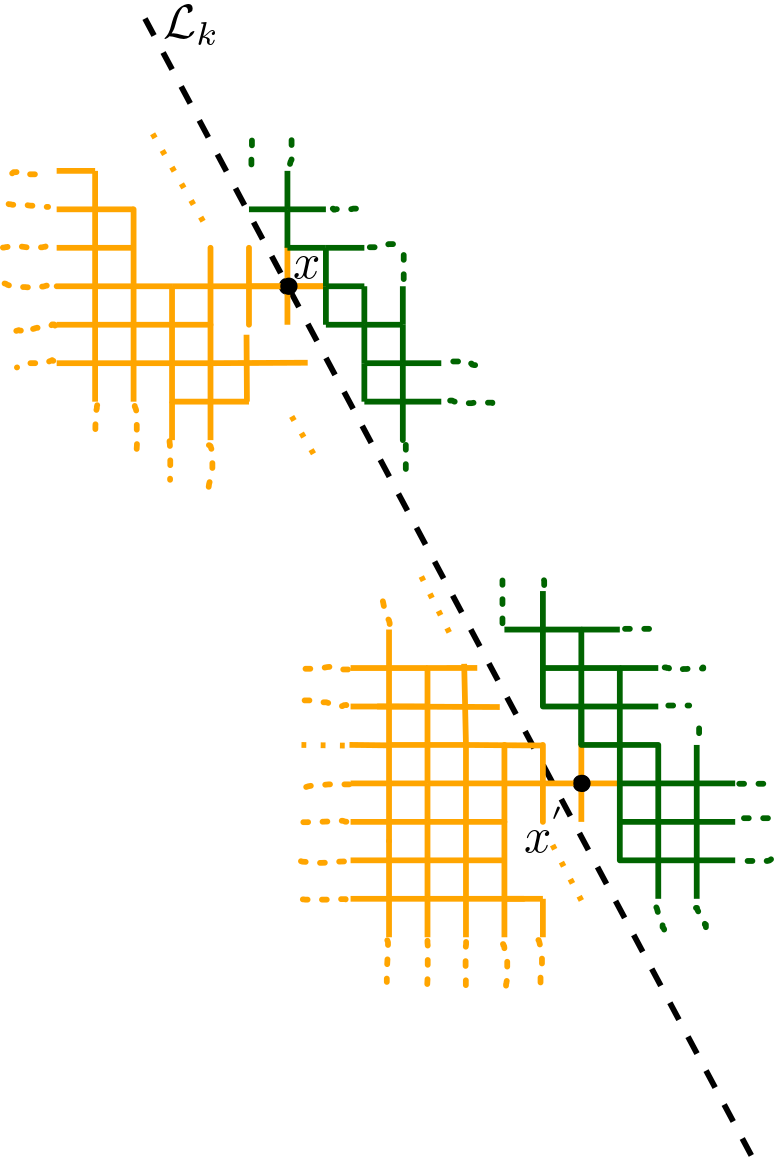}
    \caption{In orange $E(\mathcal{H}^{-}(p)) \cup \mathcal{E}^{2}_{x} \cup \mathcal{E}^{2}_{x'}$ and in green $\mathcal{R}^{x} \cap \mathcal{R}^{x'} \backslash (\mathcal{E}_{x} \cup \mathcal{E}_{x'})$.}
    \label{disjoint_sets}
\end{figure}
\noindent By independence of the two $\sigma$-algebra we deduce that 
\begin{align*}
    &\mathbb{E}^a_{\substack{(U_1,z_1)\\(U_2,z_2)}}\left[f(\tilde{X}^{1}_{T^{1}_{p}+\cdot},\tilde{X}^{2}_{T^{2}_{p}+\cdot})g^{(x,x')}h_1,L_k =p, M_k = +\infty,\tilde{X}^1_{T^{1}_p} = (x,z),\tilde{X}^2_{T^{2}_p} = (x',z')\right]\\
    &=\mathbb{E}^a_{\substack{(U_1,z_1)\\(U_2,z_2)}}\left[h_1,L_k =p,\tilde{X}^1_{T^{1}_p} = (x,z),\tilde{X}^2_{T^{2}_p} = (x',z')\right]\mathbb{E}^{K}_{\substack{x,x'}}\left[gf(\tilde{X}^{1}_{.},\tilde{X}^{2}_{.}),\mathcal{D}^{\bullet} =+\infty\right].
\end{align*}
The last equality, used in the specific case where $f = 1$ and $g = 1$, gives
\begin{equation}\label{eqn:D=inftyOtherSide}
\begin{split}
    &\mathbb{E}^a_{\substack{(U_1,z_1)\\(U_2,z_2)}}\left[h_1,L_k =p, M_k = +\infty,\tilde{X}^1_{T^{1}_p} = (x,z),\tilde{X}^2_{T^{2}_p} = (x',z')\right]\\
    &=\mathbb{E}^a_{\substack{(U_1,z_1)\\(U_2,z_2)}}\left[h_1,L_k =p,\tilde{X}^1_{T^{1}_p} = (x,z),\tilde{X}^2_{T^{2}_p} = (x',z')\right]\Pf^{K}_{\substack{x,x'}}(\mathcal{D}^{\bullet} =+\infty).
\end{split}
\end{equation}
Hence, we can write
\begin{align*}
& \mathbb{E}^a_{\substack{(U_1,z_1)\\(U_2,z_2)}}\left[h_1,L_k =p,\tilde{X}^1_{T^{1}_p} = (x,z),\tilde{X}^2_{T^{2}_p} = (x',z')\right]\mathbb{E}^{K}_{\substack{x,x'}}\left[gf(\tilde{X}^{1}_{\cdot},\tilde{X}^{2}_{\cdot}),\mathcal{D}^{\bullet} =+\infty\right] \\
&= \mathbb{E}^a_{\substack{(U_1,z_1)\\(U_2,z_2)}}\left[h_1,L_k =p, M_k = +\infty,\tilde{X}^1_{T^{1}_p} = (x,z),\tilde{X}^2_{T^{2}_p} = (x',z')\right]\mathbb{E}^{K}_{\substack{x,x'}}\left[gf(\tilde{X}^{1}_{\cdot},\tilde{X}^{2}_{\cdot}) \text{ }| \text{ }\mathcal{D}^{\bullet} =+\infty\right].
\end{align*}
By taking the sum, we obtain
\begin{align*}
    &\somme{\substack{k \geq 0 \\ p \geq 0}, \substack{(x,z)\\(x',z')}}{}{\mathbb{E}^a_{\substack{(U_1,z_1)\\(U_2,z_2)}}\left[h_1,L_k =p, M_k = +\infty,\tilde{X}^1_{T^{1}_p} = (x,z),\tilde{X}^2_{T^{2}_p} = (x',z')\right]\mathbb{E}^{K}_{\substack{x,x'}}\left[gf(\tilde{X}^{1}_{\cdot},\tilde{X}^{2}_{\cdot}) \text{ }| \text{ }\mathcal{D}^{\bullet} =+\infty\right]}\\
    &=  \mathbb{E}^a_{\substack{(U_1,z_1)\\(U_2,z_2)}}\left[h_1\mathbb{E}^K_{X^1_{T^{1}_{\mathcal{L}_1}},X^2_{T^{2}_{\mathcal{L}_1}}}\left[f(\tilde{X}^1_{\cdot},\tilde{X}^{2}_{\cdot})g \text{ } | \text{ } \mathcal{D}^{\bullet} = +\infty\right]\right].
\end{align*}
For the induction, if the formula has been proved for $k$, from Proposition~\ref{PropFrequentRegLevels} we get that $\mathcal{L}_{k+1}$ is almost surely finite. The proof works in a similar way to go from $k$ to $k+1$ and the details of the adaptation can be found in \cite{Lian}.
\end{proof}
\noindent A consequence of this theorem is the analogous of Proposition~\ref{L1decay} but in a more general setting.
\begin{proposition}\label{PropositionFrequentRegeneration}
For any $M > 0$, $U_1,U_2 \in \mathbb{Z}^d$ and any $k > 0$,
\begin{equation*}
    \Pf_{U_1,U_2}(\mathcal{L}_{k+1} - \mathcal{L}_k \geq n) \leq Cn^{-M},
\end{equation*}
where $C > 0$ only depends on $\lambda,\vec{\ell},d,K$.
\end{proposition}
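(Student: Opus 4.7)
The plan is to reduce the proposition to the $k=0$ case (which is Proposition~\ref{L1decaybis}) via the Markov-type property at joint regeneration levels established in Theorem~\ref{IndependenceJointReg}.

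By construction, $\mathcal{L}_{k+1} - \mathcal{L}_k$ coincides with the first joint regeneration level, minus its starting level, for the pair of walks shifted by $(T^1_{\mathcal{L}_k}, T^2_{\mathcal{L}_k})$. In particular, $\{\mathcal{L}_{k+1} - \mathcal{L}_k \ge n\}$ is measurable with respect to the joint $\sigma$-algebra generated by $(\tilde{X}^1_{T^1_{\mathcal{L}_k}+\cdot}, \tilde{X}^2_{T^2_{\mathcal{L}_k}+\cdot})$ and the conductances on $\mathcal{R}^{X^1_{T^1_{\mathcal{L}_k}}} \cup \mathcal{R}^{X^2_{T^2_{\mathcal{L}_k}}} \setminus (\mathcal{E}^1_{X^1_{T^1_{\mathcal{L}_k}}} \cup \mathcal{E}^1_{X^2_{T^2_{\mathcal{L}_k}}})$. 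Applying Theorem~\ref{IndependenceJointReg} with $h_k \equiv 1$, and extending it from product test functions $f \cdot g$ to joint bounded measurable functions of the shifted walks and these right-conductances by a standard Monotone Class argument (which goes through because $f$ and $g$ are measurable with respect to $\mathbf{P}$-independent $\sigma$-algebras, as exhibited in the proof of Theorem~\ref{IndependenceJointReg}), would give
\begin{equation*}
\Pf_{U_1,U_2}\!\left(\mathcal{L}_{k+1} - \mathcal{L}_k \ge n\right) = \mathbb{E}_{U_1,U_2}\!\left[ \Psi\!\left(X^1_{T^1_{\mathcal{L}_k}}, X^2_{T^2_{\mathcal{L}_k}}\right) \right],
\end{equation*}
where $\Psi(x, x') \coloneqq \Pf^K_{x,x'}\!\left(\mathcal{L}_1 - (x \cdot \vec{\ell} \wedge x' \cdot \vec{\ell}) \ge n \,\big|\, \mathcal{D}^{\bullet} = +\infty\right)$.

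At any joint $K$-open ladder level, the two walks differ by less than $e_1 \cdot \vec{\ell}$ in the direction $\vec{\ell}$: by the definition of $\mathcal{M}^{\bullet}$ in~\eqref{definition_M}, both $X^i_{T^i_R}$ are new maxima whose $\vec{\ell}$-minimum equals exactly $R$, and a single step has $\vec{\ell}$-component of absolute value at most $e_1 \cdot \vec{\ell}$. Hence, translating by $-x$, the starting configuration $(0, x' - x)$ has $x' - x \in \mathcal{U}$, and by translation invariance of $\mathbf{P}$ we obtain $\Psi(x, x') \le \Pf^K_{0, x'-x}\!\left(\mathcal{L}_1 \ge n - e_1 \cdot \vec{\ell} \,|\, \mathcal{D}^{\bullet} = +\infty\right)$. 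Proposition~\ref{L1decaybis} bounds the unconditioned version by $C n^{-M}$ uniformly in $x' - x \in \mathcal{U}$ (the $O(1)$ additive shift being absorbed in the constant), and the conditioning on $\{\mathcal{D}^{\bullet} = +\infty\}$ contributes at most a multiplicative factor $1/\eta$ by Lemma~\ref{LemmaDpositive}. Taking expectation in $(x, x')$ concludes the argument.

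I expect the main obstacle to be the formalisation of the Monotone Class extension of Theorem~\ref{IndependenceJointReg} from product test functions to joint measurable functions, together with the careful verification that $\{\mathcal{L}_{k+1}-\mathcal{L}_k \ge n\}$ genuinely lies in the corresponding product $\sigma$-algebra. Both are routine given the independence structure already exploited in the proof of Theorem~\ref{IndependenceJointReg}, after which the proposition follows by combining the three ingredients above.
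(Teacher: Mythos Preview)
Your proposal is correct and follows essentially the same route as the paper: decompose over the positions $(x,x')$ at level $\mathcal{L}_k$, apply Theorem~\ref{IndependenceJointReg} to reduce to $\Pf^K_{x,x'}(\mathcal{L}_1 - p \ge n \mid \mathcal{D}^{\bullet}=+\infty)$, remove the conditioning via Lemma~\ref{LemmaDpositive}, translate so that $x'-x \in \mathcal{U}$, and invoke Proposition~\ref{L1decaybis}. The paper leaves implicit the Monotone Class extension of Theorem~\ref{IndependenceJointReg} from products $f\cdot g$ to joint functionals of the shifted walks and right-conductances that you flag explicitly; your justification for why this extension is routine is adequate.
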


\begin{proof}
We write 
\begin{align*}
    \Pf_{U_1,U_2}(\mathcal{L}_{k+1} - \mathcal{L}_k \geq n)  &= \somme{p,x,x'}{}{\mathbb{E}_{U_1,U_2}\left[\mathcal{L}_k = p , X^{1}_{T^1_p} = x, X^{2}_{T^2_p} = x', \mathcal{L}_{k+1} - \mathcal{L}_k \geq n\right]}\\
    & = \somme{p,x,x'}{}{\mathbb{E}_{U_1,U_2}\left[\mathcal{L}_k = p , X^{1}_{T^1_p} = x, X^{2}_{T^2_p} = x', \mathcal{L}_1 \circ \theta_{T^1_{x},T^2_{x'}} - p  \geq n\right]}\\
    & = \somme{p,x,x'}{}{\mathbb{E}_{U_1,U_2}\left[\mathcal{L}_k = p , X^{1}_{T^1_p} = x, X^{2}_{T^2_p} = x',\Pf^{K}_{x,x'}(\mathcal{L}_1 - p \geq n \text{ } | \text{ }\mathcal{D}^{\bullet} =+\infty)\right]},
\end{align*}
where the third equality follows from Theorem~\ref{IndependenceJointReg}. We only have to dominate $\Pf^{K}_{x,x'}(\mathcal{L}_1 - p \geq n \text{ } | \text{ }\mathcal{D}^{\bullet} =+\infty)$ to conclude.

To do so, using the fact that $\Pf_{x,x'}^{K}(\mathcal{D}^{\bullet} =+\infty) > \alpha > 0$ with $\alpha$ which does not depend on $x$ and $x'$, we can see it suffices to bound $\Pf^{K}_{x,x'}(\mathcal{L}_1 - p \geq n )$. Set $y = x' - x$, by the invariance in law of the environment under translations, we have $\Pf^{K}_{x,x'}(\mathcal{L}_1 - p \geq n ) = \Pf^{K}_{0,y}(\mathcal{L}_1 -p + x\cdot\vec{\ell} \geq n ) $. Notice that we have $|x\cdot \vec{\ell} - p| < e_{1} \cdot \vec{\ell}$, which follows from the fact that $X^1_{T^{1}_p} = x$. The result follows from Proposition~\ref{L1decaybis}.
\end{proof}

\section{Asymptotic separation}

The aim of this section is to prove the following result: the trajectories of the two walks do not cross in the half-space $\mathcal{H}^{+}_R$, with high probability as $R$ grows. We recall that $\mathcal{H}^{+}_{R} = \{x \in \mathbb{Z}^d \text{ }|\text{ } x\cdot \vec{\ell} > R\}$ and let $\{X_n^i\} \coloneqq \{ z \in \mathbb{Z}^d \colon \exists n \ge 0 \text{ s.t. } X^i_n = z\}$ denote the range of the $i$-th random walk. We introduce the event that the traces of the walks in $\mathcal{H}^{+}_R$ get close
\begin{align} \label{definition_MN}
    \mathbf{M}_R \coloneqq \bigg\{ \inf_{\substack{x \in \mathcal{H}^{+}_{R}\cap \{X_n^1\}\\y \in \mathcal{H}^{+}_{R}\cap \{X_n^2\}}}\|x-y\|_{1}\leq 2\bigg\}.
\end{align}
\begin{proposition} \label{LemmaDistantWalks}
For any $R \ge 0$, it holds that
\begin{equation*}
    \mathbb{P}_{0}(\mathbf{M}_R ) \le CR^{-c},
\end{equation*}
where the constants $C,c > 0$ only depend on $\lambda,\vec{\ell},d$.
\end{proposition}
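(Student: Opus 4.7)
The plan is to use the joint regeneration structure developed in Section~\ref{JoiRegLev} to reduce $\mathbf{M}_R$ to a transverse separation estimate at successive joint regeneration levels, inspired by the approach of \cite{Berger_Zeitouni} but adapted to the absence of uniform ellipticity and ballisticity. Combining Proposition~\ref{L1decay}, Proposition~\ref{PropositionFrequentRegeneration}, Theorem~\ref{IndependenceJointReg} and tail estimates on each walk's transverse excursion inside a single joint regeneration slab (in the spirit of Lemma~\ref{LemmaSdecayexp}), one obtains an event $\mathcal{E}_R$ with $\mathbb{P}_0(\mathcal{E}_R^c) \le C R^{-M}$ for any fixed $M$, on which there are at least $c R$ joint regeneration levels in $[0, R]$, every consecutive gap $\mathcal{L}_{k+1} - \mathcal{L}_k$ is at most $R^{1/10}$, and each walk's transverse extension inside every such slab is at most $R^{1/10}$.

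Next, let $\Pi_{\perp}$ denote the orthogonal projection onto the hyperplane orthogonal to $\vec{\ell}$, and set $\Delta_k^{\perp} := \Pi_{\perp}(X^1_{T^1_{\mathcal{L}_k}} - X^2_{T^2_{\mathcal{L}_k}})$. By Theorem~\ref{IndependenceJointReg} and translation invariance of $\mathbf{P}$, the process $(\Delta_k^{\perp})_{k \ge 1}$ is a Markov chain under $\mathbb{P}_0$ whose transition from a state $\delta$ is governed by the law of $\Pi_{\perp}(X^1_{T^1_{\mathcal{L}_1}} - X^2_{T^2_{\mathcal{L}_1}})$ under $\mathbb{P}^K_{0,\delta}(\cdot \mid \mathcal{D}^{\bullet} = +\infty)$. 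The key input is a uniform transverse spreading estimate: there exist constants $c_0, \sigma_0 > 0$ such that, uniformly in $\delta$, the increment $\Delta_{k+1}^{\perp} - \Delta_k^{\perp}$ has magnitude at least $c_0$ in a prescribed transverse direction with probability at least $\sigma_0$. This is proved by an explicit path construction through $K$-good corridors, in the spirit of Lemma~\ref{LemmaDpositive} and Proposition~\ref{KGoodLemma}, in which the two walks perform independent transverse jogs in orthogonal directions while jointly maintaining $\{\mathcal{D}^{\bullet} = +\infty\}$. Combined with the polynomial tails on the increments and a standard concentration inequality applied to the (truncated) Markov chain, this yields
\begin{equation*}
\mathbb{P}_0 \Big( \inf_{k \in [cR/2,\, cR]} |\Delta_k^{\perp}| \le R^{1/4} \Big) \le C R^{-c'}
\end{equation*}
for some $c' > 0$ depending only on $\lambda, \vec{\ell}, d$.

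Finally, on $\mathcal{E}_R$ and outside the failure event above, suppose $\mathbf{M}_R$ occurs: there exist times $s, t$ with $x = X^1_s,\, y = X^2_t \in \mathcal{H}^+_R$ and $|x - y|_1 \le 2$. Let $k_0$ be the joint regeneration index satisfying $\mathcal{L}_{k_0} \le x \cdot \vec{\ell} < \mathcal{L}_{k_0+1}$, and define $k'_0$ analogously for $y$; by the count on regeneration levels $k_0, k'_0 \ge cR/2$, and $|k_0 - k'_0| = O(1)$ since $|x \cdot \vec{\ell} - y \cdot \vec{\ell}| \le 2$ and the slab widths are at least $1$ on $\mathcal{E}_R$. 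The transverse-extension control in each slab then forces $|\Delta_{k_0}^{\perp}| \le |x - y|_1 + 4 R^{1/10} + O(1) \le 5 R^{1/10}$, which contradicts the lower bound $R^{1/4}$ for $R$ large. The main obstacle is the uniform transverse spreading estimate itself: without uniform ellipticity, one must produce, at each regeneration and for each starting separation $\delta$, an explicit positive-probability event on which the two walks take orthogonal transverse moves while jointly maintaining $\{\mathcal{D}^{\bullet} = +\infty\}$; this is delicate because $\mathcal{D}^{\bullet}$ couples the two walks through the condition involving $\mathbf{ORI}^i$ in \eqref{definition_ENV}, so the constructed paths must avoid each other's starting neighbourhoods throughout.
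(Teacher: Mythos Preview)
Your outline has two genuine gaps, and the approach as written does not go through.

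First, the central estimate $\mathbb{P}_0\big(\inf_{k \in [cR/2,\,cR]} |\Delta_k^{\perp}| \le R^{1/4}\big) \le CR^{-c'}$ cannot be obtained from a ``standard concentration inequality applied to the (truncated) Markov chain''. Concentration inequalities bound the probability of \emph{large} deviations from a mean; what you need here is a \emph{small-ball} (local limit/heat-kernel) estimate. Moreover, the chain $(\Delta_k^{\perp})_k$ has state-dependent increments: when the two walks are close their transitions are correlated through the shared environment, so even the local limit theorem for i.i.d.\ sums is unavailable. Your ``uniform transverse spreading'' ingredient only says the increment is non-degenerate; it does not by itself yield any decay of $P(|\Delta_k^{\perp}| \le r)$. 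The paper resolves exactly this difficulty by a detour: it first proves the non-intersection bound for two walks in \emph{independent} environments (Propositions~\ref{PropositionDistantIndepEnv}--\ref{CorollaryIndepEnv}), where genuine independence allows one to invoke the heat-kernel bound of Proposition~\ref{TechnicalInequality} on each walk separately and combine via an $\ell^2$ argument; it then transfers back to the common environment through the coupling of Proposition~\ref{from_same_to_ind_env}. The iteration over joint regenerations is also different in spirit: at each level the paper exhibits a uniformly positive probability of the event $A(\mathbf d)$ (permanent separation over the next $\mathbf d$ units, Proposition~\ref{PositiveProbDontMeet}), and then shows that once $A(\mathbf d)$ succeeds the walks stay apart (Proposition~\ref{AR_I_unlikely}); this is a geometric-trials argument, not a diffusive growth argument for $|\Delta_k^{\perp}|$.

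Second, the final geometric step does not close. The meeting point $x \in \mathcal{H}^+_R$ can have $x\cdot\vec{\ell}$ arbitrarily large, so the index $k_0$ with $\mathcal{L}_{k_0} \le x\cdot\vec{\ell} < \mathcal{L}_{k_0+1}$ need not lie in $[cR/2,\,cR]$; your control on $|\Delta_k^{\perp}|$ and on the per-slab transverse extension was established only for slabs contained in $[0,R]$. Controlling separation at levels up to $R$ says nothing about $\mathbf{M}_R$, which concerns the traces in $\mathcal{H}^+_R$. The paper handles this by making the ``success'' event at level $\gamma_k$ already entail separation for all future times (via the independent-environment estimate), so a single success anywhere before level $R$ suffices.
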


In order to prove this result we will adapt methods from \cite{Berger_Zeitouni}. First, we will deal with the case when the two walks $X^1$ and $X^2$ live in independent environments. Then, when the two walks live in the same environment, we will use the fact that when they do not cross they behave like if they were in independent environments. 

\subsection{Independent environments}
First, we recall some notations from Section~\ref{SectionNotation}:
\begin{itemize}
    \item The law $Q_{U_1,U_2}$ is defined as $Q_{U_1,U_2} = \Pf_{U_1} \otimes \Pf_{U_2}$. Hence $X^1$ and $X^2$ are independent under $Q_{U_1,U_2}$. We also have $Q^{K}_{U_1,U_2} =\Pf^{K}_{U_1} \otimes \Pf^{K}_{U_2}$.
    \item We write $Q_{U_1,U_2}(\cdot \, | \, D^{\otimes}=+\infty) = \Pf_{U_1}(\cdot \, | \, D=+\infty) \otimes \Pf_{U_2}(\cdot \, | \, D=+\infty)$ where $D$ is defined in \eqref{eqn:DSingleRegeneration} in the definition of regeneration time. Analogously, $Q^{K}_{U_1,U_2}(\cdot \, | \, D^{\otimes}=+\infty) = \Pf^{K}_{U_1}(\cdot \, | \, D=+\infty) \otimes \Pf^{K}_{U_2}(\cdot \, | \, D=+\infty)$.
\end{itemize}

\begin{proposition} \label{PropositionDistantIndepEnv}
There exists $K_0 > 1$ such that, for any $U_1, U_2 \in \mathbb{Z}^d$ and $K \ge K_0$, we have
\begin{equation*}
    Q^{K}_{U_1, U_2}\bigg(\inf_{\substack{x \in  \{X_n^1\}\\y \in  \{X_n^2\}}}\|x-y\|_{1}\leq 2 \, \Big| \, \textnormal{}D^{\otimes}=+\infty\bigg) \leq C\|U_1 - U_2\|^{-c},
\end{equation*}
where the constants $C,c > 0 $ only depend on $\lambda,\vec{\ell},d,K$.
\end{proposition}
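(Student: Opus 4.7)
The idea follows the approach of \cite{Berger_Zeitouni}: exploit that the two walks live in independent environments under $Q^K_{U_1,U_2}$, combining the single-walk regeneration structure of Section~\ref{definition_reg_times} with a local-limit estimate for the regeneration walk. By translation invariance in law of the environment I may assume $U_1=0$ and $U_2=u$ with $\|u\|_1=R$.

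First I would set up a good event by truncation. Under $\Pf^K_{U_i}(\cdot\mid D=+\infty)$ the increments $X^i_{\tau^i_{k+1}}-X^i_{\tau^i_k}$ are i.i.d.\ with drift $v$ satisfying $v\cdot\vec\ell>0$, and the excursion sizes $\chi^i_k$ of \eqref{recall_chi} admit polynomial tails (see \cite{Kious_Frib}). Fix a small $\beta>0$ and let $\mathcal{G}_R$ be the event that $\chi^i_k\le R^\beta$ for all $k\le R^3$ and both $i\in\{1,2\}$. A union bound with the polynomial tails yields $Q^K_{0,u}(\mathcal{G}_R^c\mid D^{\otimes}=+\infty)\le CR^{-M}$ for any $M$. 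On $\mathcal{G}_R$ the trace of the $i$-th walk is contained in the union over $k$ of translated tilted boxes $\mathcal{B}_{X^i_{\tau^i_k}}(R^\beta,R^{\beta\alpha})$, so the event $\{\inf\|x-y\|_1 \le 2\} \cap \mathcal{G}_R$ forces the existence of indices $k_1,k_2$ with $\|X^1_{\tau^1_{k_1}}-X^2_{\tau^2_{k_2}}\|_1\le CR^{\beta\alpha}$.

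Next I would decompose along levels $L$ in the direction $\vec\ell$. Because of the positive drift, only $O(\sqrt{L})$ indices $k$ contribute regeneration points of a given walk near level $L$, by concentration of $\tau^i_k/k$. Invoking the heat-kernel estimate Proposition~\ref{TechnicalInequality}, namely $\Pf^K_0(X_{\tau_k}=y\mid D=+\infty)\le C k^{-d/2}$, the probability that a prescribed $y$ at level $L$ lies within $\|\cdot\|_1$-distance $CR^{\beta\alpha}$ of some regeneration point of $X^1$ is at most $CR^{\beta\alpha d}L^{-(d-1)/2}$. Integrating out $X^2$ first using $Q$-independence, summing over the $O(1)$ regeneration points of $X^2$ per unit level, and then over $L$ starting from the scale at which the transverse Gaussian fluctuation $\sqrt{L}$ can bridge the initial offset, produces a total estimate of the form $R^{c'\beta\alpha}\cdot R^{-(d-3)/2}$ with $c'$ depending only on $d$. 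Since $d\ge 5$, choosing $\beta$ small enough yields the desired bound $CR^{-c}$.

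The principal obstacle is the local-limit estimate Proposition~\ref{TechnicalInequality} itself, which the authors explicitly single out as the reason the method is restricted to $d\ge 5$: a Gaussian-type pointwise bound on the density of $X_{\tau_k}$ is exactly what is needed to sum over levels and get a finite, polynomially decaying estimate. The remaining ingredients, namely the polynomial tails of $\chi_k$ and the concentration of $\tau_k/k$, follow from the moment estimates and the regeneration framework of \cite{Kious_Frib} recalled in Section~\ref{definition_reg_times}.
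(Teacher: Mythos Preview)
Your ingredients are the right ones---independence under $Q^K$, the regeneration structure, the moment bounds on $\chi_k$, and the heat-kernel estimate Proposition~\ref{TechnicalInequality}---and they are exactly what the paper uses. But the level-by-level union bound as you describe it has a real gap: it does not deliver decay in $R$ when $U_1-U_2$ is transverse to $\vec\ell$. Your bound ``$P(\text{some reg.\ point of }X^1$ is within $CR^{\beta\alpha}$ of $y)\le CR^{\beta\alpha d}L^{-(d-1)/2}$'' uses only the level $L$ of $y$, not its transverse position. After integrating out $X^2$ and summing over levels, you get $CR^{\beta\alpha d}\sum_{L\ge L_0}L^{-(d-1)/2}$, and the only way you obtain $L_0$ of order a power of $R$ is via the $\vec\ell$-component of the offset. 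When $u=U_2-U_1$ is orthogonal to $\vec\ell$, both walks start at the same level, $L_0$ is $O(1)$, and the sum is just a constant. Your sentence about ``transverse Gaussian fluctuation $\sqrt L$ bridging the offset'' would fix this only if you also inserted a moderate-deviations bound on the transverse displacement of one of the walks at small $L$; but that information is not contained in the on-diagonal bound of Proposition~\ref{TechnicalInequality}, and you have not supplied it separately.

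The paper avoids this direction-dependence with a single geometric observation: if the two ranges come within distance~$2$ at some point $z$, then $z$ is at distance at least $R/3$ from one of the two starting points. This lets them bound the intersection probability by $C\|F_1^{(R/3)}\|_2\|F_2\|_2+C\|F_1\|_2\|F_2^{(R/3)}\|_2$ via Cauchy--Schwarz, where $F_i(z)=\Pf^K_{U_i}(\exists k:\,X^i_k=z\mid D=\infty)$ and $F_i^{(R)}=F_i\mathds{1}_{\{\|\cdot-U_i\|>R\}}$. Writing $F_i=\sum_n H_i(\cdot,n)$ over regeneration blocks and using Young's inequality with Proposition~\ref{TechnicalInequality} gives $\|H_i(\cdot,n)\|_2\le Cn^{-d/4+\varepsilon}$, hence $\|F_i\|_2<\infty$ for $d\ge 5$. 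For $\|F_i^{(R)}\|_2$ they split the sum at $n=R^a$: for large $n$ the same heat-kernel bound suffices, while for small $n$ the truncation $\|z-U_i\|>R$ is exploited through the elementary first-moment estimate $\|H_i^{(R)}(\cdot,n)\|_1\le C(n/R)^{1/2}$ (Markov's inequality on $\sum_{j\le n}\chi_j^\alpha$). This $L^2$ route handles all directions of $U_1-U_2$ uniformly and is what you are missing.
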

\begin{remark1}
We observe that if $U_1 = U_2$ the statement of Proposition~\ref{PropositionDistantIndepEnv} remains true with the convention $\|U_1-U_2\|^{-c} = +\infty$.
\end{remark1}

\begin{proof}
For $z \in \mathbb{Z}^d$ we define
\begin{equation}\label{eqn:DefinitionF}
F_i(z) = \Pf^{K}_{U_i}\left(\exists k, X_k^{i} = z \, | \, \text{}D=+\infty \right),
\end{equation}
and
\begin{equation}
F_i^{(R)}(z) = F_i(z)\indi{\{\|z-U_i\|> R\}}.
\end{equation}
\paragraph{Step 1: bounding our quantity using $\|F_i^{(R)}\|_2$ and $\|F^i\|_2$.}

Fix $R =  \|U_1-U_2\|/3$. If $\|X^{1}_p-X^2_q\|_1 \le 2$ then necessarily either $\|U_1-X^1_p\| \ge R$ or $\|U_2-X^2_q\| \ge R$, otherwise we would have $\|U_1-U_2\| \le 2/3\|U_1-U_2\|+  3 < \|U_1-U_2\|$ (as soon as $\|U_1-U_2\| \ge 10$). It follows that
\begin{equation}\label{eqn:FirstBound2WalksIndEnv}
\begin{split}
    Q^{K}_{U_1,U_2} &\left(\exists p,q \in \mathbb{N} \colon \|X_p^1 - X_q^2\|_{1} \leq 2 \, | \, \text{}D^{\otimes}=+\infty \right)\\&\le Q^{K}_{U_1,U_2}\left(\exists p,q \in \mathbb{N} \colon \|X_p^1 - X_q^2\|_{1} \leq 2 \text{ and } \|U_1-X^1_p\| \ge R \, | \, \text{}D^{\otimes}=+\infty \right)\\
   & + Q^{K}_{U_1,U_2}\left(\exists p,q \in \mathbb{N} \colon \|X_p^1 - X_q^2\|_{1} \leq 2 \text{ and } \|U_2-X^2_q\| \ge R \, | \, \text{}D^{\otimes}=+\infty \right).
\end{split}
\end{equation}
We write
\begin{align}
  Q^{K}_{U_1,U_2} &\left(\exists p,q \in \mathbb{N} \colon \|X_p^1 - X_q^2\|_{1} \leq 2 \text{ and } \|U_1-X^1_p\| \ge R \, | \, \text{}D^{\otimes}=+\infty \right)\nonumber\\
 &\le \somme{\|z-U_1\|\ge R}{}{Q^{K}_{U_1,U_2}\left(\exists p,q \in \mathbb{N} \colon X^1_p = z, X^2_q = y, \| z - y \|_{1} \le 2\, | \, \text{}D^{\otimes}=+\infty \right)}\nonumber\\
     & \le  \somme{u \in  \mathbb{Z}^d \colon \|u\|_1 = 2 }{}{}\somme{\|z-U_1\| \ge R}{}{F_1^{(R)}(z)F_2^{}(z+u)} \nonumber\\
     &\le  \somme{u \in  \mathbb{Z}^d \colon \|u\|_1 = 2}{}{}\somme{z \in \mathbb{Z}^d}{}{F_1^{(R)}(z)F_2^{}(z+u)} \nonumber\\
     & \le C\|F^{(R)}_1\|_2\|F_2\|_2, \label{eqn:CauchySchwarz}
\end{align}
where \eqref{eqn:CauchySchwarz} follows from Cauchy-Schwarz inequality and the constant $C > 0$ depends only on $\lambda,\vec{\ell},d,K$. Symmetrically, we can do the same computation for the other term in the r.h.s.\ of \eqref{eqn:FirstBound2WalksIndEnv}. To conclude the proof of the proposition we will show that $\|F_i\|_2 < +\infty$ and $\|F_i^{(R)}\|_2 \le CR^{-c}$ with $C,c > 0$ constants which only depend on $\lambda,\vec{\ell},d,K$.

\paragraph{Step 2: bound on $\|F_i\|_2$ and $\|F_i^{(R)}\|_2$.}

For $n \in \mathbb{N}$ we introduce $\{\tau_n^{i}\}_{n \ge 1}, \, i =1, 2$ to be the sequence of regeneration times associated to the $i$-th walk, in the sense of \eqref{eqn:RegTimeSingleWalk}. Note that, by \cite[Theorem 5.2]{Kious_Frib}, both sequences are almost surely well defined (no $\tau_n^{i}$ is infinite) and they are independent of each other as we are considering the measure $Q^{K}_{U_1, U_2}$. Let us define
\begin{equation*}
H_i(z,n) = \Pf^{K}_{U_i}\left(\exists \tau_n^{i} \leq k \leq \tau_{n+1}^{i} \text{, } X_k^{i} = z \, | \, \text{}D=+\infty\right),
\end{equation*}
and
\begin{equation*}
H_i^{(R)}(z,n) = H_i(z,n)\indi{\{ \|z-U_i \|> R \}}.
\end{equation*}
From \eqref{eqn:DefinitionF}, for all $z \in \mathbb{Z}^d$, we have that
\begin{equation*}
    F_i(z) = \sum_{n=0}^{+\infty} H_i(z,n).
\end{equation*}
Using the triangle inequality we write
\begin{equation}\label{eqn:definitionF_{i}}
 \|F_i\|_{2} \leq \sum_{n=0}^{+\infty} \|H_i(\cdot,n)\|_{2},
\end{equation}
and, similarly,
\begin{equation}\label{eqn:definitionF_{i}(R)}
 \|F_i^{(R)}\|_{2} \leq \sum_{n=0}^{+\infty} \|H_i^{(R)}(\cdot,n)\|_{2}.
\end{equation}
We want to bound $\|H_i(\cdot,n)\|_2$ and $\|H_i^{(R)}(\cdot,n)\|_2$. The quantity $H_i(\cdot,n)$ can be rewritten as a convolution because $X_{\tau_n^{i}}^{i}$ and $\{ X_{\tau_n^{i} + \, \cdot }^{i} - X_{\tau_n^{i}}^{i} \}$ are independent by \cite[Theorem 5.4]{Kious_Frib}. Thus
\begin{equation}
    H_i(\cdot,n) = T_i^{n} \ast J,
\end{equation}
where
\begin{equation*}
    T_i^{n}(z) = \Pf_{U_i}^{K}(X^{i}_{\tau^{i}_n} = z),
\end{equation*} 
and 
\begin{equation*}
 J(z) = \Pf_{0}^{K}\left(\exists  0 \leq k \leq \tau_{1}^{i} \text{, } X_k^{i} = z \, | \, \text{}D=+\infty\right).
\end{equation*}
Young's inequality gives
\begin{equation}\label{Young}
\|H_i(\cdot,n)\|_{2} \leq \| T_i^{n}\|_{2} \|J\|_{1}. 
\end{equation}
We claim that $\|J\|_{1} < +\infty$, indeed
\begin{align*}
    \somme{z \in \mathbb{Z}^d}{}{\Pf_{0}^{K}\left(\exists  0  \leq k \leq \tau_{1}^{i} \text{, } X_k^{i}  = z \, | \, \text{}D=+\infty\right)} \le C\mathbb{E}_{0}^{K}\bigg[\somme{z \in \mathbb{Z}^d}{}{\indi{\{\exists  0  \leq k \leq \tau^i_{1} \text{, } X_k^{i} = z\}}} \bigg],
\end{align*}
where $C$ is a constant which only depend on $\lambda,\vec{\ell},d,K$ using \cite[Lemma 5.1]{Kious_Frib}. The term in the last expectation is equal to $|\{X_k^{i} \colon 0\le k \le \tau_1^{i}\}|$. Let us recall the notation \eqref{recall_chi}, for $i = 1, 2$ let $\chi^{i}_{k}$ be the same quantity for the $i$-th walk and let $\chi^{i} = \chi^{i}_{0}$.
We have $|\{X_k^{i} \colon 0\le k \le \tau_1^{i}\}| \le (\chi^i)^{\beta}$ for $\beta$ large enough which only depends on $d,\alpha$. By \cite[Lemma 6.1]{Kious_Frib}, for $K$ large enough, $\chi^i$ has finite moment of order $\beta$. It follows that $\|J\|_1 < +\infty$.

For the term $\|T_i^{n}\|_2$, using Proposition~\ref{TechnicalInequality}, we have for $K$ large enough, 
\begin{equation}\label{uniform_inequality}
    \displaystyle \sup_{z \in \mathbb{Z}^d}\Pf_{U_i}^{K}(X_{\tau_n^{i}}^{i} = z) \leq Cn^{-\frac{d}{2}+\varepsilon},
\end{equation}
where $C$ only depends on $\lambda,\vec{\ell},d,K$. We get the bound 
\begin{equation*}\begin{split}
\|T_i^{n}\|_{2} &\leq \bigg(\somme{x\in \mathbb{Z}^d}{}{\Pf_{U_i}^{K}(X_{\tau_n^{i}}^{i} = z)^2}\bigg)^{\frac{1}{2}} \\
& \leq \bigg(\sup_{z \in \mathbb{Z}^d}\Pf_{U_i}^{K}(X_{\tau_n^{i}}^{i} = z)\somme{x\in \mathbb{Z}^d}{}{\Pf_{U_i}^{K}(X_{\tau_n^{i}}^{i} = z)}\bigg)^{\frac{1}{2}}\\
& \underset{\eqref{uniform_inequality}}{\leq}  Cn^{-\frac{d}{4}+\frac{\varepsilon}{2}}.
\end{split}
\end{equation*}
Inserting this estimate in \eqref{Young}, together with $\|J\|_{1} < +\infty$, we obtain 
\begin{align}\label{first_bound}
    \|H_i(\cdot,n)\|_2 \le Cn^{-\frac{d}{4}+\frac{\varepsilon}{2}}.
\end{align}
As a consequence, by \eqref{eqn:definitionF_{i}}, in dimension $d \geq 5$ we get
\begin{equation}\label{eqn:F2Finite}
    \|F_i\|_{2} < +\infty.
\end{equation}
We still have to bound $\|H_i^{(R)}(\cdot,n)\|_{2}$. A first bound is given by \eqref{first_bound}, however for $n$ small this bound is not precise enough, thus we compute another bound which is useful for small $n$. Let us bound $\|H_i^{(R)}(\cdot,n)\|_{1}$ and then use the inequality $\|H_i^{(R)}(\cdot,n)\|_{2} \leq \|H_i^{(R)}(\cdot,n)\|_{1}^{1/2}$.
We can rewrite
\begin{align}
    \|H_i^{(R)}(\cdot,n)\|_{1} &=  \sum_{\|z-U_{i}\| \geq R}\mathbb{E}^{K}_{U_i}\left[ \indi{ \left\{\exists \tau_n^{i} \leq k \leq \tau_{n+1}^{i} \colon X_k^{i} = z \right\} } \, \big| \, \text{}D=+\infty  \right] \nonumber \\ &= \mathbb{E}^{K}_{U_i}\left[ \sum_{\|z - U_i\| \geq R}\indi{ \left\{\exists \tau_n^{i} \leq k \leq \tau_{n+1}^{i} \colon X_k^{i} = z \right\} } \, \big| \, \text{}D=+\infty \right]. \label{expectation}
\end{align}
Define $A(n,R) \coloneqq \{ \exists \, \tau_n \le k \le \tau_{n+1} \textnormal{ such that } |X^i_k-X^i_0| \ge R\}$. The term inside the expectation \eqref{expectation} is bounded by $\indi{A(n,R)}|\mathcal{B}(\chi^{i}_n,(\chi^{i}_n)^{\alpha})|$; indeed, it is clearly bounded by $|\mathcal{B}(\chi^{i}_n,(\chi^{i}_n)^{\alpha})|$ and under $A(n,R)^{c}$ it is equal to $0$.
\begin{itemize}
    \item Using the fact that $\mathrm{diam}(\mathcal{B}(L,L^{\alpha})) \le CL^{\alpha}$ where $C$ is a constant which only depends on $\vec{\ell},d$, we have the following inclusion: 
\begin{align*}
    A(n,R) \subset \bigg\{ \somme{k=0}{n}{C(\chi^{i}_k})^{\alpha} \ge R  \bigg\}.
\end{align*}
Since $((\chi^{i}_{k})^{\alpha})_{k \ge 0}$ is a family of independent identically distributed random variables, and for $K$ large enough $(\chi^{i}_0)^{\alpha}$ has finite first moment under $\Pf^K_{U_i}(\cdot \text{ | }D=+\infty)$, by \cite[Lemma 6.1]{Kious_Frib}, we have 
\begin{align*}
    \Pf^{K}_{U_i}\left(A(n,R) \text{ | }D=+\infty\right) &\le \Pf^{K}_{U_i}\left(\somme{k=0}{n}{C(\chi^{i}_k)^{\alpha}} \ge R\text{ | }D=+\infty\right)\\
    &\le C\frac{(n+1)\, \mathbb{E}_{U_i}^{K}\left[(\chi^{i}_{1})^{\alpha}\text{ | }D=+\infty\right]}{R}  = \frac{Cn}{R},
\end{align*}
where $C > 0$ is a constant which only depends on $\lambda,\vec{\ell},d,K$.
    \item[$\bullet$] Using again \cite[lemma 6.1]{Kious_Frib}, for any $M > 0$ there exists $K_0 > 1$ such that for $K \ge K_0$ the variable $\chi^i_n$ has finite moment of order $M$. By chosing $K_0$ big enough, $|\mathcal{B}(\chi^{i}_n,(\chi^{i}_n)^{\alpha})|$ has finite second moment. Indeed, $|\mathcal{B}(\chi^{i}_n,(\chi^{i}_n)^{\alpha})| \le C(\chi^i_n)^{d\alpha}$ with $C$ a constant which only depends on $d$.
\end{itemize}
Using Cauchy-Schwarz inequality we have
\begin{align}
\nonumber \|H_i^{(R)}(\cdot,n)\|_{1} &\leq \Pf_{U_i}^{K}\left( A(n,R)\text{ | }D=+\infty\right)^{\frac{1}{2}}\mathbb{E}^{K}_{U_i}\left[|\mathcal{B}(\chi^{i}_n,(\chi^{i}_n)^{\alpha})|^2 \text{ | }D=+\infty\right]^{\frac{1}{2}}\\
& \label{second_bound}\leq Cn^{\frac{1}{2}}R^{-\frac{1}{2}},
\end{align}
where $C$ only depends on $\lambda,\vec{\ell},d,K$. It follows that we have
\begin{align*}
    \|H_i^{(R)}(\cdot,n)\|_{2} \le \|H_i^{(R)}(\cdot,n)\|^{\frac{1}{2}}_{1} \le  Cn^{\frac{1}{4}}R^{-\frac{1}{4}}.
\end{align*}
Pick $a > 0$ arbitrarily small. We have that
\begin{align*}
    \|F_i^{(R)}\|_{2} &\leq \somme{n=1}{\ent{R^{a}}}{\|H_i^{(R)}(\cdot,n)\|_{2}} +\somme{n=\ent{R^{a}}+1}{+\infty}{\|H_i^{(R)}(\cdot,n)\|_{2}}\\&\underset{\eqref{first_bound},\eqref{second_bound}}{\leq} C \left(\displaystyle \sum_{n = 1}^{\ent{R^{a}}}n^{\frac{1}{4}}R^{-\frac{1}{4}} + \sum_{n = \ent{R^{a}} + 1}^{+\infty}n^{-\frac{d}{4}+\frac{\varepsilon}{2}} \right)\\
    &\le CR^{-\frac{1}{4}}R^{\frac{5}{4}a} + C(R^{a})^{-\frac{d}{4}+1+\frac{\varepsilon}{2}}.
\end{align*}
By taking $a$ small enough such that $\frac{5}{4}a-\frac{1}{4} < 0$ and since $-\frac{d}{4}+1+\frac{\varepsilon}{2} < 0$ we have
\begin{equation}\label{eqn:BoundF(R)}
       \|F_i^{(R)}\|_{2} \leq CR^{-c},
\end{equation}
where $C,c > 0$ only depend on $\lambda,\vec{\ell},d,K$. To conclude one needs to combine statements \eqref{eqn:FirstBound2WalksIndEnv} and \eqref{eqn:CauchySchwarz} with \eqref{eqn:F2Finite} and \eqref{eqn:BoundF(R)}.
\end{proof}
We introduce the notation:
\begin{align}\label{definition_R0}
    R_0 \coloneqq \max(X^{1}_0\cdot \vec{\ell},X^{2}_0 \cdot \vec{\ell}).
\end{align}
The following proposition gives us a result equivalent to Proposition~\ref{LemmaDistantWalks} but considering two walks evolving in independent environments.
\begin{proposition} \label{CorollaryIndepEnv}
There exists $K_0 > 1$ such that, for any $U_1,U_2 \in \mathbb{Z}^d$, $K \ge K_0$ and $R > 0$, we have 
\begin{equation*}
Q^{K}_{U_1,U_2}\left(\mathbf{M}_{R+R_0} \right) \leq CR^{-c},
\end{equation*}
where the constants $C,c > 0$ only depend on $\lambda,\vec{\ell},d,K$.
\end{proposition}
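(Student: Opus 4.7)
The plan is to reduce to Proposition~\ref{PropositionDistantIndepEnv} via the regeneration structure of Section~\ref{definition_reg_times} applied independently to each of the two walks (under $Q^K_{U_1,U_2}$ they live in independent environments), controlling the exceptional events through the tail of $\chi$ from \cite[Lemma 6.1]{Kious_Frib} together with the heat-kernel bound of Proposition~\ref{TechnicalInequality}.

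For $i = 1, 2$, let $(\tau^i_n)_{n \ge 1}$ denote the regeneration times of $X^i$, and set
\begin{equation*}
n^i_\star \coloneqq \min\bigl\{n \ge 1 \colon X^i_{\tau^i_n}\cdot \vec\ell \ge R_0 + R/4\bigr\}, \qquad x^i \coloneqq X^i_{\tau^i_{n^i_\star}},
\end{equation*}
and $\mathcal{F} \coloneqq \{x^i \cdot \vec\ell \le R_0 + R/2,\ i = 1, 2\}$. Because consecutive regeneration points progress by at least $e_1 \cdot \vec\ell$ in the direction $\vec\ell$, we have $n^i_\star \le CR$ deterministically; on $\mathcal{F}^c$ some $\chi^i_n$ with $n \le CR$ must exceed $R/4$, so the high moments of $\chi$ from \cite[Lemma 6.1]{Kious_Frib} and a union bound give $Q^K_{U_1,U_2}(\mathcal{F}^c) \le CR^{-M}$ for any $M>0$, provided $K$ is chosen sufficiently large. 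On $\mathcal{F}$, the pre-regeneration portion $\{X^i_k\}_{k \le \tau^i_{n^i_\star}}$ lies in $\mathcal{H}^-(R_0 + R/2) \subset \mathcal{H}^-(R_0 + R)$ and does not contribute to $\mathbf{M}_{R_0+R}$. By \cite[Theorem 5.4]{Kious_Frib} applied to each walk and the independence of the two environments, conditionally on $(x^1, x^2)$ the shifted continuations $(X^i_{\tau^i_{n^i_\star}+\cdot})_{i=1,2}$ are jointly distributed as $Q^K_{x^1, x^2}(\,\cdot\,|\, D^\otimes = +\infty)$, so Proposition~\ref{PropositionDistantIndepEnv} yields
\begin{equation*}
Q^K_{U_1,U_2}(\mathbf{M}_{R_0+R},\,\mathcal{F}) \le C\,\mathbb{E}^{Q^K_{U_1,U_2}}\bigl[\|x^1 - x^2\|^{-c}\bigr].
\end{equation*}

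It remains to bound $Q^K_{U_1,U_2}(\|x^1 - x^2\| < R^a)$ for a suitable $a > 0$, since splitting on $\{\|x^1 - x^2\| \ge R^a\}$ contributes at most $CR^{-ac}$. Under $Q^K_{U_1,U_2}$ the variables $x^1$ and $x^2$ are independent, so this probability is bounded by $|B_{R^a}| \cdot \sup_{z \in \mathbb{Z}^d} \Pf^K_{U_2}(x^2 = z)$. Again by the high moments of $\chi$, with probability $\ge 1 - CR^{-M}$ one has $n^2_\star \in [R/(4\log^2 R),\, CR]$, and on that event Proposition~\ref{TechnicalInequality} (i.e.\ $\sup_z \Pf^K_{U_2}(X^2_{\tau^2_n} = z) \le Cn^{-d/2 + \varepsilon}$) summed over $n$ in this range yields $\sup_z \Pf^K_{U_2}(x^2 = z) \le CR^{1 - d/2 + \varepsilon}(\log R)^d$. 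Hence $Q^K_{U_1,U_2}(\|x^1 - x^2\| < R^a) \le CR^{ad + 1 - d/2 + \varepsilon}(\log R)^d$, whose exponent is negative for $d \ge 5$ and any fixed $a < 1/2 - 1/d$.

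The proposition then follows by adding the negligible term $Q^K_{U_1,U_2}(\mathcal{F}^c)$, the polynomial piece $CR^{-ac}$ coming from the split, and the bound above, and by choosing $a \in (0, 1/2 - 1/d)$ small enough so that all exponents are strictly negative. The main obstacle is the anti-concentration step, which rests entirely on Proposition~\ref{TechnicalInequality}; the remaining arguments are standard manipulations of the single-walk regeneration structure of Section~\ref{definition_reg_times}.
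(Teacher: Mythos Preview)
Your argument is correct, but it takes a genuinely different route from the paper's proof. The paper shifts by only the \emph{first} regeneration time $\tau^i_1$ of each walk and then, rather than invoking Proposition~\ref{PropositionDistantIndepEnv} as a black box, reuses the internal estimate $\|F_i^{(R)}\|_2 \le CR^{-c}$ established in Step~2 of that proposition's proof (equation~\eqref{eqn:BoundF(R)}). The point is that, after shifting by $\tau^i_1$, any close encounter contributing to $\mathbf{M}_{R+R_0}$ forces both walks to be at distance at least $R/2$ from their respective new starting points; this is precisely what $F_i^{(R/2)}$ encodes, and Cauchy--Schwarz gives the bound $\|F_1^{(R/2)}\|_2\,\|F_2^{(R/2-1)}\|_2 \le CR^{-c}$ directly. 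No anti-concentration for the distance between the regeneration points is needed.

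Your strategy instead shifts by a random regeneration index $n^i_\star$ to place both walks past level $R_0+R/4$, applies Proposition~\ref{PropositionDistantIndepEnv} to obtain $C\|x^1-x^2\|^{-c}$, and then spends a separate step showing that $\|x^1-x^2\|$ is typically at least $R^a$ via Proposition~\ref{TechnicalInequality}. This is valid (the strong Markov property at the stopping index $n^i_\star$ holds because the regeneration increments are i.i.d., and your lower bound $n^2_\star \ge R/(4\log^2 R)$ with polynomially high probability follows from Rosenthal-type concentration once $\chi$ has sufficiently many moments). But it effectively re-derives, in a different guise, the same heat-kernel input that already went into $\|F^{(R)}\|_2$. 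The paper's approach is shorter because it recycles that computation rather than reproducing it; yours has the modest advantage of treating Proposition~\ref{PropositionDistantIndepEnv} purely as a black box.
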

\begin{proof}
Let us fix $M$ large enough. We introduce $A_{i}(R) = \{\|X^i_{0}-X^i_{\tau^i_{1}}\|_1 \le R \}$. Then by \cite[Lemma 6.1]{Kious_Frib}, for $K$ large enough we have 
\begin{align}\label{control_first_ref}
    \Pf^{K}_{U_1,U_2}(A_{i}(R)^{c}) \le cR^{-M},
\end{align}
where $c > 0$ only depends on $\lambda,\vec{\ell},d,K$. Hence, we can write 
\begin{align*}
Q^{K}_{U_1,U_2}\left(\mathbf{M}_{R+R_0}\right) \le  Q^{K}_{U_1,U_2}\left(\mathbf{M}_{R+R_0},A_1(R/2),A_2(R/2)\right) + cR^{-M}.
\end{align*}
We observe that $\mathbf{M}_{R+R_0}\cap A_1(R/2)\cap A_2(R/2)$ implies $\mathbf{M}_{R/2+R_0} \circ \theta_{\tau^1_1,\tau^2_1}$. By \cite[theorem 5.4]{Kious_Frib}, we only have to dominate
\begin{align} \label{term_to_dom_to_conclue}
    \mathbb{E}^{Q^K_{U_1,U_2}}\bigg[Q^K_{X^1_{\tau^1_1},X^2_{\tau^2_2}}\bigg(\mathbf{M}_{R/2+R_0} \text{ | }D^{\otimes}=+\infty\bigg)\bigg].
\end{align}
Using the same notations as in the last proof, for any $x,y \in \mathbb{Z}^d$ we have
\begin{align*}
   Q^{K}_{x,y}\left(\mathbf{M}_{R/2+R_0}\, | \,\text{}D^{\otimes}=+\infty \right) &\le  \somme{u \in \mathbb{Z}^d \colon \|u\|_1 = 2 }{}{} \somme{\substack{z \in \mathbb{Z}^d\\ \|z-x\| \ge R/2 \\ \|z-y\| \ge R/2}}{}{F_1^{(R/2)}(z)F_2^{(R/2)}(z + u)}\\
   &\le \somme{u \in \mathbb{Z}^d \colon \|u\|_1 = 2 }{}{} \somme{\substack{z \in \mathbb{Z}^d}}{}{F_1^{(R/2)}(z)F_2^{(R/2 - 1)}(z + u)} \\&\le C \|F_1^{(R/2)} \|_2 \| F_2^{(R/2 - 1)} \|_2
   \\&\underset{\eqref{eqn:BoundF(R)}}{\le} CR^{-c},
\end{align*}
where the third step holds by Cauchy-Schwarz inequality. The constants $C,c > 0$ only depend on $\lambda,\vec{\ell},d,K$. We dominate \eqref{term_to_dom_to_conclue} using this last inequality and conclude the proof.
\end{proof}
The last result shows that there is little probability that the two walks intersect on points which are far from the starting points. In other words, with high probability, if the two walks do intersect, then it has to be close to one of the starting points.

We need a last proposition before moving on to the case where both walks are in the same environment. We recall the notation $T^i_{B}$, $\mathcal{H}^{+}_R$ defined in \eqref{T_B},\eqref{H+-}. Let us recall the notation $M$ defined in \eqref{definition_M_other}. For $R > 0$, we define the event
\begin{equation} \label{Definition_event_A}
    A(R) \coloneqq \bigg\{M \ge R+R_0\bigg\}\cap \bigg\{\|X_p^1 - X_q^2\|_{1} \geq 3 \quad \forall 2 < p \leq T^1_{\mathcal{H}^{+}_{R+R_0}}  \,\, \textnormal{and} \,\, \forall 2 < q \leq T^2_{\mathcal{H}^{+}_{R+R_0}} \bigg\}.
\end{equation}

\begin{proposition}\label{PropositionEventA(R)Indep}
There exists $\Lambda > 0$ and $K_0 > 1$ such that, for $K \ge K_0$, $U_1,U_2 \in \mathbb{Z}^d$ such that $(U_1-U_2) \in \mathcal{U}$, $\|U_1-U_2\| \ge \Lambda$ and $R > 0$,
\begin{equation*}
Q^{K}_{U_1,U_2}(A(R)) \geq \rho > 0,
\end{equation*}
where the constant $\rho$ only depends on $\lambda,\vec{\ell},d,K$ where $\mathcal{U}$ is defined in \eqref{eqn:USet}.
\end{proposition}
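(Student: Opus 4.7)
My plan is to exhibit an event $\tilde A$, independent of $R$, that is contained in $A(R)$ for every $R>0$ and whose $Q^K_{U_1,U_2}$-probability is bounded below by a positive constant as soon as $\|U_1-U_2\|$ is large. Writing $D^i$ for the single-walk regeneration quantity of~\eqref{eqn:DSingleRegeneration} applied to $X^i$, I define
\begin{equation*}
    \tilde A \coloneqq \{D^1=+\infty\}\cap\{D^2=+\infty\}\cap \bigg\{\inf_{\substack{x \in \{X^1_n\}\\ y \in \{X^2_n\}}}\|x-y\|_1 \ge 3 \bigg\}.
\end{equation*}

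The first step is to verify that $\tilde A \subset A(R)$ for every $R>0$. The separation requirement in~\eqref{Definition_event_A} is immediate since the infimum in $\tilde A$ is over all $p,q\ge 0$. It remains to check $M\ge R+R_0$, i.e.\ $\mathcal{D}^{\bullet i}_{\le R+R_0}=+\infty$ for $i\in\{1,2\}$. The event $\{D^i=+\infty\}$ guarantees $\mathbf{BACK}^i=+\infty$ and rules out the single-walk $\mathbf{ORI}$ trigger $X^i_{n-1}\in \mathcal{V}_{X^i_0}$ with $Z^i_n=0$. Strict non-backtracking together with $e_j\cdot\vec{\ell}\le e_1\cdot\vec{\ell}$ additionally excludes $X^i_n\in \mathcal{V}_{X^i_0 - e_1}$ for $n\ge 1$, since every point of that neighborhood has $\vec{\ell}$-component at most $X^i_0\cdot\vec{\ell}$. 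Finally, the remaining triggers in~\eqref{definition_ENV} all concern points at $\ell_1$-distance at most $2$ from $X^{3-i}_0$; since $X^{3-i}_0\in \{X^{3-i}_n\}$, the separation inequality in $\tilde A$ forbids them.

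The second step is to lower bound $Q^K_{U_1,U_2}(\tilde A)$. Writing
\begin{equation*}
    Q^K_{U_1,U_2}(\tilde A) = Q^K_{U_1,U_2}(D^{\otimes}=+\infty)\, Q^K_{U_1,U_2}\bigg(\inf_{x,y}\|x-y\|_1\ge 3 \,\Big|\, D^{\otimes}=+\infty\bigg),
\end{equation*}
the first factor equals $\Pf^K_{U_1}(D=+\infty)\Pf^K_{U_2}(D=+\infty)$ by independence of the two walks under $Q^K_{U_1,U_2}$, which is $\ge \rho_0^2>0$ by \cite[Lemma~5.1]{Kious_Frib} together with the translation invariance of $\mathbf{P}$. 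The second factor is bounded below by $1-C\|U_1-U_2\|^{-c}$ by Proposition~\ref{PropositionDistantIndepEnv}. Choosing $\Lambda$ so that $C\Lambda^{-c}\le 1/2$ (a constraint depending only on $\lambda,\vec{\ell},d,K$) yields the conclusion with $\rho=\rho_0^2/2$.

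The main delicate point is the inclusion $\tilde A\subset A(R)$: the joint definition~\eqref{definition_ENV} contains extra triggers ($\mathcal{V}_{X^i_0-e_1}$ and the analogues involving the other walk's starting point) absent from the single-walk $D$, and matching these against strict non-backtracking together with the separation bound requires the case analysis described above.
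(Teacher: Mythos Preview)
Your proof is correct and follows essentially the same approach as the paper. Both arguments reduce to showing that on $\{D^{\otimes}=+\infty\}$ the failure of $A(R)$ forces the two walks within $\ell_1$-distance $2$, and then invoke Proposition~\ref{PropositionDistantIndepEnv} to choose $\Lambda$; you simply spell out the case analysis for the inclusion $\tilde A\subset A(R)$ (handling the extra $\mathcal{V}_{X^{3-i}_0}$ and $\mathcal{V}_{X^i_0-e_1}$ triggers in~\eqref{definition_ENV}) more explicitly than the paper does.
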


\begin{proof}
We have $Q^{K}_{U_1,U_2}(A(R)) \geq cQ^{K}_{U_1,U_2}(A(R) \, | \, \text{}D^{\otimes}=+\infty)$ with $c = \Pf^{K}_{0}(D=+\infty)^2>0$.
Thus, under the law $Q^{K}_{U_1,U_2}( \cdot\, | \, \text{}D^{\otimes}=+\infty)$ the event $A(R)^{c}$ implies $ \left\{ \exists \, p,q \in \mathbb{N}, \,\, \|X_p^1 - X_q^2\|_{1} \leq 2 \right\}$. It follows from Proposition~\ref{PropositionDistantIndepEnv} that one has the bound
\begin{align*}
Q^{K}_{U_1,U_2}(A(R)^{c} \, | \, \text{}D^{\otimes}=+\infty) &\leq Q^K_{U_1,U_2}\left(\exists p,q \in \mathbb{N}, \,\, \|X_p^1 - X_q^2\|_{1} \leq 2 \, | \, \text{}D^{\otimes}=+\infty \right) \\ &\leq C\|U_1 - U_2\|^{-c}.
\end{align*}
The result is deduced by choosing $\Lambda$ large enough.
\end{proof}

\subsection{Identical environments}

We will exploit the results obtained in the last part in order to deduce estimates for two walks in the same environment. When the two walks are far enough they live in independent environments and so, by using the previous subsection, we will be able to show that they have a positive probability to not hit each other anymore. If they intersect, then we wait for the walks to get far enough and we iterate this way.

First, let us prove a general proposition which enables us to relate two walks in the same environment to two walks in independent environments. Let us recall the notation $\mathcal{E}_A$ defined in \eqref{EquationIncidentEdges}. We introduce
\begin{equation}
    \label{def_trib_temps_inter}
    \begin{split}
        \mathcal{F}_n &\coloneqq \sigma \left(\left\{(\tilde{X}^{1}_{k})_{0\le k \le n},(\tilde{X}^{2}_{k})_{0\le k \le n}\right\} \cup \left\{c^{*}(e)  \colon e \in \mathcal{E}_{(\tilde{X}^{1}_{k})_{0\le k \le n-1}}\cup \mathcal{E}_{(\tilde{X}^{2}_{k})_{0\le k \le n-1}}\right\} \right) \text{   and   }\\ T &\coloneqq \inf\Bigg\{n \ge 0 \colon \inf_{\substack{x \in  \{X_k^1\}_{0\le k \le n}\\y \in  \{X_k^2\}_{0\le k \le n}}}\|x-y\|_{1}\leq 2 \Bigg\}.
    \end{split}
\end{equation} 

\begin{proposition}\label{from_same_to_ind_env}
For any positive function $f$ measurable with respect to $\mathcal{F}_{T}$ and $U_1,U_2 \in \mathbb{Z}^{d}$ we have:
\begin{align*}
    \mathbb{E}^{K}_{U_1,U_2}(f) = \mathbb{E}^{Q_{U_1,U_2}^{K}}(f).
\end{align*}
\end{proposition}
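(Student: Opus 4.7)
The plan is to apply a monotone class argument. It suffices to verify $\mathbb{E}^K_{U_1, U_2}[\mathbf{1}_B] = \mathbb{E}^{Q^K_{U_1, U_2}}[\mathbf{1}_B]$ on a $\pi$-system generating $\mathcal{F}_T$. I would take the family of events of the form
\begin{equation*}
B = \{T = n\} \cap \Big\{\tilde{X}^1_k = (\xi^1_k, z^1_k),\, \tilde{X}^2_k = (\xi^2_k, z^2_k), \,\, 0 \le k \le n\Big\} \cap \bigcap_{e \in E_n} \{c_{*}(e) \in A_e\},
\end{equation*}
indexed by $n \ge 0$, fixed trajectories $(\xi^i, z^i)$, and Borel sets $A_e \subset (0, +\infty)$, where $E_n := \mathcal{E}_{\{\xi^1_k\}_{k \le n-1}} \cup \mathcal{E}_{\{\xi^2_k\}_{k \le n-1}}$. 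The case $n = 0$ is immediate, as $E_0 = \varnothing$ and $\mathcal{F}_0$ is deterministic.

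For $n \ge 1$, the key geometric observation is that on $\{T = n\}$ the ranges $\{\xi^1_k\}_{k \le n-1}$ and $\{\xi^2_k\}_{k \le n-1}$ are at $\ell^1$-distance at least $3$, hence the edge sets $E_n^1 := \mathcal{E}_{\{\xi^1_k\}_{k \le n-1}}$ and $E_n^2 := \mathcal{E}_{\{\xi^2_k\}_{k \le n-1}}$ satisfy $E_n^1 \cap E_n^2 = \varnothing$. The computation then proceeds as follows: by Definition~\ref{def:2EnhancedWalks} the quenched law factors as $P^\omega_{U_1,U_2} = P^\omega_{U_1} \otimes P^\omega_{U_2}$, and the trajectory probability $P^\omega_{U_i}(\tilde{X}^i_k = (\xi^i_k, z^i_k),\, k \le n)$ is a product of transitions $p^\omega$ or $p^\omega_K$ at the vertices $\xi^i_0, \dots, \xi^i_{n-1}$, so it depends on $\omega$ only through the conductances in $E_n^i$. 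Similarly, $\prod_{e \in E_n} \mathbf{1}_{\{c_{*}(e) \in A_e\}}$ splits along $E_n^1$ and $E_n^2$. Since $\mathbf{P}^K$ is a product measure over edges, and the $K$-fixing at $\mathcal{E}_{U_1} \subseteq E_n^1$ and $\mathcal{E}_{U_2} \subseteq E_n^2$ respects the partition, the $\mathbf{E}^K$-expectation factorizes, yielding exactly $\Pf^K_{U_1}(\cdots) \cdot \Pf^K_{U_2}(\cdots) = Q^K_{U_1, U_2}(B)$.

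No delicate estimate is required; the content of the proposition is that two walks whose ranges never come within $\ell^1$-distance $3$ interact with disjoint portions of the environment and therefore are indistinguishable, on $\mathcal{F}_T$, from two walks in independent environments. The remaining bookkeeping is to check (i) that the above events form a $\pi$-system generating $\mathcal{F}_T$, which is standard since $T$ takes countably many values and $\mathcal{F}_T \cap \{T = n\} = \mathcal{F}_n \cap \{T = n\}$, and (ii) that the enhanced coordinate $Z^i$ introduces no new dependence, as its dynamics rely on exactly the same incident conductances as $X^i$.
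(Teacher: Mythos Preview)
Your approach is the same as the paper's: reduce to a generating class of elementary trajectory-and-conductance events, exploit the disjointness of the edge sets $E_n^1$ and $E_n^2$ to factorise under the product environment law, and extend by a monotone class argument. The observation that the $K$-fixing at $\mathcal{E}_{U_1}\subseteq E_n^1$ and $\mathcal{E}_{U_2}\subseteq E_n^2$ respects the partition is a point the paper leaves implicit, so your write-up is if anything slightly more careful there.

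There is, however, one bookkeeping gap. Every event in your $\pi$-system is contained in $\{T=n\}$ for some \emph{finite} $n$, so the $\sigma$-algebra it generates, restricted to $\{T=+\infty\}$, is trivial. But $\mathcal{F}_T\cap\{T=+\infty\}=\mathcal{F}_\infty\cap\{T=+\infty\}$ is far from trivial, and $\{T=+\infty\}$ has positive probability in the situations where this proposition is applied. Hence your $\pi$-system does not generate $\mathcal{F}_T$, and the claim in your point (i) fails as stated. The fix is precisely what the paper does: drop the constraint $\{T=n\}$ and take instead
\[
B=\bigl\{\tilde X^1_k=(\xi^1_k,z^1_k),\ \tilde X^2_k=(\xi^2_k,z^2_k),\ 0\le k\le n\bigr\}\cap\bigcap_{e\in E_n}\{c_*(e)\in A_e\},
\]
requiring only that the fixed paths satisfy $\|\xi^1_i-\xi^2_j\|_1\ge 3$ for all $i,j\le n-1$ (equivalently, $B\subseteq\{T\ge n\}$). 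Your factorisation argument goes through unchanged on these events, and this family does generate $\mathcal{F}_T$, including on $\{T=+\infty\}$.
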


\begin{proof}
First, we restrict to the case $f = \indi{(\tilde{X}^1_{k})_{0 \le k \le n} = (a_k)_{0 \le k \le n}}g_1\indi{(\tilde{X}^2_{k})_{0 \le k \le n} = (b_k)_{0 \le k \le n}}g_2 $ where the paths $(a_k),(b_k)$ are nearest neighbour paths and for $ 0 \le i,j \le n-1$ we have $\|a_i-b_j\|_1 \ge 3$ and $g_1, g_2$ are measurable with respect to $\sigma\big(c^{*}(e) \colon e \in \mathcal{E}_{\{a_k\}_{0\le k \le n-1}}\big)$ and $\sigma\big(c^{*}(e) \colon e \in \mathcal{E}_{\{b_k\}_{0\le k \le n-1}}\big)$, respectively. We can write
\begin{align} \label{generate}
    \mathbb{E}^{K}_{U_1,U_2}(f) &= \mathbf{E}\left[g_1P^{\omega}_{U_1}\left((\tilde{X}^1_{k})_{0 \le k \le n} = (a_k)_{0 \le k \le n}\right)g_2P^{\omega}_{U_2}\left((\tilde{X}^2_{k})_{0 \le k \le n} = (b_k)_{0 \le k \le n}\right)\right].
\end{align}
The term $g_1P^{\omega}_{U_1}\big((\tilde{X}^1_{k})_{0 \le k \le n} = (a_k)_{0 \le k \le n}\big)$ is measurable with respect to $\sigma\big(c^{*}(e) \colon e \in \mathcal{E}_{\{a_k\}_{0\le k \le n}}\big)$. The term $g_2P^{\omega}_{U_2}\big((\tilde{X}^2_{k})_{0 \le k \le n} = (b_k)_{0 \le k \le n}\big)$ is measurable with respect to $\sigma\big(c^{*}(e) \colon e \in \mathcal{E}_{\{b_k\}_{0\le k \le n}}\big)$.
Now since $\mathcal{E}_{\{a_k\}_{0\le k \le n}}\cap \mathcal{E}_{\{b_k\}_{0\le k \le n}} = \emptyset$ and by independence of the conductances we can write \eqref{generate} as
\begin{align*}
    \mathbf{E}\left[g_1P^{\omega}_{U_1}\left((\tilde{X}^1_{k})_{0 \le k \le n} = (a_k)_{0 \le k \le n}\right)\right]\mathbf{E}\left[g_2P^{\omega}_{U_2}\left((\tilde{X}^1_{k})_{0 \le k \le n} = (b_k)_{0 \le k \le n}\right)\right] = \mathbb{E}^{Q_{U_1,U_2}^{K}}(f).
\end{align*}
This concludes the proof since, in the general case, $f$ can be written as an increasing limit of linear combinations of functions of this type.
\end{proof}

\noindent We state the analogous of Proposition~\ref{PropositionEventA(R)Indep}. We recall the definition of $A(R)$ at \eqref{Definition_event_A} and $\mathcal{U}$ at \eqref{eqn:USet}.

\begin{proposition}\label{PropositionJointRegTime}
There exists $\Lambda > 0$, $K_0 > 1$ such that, for any $K \ge K_0$, $U_1,U_2 \in \mathbb{Z}^d$ with $(U_1-U_2) \in \mathcal{U}$, $\|U_1-U_2\| \geq \Lambda$,
\begin{equation}
\Pf_{U_1,U_2}^{K}(A(R) \text{ }| \text{ } \mathcal{D}^{\bullet} = +\infty)\geq \rho,
\end{equation}
the constant $\rho$ only depends on $\lambda,\vec{\ell},d,K$.
\end{proposition}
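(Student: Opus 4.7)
The strategy is to mirror the proof of Proposition~\ref{PropositionEventA(R)Indep}, which handled the independent-environment case, but to carry the argument out under the joint measure $\Pf^{K}_{U_1,U_2}$, using Proposition~\ref{from_same_to_ind_env} to transfer estimates between the two. By Lemma~\ref{LemmaDpositive} (combined with the translation invariance of $\mathbf{P}$ to reduce to $U_1 = 0$, after which both starting points lie in $\mathcal{U}$), $\Pf^{K}_{U_1,U_2}(\mathcal{D}^{\bullet}=+\infty) \ge \eta > 0$ uniformly, hence it is enough to show that $\Pf^{K}_{U_1,U_2}\bigl(A(R) \cap \{\mathcal{D}^{\bullet}=+\infty\}\bigr) \ge \eta/2$ for $\|U_1-U_2\| \ge \Lambda$ large enough and uniformly in $R$. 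The key geometric observation is that on $\{\mathcal{D}^{\bullet}=+\infty\}$ the decreasing function $R \mapsto \mathcal{D}^{\bullet i}_{\le R}$ is identically $+\infty$ (by the two-value description following \eqref{definition_D}), so $M = +\infty \ge R + R_0$ automatically. Choosing $\Lambda$ larger than an absolute constant so that two walks starting at distance $\ge \Lambda$ cannot come within $L^{1}$-distance $2$ in fewer than three steps, one obtains
$$A(R)^{c} \cap \{\mathcal{D}^{\bullet}=+\infty\} \subset \{T < +\infty\},$$
where $T$ is the first meeting time of \eqref{def_trib_temps_inter}; here one uses that on $\{\mathcal{D}^{\bullet}=+\infty\}$ both walks are transient, hence $T^{i}_{R+R_{0}} < +\infty$. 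It follows that
$$\Pf^{K}_{U_1,U_2}(A(R), \mathcal{D}^{\bullet}=+\infty) \ge \eta - \Pf^{K}_{U_1,U_2}(T < +\infty),$$
and only the last term remains to be controlled.

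The event $\{T<+\infty\}$ is $\mathcal{F}_{T}$-measurable since $\{T<+\infty\} \cap \{T \le n\} = \{T \le n\} \in \mathcal{F}_n$ for every $n$. Proposition~\ref{from_same_to_ind_env} then gives
$$\Pf^{K}_{U_1,U_2}(T<+\infty) = Q^{K}_{U_1,U_2}(T<+\infty).$$
Under $Q^{K}_{U_1,U_2}$ the walks are independent, so writing $G_i(z) \coloneqq \Pf^{K}_{U_i}(\exists k \ge 0,\, X^{i}_{k} = z)$ for the unconditional hitting kernel and $G_{i}^{(R)}(z) \coloneqq G_i(z)\indi{\|z-U_i\|>R}$, a union bound over meeting displacements yields
$$Q^{K}_{U_1,U_2}(T<+\infty) \le \sum_{\|u\|_{1} \le 2}\sum_{z \in \mathbb{Z}^d} G_1(z)G_2(z+u).$$
I would then split the inner sum according to whether $\|z-U_1\| \le R'$ or not, with $R' = \|U_1-U_2\|/3$, and apply Cauchy--Schwarz exactly as in Step 1 of the proof of Proposition~\ref{PropositionDistantIndepEnv}, obtaining
$$Q^{K}_{U_1,U_2}(T<+\infty) \le C\bigl(\|G_1\|_{2}\|G_{2}^{(R')}\|_{2} + \|G_{1}^{(R')}\|_{2}\|G_2\|_{2}\bigr).$$

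To close the argument I need $\|G_i\|_{2} < +\infty$ and $\|G_{i}^{(R)}\|_{2} \le CR^{-c}$. These are obtained by copying Step 2 of the proof of Proposition~\ref{PropositionDistantIndepEnv}, with the regeneration decomposition $G_i = J_0 + \sum_{n\ge1} T^{n} \ast J$, where $T^{n}(z) = \Pf^{K}_{U_i}(X^{i}_{\tau^{i}_n} = z)$, $J$ is the per-cycle hitting kernel under $\Pf^{K}_0(\cdot \mid D = +\infty)$, and $J_0$ is the analogous (unconditioned) hitting kernel of the initial cycle $[0,\tau^{i}_1]$. The finiteness of $\|J\|_{1}$ and $\|J_0\|_{1}$ follows from the polynomial moment bounds on $\chi$ given by \cite[Lemma 6.1]{Kious_Frib}, and the bound $\|T^{n}\|_{2} \le Cn^{-d/4 + \varepsilon/2}$ comes from Proposition~\ref{TechnicalInequality}, which is summable in $d \ge 5$. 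Plugging this back gives $\Pf^{K}_{U_1,U_2}(T<+\infty) \le C\|U_1-U_2\|^{-c}$, and taking $\Lambda$ large enough that $C\Lambda^{-c} \le \eta/2$ concludes the proof with $\rho = \eta/2$. The only step that is not essentially a transcription of an earlier argument is the $L^{2}$-control of the \emph{unconditional} hitting kernel $G_i$: isolating the first-cycle term $J_0$ and bounding it via the same moment estimates that govern $J$ is the main technical point, but it is a direct variant of what was done in Proposition~\ref{PropositionDistantIndepEnv}.
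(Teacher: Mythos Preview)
Your argument is correct, but it proceeds differently from the paper. The paper transfers the event $A(R)$ itself to the independent-environment measure: it checks that (once $\|U_1-U_2\|\ge\Lambda$) the event $A(R)$ coincides with an $\mathcal{F}_T$-measurable event, applies Proposition~\ref{from_same_to_ind_env} to get $\Pf^{K}_{U_1,U_2}(A(R))=Q^{K}_{U_1,U_2}(A(R))$, and then invokes Proposition~\ref{PropositionEventA(R)Indep} directly for the lower bound. To pass to the conditioned measure it subtracts $\Pf^{K}_{U_1,U_2}(A(R),\,\mathcal{D}^{\bullet}<+\infty)\le \Pf^{K}_{U_1,U_2}(M\ge R+R_0,\,\mathcal{D}^{\bullet}<+\infty)\le Ce^{-cR}$ via Lemma~\ref{LemmaMexpdecay}; since this error is only small for large $R$, a separate explicit construction handles $R\le r$. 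Your route instead bounds the complement $A(R)^{c}\cap\{\mathcal{D}^{\bullet}=+\infty\}\subset\{T<+\infty\}$ and transfers only $\{T<+\infty\}$ to $Q^{K}$. This has the advantage that the resulting bound $Q^{K}_{U_1,U_2}(T<+\infty)\le C\|U_1-U_2\|^{-c}$ is uniform in $R$, so no case split is needed. The price is that you cannot quote Proposition~\ref{PropositionDistantIndepEnv} verbatim (it is stated for the conditioned kernel $F_i$), and must redo the $L^2$ estimates for the unconditional hitting kernel $G_i$, isolating the first-cycle term $J_0$; as you note, this is a routine variant. Either approach is fine; the paper's recycles Proposition~\ref{PropositionEventA(R)Indep} wholesale at the cost of the small-$R$ case, while yours trades that for a mild extension of the kernel estimates.
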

\begin{proof}
 We recall the notation $T$ and $\mathcal{F}_n$ defined in \eqref{def_trib_temps_inter}. Fix $\|U_1-U_2\| \ge \Lambda$ with $\Lambda$  large enough given by Proposition~\ref{PropositionEventA(R)Indep}. Then under $\Pf_{U_1,U_2}^{K}$, event $A(R)$ coincides with
\begin{align*}
    \{M \ge R+R_0\}\cap \bigg\{\|X_p^1 - X_q^2\|_{1} \geq 3 \quad &\forall 0 \le  p \leq T^1_{\mathcal{H}^{+}_{R+R_0}}  \,\, \textnormal{and} \,\, \forall0 \le  q \leq T^2_{\mathcal{H}^{+}_{R+R_0}} \bigg\}.
\end{align*}
We replace $A(R)$ by this new definition in the proof. Observe that $A(R) = \{M \ge R+R_0\}\cap\{T^1_{\mathcal{H}^{+}_{R+R_0}} \le T\}\cap \{T^2_{\mathcal{H}^{+}_{R+R_0}} \le T\}$. Furthermore,
\begin{itemize}
    \item[$\bullet$] $\{M \ge R+R_0\}$ is measurable with respect to $\sigma\Big((\tilde{X}^i_{k})_{0\le k \le T^i_{\mathcal{H}^{+}_{R+R_0}}} \text{ , }i =1,2\Big)$. On $\{T^1_{\mathcal{H}^{+}_{R+R_0}} \le T\}\cap \{T^2_{\mathcal{H}^{+}_{R+R_0}} \le T\}$ we can replace $\{M \ge R+R_0\}$ with an event $B \in \mathcal{F}_T$.
    \item[$\bullet$] $T^1_{\mathcal{H}^{+}_{R+R_0}}$ and $T^2_{\mathcal{H}^{+}_{R+R_0}}$ are both stopping times with respect to the filtration $(\mathcal{F}_n)_{n \ge 0}$. We deduce $\{T^1_{\mathcal{H}^{+}_{R+R_0}} \le T\}\cap \{T^2_{\mathcal{H}^{+}_{R+R_0}} \le T\} \in \mathcal{F}_T$.
\end{itemize}
From these two observations we have $B\cap\{T^1_{\mathcal{H}^{+}_{R+R_0}} \le T\}\cap \{T^2_{\mathcal{H}^{+}_{R+R_0}} \le T\} \in \mathcal{F}_T$. We write
\begin{align} \label{go_to_ind}
    \nonumber\Pf_{U_1,U_2}^{K}(A(R)) &=\Pf_{U_1,U_2}^{K}\bigg(B\cap\{T^1_{\mathcal{H}^{+}_{R+R_0}} \le T\}\cap \{T^2_{\mathcal{H}^{+}_{R+R_0}} \le T\} \bigg)\\&\underset{\ref{from_same_to_ind_env}}{=}Q_{U_1,U_2}^{K}\bigg(B\cap\{T^1_{\mathcal{H}^{+}_{R+R_0}} \le T\}\cap \{T^2_{\mathcal{H}^{+}_{R+R_0}} \le T\} \bigg)\\ \nonumber&= Q_{U_1,U_2}^{K}(A(R)).
\end{align}
By Lemma~\ref{LemmaDpositive} we have
\begin{align} \label{to_control}
    \Pf_{U_1,U_2}^{K}(A(R) \text{ }| \text{ } \mathcal{D}^{\bullet} = +\infty) \ge C\left(\Pf_{U_1,U_2}^{K}(A(R)) - \Pf_{U_1,U_2}^{K}(A(R)\cap \mathcal{D}^{\bullet} < +\infty)\right).
\end{align}
Furthermore,
\begin{align} \label{error_term}
    \Pf_{U_1,U_2}^{K}(A(R)\cap \mathcal{D}^{\bullet} < +\infty) &\le \Pf_{U_1,U_2}^{K}\bigg(\{M \ge R + R_0\} \cap \mathcal{D}^{\bullet} < +\infty\bigg)\\\nonumber &\le C\exp(-cR),
\end{align}
where the last inequality follows from a union bound, translation invariance of the environment and Lemma~\ref{LemmaMexpdecay}. The constants $C,c > 0$ only depend on $\lambda,\vec{\ell},d,K$.
Using \eqref{go_to_ind},\eqref{to_control},\eqref{error_term} and Proposition~\ref{PropositionEventA(R)Indep} we deduce the proposition for all $R > r$, where $r$ is chosen large enough and depends on $\lambda,\vec{\ell},d,K$. For $R \le r$ we notice that
\begin{equation*}
\begin{split}
    \Pf_{U_1,U_2}^{K}(A(R)\cap \mathcal{D}^{\bullet} = +\infty) &\ge \Pf_{U_1,U_2}^{K}(Z^i_{1} = 1, X^{i}_{k}= X^{i}_{k - 1} + e_{1}, \text{ for } 0< k \le T^i_{\mathcal{H}^{+}_{r+R_0}}, i = 1, 2)\\
    & \ge \rho > 0.
\end{split}
\end{equation*}
The constant $\rho$ can be chosen by asking all the points on the two paths to be $K$-open and the event inside the probability to happen. Note that $\rho > 0$ depends on $r$ and $K$ and, since $r$ only depends on $\lambda,\vec{\ell},d,K$, we obtain the result. 
\end{proof}

\noindent We extend the last result to the case when $U_1$ and $U_2$ may be close. 
\begin{proposition}\label{PositiveProbDontMeet}
 There exists $K_0 > 1$ such that for any $K \ge K_0$ and any $U_1,U_2 \in \mathbb{Z}^d$ with $
 (U_1-U_2) \in \mathcal{U}$ and $R > 0$
\begin{equation*}
\Pf_{U_1,U_2}^{K}(A(R) \text{ }| \text{ } \mathcal{D}^{\bullet} = +\infty)\geq \rho,
\end{equation*}
where the constant $\rho > 0$ only depends on $\lambda,\vec{\ell},d,K$ and $\mathcal{U}$ is defined in \eqref{eqn:USet}.
\end{proposition}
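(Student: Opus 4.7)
The plan is to reduce to Proposition~\ref{PropositionJointRegTime} by prepending a finite deterministic warm-up that drives the two walks apart. The case $\|U_1 - U_2\| \ge \Lambda$ is exactly Proposition~\ref{PropositionJointRegTime}, so assume $\|U_1 - U_2\| < \Lambda$. Exploiting $d \ge 5$, I would fix deterministic nearest-neighbor paths $\gamma^i = (U_i, w^i_1, \ldots, w^i_{N_0})$, $i = 1, 2$, of common length $N_0 = N_0(\Lambda, d)$ whose steps are all among $\{e_1, \ldots, e_d\}$ (so the $\vec{\ell}$-projection strictly increases at every step, ruling out $\mathbf{BACK}^i$ along $\gamma^i$), which avoid $\mathcal{V}_{U_1} \cup \mathcal{V}_{U_2} \cup \mathcal{V}_{U_1 - e_1} \cup \mathcal{V}_{U_2 - e_1}$ from time $2$ on, and whose endpoints $V_i := w^i_{N_0}$ satisfy $(V_1 - V_2) \in \mathcal{U}$ together with $\|V_1 - V_2\|_1 \ge \Lambda + 2$. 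A concrete choice is to let $\gamma^1$ alternate $e_1$- and $e_2$-steps, let $\gamma^2$ alternate $e_1$- and $e_3$-steps, and then pad $\gamma^2$ with a bounded number of further $e_j$-steps to realign the two $\vec{\ell}$-projections; having at least three perpendicular basis directions available is what makes this possible.

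Let $W$ be the event that both enhanced walks follow $\gamma^1, \gamma^2$ exactly with $Z^i_n = 1$ for every $1 \le n \le N_0$. The event that every vertex on $\gamma^1 \cup \gamma^2$ is $K$-open is a $\mathbf{P}$-independent product of $O(\Lambda)$ Bernoulli events of positive probability, hence has $\mathbf{P}$-probability at least $c(\Lambda, K) > 0$; conditionally on $K$-openness, the quenched probability of following the prescribed trajectory with the prescribed $Z$-values is bounded below uniformly by a constant depending only on $\lambda, \vec{\ell}, d, K$. Thus $\Pf^K_{U_1, U_2}(W) \ge c_1 > 0$, and on $W$ neither $\mathbf{BACK}^i$ nor $\mathbf{ORI}^i$ is triggered during the time interval $[0, N_0]$, while both walks terminate the warm-up at $K$-open points $V_1, V_2$.

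I would then conclude by restarting at $(V_1, V_2)$: Lemma~\ref{Lemma2StoppingTimes} gives the strong Markov property at the deterministic time $N_0$, and Proposition~\ref{omegaK} (applied at the $K$-open points $V_1, V_2$ on the event that the restarted walks satisfy $\mathcal{D}^{\bullet} = +\infty$) identifies the restart law with the $K$-modified annealed law $\Pf^K_{V_1, V_2}$. Since $(V_1 - V_2) \in \mathcal{U}$ and $\|V_1 - V_2\| \ge \Lambda$, Proposition~\ref{PropositionJointRegTime} applied at $(V_1, V_2)$, combined with Lemma~\ref{LemmaDpositive}, yields
\[
\Pf^K_{V_1, V_2}\bigl(A(R'), \mathcal{D}^{\bullet} = +\infty\bigr) \ge \rho' \eta > 0
\]
uniformly in $R'$. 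On $W$, the strict forward progress along $\gamma^i$ ensures $V_i \cdot \vec{\ell} > U_i \cdot \vec{\ell}$ by at least a constant amount, so that the non-backtracking built into the restarted $\{\mathcal{D}^{\bullet} = +\infty\}$ prevents the walks from ever reentering $\mathcal{V}_{U_i} \cup \mathcal{V}_{U_i - e_1}$; consequently $W$ together with the restarted $\{\mathcal{D}^{\bullet} = +\infty\} \cap A(R - N_0\, e_1 \cdot \vec{\ell})$ implies the original $\{\mathcal{D}^{\bullet} = +\infty\} \cap A(R)$. Composing the two estimates yields $\Pf^K_{U_1, U_2}(A(R), \mathcal{D}^{\bullet} = +\infty) \ge c_1 \rho' \eta$, and dividing by $\Pf^K_{U_1, U_2}(\mathcal{D}^{\bullet} = +\infty) \le 1$ concludes. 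The main technical obstacle is precisely this last compatibility step: the two $\mathcal{D}^{\bullet}$'s reference different starting neighborhoods, and one must verify that the geometric conditions imposed on $\gamma^1, \gamma^2$ are strong enough to identify the restarted event with a subset of the desired one; once this bookkeeping is done, the argument reduces to the two previously established estimates.
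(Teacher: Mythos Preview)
Your overall strategy coincides with the paper's: prepend finite deterministic paths that drive the walks apart, then restart at the endpoints $(V_1,V_2)$ and invoke Proposition~\ref{PropositionJointRegTime}. The gap is at the restart step.

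Proposition~\ref{omegaK} is a purely \emph{quenched} identity: on $\{\mathcal{D}^\bullet=\infty\}$ the laws $P^\omega_{V_1,V_2}$ and $P^{\omega_K}_{V_1,V_2}$ agree. It does not by itself ``identify the restart law with the $K$-modified annealed law $\Pf^K_{V_1,V_2}$''. After Lemma~\ref{Lemma2StoppingTimes} and Proposition~\ref{omegaK} you are left with an expression of the form $\mathbf{E}\big[\mathds{1}_{O}\, P^{\omega_K}_{V_1,V_2}(A(R'),\mathcal{D}^\bullet=\infty)\big]$ (times the deterministic-on-$O$ quenched probability of $W$), and to factor out $\Pf^K_{V_1,V_2}(\cdot)$ you need $\mathbf{P}$-independence between the conductances determining $O$ (edges incident to the warm-up paths) and those on which $P^{\omega_K}_{V_1,V_2}(A(R'),\mathcal{D}^\bullet=\infty)$ depends. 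That independence is exactly what Theorem~\ref{IndependenceJointReg} packages (cf.\ the disjointness of edge sets in Figure~\ref{disjoint_sets}); it is not automatic, and forcing it constrains the warm-up to be compatible with a joint regeneration at $(V_1,V_2)$---in particular each path must end with two $e_1$-steps to match \eqref{definition_M}, which your alternating $e_1$--$e_2$ / $e_1$--$e_3$ construction does not arrange. The event-level ``bookkeeping'' you flag (aligning the two $\mathcal{D}^\bullet$'s) is real but secondary to this law-level factorization.

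The paper's proof avoids the issue by designing the warm-up so that $X^i_{T^i_{\mathcal{L}_1}}=V_i$ and applying Theorem~\ref{IndependenceJointReg} directly, which produces $\Pf^K_{V_1,V_2}(A(R)\mid\mathcal{D}^\bullet=\infty)$ as a factor with no further argument. Your plan becomes correct if you either replace the combination Lemma~\ref{Lemma2StoppingTimes}~+~Proposition~\ref{omegaK} by a call to Theorem~\ref{IndependenceJointReg}, or reproduce its independence argument after modifying the paths accordingly.
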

\begin{proof}
We simply need to find a lower bound for $\Pf_{U_1,U_2}^{K}(A(R)\cap \mathcal{D}^{\bullet} = +\infty)$. Fix $\Lambda$ as in Proposition~\ref{PropositionJointRegTime}. We may suppose $\|U_1-U_2\| \le \Lambda$ otherwise we can simply apply Proposition~\ref{PropositionJointRegTime}. We build paths that start from $U_1$ and $U_2$ in the following way:
\begin{itemize}
    \item Fix $V_1$ and $V_2$ such that $\Lambda \le \|V_1-V_2\| < 2\Lambda$ and $ R_0+5< \ent{V_1\cdot \vec{\ell}},\ent{V_2\cdot \vec{\ell}} \le R_0+6$.
    \item Build two finite paths $(x_i)$ and $(y_j)$ starting from $U_1$, $U_2$ respectively and ending at $V_1$, $V_2$, which are:
    \begin{enumerate}
        \item Compatible with having joint regeneration points on $(V_1,V_2)$.
        \item Of length smaller than $C\Lambda$, with $C$ a large enough constant.
        \item Compatible with $A(R)$, i.e.\ on those paths the walks get at distance more than $2$ after the first 2 steps and stay on the right side (in terms of inner product with $\vec{\ell}$) of their starting point.
    \end{enumerate}
\end{itemize}
\noindent It is always possible to find such paths, see the figure below for a visual explanation of why these paths exist.
%Fix $V_1$ and $V_2$ such that $\Lambda \le ||V_1-V_2|| < 2\Lambda$ and $ R_0+5< \ent{V_1\cdot \vec{\ell}},\ent{V_2\cdot \vec{\ell}} < R_0+6$ and two finite paths $(x_i)$ and $(y_j)$ starting from $U_1$ respectively $U_2$ and ending at $V_1$ respectively $V_2$, which are compatible with having joint regeneration points on $(V_1,V_2)$ and of size lower than $C\Lambda$ where $C$ is a large enough constant. The existence of such paths follows from the next figure.
\begin{figure}[H]
    \centering
    \includegraphics[scale = 0.25]{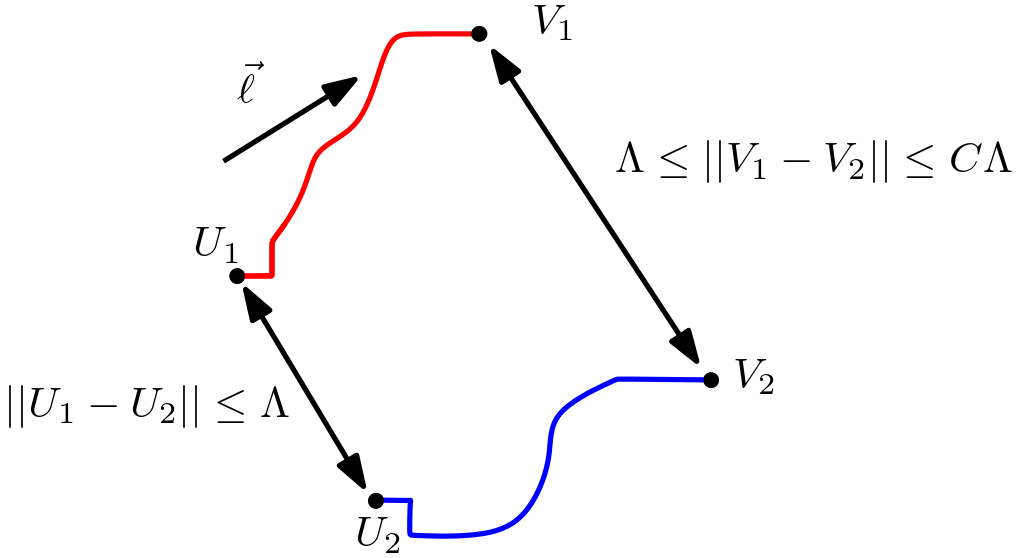}
    \caption{General construction for the points $V_1,V_2$ and for the paths to these points. One may observe that, even if $U_1 = U_2$, the construction still works. Indeed, after three steps the walks get at distance more than $3$.}
    \label{Getting_far}
\end{figure}
By the construction of the paths $(x_i)$ and $(y_j)$, the we have that 
\begin{align}\label{good_path_then_AR}
     \Pf_{U_1,U_2}^{K}\left(A(R)\right) \ge     \Pf_{U_1,U_2}^{K}\left(\substack{
    (X^1_i) = (x_i),(X^2_j)=(y_j),(x_i) \text{ and }(y_j)\text{ are all }K\text{-open points}\\ X^1_{T^1_{\mathcal{L}_1}} = V_1, X^2_{T^2_{\mathcal{L}_1}} = V_2, \,\, A(R) \circ \theta_{T^1_{\mathcal{L}_1},T^2_{\mathcal{L}_1}}}\right).
\end{align}
Using Theorem~\ref{IndependenceJointReg} the term on the r.h.s.~can be rewritten as
\begin{align*}
 &\Pf_{U_1,U_2}^{K}\left((X^1_i) = (x_i),(X^2_j)=(y_j),(x_i) \text{ and }(y_j)\text{ are all }K\text{-open points}, X^1_{T^1_{\mathcal{L}_1}} = V_1, X^2_{T^2_{\mathcal{L}_1}} = V_2\right)\\&\times \Pf_{V_1,V_2}^{K}\left(A(R)\,\,|\,\,\mathcal{D}^{\bullet} = +\infty\right).
    \end{align*}
Using Proposition~\ref{PropositionJointRegTime}, the right term is bounded from below by $\rho > 0$ which only depends on $\lambda,\vec{\ell},d,K$. The left term can be rewritten as (see step \eqref{eqn:D=inftyOtherSide} in the proof of Theorem~\ref{IndependenceJointReg} for details) 
\begin{align*}
    \Pf_{U_1,U_2}^{K}\left((X^1_i) = (x_i),(X^2_j)=(y_j),(x_i) \text{ and }(y_j)\text{ are all }K\text{-open points}\right)\Pf_{V_1,V_2}^{K}\left(\mathcal{D}^{\bullet} = +\infty\right)
\end{align*}
By Lemma~\ref{LemmaDpositive} and using the fact that 
\begin{equation*}
    \Pf_{U_1,U_2}^{K}\big((X^1_i) = (x_i),(X^2_j)=(y_j),(x_i) \text{ and }(y_j)\text{ are all }K\text{-open points}\big) \ge \rho^{*} > 0,
\end{equation*}
where $\rho^{*}$ only depends on $\lambda,K$, this concludes the proof.
\end{proof}

We conclude this subsection with the following proposition.
\begin{proposition} \label{AR_I_unlikely}
There exists $K_0 > 1$ such that for $K \ge K_0$, for $U_1,U_2 \in \mathbb{Z}^d$ such that $(U_1-U_2) \in \mathcal{U}$  and $R > 0$, we have
    \begin{align*}
        \Pf_{U_1,U_2}^{K}(A(R)\cap \mathbf{M}_{R+R_0}) \le CR^{-c},
    \end{align*}
where the constants $C,c > 0$ only depend on $\lambda,\vec{\ell},d,K$ and $\mathcal{U}$ is defined in \eqref{eqn:USet}.
\end{proposition}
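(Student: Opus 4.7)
My strategy is to transfer the event $A(R)\cap\mathbf{M}_{R+R_0}$ from the coupled environment measure $\Pf^K_{U_1,U_2}$ to the product measure $Q^K_{U_1,U_2}$ via Proposition~\ref{from_same_to_ind_env}, and then apply the polynomial decay of Proposition~\ref{CorollaryIndepEnv}. The main obstacle is that $\mathbf{M}_{R+R_0}$ is \emph{not} $\mathcal{F}_T$-measurable for the first-meeting time $T$ defined in \eqref{def_trib_temps_inter}, so the transfer cannot be applied to $\mathbf{M}_{R+R_0}$ directly.

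Let $T$ denote this first meeting time. The definition of $A(R)$ forces $T\le 2$ or $T>T^1_{\mathcal{H}^+_{R+R_0}}\wedge T^2_{\mathcal{H}^+_{R+R_0}}$; the regime $T\le 2$ occurs only when $\|U_1-U_2\|$ is small and is handled by conditioning on the first three steps of each walk (on $A(R)$ the walks separate to distance $\ge 3$ after step~$2$) and reducing to starting configurations with mutual distance $\ge 3$, in the same spirit as Proposition~\ref{PositiveProbDontMeet}. So I assume $T\ge 3$ and introduce the stopping time
\[
T^{\square}\coloneqq\inf\bigl\{n\ge 0:\exists\, p,q\le n,\ X^1_p,X^2_q\in\mathcal{H}^+(R+R_0),\ \|X^1_p-X^2_q\|_1\le 2\bigr\},
\]
so that $\mathbf{M}_{R+R_0}=\{T^\square<+\infty\}$ and $T^{\square}\ge T^1_{\mathcal{H}^+_{R+R_0}}\vee T^2_{\mathcal{H}^+_{R+R_0}}$. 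I then split
\[
A(R)\cap\mathbf{M}_{R+R_0}\cap\{T\ge 3\}\subseteq\bigl(A(R)\cap\{T=T^\square<+\infty\}\bigr)\cup\bigl(A(R)\cap\{T<T^\square<+\infty\}\bigr).
\]

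The main contribution is the first term. On $A(R)\cap\{T=T^\square<+\infty\}$ one has $T\ge T^1_{\mathcal{H}^+_{R+R_0}}\vee T^2_{\mathcal{H}^+_{R+R_0}}$, so both $A(R)$ (whose defining constraints, including $M\ge R+R_0$, refer only to indices $\le T^1_{\mathcal{H}^+_{R+R_0}}\vee T^2_{\mathcal{H}^+_{R+R_0}}$) and $\{T=T^\square\}$ are $\mathcal{F}_T$-measurable. Proposition~\ref{from_same_to_ind_env} then yields
\[
\Pf^K_{U_1,U_2}\bigl(A(R)\cap\{T=T^\square<+\infty\}\bigr)=Q^K_{U_1,U_2}\bigl(A(R)\cap\{T=T^\square<+\infty\}\bigr)\le Q^K_{U_1,U_2}(\mathbf{M}_{R+R_0})\le CR^{-c},
\]
by Proposition~\ref{CorollaryIndepEnv}.

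The residual term $A(R)\cap\{T<T^\square<+\infty\}$ forces the first meeting at time $T$ to involve a point outside $\mathcal{H}^+(R+R_0)$: on $A(R)$ with $T\ge 3$ this either requires a backtrack of one walk after it has reached $\mathcal{H}^+(R+R_0)$, which is exponentially unlikely in $R$ by \cite[Lemma~3.1]{Kious_Frib} combined with the strong Markov property at $T^1_{\mathcal{H}^+_{R+R_0}}\wedge T^2_{\mathcal{H}^+_{R+R_0}}$, or else yields a near-boundary meeting (both walks at level $>R+R_0-2$) that can be absorbed by rerunning the main argument with $\mathbf{M}_{R+R_0}$ replaced by $\mathbf{M}_{(R-2)+R_0}$. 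The hard part is controlling this residual case cleanly, since identifying precisely which $\mathcal{F}_T$-measurable event captures the bad scenario at the boundary of $\mathcal{H}^+(R+R_0)$ requires a careful combined use of the strong Markov property and the separation guaranteed by $A(R)$.
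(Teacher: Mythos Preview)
Your overall strategy---transfer to $Q^K$ via Proposition~\ref{from_same_to_ind_env} and then invoke Proposition~\ref{CorollaryIndepEnv}---matches the paper's, but your decomposition into $\{T=T^\square\}$ versus $\{T<T^\square\}$ leaves a genuine gap in the residual term. Your claim that $\{T<T^\square\}$ on $A(R)$ ``requires a backtrack of one walk after it has reached $\mathcal{H}^+(R+R_0)$, which is exponentially unlikely in $R$'' is not justified: exiting $\mathcal{H}^+(R+R_0)$ needs only an $O(1)$ backtrack, whose probability is bounded away from zero uniformly in~$R$; the exponential bound of \cite[Lemma~3.1]{Kious_Frib} controls only backtracks of order~$R$. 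Your fallback of ``rerunning with $\mathbf{M}_{(R-2)+R_0}$'' for the near-boundary case faces exactly the same measurability obstruction and would lead to an uncontrolled recursion. Finally, reducing the case $T\le 2$ to distant starting points ``in the spirit of Proposition~\ref{PositiveProbDontMeet}'' is the wrong analogy: that proposition manufactures \emph{lower} bounds by explicit path construction, whereas here you need an \emph{upper} bound.

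The paper avoids all of this with a different reduction. It first shifts to the first joint regeneration level $\mathcal{L}_1$ via Theorem~\ref{IndependenceJointReg}, which removes the small-time issues, and then introduces a surrogate event $\Delta(r)=\{X^1_T\cdot\vec{\ell}\ge R_0+r\}$ that is \emph{directly} $\mathcal{F}_T$-measurable. The inclusion
\[
A(R)\cap\mathbf{M}_{R+R_0}\cap\bigcap_{k\le R^2}\{\mathcal{L}_{k+1}-\mathcal{L}_k\le R/4\}\ \subset\ \bigl\{\Delta(R/4)\circ\theta_{T^1_{\mathcal{L}_1},\,T^2_{\mathcal{L}_1}}\bigr\}
\]
gives the desired upper bound with no case split on $T^\square$; Proposition~\ref{from_same_to_ind_env} then transfers $\Delta(R/4)$ to $Q^K$, where $\Delta(R/4)\subset\mathbf{M}_{R_0+R/8}$, and Proposition~\ref{CorollaryIndepEnv} closes the estimate. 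The idea you are missing is to choose a single $\mathcal{F}_T$-measurable event that is \emph{implied by} $A(R)\cap\mathbf{M}_{R+R_0}$ (after the regeneration shift) and in turn \emph{implies} a relaxed $\mathbf{M}$-event under $Q^K$, rather than trying to carve $\mathbf{M}_{R+R_0}$ itself into $\mathcal{F}_T$-measurable pieces.
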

\begin{proof}
We recall the notation $T$ defined in \eqref{def_trib_temps_inter}. We introduce $\Delta(R) = \{X^1_T\cdot \vec{\ell}  \ge R_0+ R\}$. On the event $\bigcap_{k=0}^{R^2}\{\mathcal{L}_{k+1}-\mathcal{L}_k \le R/4\} \cap A(R) \cap \mathbf{M}_{R+R_0}$, we have that $\Delta(R/4) \circ \theta_{T^1_{\mathcal{L}_1},T^2_{\mathcal{L}_1}}$ holds. It follows that
\begin{equation}
\begin{split}\label{eqn:OtherTermSeparation}
    \Pf_{U_1,U_2}^{K}\left( A(R)\cap \mathbf{M}_{R+R_0} \right) &\le
          \Pf_{U_1,U_2}^{K}\left(\Delta(R/4)\circ \theta_{T^1_{\mathcal{L}_1},T^2_{\mathcal{L}_1}}\right)    + \somme{k=0}{R^2}{\Pf_{U_1,U_2}^{K}\left( \mathcal{L}_{k+1}-\mathcal{L}_k >R/4\right)}.
\end{split}
\end{equation}
By Proposition~\ref{L1decaybis} and Proposition~\ref{PropositionFrequentRegeneration} we have that
\begin{align}\label{domin_sec_third_term}
    \Pf_{U_1,U_2}^{K}\left( \mathcal{L}_{k+1}-\mathcal{L}_k > R/4\right) \le CR^{-M},
\end{align}
where $M$ is large enough and $C$ only depends on $\lambda,\vec{\ell},d,K$. It follows that
\begin{align}\label{}
    \somme{k=0}{R^2}{\Pf_{U_1,U_2}^{K}\left( \mathcal{L}_{k+1}-\mathcal{L}_k >R/4\right)} \le CR^{-M+2}.
\end{align}
For the first term on the r.h.s.\ of \eqref{eqn:OtherTermSeparation} we use Theorem~\ref{IndependenceJointReg} to write
\begin{align*}
    \Pf_{U_1,U_2}^{K}\left(\Delta(R/4)\circ \theta_{T^1_{\mathcal{L}_1},T^2_{\mathcal{L}_1}}\right) &= \mathbb{E}_{U_1,U_2}^{K}\left[\Pf^{K}_{X^1_{T^1_{\mathcal{L}_1}},X^2_{T^2_{\mathcal{L}_1}}}\left(\Delta(R/4)\text{ | }\mathcal{D}^{\bullet}=+\infty \right)\right]\\&\le C\mathbb{E}_{U_1,U_2}^{K}\left[\Pf^{K}_{X^1_{T^1_{\mathcal{L}_1}},X^2_{T^2_{\mathcal{L}_1}}}\left(\Delta(R/4) \right)\right],
\end{align*}
where $C > 0$ only depends on $\lambda,\vec{\ell},d,K$ and comes from Lemma~\ref{LemmaDpositive}.

Now, we observe that $\Delta(R/4)$ is measurable with respect to $\mathcal{F}_{T}$, where $T$ and $\mathcal{F}_{n}$ are defined in \eqref{def_trib_temps_inter}. By Proposition~\ref{from_same_to_ind_env}, we have 
\begin{align*}
   \mathbb{E}_{U_1,U_2}^{K}\left[\Pf^{K}_{X^1_{T^1_{\mathcal{L}_1}},X^2_{T^2_{\mathcal{L}_1}}}\left(\Delta(R/4)\text{ | }\mathcal{D}^{\bullet}=+\infty \right)\right] = \mathbb{E}_{U_1,U_2}^{K}\left[Q^{K}_{X^1_{T^1_{\mathcal{L}_1}},X^2_{T^2_{\mathcal{L}_1}}}\left(\Delta(R/4)\text{ | }\mathcal{D}^{\bullet}=+\infty \right)\right].
\end{align*}
Observing that $\Delta(R/4)$ implies $\mathbf{M}_{R_0+R/8}$ (for $R$ large enough, otherwise the proposition is obvious), we conclude the proof using Proposition~\ref{CorollaryIndepEnv} and \eqref{domin_sec_third_term}.
\end{proof}

\subsection{Proof of asymptotic separation}
We have everything needed to prove the asymptotic separation stated in Proposition~\ref{LemmaDistantWalks}. We fix $\psi \in (0, 1)$ arbitrarily small. We also introduce $\mathbf{d} = N^{1-\psi}$. We set $\gamma_1 = \mathcal{L}_1$ and for $  k \ge 2 $ we call $\gamma_k$ the first joint regeneration level in $\mathcal{H}^{+}(\gamma_{k-1}+2\mathbf{d})$. Recall the notation $A(R)$ for $R > 0$ defined in \eqref{Definition_event_A}.

We introduce the event $S_k$ which is just event $A(\mathbf{d})$ shifted so that the walks start respectively from $X_{T^1_{\gamma_k}}$ and $X_{T^2_{\gamma_k}}$. We set:
\begin{equation} \label{Event_S_and_Sk}
     \forall k \ge 0, \text{  }S_k = A(\mathbf{d}) \circ \theta_{T^1_{\gamma_k}, T^2_{\gamma_k}} \text{ and }S(N) = \union{k=1}{\ent{N^{\psi}/4}}{S_k}.
\end{equation}
The idea of the proof is the following: each time our walks reach a level $\gamma_k$ we check whether $S_k$ occurs or not. If $S_k$ happens, using Proposition~\ref{AR_I_unlikely}, the walks will never meet after that with high probability. If $S_k$ does not happen, we wait for the walks to reach $\gamma_{k+1}$ and repeat the procedure. Using the independence property of joint regeneration levels (see Theorem~\ref{IndependenceJointReg}) we expect the first success of this process to happen quickly (See Figure~\ref{Succesives_attemps}).
\begin{figure}[H]
    \centering
    \includegraphics[scale = 0.45]{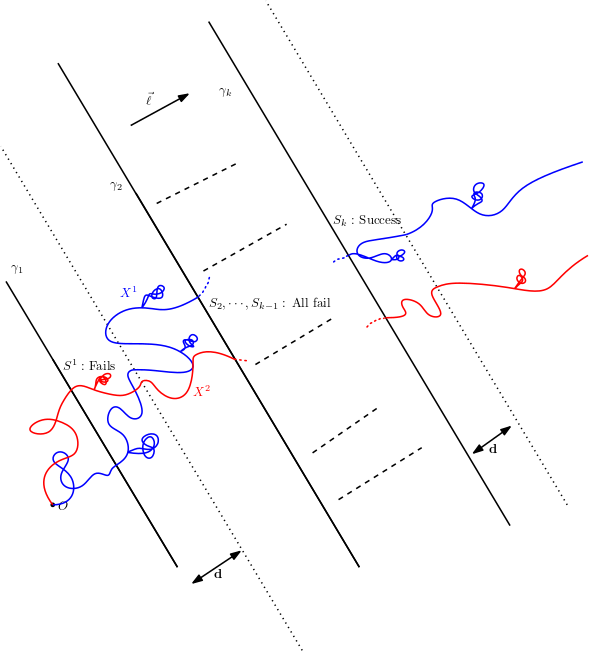}
    \caption{Successive attempts for events $S_k$.}
    \label{Succesives_attemps}
\end{figure}
\begin{proposition}\label{PropositionEventS}
There exists $K_0 > 1$ such that, for any $K \ge K_0$, there exists $\rho > 0$ such that, for any  $k \ge 1$, we have
\begin{equation*}
   \displaystyle \mathbb{P}_{0}(S_k \, | \, S_0^{c},\cdots,S_{k-1}^{c} ) \geq \rho,
\end{equation*}
where $\rho$ only depends on $\lambda,\vec{\ell},d,K$.
\end{proposition}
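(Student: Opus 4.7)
The plan is to combine the Markov-type property of joint regeneration levels from Theorem~\ref{IndependenceJointReg} with the uniform lower bound for the separation event from Proposition~\ref{PositiveProbDontMeet}. The heuristic is clean: at level $\gamma_k$, the future evolution of the pair $(X^1, X^2)$ is distributed (after conditioning on $\mathcal{D}^\bullet=+\infty$) as a fresh pair of walks started at the joint regeneration point, so the event $S_k$ has the law of $A(\mathbf{d})$ under $\mathbb{P}^K_{x,y}(\cdot \mid \mathcal{D}^\bullet=+\infty)$, uniformly in $(x,y)$ with $x - y \in \mathcal{U}$.

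First I would check that the conditioning event $H_k := \bigcap_{j=0}^{k-1} S_j^c$ is measurable with respect to the sigma-algebra $\mathcal{G}_k$ defined in \eqref{eqn:SigmaAlgebraG_k}. For each $j < k$, the event $S_j = A(\mathbf{d}) \circ \theta_{T^1_{\gamma_j}, T^2_{\gamma_j}}$ is determined by the two joint trajectories between hitting levels $\gamma_j$ and $\gamma_j + \mathbf{d} + R_0^{(j)}$, where $R_0^{(j)} := \max(X^1_{T^1_{\gamma_j}} \cdot \vec{\ell}, X^2_{T^2_{\gamma_j}} \cdot \vec{\ell}) - \gamma_j$. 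By the construction of the joint $K$-open ladder point, $R_0^{(j)} < e_1 \cdot \vec{\ell}$. Since by construction $\gamma_{j+1} \ge \gamma_j + 2\mathbf{d}$ and $\mathbf{d} = N^{1-\psi}$ is much larger than $e_1 \cdot \vec{\ell}$ for $N$ large, we get $\gamma_j + \mathbf{d} + e_1 \cdot \vec{\ell} < \gamma_{j+1} \le \gamma_k$, so $S_j$ is recorded in $\mathcal{G}_k$.

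Next, I would apply Theorem~\ref{IndependenceJointReg} with $h_k := \indi{H_k}$, $g \equiv 1$, and $f(\tilde{X}^1, \tilde{X}^2) := \indi{A(\mathbf{d})(X^1, X^2)}$ (which only depends on the spatial marginals) to obtain
\begin{equation*}
    \mathbb{P}_0\big(H_k \cap S_k\big) = \mathbb{E}_0\Big[h_k\, \mathbb{P}^K_{X^1_{T^1_{\gamma_k}}, X^2_{T^2_{\gamma_k}}}\!\big(A(\mathbf{d}) \,\big|\, \mathcal{D}^\bullet = +\infty\big)\Big].
\end{equation*}
By the definition of $\mathcal{M}^\bullet$ in \eqref{definition_M}, at any joint regeneration level both walks lie in $\mathcal{H}^+(\gamma_k)$ and the minimum of their $\vec{\ell}$-coordinates equals $\gamma_k$, so $X^1_{T^1_{\gamma_k}} - X^2_{T^2_{\gamma_k}} \in \mathcal{U}$. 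Proposition~\ref{PositiveProbDontMeet} therefore provides $\mathbb{P}^K_{x,y}(A(\mathbf{d}) \mid \mathcal{D}^\bullet = +\infty) \ge \rho > 0$ for all admissible $(x,y)$, with $\rho$ depending only on $\lambda, \vec{\ell}, d, K$. Substituting and dividing by $\mathbb{P}_0(H_k)$ yields the claim.

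The main obstacle is purely the measurability bookkeeping in the first step: one has to verify carefully that the event $A(\mathbf{d}) \circ \theta_{T^1_{\gamma_j}, T^2_{\gamma_j}}$, which involves the non-intersection of the walks up to an overshoot level defined intrinsically by their own positions, really fits inside $\mathcal{G}_k$. Once that is in place, the result is a direct consequence of the two main structural theorems, so no additional quantitative estimate is needed.
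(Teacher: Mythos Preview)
Your proof is correct and follows essentially the same route as the paper: apply Theorem~\ref{IndependenceJointReg} at the joint regeneration level $\gamma_k$ (after checking that $S_0^c,\dots,S_{k-1}^c$ is measurable with respect to the associated past $\sigma$-algebra), then invoke the uniform lower bound from Proposition~\ref{PositiveProbDontMeet} using $X^1_{T^1_{\gamma_k}}-X^2_{T^2_{\gamma_k}}\in\mathcal{U}$. The paper's proof is the same argument written as a sum over the positions $(U_1,U_2)$ at level $\gamma_k$; your version simply spells out the measurability bookkeeping that the paper leaves implicit.
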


\begin{proof}
This is a direct application of Theorem~\ref{IndependenceJointReg} and Proposition~\ref{PositiveProbDontMeet}. Indeed, using Theorem~\ref{IndependenceJointReg}, we have
\begin{align*}
    \Pf_0(S_k,S_{k-1}^{c},\cdots,S_0^{c}) &=\somme{U_1,U_2}{}{\Pf_{0}(S_{k-1}^{c},\cdots,S_0^{c},X^{1}_{T^1_{\gamma_k}} = U_1,X^{2}_{T^2_{\gamma_k}} = U_2)\Pf_{U_1,U_2}^{K}(A(\mathbf{d}) \text{ }|\text{ }\mathcal{D}^{\bullet} =+\infty)}.
\end{align*}
Observing that $U_1-U_2 \in \mathcal{U}$ and applying Proposition~\ref{PositiveProbDontMeet} gives the result.
\end{proof}

\noindent We introduce
\begin{equation} \label{RegN}
     \textbf{Reg}(N) \coloneqq \bigcap_{k=1}^{N^2}\{\mathcal{L}_{k} - \mathcal{L}_{k-1} \leq \mathbf{d}\},
\end{equation}
which is the event under which the $N^2$ first joint regeneration levels are close to each other. We also recall the definition of $\mathbf{M}_N$ in \eqref{definition_MN}. We have the following proposition.

\begin{proposition}\label{PropositionIntersectionThreeEvents}
There exists $K_0 > 1$ such that, for any $K \ge K_0$ and any $0 \le k \le \ent{N^{\psi}/4}$, we have the following inequality: 
\begin{center}
    $\Pf_0(\textnormal{\textbf{Reg}}(N)\cap \mathrm{\bf{M}}_N \cap S_k) \leq CN^{-c(1-\psi)}$,
\end{center}
where the constants $C,c > 0$ only depend on $\lambda,\vec{\ell},d,K$.
\end{proposition}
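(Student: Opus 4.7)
I would use the joint regeneration structure to reduce the bound to an application of Proposition~\ref{AR_I_unlikely} at the regeneration level $\gamma_k$.  The key geometric observation is that on $\textnormal{\textbf{Reg}}(N)$, consecutive joint regeneration levels satisfy $\mathcal{L}_{j}-\mathcal{L}_{j-1}\le \mathbf{d}$ for every $j\le N^{2}$.  Since $\gamma_k$ is by definition the first $\mathcal{L}_j$ strictly past $\gamma_{k-1}+2\mathbf{d}$, this forces $\gamma_k-\gamma_{k-1}\le 3\mathbf{d}$, hence $\gamma_k\le 3k\mathbf{d}\le (3/4)N$ for all $k\le \lfloor N^{\psi}/4\rfloor$.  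Writing $j_k$ for the (random) index with $\gamma_k=\mathcal{L}_{j_k}$, the same argument gives $j_k\le 3k\ll N^{2}$, so the event $\{j_k=j\}$ is $\mathcal{G}_j$-measurable for every $j$.  Moreover, joint regeneration at $\gamma_k$ prevents both walks from backtracking below $\gamma_k$ after $T^{i}_{\gamma_k}$ (via $\mathcal{D}^{\bullet}\circ\theta=+\infty$), so the trace of $X^{i}$ inside $\mathcal{H}^{+}(N)$ comes entirely from the shifted trajectory $(X^{i}_{T^{i}_{\gamma_k}+n})_{n\ge 0}$.  Since $\mathbf{M}_R$ is non-increasing in $R$ and, in the shifted frame, $R_0=\max(X^{1}_{T^{1}_{\gamma_k}}\!\cdot\!\vec{\ell},X^{2}_{T^{2}_{\gamma_k}}\!\cdot\!\vec{\ell})\le \gamma_k+1$, we have $\mathbf{d}+R_0\le (3/4)N+\mathbf{d}+1<N$ for $N$ large, which yields on $\textnormal{\textbf{Reg}}(N)\cap\{j_k=j\}$ the inclusion
\[
\mathbf{M}_N\cap S_k \;\subseteq\; \big(A(\mathbf{d})\cap \mathbf{M}_{\mathbf{d}+R_0}\big)\circ \theta_{T^{1}_{\mathcal{L}_j},\,T^{2}_{\mathcal{L}_j}}.
\]

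Summing over $j$ and applying Theorem~\ref{IndependenceJointReg} separately for each fixed $j$ (with $h_j=\mathds{1}_{\{j_k=j\}}$, $f=\mathds{1}_{A(\mathbf{d})\cap \mathbf{M}_{\mathbf{d}+R_0}}$ and $g\equiv 1$) gives
\[
\Pf_0\!\big(\textnormal{\textbf{Reg}}(N)\cap \mathbf{M}_N\cap S_k\big) \;\le\; \sum_{j\ge 1} \mathbb{E}_0\!\Big[\mathds{1}_{\{j_k=j\}}\,\Pf^{K}_{X^{1}_{T^{1}_{\mathcal{L}_j}},\,X^{2}_{T^{2}_{\mathcal{L}_j}}}\!\big(A(\mathbf{d})\cap \mathbf{M}_{\mathbf{d}+R_0}\,\big|\,\mathcal{D}^{\bullet}=+\infty\big)\Big].
\]
By construction of joint $K$-open ladder points, $X^{1}_{T^{1}_{\mathcal{L}_j}}-X^{2}_{T^{2}_{\mathcal{L}_j}}\in \mathcal{U}$, so Proposition~\ref{AR_I_unlikely} combined with Lemma~\ref{LemmaDpositive} (the latter absorbs the conditioning $\{\mathcal{D}^{\bullet}=+\infty\}$ into a multiplicative constant) bounds each inner probability by $C\mathbf{d}^{-c}=CN^{-c(1-\psi)}$, uniformly in $j$ and in the starting positions.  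Since $\sum_{j\ge 1}\Pf_0(j_k=j)\le 1$, this yields the announced estimate $\Pf_0\!\big(\textnormal{\textbf{Reg}}(N)\cap \mathbf{M}_N\cap S_k\big)\le CN^{-c(1-\psi)}$.

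\textbf{Main obstacle.}  The delicate step is verifying that the meeting event $\mathbf{M}_N$, although a priori a function of the entire pair of trajectories, factors through the shift at $T^{i}_{\gamma_k}$ and is controlled by $\mathbf{M}_{\mathbf{d}+R_0}$ in the shifted frame.  This rests on the no-backtracking property beyond $\gamma_k$ together with the strict inequality $\gamma_k+\mathbf{d}<N$, which is exactly what the event $\textnormal{\textbf{Reg}}(N)$ supplies when $k\le \lfloor N^{\psi}/4\rfloor$; without it one would lose the comparison $\mathbf{M}_N\subseteq \mathbf{M}_{\mathbf{d}+R_0}$.  A secondary subtlety is that $\gamma_k=\mathcal{L}_{j_k}$ is read off at a random index, not a deterministic one; this is resolved cleanly by decomposing over values of $j_k$ and invoking Theorem~\ref{IndependenceJointReg} for each fixed $j$, exploiting the $\mathcal{G}_j$-measurability of $\{j_k=j\}$.
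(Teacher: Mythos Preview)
Your approach is correct and matches the paper's: both reduce to Proposition~\ref{AR_I_unlikely} at the level $\gamma_k$ via Theorem~\ref{IndependenceJointReg} and Lemma~\ref{LemmaDpositive}, after establishing $\gamma_k+\mathbf{d}+1<N$ on $\textnormal{\textbf{Reg}}(N)$ so that $\mathbf{M}_N$ is dominated by the shifted event $\mathbf{M}_{\mathbf{d}+R_0}$. The paper decomposes over the positions $(X^1_{T^1_{\gamma_k}},X^2_{T^2_{\gamma_k}})$ rather than over the random index $j_k$, but the substance is identical; your decomposition over $j$ is in fact the careful way to justify applying Theorem~\ref{IndependenceJointReg} at the random level $\gamma_k=\mathcal{L}_{j_k}$.

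One small slip: the bound $j_k\le 3k$ is false, since between $\gamma_{k-1}$ and $\gamma_k$ there can be of order $\mathbf{d}$ joint regeneration levels (the increments $\mathcal{L}_i-\mathcal{L}_{i-1}$ are only bounded below by a dimensional constant, not by $\mathbf{d}$). The correct bound $j_k\le Ck\mathbf{d}\le CN$ still gives $j_k\ll N^2$, which is all you need, and nothing else in your argument relies on the sharper estimate.
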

\begin{proof}
We sum over all possible values for $(X^1_{T^1_{\gamma_k}},X^2_{T^2_{\gamma_k}})$ on the event $\mathbf{Reg}(N)$
\begin{align*}
    \Pf_0(\textbf{Reg}(N)\cap \mathrm{\bf{M}}_N \cap S_k) &= \somme{U_1,U_2}{}{\Pf_0\left(X^1_{T^1_{\gamma_k}} = U_1,X^2_{T^2_{\gamma_k}} = U_2,\textbf{Reg}(N)\cap \mathrm{\bf{M}}_N \cap S_k\right)}.
\end{align*}
On $\textbf{Reg}(N)$, $U_1\cdot\vec{\ell} < 3N/4$ and $U_2\cdot\vec{\ell} < 3N/4$. The worst case clearly happens for $k = \ent{N^{\psi}/4}$. We observe that for every $0 \le j \le \ent{N^{\psi}/4}$ on the event $\mathbf{Reg}(N)$, we have $\gamma_{j+1} \le \gamma_j + 3\mathbf{d}$. Indeed, between $\gamma_j$ and $\gamma_{j+1}$ there are at most $C \mathbf{d}$ joint regeneration levels, for some constant depending on the dimension $d$ and $\vec{\ell}$. The claim follows from the fact that, for $0 \le j \le \ent{N^{\psi}/4}$, we are choosing all the $\gamma_{j}$ between the first $N^2$ regeneration levels. We deduce that for $k = \ent{N^{\psi}/4}$ we have $\gamma_k \le 3\mathbf{d}N^{\psi}/4 \le 3N/4$.

%Indeed, on the event $\mathbf{Reg}(N)$, as long as $\gamma_j < N$ and since $\gamma_{j+1}$ is the first regeneration level in $\mathcal{H}^{+}(\gamma_j +2\mathbf{d})$ and $\gamma_j +2\mathbf{d} \le 2N$, we have $\gamma_{j+1} \le \gamma_j + 3\mathbf{d}$. It follows that for $0 \le k \le \ent{N^{\psi}/4}$ we have $\displaystyle \gamma_k \le 3\mathbf{d}N^{\psi}/4 \le 3N/4$.

We recall $S_k = A(\mathbf{d}) \circ \theta_{T^1_{\gamma_k}, T^2_{\gamma_k}}$ and since $\gamma_k \le 3N/4$ on $\mathbf{Reg}(N)$, $\mathrm{\bf{M}}_N$ implies $\mathrm{\bf{M}}_{\mathbf{d}+R_0} \circ \theta_{T^1_{\gamma_k}, T^2_{\gamma_k}}$. We deduce that the last sum is bounded by
\begin{align*}
    \somme{\substack{U_1,U_2}}{}{\Pf_0\left(X^1_{T^1_{\gamma_k}} = U_1,X^2_{T^2_{\gamma_k}} = U_2, \mathrm{\bf{M}}_{\mathbf{d}+R_0} \circ \theta_{T^1_{\gamma_k}, T^2_{\gamma_k}},  A(\mathbf{d}) \circ \theta_{T^1_{\gamma_k}, T^2_{\gamma_k}}\right)}.
\end{align*}
Using Theorem~\ref{IndependenceJointReg} this sum can be rewritten as
\begin{align} \label{MNAd}
    \somme{U_1,U_2}{}{\Pf_0\left(X^1_{T^1_{\gamma_k}} = U_1,X^2_{T^2_{\gamma_k}} = U_2\right)\Pf_{U_1,U_2}^{K}\left(\mathrm{\bf{M}}_{\mathbf{d}+R_0}\cap A(\mathbf{d})\text{ }|\text{ }\mathcal{D}^{\bullet} =+\infty\right)}.
\end{align}
By Lemma~\ref{LemmaDpositive}, the term $\Pf_{U_1,U_2}^{K}\left(\mathbf{M}_{\mathbf{d}+R_0}\cap A(\mathbf{d})\text{ }|\text{ }\mathcal{D}^{\bullet} =+\infty\right)$ is bounded by $C\Pf_{U_1,U_2}^{K}\left(\mathbf{M}_{\mathbf{d}+R_0}\cap A(\mathbf{d})\right)$, where $C > 0$ only depends on $\lambda,\vec{\ell},d,K$. We conclude using Proposition~\ref{AR_I_unlikely} that the sum \eqref{MNAd} is bounded by 
\begin{align*}
    \somme{U_1,U_2}{}{\Pf_0\left(X^1_{T^1_{\gamma_k}} = U_1,X^2_{T^2_{\gamma_k}} = U_2\right)C\mathbf{d}^{-c}} = CN^{(1-\psi)c},
\end{align*}
where $C,c > 0$ only depend on $\lambda,\vec{\ell},d,K$.
\end{proof}
Finally, we prove the asymptotic separation.
\begin{proof}[Proof of Proposition~\ref{LemmaDistantWalks}]
We recall the definition of $S(N)$ in \eqref{Event_S_and_Sk} and of $\mathbf{Reg}(N)$ in \eqref{RegN}. Using Proposition~\ref{PropositionEventS} we have $\Pf_{0}\left(S(N)^c\right) \leq (1-\rho)^{N^{\psi}} $ and by Proposition~\ref{PropositionFrequentRegeneration}, for any $M > 0$ we can take $K$ large enough such that we have $\Pf_{0}(\mathbf{Reg}(N)^{c}) \leq CN^{-M}$, where the constants $\rho,C>0$ only depend on $\lambda,\vec{\ell},d,K$. Hence, 
\begin{align*}
    \Pf_0(\mathrm{\bf{M}}_N) &= \Pf_0\left(\mathrm{\bf{M}}_N\cap \mathbf{Reg}(N) \cap S(N)\right) + \Pf_0\left(\mathrm{\bf{M}}_N \cap (\mathbf{Reg}(N)^c \cup S(N)^c)\right) \\ &\le \Pf_0\left(\mathrm{\bf{M}}_N\cap \mathbf{Reg}(N) \cap S(N)\right) + CN^{-M} + (1-\rho)^{N^{\psi}}.
\end{align*}
We only need to bound $\Pf_0\left(\mathrm{\bf{M}}_N\cap \mathbf{Reg}(N) \cap S(N)\right)$, by the definition of $S(N)$ and union bound we get
\begin{align*}
   \Pf_0\left(\mathrm{\bf{M}}_N\cap \mathbf{Reg}(N) \cap S(N)\right) &= \Pf_0\Bigg(\mathrm{\bf{M}}_N \cap \mathbf{Reg}(N) \cap \bigg( \union{k=1}{\ent{N^{\psi}/4}}{S_k}\bigg)\Bigg)\\& \le \sum_{k=1}^{\ent{N^{\psi}/4}} \Pf_0\left(\mathrm{\bf{M}}_N\cap \mathbf{Reg}(N) \cap S_k\right) \\&\le CN^{\psi}N^{-c(1-\psi)},
\end{align*}
where the last step is a consequence of Proposition~\ref{PropositionIntersectionThreeEvents}. We conclude by choosing $\psi$ small enough, we notice that this choice depends on $c$ which only depends on $\lambda,\vec{\ell},d,K$.
\end{proof}

\section{Convergence of the series}\label{Section5}

\subsection{For the trajectory}
In this section we aim to prove that $\mathbb{E}_{0}[F(Z_n^1)F(Z_n^2)] - \mathbb{E}_{0}[F(Z_n^1)]\mathbb{E}_{0}[F(Z_n^2)]$ converges polynomially fast to $0$ as $n \rightarrow +\infty$. $F$ here is a bounded positive Lipschitz function from $\mathcal{C}([0,T],\mathbb{R}^{d}) \to \mathbb{R}$ and $Z^{i}_n(t) = \displaystyle \frac{X^{i}_{\tau^{i}_{\ent{nt}}} - nvt}{n^{1/2}}$ for $t \in [0,T]$. Actually, with a slight abuse of the notation, in all this section we will refer with $Z^{i}_n(t)$ to the \textbf{polygonal interpolant} of $(X^{i}_{\tau^{i}_{\ent{nt}}} - nvt)/n^{1/2}$, which is needed so that $F(Z^{i}_n)$ is well defined. Throughout this section we assume that the function $F$ is bounded by $1$ and has Lipschitz constant $L=1$, as extending the proofs to other constants follows analogously. We also assume $t \in [0,1]$ for simplicity, as taking $t \in [0,T]$ does not create any significant difficulty.

Let us state the main theorem of this section.
\begin{theorem}\label{decay_term_serie}
There exists $K_0 > 1$ such that for all $K \ge K_0$ the following inequality holds:
\begin{align*}
    \Big|\mathbb{E}_{0}[F(Z_n^1)F(Z_n^2)] - \mathbb{E}_{0}[F(Z_n^1)]\mathbb{E}_{0}[F(Z_n^2)]\Big| \le Cn^{-c},
\end{align*}
where $C,c > 0$ are constants which only depend on $\lambda,\vec{\ell},d,K$.
\end{theorem}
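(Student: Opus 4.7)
The plan is to exploit the asymptotic separation established in Proposition~\ref{LemmaDistantWalks}: with high probability, an early joint regeneration level $\mathcal{L}$ can be located after which the two trajectories never intersect, so that by Theorem~\ref{IndependenceJointReg} followed by Proposition~\ref{from_same_to_ind_env} the post-$\mathcal{L}$ parts of $(X^1,X^2)$ have the same law as two independent walks in independent environments. The portion of the trajectory up to $\mathcal{L}$ is polynomially smaller than $\sqrt{n}$, so the Lipschitz property of $F$ makes truncating it harmless.

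Concretely, fix a small $a \in (0, 1/2)$ and set $R = \lfloor n^a \rfloor$, $\mathbf{d} = R^{1 - \psi}$. Mimicking the scheme of Proposition~\ref{LemmaDistantWalks} at scale $R$, consider the good event
\begin{equation*}
    \mathcal{G} \coloneqq \mathbf{Reg}(R) \cap S(R) \cap \mathbf{M}_R^c,
\end{equation*}
whose complement has probability $Cn^{-c}$ by Propositions~\ref{PropositionFrequentRegeneration}, \ref{PropositionEventS} and \ref{LemmaDistantWalks}. On $\mathcal{G}$ let $k^\star$ be the first index $\le \lfloor R^\psi/4 \rfloor$ with $S_{k^\star}$ satisfied and set $\mathcal{L} \coloneqq \gamma_{k^\star}$. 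Since $T^i_\mathcal{L}$ coincides with a single-walk regeneration time of $X^i$ (by the construction of joint regeneration levels in Section~\ref{joint_reg_levels}), there exists an index $j^i_n$ with $\tau^i_{j^i_n} = T^i_\mathcal{L}$, and standard moment bounds on regeneration displacements give $j^i_n \le CR$ outside an event of probability $O(n^{-c})$. Introduce the truncated polygonal interpolants
\begin{equation*}
    \tilde{Z}^i_n(t) \coloneqq \frac{X^i_{\tau^i_{j^i_n + \ent{nt}}} - X^i_{\tau^i_{j^i_n}} - vnt}{\sqrt{n}}.
\end{equation*}
The difference $Z^i_n(t) - \tilde{Z}^i_n(t)$ is a telescoping sum of at most $2 j^i_n$ centred iid regeneration increments divided by $\sqrt{n}$, so a standard maximal CLT-scale estimate yields $\|Z^i_n - \tilde{Z}^i_n\|_\infty \le C\sqrt{j^i_n/n} = O(n^{(a-1)/2})$ up to an event of probability $O(n^{-c})$. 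By the Lipschitz assumption $|F(Z^i_n) - F(\tilde{Z}^i_n)| \le C n^{(a-1)/2}$ on that event, and by $|F| \le 1$ off it.

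On $\mathcal{G}$ the two walks never meet after $\mathcal{L}$, hence $F(\tilde{Z}^1_n)F(\tilde{Z}^2_n)$ (shifted by $(T^1_\mathcal{L}, T^2_\mathcal{L})$) is measurable with respect to the sigma-algebra $\mathcal{F}_T$ of \eqref{def_trib_temps_inter} associated to the post-$\mathcal{L}$ trajectories. Conditioning on $\mathcal{G}_{k^\star}$, Theorem~\ref{IndependenceJointReg} rewrites the joint law of these post-regeneration trajectories as $\mathbb{E}^K_{X^1_{T^1_\mathcal{L}},X^2_{T^2_\mathcal{L}}}[\,\cdot\,|\,\mathcal{D}^\bullet=+\infty]$, and Proposition~\ref{from_same_to_ind_env} replaces the latter by the product law $Q^K_{X^1_{T^1_\mathcal{L}},X^2_{T^2_\mathcal{L}}}[\,\cdot\,|\,D^\otimes=+\infty]$, which factorises across the two walks. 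Performing the same $Z^i_n \leftrightarrow \tilde{Z}^i_n$ replacement separately in each marginal expectation yields
\begin{equation*}
    \bigl|\mathbb{E}_0[F(Z^1_n)F(Z^2_n)] - \mathbb{E}_0[F(Z^1_n)]\mathbb{E}_0[F(Z^2_n)]\bigr| \le Cn^{-c} + Cn^{(a-1)/2},
\end{equation*}
which proves the theorem.

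The main technical obstacle is the measurability bookkeeping: one has to ensure that $\mathds{1}_\mathcal{G}$, the random index $k^\star$ and the associated data $(\mathcal{L}, j^i_n, X^i_{T^i_\mathcal{L}})$ are all $\mathcal{G}_{k^\star}$-measurable in the sense of \eqref{eqn:SigmaAlgebraG_k}, so that Theorem~\ref{IndependenceJointReg} may be applied after conditioning and so that Proposition~\ref{from_same_to_ind_env} may subsequently be invoked with the no-meeting event carved out cleanly. A secondary nuisance is the passage between the quenched/annealed measures with and without the $K$-superscript and the renormalisation by $\mathbb{P}^K[\mathcal{D}^\bullet = +\infty]$ versus $\mathbb{P}^K[D^\otimes = +\infty]$, which is uniformly bounded away from zero by Lemma~\ref{LemmaDpositive} and hence absorbs into the $O(n^{-c})$ error.
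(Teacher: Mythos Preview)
Your overall strategy --- shift to an early joint regeneration level, show the truncation costs $o(1)$ for the Lipschitz functional, then decouple via Theorem~\ref{IndependenceJointReg} and Proposition~\ref{from_same_to_ind_env} --- is exactly the paper's. But the way you \emph{choose} that level creates two real gaps.

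\textbf{(i) The no-meeting claim is false as stated.} On $\mathcal{G}=\mathbf{Reg}(R)\cap S(R)\cap\mathbf{M}_R^c$ you set $\mathcal{L}=\gamma_{k^\star}$ and assert that ``the two walks never meet after $\mathcal{L}$''. But $S_{k^\star}=A(\mathbf{d})\circ\theta_{T^1_{\gamma_{k^\star}},T^2_{\gamma_{k^\star}}}$ only guarantees separation until the walks hit $\mathcal{H}^+_{\gamma_{k^\star}+\mathbf{d}}$, while $\mathbf{M}_R^c$ only guarantees separation in $\mathcal{H}^+_R$. Since $\gamma_{k^\star}+\mathbf{d}\le 3R/4+R^{1-\psi}<R$, there is an uncontrolled slab of width $\sim R$ where the walks may well meet. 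The passage to the product law via Proposition~\ref{from_same_to_ind_env} therefore does not go through on $\mathcal{G}$.

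\textbf{(ii) The measurability issue you flag is fatal for Theorem~\ref{IndependenceJointReg}.} To sum over $k$ you need $\{k^\star=k\}\in\mathcal{G}_k$ (the $\sigma$-algebra of \eqref{eqn:SigmaAlgebraG_k} at the level $\gamma_k$). But $\{k^\star=k\}\supset S_k$ looks at the walks \emph{after} $\gamma_k$, so it is not $\mathcal{G}_k$-measurable, and the Markov-type identity of Theorem~\ref{IndependenceJointReg} cannot be applied at the random index $k^\star$.

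Both difficulties disappear if you do what the paper does: treat Proposition~\ref{LemmaDistantWalks} as a black box at scale $n^\vartheta$, and take $\mathcal{L}=\mathcal{R}_0$ to be the \emph{first} joint regeneration level in $\mathcal{H}^+_{n^\vartheta}$ (a level defined by a deterministic threshold, so Theorem~\ref{IndependenceJointReg} applies directly). On $\mathbf{M}_{n^\vartheta}^c\cap\{\mathcal{R}_0\le 2n^\vartheta\}$, which has probability $\ge 1-Cn^{-c}$ by Proposition~\ref{LemmaDistantWalks} and \eqref{R0_occurs_soon}, the walks never come within distance $2$ after $\rho^i_0=T^i_{\mathcal{R}_0}$, and the decoupling is clean (this is Proposition~\ref{shift_inequality} and Proposition~\ref{decondition_I}). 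There is no need to re-enter the machinery of $\gamma_k$, $S_k$, $k^\star$.

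A minor point: your ``standard maximal CLT-scale estimate'' for $\|Z^i_n-\tilde Z^i_n\|_\infty$ involves a \emph{sliding-window} sum $\sum_{k=\lfloor nt\rfloor}^{\lfloor nt\rfloor+j^i_n-1}(\Delta_k-v)$, whose supremum over $t\in[0,T]$ is not controlled by a single Doob inequality. The paper sidesteps this by working on the event $F^i_{n,\vartheta}=\{\chi^i_k\le n^\vartheta,\,k\le n^2\}$, where each increment satisfies $\|\Delta_k\|\le Cn^{\alpha\vartheta}$ deterministically; then the bound is just $j^i_n\cdot n^{\alpha\vartheta}/\sqrt{n}\le Cn^{(\alpha+2)\vartheta-1/2}$ (Proposition~\ref{close_trajectories}).
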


\noindent We postpone the proof to the end of the section but let us give some intuition. Roughly speaking, on the event $\mathbf{M}^c_{n^\vartheta}$ (see \eqref{definition_MN}) and after few regeneration times, the two walks do not intersect anymore. It follows that they should behave like two regenerating random walks in two independent environment. This is the point of Proposition~\ref{shift_inequality} and Proposition~\ref{decondition_I} which are the main steps to prove Theorem~\ref{decay_term_serie}. The rest of the proof basically consists in proving that the beginning of the trajectories does not matter in the re-scaled trajectory $Z_n$. For the sake of brevity we will write $\mathbb{E}_0 = \mathbb{E}_{0, 0}$ when referring to two walks.

We fix $0 < \vartheta < 1/2$ throughout this section. Let us start with an easy estimate which will be useful later on.
\begin{proposition}\label{last_inequality}
There exists $K_0 > 1$ such that for all $K \ge K_0$ the following inequality holds:
\begin{align*}
    \left|\mathbb{E}_0\left[F(Z_n)\right] - \mathbb{E}_0\left[F(Z_n)\text{ }|\text{ }D=+\infty\right]\right| \le Cn^{-c},
\end{align*}
where $C,c > 0$ are constants which only depend on $\lambda,\vec{\ell},d,K$.
\end{proposition}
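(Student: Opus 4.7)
The plan is to introduce the shifted process
\begin{equation*}
\tilde Z_n(t) \coloneqq \frac{X_{\tau_{\lfloor nt\rfloor+1}}-X_{\tau_1}-n v t}{\sqrt{n}},
\end{equation*}
polygonally interpolated on the grid $\{k/n\}_{0 \le k \le n}$, and then to prove two things: that $\tilde Z_n$ has the same distribution under $\mathbb{P}_0$ and under $\mathbb{P}_0(\cdot\,|\,D=+\infty)$, and that $\|Z_n-\tilde Z_n\|_\infty$ is small in $L^1$. The result will then follow from the triangle inequality
\begin{equation*}
\big|\mathbb{E}_0[F(Z_n)]-\mathbb{E}_0[F(Z_n)\,|\,D=+\infty]\big|\le \mathbb{E}_0\|Z_n-\tilde Z_n\|_\infty+\mathbb{E}_0\big[\|Z_n-\tilde Z_n\|_\infty\,\big|\,D=+\infty\big],
\end{equation*}
using that $F$ is $1$-Lipschitz and bounded by $1$.

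For the distributional identity, the key observation is that $\{D=+\infty\}$ agrees, up to a $\mathbb{P}_0$-null set, with the pre-$\tau_1$ event $\{D>\tau_1\}$. Indeed, $X_{\tau_1}$ is a $K$-open ladder point satisfying $X_{\tau_1}\cdot\vec{\ell}\ge 3\,e_1\cdot\vec{\ell}$ (since $X_{\tau_1-2}\cdot\vec{\ell}\ge e_1\cdot\vec{\ell}$ is a new maximum in $\mathcal{H}^+(0)$, followed by two $e_1$-steps), while every $y\in \mathcal{V}_0$ satisfies $y\cdot\vec{\ell}\le e_1\cdot\vec{\ell}$; by the regeneration property the walk after $\tau_1$ remains in $\mathcal{H}^+(X_{\tau_1}\cdot\vec{\ell})$, so neither $\mathbf{BACK}$ nor $\mathbf{ORI}$ can be triggered past $\tau_1$. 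Hence $\{D=+\infty\}$ is $\mathcal{F}_{\tau_1}\coloneqq\sigma(X_0,\dots,X_{\tau_1})$-measurable. Combined with the fact that $(X_{\tau_{k+1}}-X_{\tau_1})_{k\ge 0}$ is $\mathbb{P}_0$-independent of $\mathcal{F}_{\tau_1}$ (by the one-walk analog of \cite[Theorem 5.4]{Kious_Frib}), this yields $\mathbb{E}_0[F(\tilde Z_n)\,|\,D=+\infty]=\mathbb{E}_0[F(\tilde Z_n)]$.

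For the size of $\|Z_n-\tilde Z_n\|_\infty$, since both interpolants live on the same grid the supremum is attained there, and at a grid point $k/n$
\begin{equation*}
|Z_n(k/n)-\tilde Z_n(k/n)|\le n^{-1/2}\big(|X_{\tau_{k+1}}-X_{\tau_k}|+|X_{\tau_1}|\big)\le C n^{-1/2}\big(\chi_k^\alpha+\chi_0^\alpha\big),
\end{equation*}
with $\chi_k$ as in \eqref{recall_chi}, using that each regeneration block is contained in $\mathcal{B}(\chi_k,\chi_k^\alpha)$, of diameter $O(\chi_k^\alpha)$. Since the $(\chi_k)_{k\ge 1}$ are iid with the post-regeneration law, which for $K$ large has moments of every polynomial order by \cite[Lemma 6.1]{Kious_Frib}, an $L^p$ maximal estimate yields $\mathbb{E}_0\big[\max_{k\le n}\chi_k^\alpha\big]\le Cn^{1/p}$ for any $p$. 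The one-walk version of Lemma~\ref{LemmaDpositive} gives $\mathbb{P}_0(D=+\infty)\ge\eta>0$, so the conditional expectation of $\|Z_n-\tilde Z_n\|_\infty$ is controlled by $\eta^{-1}$ times the unconditional one; choosing $p$ large produces the desired bound $Cn^{-c}$ for any $c\in(0,1/2)$.

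The main obstacle is the measurability identification $\{D=+\infty\}=\{D>\tau_1\}$ almost surely: this is what allows the unconditional regeneration independence to transfer to the conditioned measure. Once it is in place, the remainder of the argument is routine moment bookkeeping on regeneration blocks, ensured by taking $K$ large enough.
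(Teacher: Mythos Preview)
Your proof is correct and follows essentially the same approach as the paper: introduce the one-step shifted process (the paper calls it $Z_n^*$, defined as $Z_n(t+\tfrac{1}{n})-Z_n(\tfrac{1}{n})$, which coincides with your $\tilde Z_n$), control $\|Z_n-\tilde Z_n\|_\infty$ via the regeneration block sizes $\chi_k$ using \cite[Lemma 6.1]{Kious_Frib}, and identify the law of the shifted process with the conditioned one through the regeneration structure of \cite[Theorem 5.4]{Kious_Frib}. Two cosmetic points: your $\mathcal{F}_{\tau_1}$ must include the second coordinate $(Z_n)_{n\le\tau_1}$ of the enhanced walk since $\mathbf{ORI}$ depends on it, and the bound $X_{\tau_1-2}\cdot\vec\ell\ge e_1\cdot\vec\ell$ is slightly overstated---one only gets $X_{\tau_1-2}\cdot\vec\ell>0$, hence $X_{\tau_1}\cdot\vec\ell>2e_1\cdot\vec\ell>e_1\cdot\vec\ell$, which is still enough to keep the post-$\tau_1$ walk out of $\mathcal{V}_0$.
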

\begin{proof}
We introduce the scaled and centered position of the walk, starting after the first regeneration time
\begin{equation}\label{eqn:OneShiftedPosition}
Z^*_n(t) = Z_n\left(t+\tfrac{1}{n}\right) - Z_n\left(\tfrac{1}{n}\right).
\end{equation}
We recall the notation $\chi_k$ defined in \eqref{recall_chi}. We set $B_{n,\varepsilon}= \big\{\forall k \in \{0,\cdots,n+1\}\colon\chi_k \le n^{\varepsilon}\big\}$, where $\varepsilon > 0$ is a constant that we choose arbitrarily small. Then, on $B_{n,\varepsilon}$, we have 
\begin{equation*}
    \sup_{t \in [0, 1]}\|Z^*_n(t)-Z_n(t)\| \le Cn^{-\frac{1}{2}}n^{\alpha\varepsilon} \le Cn^{-c},
\end{equation*}
where $c = \alpha\varepsilon - \tfrac{1}{2} < 0$ when choosing $\varepsilon$ small enough (note that $\alpha > d + 3$ is a finite constant).
It follows that, on $B_{n,\varepsilon}$ and using the fact that $F$ is $1$-Lipschitz, we get
\begin{align*}
    |F(Z_n)-F(Z^*_n)| \le Cn^{-c}.
\end{align*}
By \cite[Lemma 6.1]{Kious_Frib}, for $K$ large enough we have $\Pf_0(B_{n,\varepsilon}^{c}) \le Cn^{-c}$, where $C,c > 0$ only depend on $\lambda,\vec{\ell},d,K$. Using this inequality and the fact that $F$ is bounded by $1$ we get that
\begin{align*}
  \left|\mathbb{E}_0[F(Z_n)]-\mathbb{E}_0[F(Z^*_n)]\right| \le Cn^{-c}.
\end{align*}
We conclude the proof by observing that, using \cite[Theorem 5.1]{Kious_Frib},
\begin{align*}
    \mathbb{E}_0[F(Z^*_n)] = \mathbb{E}_0[F(Z_n)\text{ | }D=+\infty].
\end{align*}
\end{proof}
\noindent Let us introduce some events and related estimates:
\begin{itemize}
    \item We recall 
    \begin{equation*}
        \mathrm{\bf{M}}^c_{n^\vartheta} = \bigg\{\displaystyle \inf_{\substack{x \in \mathcal{H}^{+}_{n^{\vartheta}}\cap \{X_n^1\}\\y \in \mathcal{H}^{+}_{n^{\vartheta}}\cap \{X_n^2\}}}\|x-y\|_{1}\ge 3 \bigg\}.
    \end{equation*}
    Using Proposition~\ref{LemmaDistantWalks} we have $\Pf_{0}(\mathrm{\bf{M}}_{n^\vartheta}) \le Cn^{-c}$ where $C,c > 0$ only depend on $\lambda,\vec{\ell},d,K$.
    \item We define $\mathcal{R}_0$ to be the first joint regeneration level in $ \mathcal{H}^{+}_{n^{\vartheta}}$. For $i \in \{1,2\}$ we introduce $\rho_0^{i} = T^{i}_{\mathcal{R}_0}$. In particular, $T^i_{\mathcal{R}_0}$ is a regeneration time for the walk $X^i$. We introduce the event
    \begin{equation} \label{estimate_on_R}
        B_{n,\vartheta} \coloneqq \left\{\mathcal{R}_0 \le 2n^{\vartheta}\right\}.
    \end{equation}
    Using Proposition~\ref{PropositionFrequentRegeneration}, there exists $K > 0$ such that $\Pf_{0}(B_{n,\vartheta}^{c}) \le Cn^{-c}$, where $C,c$ depend on $\vartheta, \lambda,d,\vec{\ell},K$. Indeed, on the event $\{\forall k \in \{1,\cdots,dn^{\vartheta}\}, \text{ }\mathcal{L}_{k+1}-\mathcal{L}_{k} \le n^{\vartheta}\}$, with the convention $\mathcal{L}_{0} = 0$ and using the fact that by construction $\mathcal{L}_{k+1}-\mathcal{L}_k \ge \tfrac{2}{\sqrt{d}}$, we have $\mathcal{R}_0 \le 2n^{\vartheta}$. %hence under $B_i^{c}$ and $\{\forall i \in \{1,\cdots,dn^{\theta}\}, \text{ }\mathcal{L}_{k+1}-\mathcal{L}_{k} \le n^{\theta}\}$, there exists $k \in \{0,\cdots,2dn^{\theta}\}$ such that $\tau^{i}_{k+1}-\tau^{i}_k \ge \epsilon n^{\frac{2\theta}{\gamma}-\theta}$ with $\epsilon$ chosen arbitrarily small. 
    The following inequality follows:
    \begin{align} \label{R0_occurs_soon}
        \Pf_{0}(B_{n,\vartheta}^{c}) & \le \Pf_{0}\left(\exists k \in \{1,\cdots,dn^{\vartheta}\}, \text{ }\mathcal{L}_{k+1}-\mathcal{L}_{k} \ge n^{\vartheta}\right).
    \end{align}
    %\begin{align*}
        %\Pf_{0}(B_i^{c}) & \le \Pf_{0}\bigg(\exists k \in %\{1,\cdots,dn^{\theta}\}, \text{ %}\mathcal{L}_{k+1}-\mathcal{L}_{k} \ge %n^{\theta}\bigg)\\&+\Pf_{0}\bigg(\exists k \in %\{0,\cdots,2dn^{\theta}\}, \text{ %}\tau^{i}_{k+1}-\tau^i_{k} \ge %n^{\frac{2\theta}{\gamma}-\theta}\bigg).
    %\end{align*}
Using Proposition~\ref{PropositionFrequentRegeneration} for all $K$ large enough the last term is bounded from above, for $M$ arbitrarily large, by
\begin{align*}
    Cn^{\vartheta}n^{-M\vartheta}. 
    %+ Cn^{\theta}(n^{-\frac{2\theta}{\gamma}+\theta})^{\gamma-\delta} \le Cn^{-c},
\end{align*}
%where $\delta$ is chosen small enough such that $\theta-(\frac{2\theta}{\gamma}-\theta)(\gamma-\delta) < 0$ and $M$ arbitrarily large. The constants $C,c > 0$ only depend on $\lambda,\vec{\ell},d,K$.
    \item We call $\rho^i_{n}$ the $n^{\text{th}}$ (not joint) regeneration time after $\rho_{0}^{i}$ for $X^{i}$.
    \item We recall the notation \eqref{recall_chi}. Then define $F^{i}_{n,\vartheta} = \big\{\forall k \in \{1,\cdots,n^2\}\text{, }\chi^{i}_k \le n^{\vartheta}\big\}$. Using \cite[Lemma 6.1]{Kious_Frib}, there exists $K > 0$ such that $\Pf_0((F^{i}_{n,\vartheta})^{c}) \le Cn^{-M}$, where $C$ only depends on $\lambda,\vec{\ell},d,K$.
\end{itemize}
We introduce
\begin{align} \label{estime_E}
    E_{n,\vartheta} \coloneqq \mathrm{\bf{M}}^c_{n^\vartheta} \cap B_{n,\vartheta} \cap F^{1}_{n,\vartheta} \cap F^{2}_{n,\vartheta}.
\end{align}
The estimates above can be summarized in the following proposition.
\begin{proposition} \label{proba_E_decay}
There exists $K_0 > 1$ such that, for all $K \ge K_0$, we have
\begin{equation*}
    \Pf_{0}(E_{n,\vartheta}^{c}) \le Cn^{-c},
\end{equation*}
where the constants $C,c >0$
 only depend on $\vartheta, \lambda,\vec{\ell},d,K$.
\end{proposition}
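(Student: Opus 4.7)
The plan is essentially a union bound combined with the four estimates that have already been collected in the bullet points immediately preceding the statement of the proposition. Writing
\[
E_{n,\vartheta}^{c} \subset \mathrm{\bf{M}}_{n^\vartheta} \cup B_{n,\vartheta}^{c} \cup (F^{1}_{n,\vartheta})^{c} \cup (F^{2}_{n,\vartheta})^{c},
\]
it suffices to bound each of the four terms on the right-hand side by a quantity of the form $C n^{-c}$ with constants depending only on $\vartheta, \lambda, \vec{\ell}, d, K$, and to set the final $c$ to be the minimum of these four exponents.

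For the first term, Proposition~\ref{LemmaDistantWalks} applied with $R = n^\vartheta$ yields $\Pf_{0}(\mathrm{\bf{M}}_{n^\vartheta}) \le C n^{-c\vartheta}$. For the second, the argument explained below \eqref{estimate_on_R} shows that $B_{n,\vartheta}^{c}$ is contained in the event that some $\mathcal{L}_{k+1} - \mathcal{L}_{k}$ with $k \le dn^{\vartheta}$ exceeds $n^{\vartheta}$; a union bound combined with Proposition~\ref{PropositionFrequentRegeneration} (taking $K$ large enough and $M$ arbitrarily large) gives a bound of $C n^{\vartheta(1-M)}$, which is polynomially small in $n$. For the third and fourth terms, \cite[Lemma 6.1]{Kious_Frib} provides polynomial tails of arbitrary order on each $\chi^i_k$ (again for $K$ large), so a union bound over the $n^2$ indices yields $\Pf_{0}((F^{i}_{n,\vartheta})^{c}) \le C n^{2-M\vartheta}$, which is again polynomially small by choosing $M$ large enough.

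There is no real obstacle here: the proposition is a bookkeeping statement that packages the four estimates into a single event $E_{n,\vartheta}$ to be used in the remainder of Section~\ref{Section5}. The only mild subtlety is to choose a single value of $K$ that is large enough for all three nontrivial estimates simultaneously (the $\chi^i_k$ tail bound, the regeneration-level tail bound of Proposition~\ref{PropositionFrequentRegeneration}, and the validity of Proposition~\ref{LemmaDistantWalks}), and to pick $M$ sufficiently large so that the exponents $\vartheta(M-1)$ and $M\vartheta - 2$ exceed the exponent $c\vartheta$ coming from the asymptotic separation estimate; then the final polynomial exponent is governed by the latter.
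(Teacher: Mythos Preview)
Your proposal is correct and matches the paper's approach exactly: the paper does not give a separate proof but simply states that ``the estimates above can be summarized in the following proposition,'' meaning precisely the union bound over the four events using the bounds on $\mathrm{\bf{M}}_{n^\vartheta}$, $B_{n,\vartheta}^{c}$, and $(F^{i}_{n,\vartheta})^{c}$ already established in the bullet points.
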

Let us introduce $\tilde{X}^i_{\cdot} = X^{i}_{\rho_{0}^{i}+\cdot} - X^{i}_{\rho^i_0}$, which is just the walk $X^i$ shifted by $\rho_0^{i}$ and re-centered; moreover, call $\tilde{Z}^{i}_{n}(t) =  (\tilde{X}^{i}_{\tau_{\ent{nt}}} - nvt)/n^{1/2}$. Let us show that on $E_{n,\vartheta}$, the trajectories $\tilde{Z}^{i}_n$ and $Z_n^{i}$ stay close.
\begin{proposition} \label{close_trajectories}
The following inequality holds almost surely on the event $E_{n, \vartheta}$
\begin{equation*}
    \forall i \in \{1,2\}, \text{  } \sup_{t \in [0, 1]}\|\tilde{Z}^{i}_n(t) - Z^{i}_n(t)\| \le Cn^{-c},
\end{equation*}
where $C,c$ depend on $\lambda,\vec{\ell}, d, \vartheta$.
\end{proposition}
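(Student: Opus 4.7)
The plan is to establish the bound as a deterministic, sample-by-sample estimate on the event $E_{n,\vartheta}$. The key observation is that since $\mathcal{R}_0$ is a joint regeneration level, the shift time $\rho^i_0 = T^i_{\mathcal{R}_0}$ is itself a single-walk regeneration time of $X^i$: at $T^i_{\mathcal{R}_0}$ the walk $X^i$ sits on a $K$-open ladder point, while $\mathcal{D}^{\bullet i} = +\infty$ forces $\mathbf{BACK}^i = +\infty$ together with a condition at least as strong as the single-walk $\mathbf{ORI} = +\infty$. Hence there exists $k^i \ge 1$ with $\rho^i_0 = \tau^i_{k^i}$, and the Markov-type regeneration structure of Section~\ref{definition_reg_times} yields that the regeneration times of $\tilde X^i$ are $\tilde \tau_j = \tau^i_{k^i + j} - \tau^i_{k^i}$, with $\tilde X^i_{\tilde \tau_j} = X^i_{\tau^i_{k^i + j}} - X^i_{\tau^i_{k^i}}$.

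Next I would control the index shift $k^i$ on $B_{n,\vartheta}$. By the $K$-open ladder structure that defines a regeneration, $X^i_{\tau^i_{k+1}} = X^i_{\tau^i_{k+1} - 2} + 2 e_1$, so consecutive regeneration levels are separated in direction $\vec\ell$ by at least $2 e_1 \cdot \vec\ell \ge 2/\sqrt d$. Combined with $\mathcal{R}_0 \le 2 n^\vartheta$ on $B_{n,\vartheta}$, this forces the deterministic bound $k^i \le C n^\vartheta$. Since $Z^i_n$ and $\tilde Z^i_n$ are piecewise-linear polygonal interpolants with common nodes $t = k/n$ for $k = 0, \dots, n$, the supremum over $[0,1]$ is attained at one such node, and at such a node the centering terms $-vk/\sqrt n$ cancel, giving the clean telescoping expression
\[
Z^i_n(k/n) - \tilde Z^i_n(k/n) \;=\; \frac{1}{\sqrt n}\Bigl( \sum_{j=1}^{k^i} \Delta^i_j \;-\; \sum_{j=k+1}^{k+k^i} \Delta^i_j\Bigr), \qquad \Delta^i_j \coloneqq X^i_{\tau^i_j} - X^i_{\tau^i_{j-1}}.
\]

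Finally, on $F^i_{n,\vartheta}$ every $\chi^i_{j-1}$ with $j \le n^2$ is at most $n^\vartheta$, so the whole inter-regeneration piece lies in $\mathcal{B}(n^\vartheta, n^{\vartheta \alpha})$ and $\|\Delta^i_j\| \le C n^{\vartheta \alpha}$. Since $k + k^i \le n + C n^\vartheta \le n^2$ for $n$ large, this bound applies to every term of both sums, producing at most $2 k^i \le C n^\vartheta$ terms each of norm $\le C n^{\vartheta \alpha}$, whence
\[
\sup_{t \in [0, 1]} \| Z^i_n(t) - \tilde Z^i_n(t) \| \;\le\; C\, n^{\vartheta(\alpha + 1) - 1/2}.
\]
Choosing $\vartheta$ small enough (depending on $\alpha$, hence on $d$) so that $\vartheta(\alpha + 1) < 1/2$ gives the claim with $c = 1/2 - \vartheta(\alpha + 1) > 0$; this is consistent with the fact that in the downstream applications $\vartheta$ is taken as small as needed. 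The argument uses no probabilistic concentration, only a deterministic telescoping on $E_{n,\vartheta}$; the one genuinely non-trivial step, and my expected main bookkeeping obstacle, is the identification $\rho^i_0 = \tau^i_{k^i}$, which rests on the fact that the conditions $\mathbf{BACK}^i \wedge \mathbf{ORI}^i = +\infty$ making up $\mathcal{D}^{\bullet i}$ are strictly stronger than their single-walk analogues.
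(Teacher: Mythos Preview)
Your proposal is correct and follows essentially the same approach as the paper: identify $\rho^i_0$ as a single-walk regeneration time $\tau^i_{k^i}$, bound the index shift $k^i$ via the minimum $\vec\ell$-gain per regeneration together with $\mathcal{R}_0 \le 2n^\vartheta$, and then control the telescoped difference using $\chi^i_j \le n^\vartheta$ on $F^i_{n,\vartheta}$. Your bound $k^i \le C n^\vartheta$ is actually tighter than the paper's $k_0 \le C n^{2\vartheta}$ (their exponent seems to be a harmless over-count), and your explicit remark that the supremum of the piecewise-affine difference is attained at a node $k/n$ is a point the paper leaves implicit; otherwise the two arguments are identical.
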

\begin{proof}
Fix $i \in \{1,2\}$ and $k \in \{0,\cdots,n\}$, let us show that $X^{i}_{\tau_k}$ is close to $X^{i}_{\rho_k}$ after the proper rescaling. We have that $\mathcal{R}_0 \le 2n^{\vartheta}$ and since $\rho_{0}^{i}$ is a regeneration time, it can be written as $\tau^{i}_{k_0}$ with $k_0 \in \{0,\cdots,d\ent{n^{2\vartheta}}\}$. Indeed, at each regeneration time the walk has increased at least $2/\sqrt{d}$ in the direction $\vec{\ell}$.

Using the fact that, under $E_{n,\vartheta}$, $\chi^{i}_k \le n^{\vartheta}$ for all $k \in \{0,\cdots,n^2\}$ we have
\begin{align*}
 \|X^{i}_{\rho_{k}} - X^i_{\rho_{0}}- X^{i}_{\tau_k}\|=\|X^{i}_{\tau_{k+k_0}} - X^{i}_{\tau_{k_0}} - X^{i}_{\tau_k}\|
    & \le Ck_0n^{\alpha\vartheta}\\
    & \le Cn^{(\alpha +2)\vartheta},
\end{align*}
where $C$ only depends on $\lambda,\vec{\ell},d$. The first inequality comes from the fact that the diameter of $\mathcal{B}(m,m^{\alpha})$ is bounded by $Cm^{\alpha}$. Then, we only have to chose $\vartheta$ such that $(\alpha + 2)\vartheta \le 1/4$ to conclude, indeed uniformly over all $k \in \{ 1, \cdots, n\}$:
\begin{align*}
    \|Z^{i}_n(k/n) - \tilde{Z}^{i}_n(k/n)\| &= \bigg\|\frac{X^{i}_{\tau_k} +X^{i}_{\rho_0} - {X}_{\rho_k}^{i}}{n^{1/2}}\bigg\| \le Cn^{-1/4},
\end{align*}
we conclude by taking $c = 1/4$.
\end{proof}
We give the following corollary of the last proposition.
\begin{corollary} \label{corollary_shift}
The following inequality holds:
\begin{align*}
\left|\mathbb{E}_0\left[F(Z^1_n)F(Z^2_n)\right] - \mathbb{E}_0\left[F(\tilde{Z}^1_n)F(\tilde{Z}^2_n)\right]\right| \le Cn^{-c},
\end{align*}
where $C,c > 0$ are constants which only depend on $\lambda,\vec{\ell},d,K$.
\end{corollary}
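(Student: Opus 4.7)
The proof should be a direct combination of Proposition~\ref{close_trajectories} and Proposition~\ref{proba_E_decay}, following the familiar pattern of splitting the expectation over the good event $E_{n,\vartheta}$ and its complement. The plan is as follows.

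First, I would write
\begin{align*}
    \left|\mathbb{E}_0\left[F(Z^1_n)F(Z^2_n)\right] - \mathbb{E}_0\left[F(\tilde{Z}^1_n)F(\tilde{Z}^2_n)\right]\right| &\le \mathbb{E}_0\left[\mathds{1}_{E_{n,\vartheta}}\left|F(Z^1_n)F(Z^2_n) - F(\tilde{Z}^1_n)F(\tilde{Z}^2_n)\right|\right] \\
    &\quad + \mathbb{E}_0\left[\mathds{1}_{E_{n,\vartheta}^c}\left|F(Z^1_n)F(Z^2_n) - F(\tilde{Z}^1_n)F(\tilde{Z}^2_n)\right|\right].
\end{align*}
Then, I would deal with the two terms separately.

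For the first term, using the elementary factorization $F(Z^1_n)F(Z^2_n) - F(\tilde{Z}^1_n)F(\tilde{Z}^2_n) = (F(Z^1_n) - F(\tilde{Z}^1_n))F(Z^2_n) + F(\tilde{Z}^1_n)(F(Z^2_n) - F(\tilde{Z}^2_n))$ together with the fact that $F$ is bounded by $1$, it is enough to bound $|F(Z^i_n) - F(\tilde{Z}^i_n)|$ on $E_{n,\vartheta}$ for $i = 1, 2$. Since $F$ is $1$-Lipschitz with respect to the uniform norm on $\mathcal{C}^d([0,1])$ (or $\mathcal{C}^d([0,T])$ in general), Proposition~\ref{close_trajectories} gives that on $E_{n,\vartheta}$
\begin{equation*}
    |F(Z^i_n) - F(\tilde{Z}^i_n)| \le \sup_{t \in [0,1]}\|Z^i_n(t) - \tilde{Z}^i_n(t)\| \le Cn^{-c},
\end{equation*}
which yields that the first term is bounded by $Cn^{-c}$.

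For the second term, one simply uses that $|F(Z^1_n)F(Z^2_n) - F(\tilde{Z}^1_n)F(\tilde{Z}^2_n)| \le 2$ since $F$ is bounded by $1$, and then applies Proposition~\ref{proba_E_decay} to get
\begin{equation*}
    \mathbb{E}_0\left[\mathds{1}_{E_{n,\vartheta}^c}\left|F(Z^1_n)F(Z^2_n) - F(\tilde{Z}^1_n)F(\tilde{Z}^2_n)\right|\right] \le 2 \mathbb{P}_0(E_{n,\vartheta}^c) \le Cn^{-c}.
\end{equation*}
Combining both bounds (and taking the smaller of the two constants $c$) yields the corollary. There is no real obstacle here: the only subtle point is ensuring that the constant $\vartheta$ used in the definition of $E_{n,\vartheta}$ satisfies both $(\alpha + 2)\vartheta \le 1/4$ (needed in Proposition~\ref{close_trajectories}) and the constraints inherited from Proposition~\ref{proba_E_decay}, which is already incorporated in the fixed choice $0 < \vartheta < 1/2$ at the start of the section.
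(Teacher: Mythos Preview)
Your proof is correct and follows exactly the approach the paper intends: the paper's own proof is a one-line remark that the result follows from Proposition~\ref{close_trajectories}, Proposition~\ref{proba_E_decay}, and the fact that $F$ is bounded and Lipschitz, and you have simply spelled out the standard splitting argument over $E_{n,\vartheta}$ and its complement.
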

\begin{proof}
The result follows from Proposition~\ref{close_trajectories}, Proposition~\ref{proba_E_decay} and the fact that $F$ is a bounded Lipschitz function.
\end{proof}

We define $\mathcal{I} = \big\{ \inf_{\substack{p,q \ge 0}}\|X^1_{ p}-X^2_{q}\|_{1}\ge 3 \big\}$ the event that the two walks stay far away from each other. The following proposition is crucial and shows that, when shifted and on $\mathcal{I}$, we can approximate the behavior of our walks as if they lived in independent environments. We also highlight the fact that the following proposition gives an inequality. One can prove it, in the same way, as an equality with an error term of order $Cn^{-c}$, where $C,c > 0$ only depend on $\lambda,\vec{\ell},d,K$. Let us introduce the sigma algebra
\begin{equation}\label{eqn:SigmaAlgebraFuture}
    \sigma\left(\big\{X^1_0,X^2_0,(X^1_{p},Z^1_{p}),(X^2_{q},Z^2_{q}) \,\,| \,\, p,q \ge 1 \big\}\cup \big\{c_{*}(e) \,\, |\,\, e \in \mathcal{E}_{(X^1_n)_{n\ge 0}} \cup \mathcal{E}_{(X^2_n)_{n\ge 0}} \backslash (\mathcal{E}_{X^1_0}\cup \mathcal{E}_{X^2_0})\big\}\right).
\end{equation} 

\begin{proposition} \label{shift_inequality}
For any bounded positive function $f$ measurable with respect to \eqref{eqn:SigmaAlgebraFuture} we have the following inequality:
\begin{align*}
    \mathbb{E}_0\left[f \circ \theta_{\rho^1_0, \rho^2_0} \indi{\mathcal{I}}\circ \theta_{\rho^1_0, \rho^2_0}\right] \le \mathbb{E}_{0}\bigg[\mathbb{E}^{Q^{K}_{X^1_{\rho^1_0},X^2_{\rho^2_0}}}\bigg[f \text{ }|\text{ }\mathcal{I}, D^{\otimes} = +\infty \bigg]\bigg].
\end{align*}
\end{proposition}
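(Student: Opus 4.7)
The plan is to derive the inequality by combining two ingredients: the Markov-type property at joint regeneration levels (Theorem~\ref{IndependenceJointReg}) applied at $\mathcal{R}_0$, and the transfer from the same-environment law $\mathbb{P}^K$ to the product law $Q^K$ on the non-intersection event $\mathcal{I}$ (Proposition~\ref{from_same_to_ind_env}). First, since $\mathcal{R}_0$ is the first joint regeneration level in $\mathcal{H}^{+}_{n^\vartheta}$, one has $\mathcal{R}_0 = \mathcal{L}_{N_0}$ for the $(\mathcal{G}_k)$-stopping time $N_0 = \inf\{k\ge 1 \colon \mathcal{L}_k \ge n^\vartheta\}$. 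Partitioning according to $\{N_0 = k\}$ and applying Theorem~\ref{IndependenceJointReg} term by term (with $h_k = \indi{N_0 = k}$ being $\mathcal{G}_k$-measurable, and a standard monotone class argument used to split $f\cdot \indi{\mathcal{I}}$ into the ``trajectory'' and ``conductances-on-the-right'' factors required by the theorem), one obtains
\begin{equation*}
    \mathbb{E}_0\!\left[f \circ \theta_{\rho_0^1, \rho_0^2}\, \indi{\mathcal{I}} \circ \theta_{\rho_0^1, \rho_0^2}\right] = \mathbb{E}_0\!\left[\mathbb{E}^K_{X^1_{\rho_0^1}, X^2_{\rho_0^2}}\!\left[f\, \indi{\mathcal{I}} \,\big|\, \mathcal{D}^{\bullet} = +\infty\right]\right].
\end{equation*}

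The second key step is the pathwise identity $\{\mathcal{D}^{\bullet} = +\infty\} \cap \mathcal{I} = \{D^{\otimes} = +\infty\} \cap \mathcal{I}$. Indeed, on $\mathcal{I}$ the conditions in $\mathbf{ORI}^i$ involving the other walk's starting neighborhood ($\mathcal{V}_{X^j_0}$ and $\mathcal{V}_{X^j_0 - e_1}$ for $j \ne i$) never trigger since the walks stay at distance $\ge 3$. Moreover, under $\mathbf{BACK}^i = +\infty$ the walk $X^i$ satisfies $X^i_n \cdot \vec{\ell} > X^i_0 \cdot \vec{\ell}$ for $n \ge 1$, which rules out the term $\{X^i_n \in \mathcal{V}_{X^i_0 - e_1}\}$ (all neighbors of $X^i_0 - e_1$ other than $X^i_0$ have $\vec{\ell}$-coordinate $\le X^i_0 \cdot \vec{\ell}$, while $X^i_n = X^i_0$ would itself trigger $\mathbf{BACK}^i$). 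So on $\mathcal{I} \cap \{\mathbf{BACK}^i = +\infty\}$, $\mathbf{ORI}^i$ reduces to the single-walk $\mathbf{ORI}$, and the identity follows. Fixing $U_1, U_2$ and noting that $\mathcal{I}$ forces $T = +\infty$, the random variable $f\, \indi{\mathcal{I}} \indi{D^{\otimes} = +\infty}$ is $\mathcal{F}_T$-measurable; Proposition~\ref{from_same_to_ind_env} then yields
\begin{equation*}
    \mathbb{E}^K_{U_1, U_2}[f\, \indi{\mathcal{I}} \indi{\mathcal{D}^{\bullet} = +\infty}] = \mathbb{E}^K_{U_1, U_2}[f\, \indi{\mathcal{I}} \indi{D^{\otimes} = +\infty}] = \mathbb{E}^{Q^K_{U_1, U_2}}[f\, \indi{\mathcal{I}} \indi{D^{\otimes} = +\infty}].
\end{equation*}

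Finally, the trivial bound $\mathbb{P}^K_{U_1, U_2}(\mathcal{D}^{\bullet} = +\infty) \ge \mathbb{P}^K_{U_1, U_2}(\mathcal{I}, \mathcal{D}^{\bullet} = +\infty)$, combined with the event identity and Proposition~\ref{from_same_to_ind_env}, gives $\mathbb{P}^K_{U_1, U_2}(\mathcal{D}^{\bullet} = +\infty) \ge \mathbb{P}^{Q^K_{U_1, U_2}}(\mathcal{I}, D^{\otimes} = +\infty)$, so that
\begin{equation*}
    \mathbb{E}^K_{U_1, U_2}[f\, \indi{\mathcal{I}} \,|\, \mathcal{D}^{\bullet} = +\infty] = \frac{\mathbb{E}^{Q^K_{U_1, U_2}}[f\, \indi{\mathcal{I}} \indi{D^{\otimes} = +\infty}]}{\mathbb{P}^K_{U_1, U_2}(\mathcal{D}^{\bullet} = +\infty)} \le \mathbb{E}^{Q^K_{U_1, U_2}}[f \,|\, \mathcal{I}, D^{\otimes} = +\infty].
\end{equation*}
Taking the expectation under $\mathbb{P}_0$ and combining with the first display completes the proof. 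The main obstacle is the pathwise identification of the two ``no-backtracking'' events on $\mathcal{I}$, which hinges on a careful case analysis of the extra terms in the definition of $\mathbf{ORI}^i$. A secondary technical care is needed when invoking Proposition~\ref{from_same_to_ind_env}: the event $\mathcal{I}$ is defined in terms of the full infinite trajectories, so one must work with the natural extension of $\mathcal{F}_T$ on $\{T = +\infty\}$ and verify that $f$ and $\indi{\mathcal{I}}$ are measurable there.
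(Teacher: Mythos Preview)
Your proof is correct and follows essentially the same route as the paper: apply Theorem~\ref{IndependenceJointReg} at the joint regeneration level $\mathcal{R}_0$, use the pathwise identity $\{\mathcal{D}^{\bullet}=+\infty\}\cap\mathcal{I}=\{D^{\otimes}=+\infty\}\cap\mathcal{I}$, transfer to $Q^K$ via Proposition~\ref{from_same_to_ind_env}, and conclude with the trivial denominator bound. You supply more explicit justification than the paper on two points it leaves implicit (the partitioning over $\{N_0=k\}$ to handle the random regeneration index, and the case analysis reducing $\mathbf{ORI}^i$ to the single-walk $\mathbf{ORI}$ on $\mathcal{I}\cap\{\mathbf{BACK}^i=+\infty\}$), but the structure and key ideas are identical.
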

\begin{remark1}
Looking at this inequality one may be surprised. Indeed, on the left term $\rho_0$ is using the notion of joint regeneration levels while on the right term, $\{D^{\otimes} = \infty\} = \{D^1=+\infty,D^2=+\infty\}$ refers to the classical notion of regeneration time for a single walk. However, we observe that on event $\mathcal{I}$, the event $\{\mathcal{D}^{\bullet} = +\infty\}$ coincides with $\{D^{\otimes} = + \infty\}$.
\end{remark1}
\begin{proof}
First, using Theorem~\ref{IndependenceJointReg}, we can write
\begin{align}\label{first_eq}
    \mathbb{E}_0\left[f \circ \theta_{\rho^1_0, \rho^2_0} \indi{\mathcal{I}} \circ \theta_{\rho^1_0, \rho^2_0}\right] = \mathbb{E}_{0}\left[\mathbb{E}^{K}_{X^1_{\rho^1_0},X^2_{\rho^2_0}}\left[f\indi{\mathcal{I}} \text{ }|\text{ }\mathcal{D}^{\bullet} =+\infty \right]\right].
\end{align}
Fix $a,b \in \mathbb{Z}^d$ such that $\Pf_{0}\big(X^1_{\rho^1_0}=a,X^2_{\rho^2_0} = b\big) > 0$. Let us prove that 
\begin{align} \label{first_res}
    \mathbb{E}^{K}_{a,b}\left[f\indi{\mathcal{I}} \text{ }|\text{ }\mathcal{D}^{\bullet} =+\infty \right] \le \mathbb{E}^{Q^{K}_{a,b}}\bigg[f \text{ }|\text{ }\mathcal{I}, D^{\otimes} = +\infty \bigg].
\end{align}

Observe that on $\mathcal{I}$, the function $f\indi{\mathcal{D}^{\bullet}=+\infty}$ coincide with a function $g$ measurable with respect to $\mathcal{F}_T$, where $T$ and $\mathcal{F}_n$ are defined in \eqref{def_trib_temps_inter}. Thus, we can write 
\begin{align}\label{go_to_Q}
    \mathbb{E}^{K}_{a,b}\left[f\indi{\mathcal{I}}\indi{\mathcal{D}^{\bullet} = +\infty}\right] \underset{}{=}\mathbb{E}^{K}_{a,b}\left[\indi{\mathcal{I}}g\right]\underset{\ref{from_same_to_ind_env}}{=} \mathbb{E}^{Q^{K}_{a,b}}\left[\indi{\mathcal{I}}g\right]=   \mathbb{E}^{Q^{K}_{a,b}}\left[f\indi{\mathcal{I}}\indi{\mathcal{D}^{\bullet} = +\infty}\right],
\end{align}
where the second equality uses Proposition~\ref{from_same_to_ind_env} and the fact that $\indi{\mathcal{I}}g$ is measurable with respect to $\mathcal{F}_T$. On $\mathcal{I}$, the event $\{\mathcal{D}^{\bullet}=+\infty\}$ coincides with $\{D^{\otimes} = +\infty\}$. It follows that
\begin{align} \label{third_res}
     Q_{a,b}^{K}\left(\mathcal{I},D^{\otimes}=+\infty\right)= \Pf_{a,b}^{K}\left(\mathcal{I},D^{\otimes}=+\infty\right)\le \Pf_{a,b}^{K}\left(\mathcal{D}^{\bullet} =+\infty\right).
\end{align}
Result \eqref{first_res} follows from \eqref{go_to_Q} and \eqref{third_res}. Inserting it in \eqref{first_eq} concludes the proof.
\end{proof}
\noindent The next lemma will enable us to erase the conditioning on the event $\mathcal{I}$.
\begin{lemma} \label{useful_inequality}
We have the following inequality:
\begin{align*}
    \mathbb{E}_{0}\left[Q^{K}_{X^1_{\rho^1_0},X^2_{\rho^2_0}}\left(\mathcal{I}^{c} \text{ }|\text{ }D^{\otimes}=+\infty \right)\right] \le Cn^{-c},
\end{align*}
where $C,c > 0$ are constants which only depend on $\lambda,\vec{\ell},d,K$.
\end{lemma}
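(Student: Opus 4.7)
The strategy is to first apply Proposition~\ref{PropositionDistantIndepEnv} to the inner probability, yielding
\[
Q^{K}_{U_1,U_2}\bigl(\mathcal{I}^c \mid D^{\otimes}=+\infty\bigr) \le C\|U_1 - U_2\|^{-c}
\]
with $U_i = X^i_{\rho_0^i}$. Combined with the trivial upper bound $1$ on a probability, the lemma reduces to
\[
\mathbb{E}_0\bigl[\min\bigl(1, C\|X^1_{\rho_0^1} - X^2_{\rho_0^2}\|^{-c}\bigr)\bigr] \le C n^{-c'}.
\]
Splitting according to whether $\|X^1_{\rho_0^1} - X^2_{\rho_0^2}\| \ge n^{\delta}$ or not, for a small $\delta > 0$ to be tuned later, the \emph{far} region contributes $\le Cn^{-c\delta}$, and the remaining task is the anti-concentration estimate
\[
\Pf_0\bigl(\|X^1_{\rho_0^1} - X^2_{\rho_0^2}\| < n^{\delta}\bigr) \le Cn^{-c'},
\]
which is the technical heart of the argument.

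To prove this anti-concentration, I would introduce an intermediate scale $\vartheta' \in (0, \vartheta)$, restrict to the good event $E_{n,\vartheta} \cap \mathbf{M}^c_{n^{\vartheta'}}$ (of overwhelming probability by Proposition~\ref{proba_E_decay} and Proposition~\ref{LemmaDistantWalks}), and condition on the positions $Y_i = X^i_{T^i_{n^{\vartheta'}}}$ at which the two walks first reach level $n^{\vartheta'}$. On $\mathbf{M}^c_{n^{\vartheta'}}$ one has $\|Y_1 - Y_2\|_1 \ge 3$ and the walks do not meet again in $\mathcal{H}^+_{n^{\vartheta'}}$, so the trajectory segments between levels $n^{\vartheta'}$ and $\mathcal{R}_0 \sim n^{\vartheta}$ are captured before the first meeting time of the shifted pair. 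Proposition~\ref{from_same_to_ind_env} then lets me replace the joint same-environment law $\Pf^K_{Y_1,Y_2}$ by the product law $Q^K_{Y_1,Y_2}$ on this segment. Under $Q^K$ the two walks are independent, and each walk's perpendicular displacement from $Y_i$ to $X^i_{\rho_0^i}$ enjoys a local-limit bound: taking $k\sim n^\vartheta$ in Proposition~\ref{TechnicalInequality} and slicing in the bias direction yields $\sup_u \Pf^K(\text{perp.\ disp.} = u) \le C n^{-\vartheta(d-1)/2 + \varepsilon}$. A standard convolution bound transfers this to $\sup_v \Pf(\text{perp.\ diff.} = v) \le C n^{-\vartheta(d-1)/2 + \varepsilon}$, and summing over the $(d-1)$-dimensional ball of radius $n^\delta$ gives $Cn^{(d-1)\delta - \vartheta(d-1)/2 + \varepsilon}$, which is $\le Cn^{-c'}$ as soon as $\delta < \vartheta/2$.

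The main obstacle is composing Proposition~\ref{from_same_to_ind_env} with the Markov property at the intermediate level: because both walks start at the origin the first meeting time is trivially $0$, so a direct application is impossible; however once we restart at $Y_1, Y_2$ with $\|Y_1 - Y_2\|_1 \ge 3$ and restrict to $\mathbf{M}^c_{n^{\vartheta'}}$, the shifted walks' first meeting time exceeds both $T^1_{\mathcal{R}_0}$ and $T^2_{\mathcal{R}_0}$, so the relevant events are measurable before the meeting and the transfer applies cleanly. A secondary technical point is to upgrade Proposition~\ref{TechnicalInequality}, stated for a fixed regeneration count $\tau_k$, to a local-limit bound at the level hitting time $T^i_{\mathcal{R}_0}$; this follows by concentration of the regeneration index around $\mathcal{R}_0/v$ combined with the slicing argument already mentioned.
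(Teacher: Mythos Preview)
Your approach is valid in outline but takes a substantially more laborious route than the paper's. The paper avoids anti-concentration entirely: it reverses the transfer of Proposition~\ref{shift_inequality} to go from $Q^{K}$ back to $\Pf^{K}$, then undoes the regeneration shift via Theorem~\ref{IndependenceJointReg}, reducing the whole lemma to Proposition~\ref{LemmaDistantWalks}. Concretely, after removing the conditioning (using $Q^{K}(D^{\otimes}=+\infty)\ge\eta>0$), the paper uses the identity $Q^{K}_{a,b}(\mathcal{I},D^{\otimes}=+\infty)=\Pf^{K}_{a,b}(\mathcal{I},\mathcal{D}^{\bullet}=+\infty)$ from \eqref{go_to_Q}--\eqref{third_res} to pass to $\Pf^{K}_{a,b}(\mathcal{I}^{c},\mathcal{D}^{\bullet}=+\infty)$, bounds this by $C\,\Pf^{K}_{a,b}(\mathcal{I}^{c}\mid\mathcal{D}^{\bullet}=+\infty)$ via Lemma~\ref{LemmaDpositive}, and then Theorem~\ref{IndependenceJointReg} gives
\[
\mathbb{E}_{0}\Bigl[\Pf^{K}_{X^{1}_{\rho^{1}_{0}},X^{2}_{\rho^{2}_{0}}}\bigl(\mathcal{I}^{c}\mid\mathcal{D}^{\bullet}=+\infty\bigr)\Bigr]=\Pf_{0}\bigl(\mathcal{I}^{c}\circ\theta_{\rho^{1}_{0},\rho^{2}_{0}}\bigr)\le\Pf_{0}(\mathbf{M}_{n^{\vartheta}})\le Cn^{-c}.
\]
No estimate on $\|X^{1}_{\rho^{1}_{0}}-X^{2}_{\rho^{2}_{0}}\|$ is ever needed.

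Your route via Proposition~\ref{PropositionDistantIndepEnv} plus anti-concentration can presumably be made rigorous, but the technical cost is real: the point $X^{i}_{\rho^{i}_{0}}$ is indexed by the \emph{joint} regeneration level $\mathcal{R}_{0}$, which couples the two walks even after you pass to $Q^{K}$ at the intermediate scale, so the ``independent walks, convolve two local-limit bounds'' step is not immediate; and upgrading Proposition~\ref{TechnicalInequality} from a fixed regeneration index to the random hitting time $T^{i}_{\mathcal{R}_{0}}$ requires a genuine concentration argument for the regeneration count. None of this is needed once you notice that the lemma is essentially Proposition~\ref{shift_inequality} run in reverse.
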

\begin{proof}
By \cite[Lemma 5.1]{Kious_Frib} we have $Q^K_{X^1_{\rho^1_0},X^2_{\rho^2_0}}\left(D^{\otimes}=+\infty\right) = \Pf_0(D=+\infty)^2 > \eta > 0$, where $\eta$ only depends on $\lambda,\vec{\ell},d,K$. Hence, we only have to dominate
\begin{align*}
     \mathbb{E}_{0}\left[Q^{K}_{X^1_{\rho^1_0},X^2_{\rho^2_0}}\left(\mathcal{I}^{c},D^{\otimes}=+\infty \right)\right].
\end{align*}
By the same argument as in the last proof, it is equal to
\begin{align*}
     \mathbb{E}_{0}\left[\Pf^{K}_{X^1_{\rho^1_0},X^2_{\rho^2_0}}\left(\mathcal{I}^{c},\mathcal{D}^{\bullet} =+\infty \right)\right].
\end{align*}
Using Lemma~\ref{LemmaDpositive}, the last term is bounded from above by
\begin{align*}
    C\mathbb{E}_{0}\left[\Pf^{K}_{X^1_{\rho^1_0},X^2_{\rho^2_0}}\left(\mathcal{I}^{c}\text{ | }\mathcal{D}^{\bullet} =+\infty \right)\right],
\end{align*}
where $C > 0$ is a constant which only depends on $\lambda,\vec{\ell},d,K$. By Proposition~\ref{IndependenceJointReg}, this expectation is equal to 
\begin{align*}
    \mathbb{P}_{0}\left(\mathcal{I}^{c} \circ \theta_{\rho_0}\right) \le \Pf_0\bigg(\bigg\{\displaystyle \inf_{\substack{x \in \mathcal{H}^{+}_{n^{\vartheta}}\cap \{X_n^1\}\\y \in \mathcal{H}^{+}_{n^{\vartheta}}\cap \{X_n^2\}}}\|x-y\|_{1}\le 2 \bigg\}\bigg) \le Cn^{-c},
\end{align*}
where the last inequality follows from Proposition~\ref{LemmaDistantWalks}.
\end{proof}
\begin{proposition} \label{decondition_I}
For any bounded positive function $f$ measurable with respect to \eqref{eqn:SigmaAlgebraFuture} we have the following inequality:
\begin{align*}
    &\left|\mathbb{E}_{0}\bigg[\mathbb{E}^{Q^{K}_{X^1_{\rho^1_0},X^2_{\rho^2_0}}}\bigg[f \text{ }|\text{ } \mathcal{I},D^{\otimes}=+\infty \bigg]\bigg] - \mathbb{E}_{0}\bigg[\mathbb{E}^{Q^{K}_{X^1_{\rho^1_0},X^2_{\rho^2_0}}}\bigg[f \text{ }|\text{ }D^{\otimes}=+\infty \bigg]\bigg]\right| \le Cn^{-c},
\end{align*}
where $C,c > 0$ are constants which only depend on $\lambda,\vec{\ell},d,K$.
\end{proposition}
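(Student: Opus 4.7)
The plan is to reduce this estimate to Lemma~\ref{useful_inequality} via an elementary identity for conditional expectations, applied pointwise in $(X^1_{\rho^1_0}, X^2_{\rho^2_0})$ before taking the outer expectation.

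First I would recall the following standard identity: for any probability measure $Q$, any events $A, B$ with $Q(A) > 0$, and any bounded measurable function $f$, the decomposition
\begin{equation*}
    \mathbb{E}^Q[f \mid A] = \mathbb{E}^Q[f \mid A \cap B]\, Q(B \mid A) + \mathbb{E}^Q[f \mid A \cap B^c]\, Q(B^c \mid A)
\end{equation*}
(where the second term is understood to vanish if $Q(A\cap B^c)=0$) yields, after subtracting $\mathbb{E}^Q[f \mid A \cap B]$, the bound
\begin{equation*}
    \bigl|\mathbb{E}^Q[f \mid A] - \mathbb{E}^Q[f \mid A\cap B]\bigr| \le 2\|f\|_\infty\, Q(B^c \mid A).
\end{equation*}

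Next, for each realisation of $X^1_{\rho^1_0}, X^2_{\rho^2_0}$, apply this inequality under the quenched--product annealed measure $Q^{K}_{X^1_{\rho^1_0}, X^2_{\rho^2_0}}$ with $A = \{D^{\otimes} = +\infty\}$ (which has probability $\Pf_0(D=+\infty)^2 > 0$, as noted in Lemma~\ref{useful_inequality}) and $B = \mathcal{I}$. This gives the pointwise estimate
\begin{equation*}
    \Bigl|\mathbb{E}^{Q^{K}_{X^1_{\rho^1_0}, X^2_{\rho^2_0}}}\!\bigl[f \mid \mathcal{I}, D^{\otimes}{=}{+\infty}\bigr] - \mathbb{E}^{Q^{K}_{X^1_{\rho^1_0}, X^2_{\rho^2_0}}}\!\bigl[f \mid D^{\otimes}{=}{+\infty}\bigr]\Bigr| \le 2\|f\|_\infty\, Q^{K}_{X^1_{\rho^1_0}, X^2_{\rho^2_0}}\!\bigl(\mathcal{I}^c \mid D^{\otimes}{=}{+\infty}\bigr).
\end{equation*}

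Finally, I would take $\mathbb{E}_0$ of both sides and use the triangle inequality on the left-hand side of the target estimate to obtain
\begin{equation*}
    \Bigl|\mathbb{E}_0\!\bigl[\mathbb{E}^{Q^{K}_{X^1_{\rho^1_0}, X^2_{\rho^2_0}}}[f \mid \mathcal{I}, D^{\otimes}{=}{+\infty}]\bigr] - \mathbb{E}_0\!\bigl[\mathbb{E}^{Q^{K}_{X^1_{\rho^1_0}, X^2_{\rho^2_0}}}[f \mid D^{\otimes}{=}{+\infty}]\bigr]\Bigr| \le 2\|f\|_\infty\, \mathbb{E}_0\!\left[Q^{K}_{X^1_{\rho^1_0}, X^2_{\rho^2_0}}\!\bigl(\mathcal{I}^c \mid D^{\otimes}{=}{+\infty}\bigr)\right],
\end{equation*}
and conclude by invoking Lemma~\ref{useful_inequality}, which bounds the right-hand side by $C n^{-c}$. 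There is no real obstacle here: all the work is done by the asymptotic separation result encapsulated in Lemma~\ref{useful_inequality}, and this proposition is essentially the bookkeeping step that passes from "the bad event $\mathcal{I}^c$ is rare" to "conditioning on $\mathcal{I}$ changes the expectation by a polynomially small amount".
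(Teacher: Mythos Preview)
Your proof is correct and follows essentially the same approach as the paper: both decompose the conditional expectation on $\mathcal{I}$ versus $\mathcal{I}^c$, use boundedness of $f$ to control the difference by a constant times $\mathbb{E}_0\bigl[Q^{K}_{X^1_{\rho^1_0},X^2_{\rho^2_0}}(\mathcal{I}^c \mid D^{\otimes}=+\infty)\bigr]$, and conclude via Lemma~\ref{useful_inequality}. Your version is slightly more explicit about the elementary conditional-expectation identity being invoked, but the content is identical.
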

\begin{proof}
We can rewrite the term $\mathbb{E}_{0}\bigg[\mathbb{E}^{Q^{K}_{X^1_{\rho^1_0},X^2_{\rho^2_0}}}\bigg[f \text{ }|\text{ }D^{\otimes}=+\infty \bigg]\bigg]$ as
\begin{align*}
    & \mathbb{E}_{0}\bigg[\mathbb{E}^{Q^{K}_{X^1_{\rho^1_0},X^2_{\rho^2_0}}}\bigg[f \text{ }|\text{ }\mathcal{I},D^{\otimes}=+\infty \bigg]Q^{K}_{X^1_{\rho^1_0},X^2_{\rho^2_0}}\bigg(\mathcal{I}\text{ | }D^{\otimes}=+\infty\bigg)\bigg] +\\&\mathbb{E}_{0}\bigg[\mathbb{E}^{Q^{K}_{X^1_{\rho^1_0},X^2_{\rho^2_0}}}\bigg[f \text{ }|\text{ }\mathcal{I}^{c},D^{\otimes}=+\infty \bigg]Q^{K}_{X^1_{\rho^1_0},X^2_{\rho^2_0}}\bigg(\mathcal{I}^{c}\text{ | }D^{\otimes}=+\infty\bigg)\bigg].
\end{align*} 
Using the fact that $f$ is bounded, it follows that the term in the statement of the proposition is bounded from above by
\begin{align*}
    C\mathbb{E}_{0}\bigg[Q^{K}_{X^1_{\rho^1_0},X^2_{\rho^2_0}}\bigg(\mathcal{I}^{c}\text{ | }D^{\otimes}=+\infty\bigg)\bigg].
\end{align*}
We conclude using Lemma~\ref{useful_inequality}.
\end{proof}

We are now able to prove Theorem~\ref{decay_term_serie}.
\begin{proof}[Proof of Theorem~\ref{decay_term_serie}]
We recall that we want to dominate
\begin{equation*}
    \mathbb{E}_{0}\left[F(Z^1_n)F(Z^2_n)\right] - \mathbb{E}_0\left[F(Z^1_n)\right]\mathbb{E}_0\left[F(Z^2_n)\right].
\end{equation*}
First, using Corollary~\ref{corollary_shift} we dominate this by
\begin{align*}
    Cn^{-c}+\mathbb{E}_{0}\left[F(\tilde{Z}^1_n)F(\tilde{Z}^2_n)\right] - \mathbb{E}_0\left[F(Z^1_n)\right]\mathbb{E}_0\left[F(Z^2_n)\right].
\end{align*}
Using the fact that $\Pf_{0}(\mathcal{I}^{c} \circ \theta_{\rho^1_0, \rho^2_0}) \le Cn^{-c}$ by Proposition~\ref{LemmaDistantWalks} and the fact that $F$ is bounded we dominate it by
\begin{align*}
    Cn^{-c}+\mathbb{E}_0\left[\indi{\mathcal{I}} \circ \theta_{\rho^1_0, \rho^2_0} F(\tilde{Z}^1_n)F(\tilde{Z}^2_n)\right] - \mathbb{E}_0\left[F(Z^1_n)\right]\mathbb{E}_0\left[F(Z^2_n)\right].
\end{align*}
Let us bound from above the term
\begin{equation*}
    \mathbb{E}_0\left[\indi{\mathcal{I}} \circ \theta_{\rho^1_0, \rho^2_0} F(\tilde{Z}^1_n)F(\tilde{Z}^2_n)\right].
\end{equation*}
Let us introduce the function $f = F\left(Z^1_n-Z^1_n(0)\right)F\left(Z^2_n-Z^2_n(0)\right)$, then the last term can be rewritten as
\begin{align*}
    \mathbb{E}_0\left[\indi{\mathcal{I}} \circ \theta_{\rho^1_0, \rho^2_0} f\circ \theta_{\rho^1_0, \rho^2_0} \right].
\end{align*}
Furthermore, we observe that $f$ satisfies the assumptions of Proposition~\ref{shift_inequality} and Proposition~\ref{decondition_I}, hence last term is bounded from above by
\begin{align*}
    Cn^{-c} + \mathbb{E}_0\bigg[\mathbb{E}^{Q_{X^1_{\rho^1_0},X^2_{\rho^2_0}}^{K}}\left[f\text{ | }D^{\otimes}=+\infty\right]\bigg].
\end{align*}
Using \cite[Theorem 5.4]{Kious_Frib} and the form of the function $f$ we can write 
\begin{equation*}
    \mathbb{E}^{Q_{X^1_{\rho^1_0},X^2_{\rho^2_0}}^{K}}\left[F\left(Z^1_{n}-Z^1_n(0)\right)F\left(Z^2_{n}-Z^2_n(0)\right)\text{ | }D^{\otimes}=+\infty\right],
\end{equation*}
as 
\begin{equation*}
   \mathbb{E}^K_0\left[F(Z_n) \text{ | }D=+\infty\right]^2.
\end{equation*}
To summarise we have dominated the term
\begin{align*}
     \mathbb{E}_0\left[F(Z^1_n)F(Z^2_n)\right] - \mathbb{E}_0\left[F(Z^1_n)\right]\mathbb{E}_0\left[F(Z^2_n)\right],
\end{align*}
by
\begin{align*}
     Cn^{-c} + \mathbb{E}^K_0\left[F(Z_n) \text{ | }D=+\infty\right]^2 - \mathbb{E}_0\left[F(Z_n)\right]^2.
\end{align*}
We conclude by using the fact that $F$ is bounded and Proposition~\ref{last_inequality}.
\end{proof}

\subsection{For the joint process}
We recall the notations $S_n(t) =\tau_{\floor{tn}}/\mathrm{Inv}(n)$, where $\mathrm{Inv}(n) = \inf\{t \ge 0 \colon \mathbf{P}[c_{*} > t] \leq 1/n\}$, $\displaystyle W_n(t) = (Z_n(t),S_n(t))$ and $\displaystyle W^{*}_n(t) = W_n\left(t+\tfrac{1}{n}\right) - W_n\left(\tfrac{1}{n}\right)$. We fix a positive and bounded Lipschitz function $F : (\mathbb{R}^{d+1})^{m} \rightarrow \mathbb{R}$ and $0 \le t_1 \le \cdots \le t_m \le T$. For simplicity, we will assume $T = 1$ throughout the section. To clarify the notations we will write 
\begin{align*}
    F(W_n^{*}) = F(W_n^{*}(t_1),\cdots,W_n^{*}(t_m)).
\end{align*}
In this section we prove the following result.
\begin{proposition}\label{decay_time_serie}
There exists $K_0 > 1$ such that the following inequality holds for all $K \ge K_0$:
\begin{align*}
    |\mathbb{E}_{0}[F(W^{*1}_n)F(W^{*2}_n)] - \mathbb{E}_{0}[F(W^{*1}_n)]\mathbb{E}_{0}[F(W^{*2}_n)]| \le Cn^{-c},
\end{align*}
where $C,c > 0$ are constants which only depend on $\lambda,\vec{\ell},d,K$. 
\end{proposition}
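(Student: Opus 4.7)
The argument would closely parallel the proof of Theorem~\ref{decay_term_serie}, extending the machinery to the joint process $W^{*i}_n=(Z^{*i}_n,S^{*i}_n)$. The only genuinely new input is a stable-domain tail estimate for sums of regeneration time increments; all other steps are formal analogs of those in Section~5.1.

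First I would introduce the further shift past the first joint regeneration level $\rho_0$: let $\tilde{X}^i_\cdot=X^i_{\rho^i_0+\cdot}-X^i_{\rho^i_0}$, let $\tilde{\tau}^i_j=\tau^i_{j+k_0^i}-\tau^i_{k_0^i}$ with $\rho^i_0=\tau^i_{k_0^i}$, and define $\tilde{W}^{*i}_n$ analogously from the shifted walk. The key preliminary step is an analog of Proposition~\ref{close_trajectories}: on an event of probability $1-O(n^{-c})$, the sup distance between $\tilde{W}^{*i}_n$ and $W^{*i}_n$ is polynomially small. For the $Z$ component this is exactly Proposition~\ref{close_trajectories}, which already requires $\vartheta$ small enough. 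For the $S$ component, writing $k=\floor{nt}$,
\begin{equation*}
\tilde{S}^{*i}_n(t)-S^{*i}_n(t)=\frac{1}{\mathrm{Inv}(n)}\Bigg(\sum_{j=k+2}^{k+1+k_0^i}(\tau^i_j-\tau^i_{j-1})-\sum_{j=2}^{k_0^i+1}(\tau^i_j-\tau^i_{j-1})\Bigg),
\end{equation*}
and on $E_{n,\vartheta}$ we have $k_0^i\le Cn^{\vartheta}$. Using that $\mathbb{E}_0^K[(\tau_2-\tau_1)^\alpha\mid D=+\infty]<\infty$ for every $\alpha<\gamma$ (a moment estimate of the type used in \cite{Kious_Frib}), the subadditivity $(a+b)^\alpha\le a^\alpha+b^\alpha$ for $\alpha\le 1$ combined with Markov's inequality yields that each of the two sums above is at most $n^{1/\gamma-c}$ with probability at least $1-Cn^{\vartheta-\alpha/\gamma+\alpha c}$, which is polynomially small once $c$ is chosen small and $\alpha$ close to $\gamma$ (the choice $\vartheta<1/2$ ensures there is room for this). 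A union bound over the finitely many times $t_1,\dots,t_m$ then gives the $S$-component control on a further high-probability event.

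Given this, I would follow the structure of the proof of Theorem~\ref{decay_term_serie} verbatim: (i) replace $W^{*i}_n$ by $\tilde{W}^{*i}_n$ in the covariance up to $Cn^{-c}$ error, using that $F$ is bounded and Lipschitz; (ii) on the asymptotic separation event $\mathcal{I}\circ\theta_{\rho^1_0,\rho^2_0}$ (whose complement costs $Cn^{-c}$ by Proposition~\ref{LemmaDistantWalks}), apply Proposition~\ref{shift_inequality} to the function $f=F(\tilde{W}^{*1}_n)F(\tilde{W}^{*2}_n)$, which is measurable with respect to the sigma-algebra \eqref{eqn:SigmaAlgebraFuture} since $\tilde{W}^{*i}_n$ depends only on the walks (and the traversed conductances) past the joint regeneration; (iii) remove the conditioning on $\mathcal{I}$ via Proposition~\ref{decondition_I}; (iv) use the independence of the two walks under $Q^K(\cdot\mid D^{\otimes}=+\infty)$ to factorize, which yields $\mathbb{E}_0^K[F(W^*_n)\mid D=+\infty]^2$ after invoking translation invariance; (v) close with an analog of Proposition~\ref{last_inequality} asserting that $|\mathbb{E}_0^K[F(W^*_n)\mid D=+\infty]-\mathbb{E}_0[F(W^*_n)]|\le Cn^{-c}$. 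Step (v) requires the same tail bound for the single increments $\tau_1$ and $\tau_{\floor{nt}+1}-\tau_{\floor{nt}}$ (each controlled by a direct Markov inequality at a fractional moment below $\gamma$). The main obstacle is the tail estimate for sums of heavy-tailed regeneration time increments of length $k_0^i\approx n^{\vartheta}$, showing they are $o(\mathrm{Inv}(n))$ with polynomial probability; all other steps are formal extensions of the spatial argument to the joint coordinate.
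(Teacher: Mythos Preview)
Your approach is correct and structurally identical to the paper's: replace $W^{*i}_n$ by its shift $\widetilde{W}^{*i}_n$ past the joint regeneration level $\rho_0^i$, use asymptotic separation together with Propositions~\ref{shift_inequality} and~\ref{decondition_I} to pass to independent environments, then factorize. The one substantive difference is in the tail bound for the partial sums $\sum_{j\le k_0^i}(\tau^i_{j+1}-\tau^i_j)$. The paper isolates this as Lemma~\ref{LemmaSmallTimes} and proves it via the Fuk--Nagaev inequality (Lemma~\ref{LemmaFukNagaev}), whereas you use the more elementary fractional-moment bound $\mathbb{P}(\sum\xi_j>x)\le N\,\mathbb{E}[\xi_1^\alpha]/x^\alpha$ for $\alpha<\gamma$, via subadditivity of $t\mapsto t^\alpha$. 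Both routes give the same constraint $\rho<\eta/\gamma$; yours avoids Fuk--Nagaev at the cost of needing $\mathbb{E}_0^K[(\tau_2-\tau_1)^\alpha\mid D=\infty]<\infty$, which is available from the regularly varying tail stated just before Lemma~\ref{LemmaMaxBound}.

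Two minor corrections. First, on $E_{n,\vartheta}$ the bound is $k_0^i\le Cn^{2\vartheta}$ rather than $n^\vartheta$ (cf.\ the proof of Proposition~\ref{close_trajectories}); this only shifts your exponent and is harmless since $\vartheta<1/2$. Second, your step~(v) is in fact an exact equality and needs no tail input: $W^*_n$ is a function of the increments $(\tau_{k+1}-\tau_k,\,X_{\tau_{k+1}}-X_{\tau_k})_{k\ge1}$ alone, and their joint law is the same under $\mathbb{P}_0$ and under $\mathbb{P}_0^K(\cdot\mid D=+\infty)$, so $\mathbb{E}_0^K[F(W^*_n)\mid D=+\infty]=\mathbb{E}_0[F(W^*_n)]$, as the paper records in~\eqref{EqnProofTimeSerie4}.
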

\noindent Before proving Proposition~\ref{decay_time_serie} we will need to state and prove Lemma~\ref{lemma_time_shift}. Let us also admit the following lemma which is crucial in the proof of Lemma~\ref{lemma_time_shift}.
\begin{lemma}\label{LemmaSmallTimes}
For all $\eta \in (0,1)$ and $\rho < \eta / \gamma$, the probability
\[
\mathbb{P}_{0} \left( S^*_n(n^{-\eta}) > n^{-\rho} \right) = \mathbb{P}_{0} \left( S_n(n^{-\eta}) > n^{-\rho} | D = + \infty \right)
\]
decays polynomially fast to $0$ as $n \to \infty$.
\end{lemma}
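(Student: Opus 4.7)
The equality $\mathbb{P}_0(S^*_n(n^{-\eta}) > n^{-\rho}) = \mathbb{P}_0(S_n(n^{-\eta}) > n^{-\rho} \mid D = +\infty)$ is a direct consequence of the regeneration structure: the shift $W^*_n(t) = W_n(t+1/n) - W_n(1/n)$ removes the first regeneration increment, and by \cite[Theorem 5.4]{Kious_Frib} the subsequent increments $\{\tau_{k+1}-\tau_k\}_{k\ge 1}$ are i.i.d.\ with the law of $\tau_1$ under $\mathbb{P}^K_0(\cdot \mid D=+\infty)$. So it suffices to establish the polynomial decay for the conditional probability on the right.

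Write $N := \lfloor n^{1-\eta}\rfloor$ and decompose $\tau_N = \sum_{k=1}^{N} T_k$, where $T_k := \tau_k - \tau_{k-1}$ (with $\tau_0 := 0$) are i.i.d.\ under $\mathbb{P}_0(\cdot \mid D=+\infty)$. The plan is a standard truncation argument. The key ingredient is a tail bound
\[
\mathbb{P}_0(T_1 > t \mid D=+\infty) \le \hat L(t)\, t^{-\gamma}, \qquad t \ge 1,
\]
for some slowly varying $\hat L$. In the trapping picture such a bound follows from the fact that, between consecutive regenerations, $T_1$ is dominated by the sojourn at the heaviest conductance encountered—itself geometric with parameter $\sim 1/c_*(e)$—combined with the heavy tail $\mathbf{P}(c_* > t) = L(t) t^{-\gamma}$ and polynomial control on the range of the walk between regenerations (e.g.\ \cite[Lemma 6.1]{Kious_Frib}). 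I would either invoke this estimate from the prior work or derive it in a short lemma.

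Given this, fix $\rho < \eta/\gamma$. A short algebraic check shows that the interval
\[
\frac{1-\eta}{\gamma} < \beta < \frac{1 - \gamma + \eta\gamma - \rho\gamma}{\gamma(1-\gamma)}
\]
is non-empty precisely under the hypothesis $\rho < \eta/\gamma$. Fix such a $\beta$ and set $M := n^{\beta}$. Using $\mathrm{Inv}(n) = n^{1/\gamma} L_1(n)$ for some slowly varying $L_1$, it suffices to dominate $\mathbb{P}_0(\tau_N > n^{1/\gamma-\rho} L_1(n) \mid D = +\infty)$ by splitting according to whether $\max_{k \le N} T_k$ exceeds $M$. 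A union bound gives
\[
\mathbb{P}_0\Big(\max_{k\le N} T_k > M \,\Big|\, D=+\infty\Big) \le N\, \hat L(M)\, M^{-\gamma} = n^{\,1-\eta-\beta\gamma+o(1)},
\]
while Karamata's theorem (since $\gamma < 1$) yields $\mathbb{E}[T_1 \mathbf{1}_{T_1 \le M} \mid D=+\infty] \le C M^{1-\gamma}\hat L(M)$, whence by Markov's inequality
\[
\mathbb{P}_0\Big(\sum_{k=1}^{N} T_k \mathbf{1}_{T_k \le M} > n^{1/\gamma-\rho} L_1(n) \,\Big|\, D=+\infty\Big) \le C\, n^{\,1-\eta + \beta(1-\gamma) - 1/\gamma + \rho + o(1)}.
\]
The choice of $\beta$ makes both exponents strictly negative, yielding the desired polynomial decay.

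The main obstacle is securing the tail estimate on $T_1$ with the sharp exponent $\gamma$; once available, the rest is a routine truncation computation in which the strict inequality $\rho < \eta/\gamma$ provides a polynomial cushion absorbing all slowly varying factors.
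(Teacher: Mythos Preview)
Your argument is correct and proceeds along the same general lines as the paper's proof: both rely on the tail estimate $\mathbb{P}^K_0(\tau_1>t\mid D=+\infty)=\psi(t)t^{-\gamma}$ for a slowly varying $\psi$ (which the paper obtains by combining \cite[Proposition~10.3]{Kious_Frib} with \cite[IX.8, Theorem~1]{Feller}, so you should cite this rather than leave it as ``I would either invoke or derive''), and both split according to whether the maximal increment exceeds a threshold.

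The genuine difference is in how the ``sum large, max small'' event is handled. The paper invokes the Fuk--Nagaev inequality (Lemma~\ref{LemmaFukNagaev}), which gives a super-polynomial bound of the form $(Cn^{-\delta})^{n^\varepsilon}$ on $\mathbb{P}_0(\mathscr{S}(n^{1-\eta})>x,\,\mathscr{M}(n^{1-\eta})\le y)$. You instead bound the truncated mean via Karamata and apply Markov's inequality, obtaining only a polynomial bound---but this is all the lemma requires. Your route is therefore more elementary: it avoids importing Fuk--Nagaev entirely, at the cost of a slightly more delicate choice of truncation level (you must thread $\beta$ through a two-sided window rather than simply take $\varepsilon$ small). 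The paper's approach, on the other hand, gives a cleaner separation: the max bound carries all the polynomial decay while Fuk--Nagaev disposes of the remainder super-polynomially, so there is no optimisation over exponents. Either works here; yours is arguably the more economical proof for the statement as written.
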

We let $\rho^i_0$ be $T^i_{\mathcal{R}_0}$, where $\mathcal{R}_0$ is the first joint regeneration level in $\mathcal{H}^+_{n^\vartheta}$. Let us also recall that we denote $k_0^{i} \ge 1$ the random integer such that $\tau^{i}_{k_0^{i}} = \rho^{i}_0$.
Then let us introduce, for $i \in \{1,2\}$
\begin{equation} \label{Stilde}
    \widetilde{W}^{*i}_n(t) = W^{*i}_n\left(\tfrac{k^i_0}{n}+t\right) - W^{*i}_n\left(\tfrac{k^i_0}{n}\right),
\end{equation}
which is the value of $W^{*i}_n$ for the walk shifted by $\tau_{k^i_0}$.
\begin{lemma} \label{lemma_time_shift}
The following inequality holds:
\begin{align*}
\left|\mathbb{E}_0\left[F(W^{*1}_n)F(W^{*2}_n)\right] - \mathbb{E}_0\left[F(\widetilde{W}^{*1}_n)F(\widetilde{W}^{*2}_n)\right]\right| \le Cn^{-c},
\end{align*}
where $C,c > 0$ are constants which only depend on $\lambda,\vec{\ell},d,K$.
\end{lemma}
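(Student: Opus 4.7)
The plan is to bound $\|W_n^{*i}(t_j) - \widetilde{W}_n^{*i}(t_j)\|$ componentwise on a high-probability event and conclude via the Lipschitz and boundedness properties of $F$. Unfolding the definitions, for every $j \in \{1,\dots, m\}$ and $i \in \{1,2\}$ one has
\begin{align*}
Z_n^{*i}(t_j) - \widetilde{Z}_n^{*i}(t_j) &= \frac{\bigl(X^i_{\tau^i_{k_0^i+1}} - X^i_{\tau^i_1}\bigr) - \bigl(X^i_{\tau^i_{k_0^i + \floor{nt_j+1}}} - X^i_{\tau^i_{\floor{nt_j+1}}}\bigr)}{n^{1/2}}, \\
S_n^{*i}(t_j) - \widetilde{S}_n^{*i}(t_j) &= \frac{\bigl(\tau^i_{k_0^i+1} - \tau^i_1\bigr) - \bigl(\tau^i_{k_0^i + \floor{nt_j+1}} - \tau^i_{\floor{nt_j+1}}\bigr)}{\mathrm{Inv}(n)},
\end{align*}
each bracketed difference being an increment across $k_0^i$ consecutive regeneration blocks.

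On the event $E_{n,\vartheta}$ from \eqref{estime_E} we have $k_0^i \le d n^{2\vartheta}$ and $\chi^i_k \le n^{\vartheta}$ for every block involved, so the diameter bound $\mathrm{diam}\,\mathcal{B}(m,m^{\alpha}) \le Cm^{\alpha}$, used exactly as in Proposition~\ref{close_trajectories}, yields $\|X^i_{\tau^i_{\ell+k_0^i}}-X^i_{\tau^i_\ell}\| \le C k_0^i n^{\alpha\vartheta} \le C n^{(\alpha+2)\vartheta}$ for both $\ell = 1$ and $\ell = \floor{nt_j+1}$, hence $|Z_n^{*i}(t_j) - \widetilde{Z}_n^{*i}(t_j)| \le C n^{(\alpha+2)\vartheta - 1/2}$, polynomially small once $\vartheta < 1/(2(\alpha+2))$. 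For the clock part, fix $\eta \in (0,1-2\vartheta)$ and $\rho \in (0, \eta/\gamma)$, and set
\begin{equation*}
H_n \coloneqq E_{n,\vartheta} \cap \bigcap_{i \in \{1,2\}}\bigcap_{j=0}^{m} \Bigl\{\tau^i_{\floor{n^{1-\eta}} + \ell_j} - \tau^i_{\ell_j} \le n^{-\rho}\,\mathrm{Inv}(n)\Bigr\}, \qquad \ell_j \coloneqq \floor{n t_j + 1},
\end{equation*}
with $t_0 \coloneqq 0$. For $n$ large, $k_0^i \le d n^{2\vartheta} \le \floor{n^{1-\eta}}$ on $E_{n,\vartheta}$, so both bracketed $\tau$-differences are at most $n^{-\rho}\mathrm{Inv}(n)$ on $H_n$, giving $|S_n^{*i}(t_j) - \widetilde{S}_n^{*i}(t_j)| \le 2 n^{-\rho}$.

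To bound $\mathbb{P}_0(H_n^c)$ I would combine Proposition~\ref{proba_E_decay} with a union bound over the $2(m+1)$ clock constraints; by \cite[Theorem~5.4]{Kious_Frib} the sequence $(\tau^i_{k+1}-\tau^i_k)_{k\ge 1}$ is iid under $\mathbb{P}_0$, so $\tau^i_{\floor{n^{1-\eta}} + \ell_j} - \tau^i_{\ell_j}$ is a sum of $\floor{n^{1-\eta}}$ iid copies of the generic regeneration increment and therefore has the same distribution as $\tau^i_{\floor{n^{1-\eta}}+1} - \tau^i_1 = \mathrm{Inv}(n)\, S^{*i}_n(n^{-\eta})$, whose tail is controlled by Lemma~\ref{LemmaSmallTimes}. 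Combining both estimates, $\|W_n^{*i}(t_j) - \widetilde{W}_n^{*i}(t_j)\| \le C n^{-c'}$ on $H_n$ uniformly in $(i,j)$ for some $c' > 0$. Using that $F$ is bounded and Lipschitz, so that $|F(a)F(b)-F(c)F(d)|$ is controlled by $\|a - c\| + \|b - d\|$ on $H_n$ and by an absolute constant off $H_n$, we conclude
\begin{equation*}
\bigl|\mathbb{E}_0[F(W_n^{*1})F(W_n^{*2}) - F(\widetilde{W}_n^{*1})F(\widetilde{W}_n^{*2})]\bigr| \le C n^{-c'} + C'\, \mathbb{P}_0(H_n^c) \le C n^{-c}.
\end{equation*}
The main subtlety is extending the one-shot statement of Lemma~\ref{LemmaSmallTimes} to the $m+1$ shifted start indices $\ell_j$, which is where the iid nature of post-first-regeneration increments (Theorem~5.4 of \cite{Kious_Frib}) is essential; the $Z$-part is a direct variant of Proposition~\ref{close_trajectories}.
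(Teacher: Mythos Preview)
Your proposal is correct and follows essentially the same approach as the paper: bound the $Z$-component differences on $E_{n,\vartheta}$ via the diameter estimate of Proposition~\ref{close_trajectories}, bound the $S$-component differences by replacing $k_0^i$ with the larger deterministic index $\lfloor n^{1-\eta}\rfloor$ and invoking the i.i.d.\ structure of the regeneration increments together with Lemma~\ref{LemmaSmallTimes}, and conclude using the Lipschitz and boundedness of $F$. The only cosmetic difference is that you write out the telescoping differences $\tau^i_{k_0^i+\ell_j}-\tau^i_{\ell_j}$ explicitly and package the good event as a single $H_n$, whereas the paper works directly with $S_n^{*i}(t_j+k_0^i/n)-S_n^{*i}(t_j)$ and splits into the events $A^i_{n,\eta,\rho}$ and $E_{n,\vartheta}$; the underlying argument is identical.
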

\begin{proof}
Since $F$ is Lipschitz we have the following bound for $i \in \{1,2\}$
\begin{align} 
    |F(W^{*i}_n)-F(\widetilde{W}^{*i}_n)|\le C\bigg(&\max_{1\le j \le m}\left(\left|S_n^{*}\left(t_j +\tfrac{k_0^{i}}{n}\right) - S_n^{*}\left(\tfrac{k_0^i}{n}\right)- S_n^{*}(t_j)\right|\right) + \label{first_estim_lips} \\& \max_{1\le j\le m}\left(\left\| Z_n^{*}\left(t_j +\tfrac{k_0^{i}}{n}\right) - Z_n^{*}\left(\tfrac{k_0^i}{n}\right)- Z_n^{*}(t_j)\right\|\right)\bigg).\label{sec_estim_lips}
\end{align}
Let us recall the event $B_{n,\vartheta}$ defined in \eqref{estimate_on_R}, we proved that $\Pf_0(B_{n,\vartheta}^c)\le Cn^{-c}$ in \eqref{R0_occurs_soon}.

Let us put ourselves on the event $B_{n,\vartheta}$ and fix $1 > 1 - \eta > \vartheta > 0$. Then for $i \in \{1,2\}$, $X^i_{\tau^i_{\floor{n^{1-\eta}}}}$ is $\Pf_{0}$-almost surely in $\mathcal{H}^+_{2 n^\vartheta}$, hence $\tau^i_{k^i_0} < \tau^i_{n^{1-\eta}}$ since $\mathcal{R}_0 \le 2n^{\vartheta}$. It follows that event $B_{n, \vartheta}$ implies $C_{n,\eta}^{i} \coloneqq \{k^i_0 \le n^{1 - \eta}\}$ for $i \in \{1,2\}$. In addition, under event $C^{i}_{n,\eta}$ the term $S^{*i}_n\left(t_{j} + k^i_0/n\right) - S^{*i}_n\left(t_{j} \right)$ is dominated by $S^{*i}_n(t_{j} + n^{-\eta}) - S^{*i}_n(t_{j})$ which is distributed as $S^{*i}_n(n^{-\eta})$ under $\Pf_0$ by \cite[Theorem 5.4]{Kious_Frib}.

Fixing $t_0 = 0$, we define
\begin{equation}\label{definition_Bin}
    A^i_{n,\eta,\rho} \coloneqq \left\{\forall j \in \{0,\cdots,m\}\text{, } S^{*i}_n\left(t_{j} + k^i_0/n\right) - S^{*i}_n\left(t_{j}\right) \le n^{\rho}\right\}.
\end{equation}
Then, using \eqref{estimate_on_R}, Lemma~\ref{LemmaSmallTimes}, a union bound and the i.i.d.\ structure of regeneration times one gets
\begin{align} \label{estimate_Ain}
    &\Pf_0\left( (A^i_{n,\eta,\rho})^{c} \right) \le m\Pf_0\left(S^{*i}_n(n^{-\eta}) \ge n^{\rho}   \text{ | }D=+\infty\right) + \Pf_0\left(B_{n,\vartheta}^{c}\right) \le Cn^{-c},
\end{align}
where $\rho < \eta / \gamma < 1/\gamma$. We recall the definition of the event $E_{n,\vartheta}$ in \eqref{estime_E}. Using Proposition~\ref{proba_E_decay} and Proposition~\ref{close_trajectories}, on $E_{n,\vartheta}$, we have  
\begin{align*}
    \sup_{t \in [0, 1]}\|\tilde{Z}^{i}_n(t) - Z^{i}_n(t)\| \le Cn^{-c}.
\end{align*}
In particular,
\begin{align} \label{sec_estim_traj}
    \forall j \in\{0,\cdots,m\}, \text{ } \left\|Z_n^{*}\left(t_j +\tfrac{k_0^{i}}{n}\right) - Z_n^{*}\left(\tfrac{k_0^i}{n}\right)- Z_n^{*}(t_j)\right\| \le Cn^{-c}.
\end{align}
Putting everything together we obtain
\begin{flalign*}
    \Big|&\mathbb{E}_0\left[F(W^{*1}_n)F(W^{*2}_n)\right] - \mathbb{E}_0\left[F(\widetilde{W}^{*1}_n)F(\widetilde{W}^{*2}_n)\right]\Big|
    \\ &  \le\left|\mathbb{E}_0\left[F(W^{*1}_n)F(W^{*2}_n) \mathds{1}_{\{A^1_{n,\eta,\rho} \cap A^2_{n,\eta,\rho} \cap E_{n,\vartheta}\}}\right] - \mathbb{E}_0\left[F(\widetilde{W}^{*1}_n)F(\widetilde{W}^{*2}_n)\mathds{1}_{\{A^1_{n,\eta,\rho} \cap A^2_{n,\eta,\rho} \cap E_{n,\vartheta}\}}\right] \right| + Cn^{-c} \\
    &  \le Cn^{-c}.
\end{flalign*}
In the first inequality we used \eqref{estimate_Ain} and Proposition~\ref{proba_E_decay} and in the second one the fact that $F$ is bounded, \eqref{first_estim_lips}, \eqref{sec_estim_lips}, \eqref{definition_Bin} and \eqref{sec_estim_traj}.
\end{proof}
We are now able to prove the key result.
\begin{proof}[Proof of Proposition~\ref{decay_time_serie}]
Let us fix again $1 > 1 - \eta > \vartheta > 0$ and recall that $\mathcal{I} = \{ \inf_{\substack{p,q \ge 0}}(\|X^1_{ p}-X^2_{q}\|_{1})\ge 2 \}$. By Lemma~\ref{lemma_time_shift} we get that
\begin{equation*}
    \mathbb{E}_{0}[F(W^{*1}_n)F(W^{*2}_n)] - \mathbb{E}_{0}[F(W^{*1}_n)]\mathbb{E}_{0}[F(W^{*2}_n)] \le Cn^{-c} + \mathbb{E}_{0}[F(\widetilde{W}^{*1}_n)F(\widetilde{W}^{*2}_n)] - \mathbb{E}_{0}[F(W^{*1}_n)]\mathbb{E}[F(W^{*2}_n)]
\end{equation*}
Moreover, recalling the definition of $\mathbf{M}_N$ in \eqref{definition_MN} and using Proposition~\ref{LemmaDistantWalks}, we also get that
\begin{align}
    \left| \mathbb{E}_0\left[F(\widetilde{W}^{*1}_n)F(\widetilde{W}^{*2}_n)\right] - \mathbb{E}_0\left[\mathds{1}_{\big\{\mathcal{I} \circ \theta_{\rho^1_0, \rho^2_0}\big\}}F(\widetilde{W}^{*1}_n)F(\widetilde{W}^{*2}_n)\right] \right| &\le \Pf_{0}\left(\mathcal{I}^{c} \circ \theta_{\rho^1_0, \rho^2_0} \right)\nonumber \\&\le \Pf_0\left(\mathbf{M}_{n^{\vartheta}}\right)\nonumber\\ &\le C n^{-c}\label{EqnProofTimeSerie3}.
\end{align}
We introduce $f = F(W^{*1}_n)F(W^{*2}_n)$ and note that
\begin{equation*}
    \mathbb{E}_0\left[\mathds{1}_{\big\{\mathcal{I} \circ \theta_{\rho^1_0, \rho^2_0}\big\}}F(\widetilde{W}^{*1}_n)F(\widetilde{W}^{*2}_n)\right] = \mathbb{E}_0\left[\mathds{1}_{\big\{\mathcal{I} \circ \theta_{\rho^1_0, \rho^2_0}\big\}} f \circ \theta_{\rho^1_0, \rho^2_0}\right].
\end{equation*}
Furthermore, $f$ satisfies the assumptions of Proposition~\ref{shift_inequality} and Proposition~\ref{decondition_I}, hence we get
\begin{align*}
    \mathbb{E}_0\left[\mathds{1}_{\big\{\mathcal{I} \circ \theta_{\rho^1_0, \rho^2_0}\big\}} f \circ \theta_{\rho^1_0, \rho^2_0} \right] \le Cn^{-c} + \mathbb{E}_0\bigg[\mathbb{E}^{Q_{X^1_{\rho^1_0},X^2_{\rho^2_0}}^{K}}\Big[f\text{ | }D^{\otimes}=+\infty\Big]\bigg].
\end{align*}
We can rewrite the last term in the following way
\begin{align}
    \mathbb{E}^{Q_{X^1_{\rho^1_0},X^2_{\rho^2_0}}^{K}}\Big[f\text{ | }D^{\otimes}=+\infty\Big] &= \mathbb{E}^{Q_{X^1_{\rho^1_0},X^2_{\rho^2_0}}^{K}}\Big[F(W^{*1}_n)F(W^{*2}_n)\text{ | }D^{\otimes}=+\infty\Big]\nonumber\\ 
    &= \mathbb{E}_{0}\left[F(W^{*}_n)\text{ | }D=+\infty\right]^2 = \mathbb{E}_{0}\left[F(W^{*}_n)\right]^2.\label{EqnProofTimeSerie4}
\end{align}
Putting together the estimates \eqref{EqnProofTimeSerie3} and \eqref{EqnProofTimeSerie4} we have shown that
\begin{equation*}
    \big| \mathbb{E}_{0} \left[ F(W^{*1}_n)F(W^{*2}_n) \right] - \mathbb{E}_{0} \left[ F(W^{*1}_n)\right] \mathbb{E}_{0} \left[ F(W^{*2}_n)\right] \big| \le Cn^{-c},
\end{equation*}
which concludes the proof.
\end{proof}

\subsection{Small times do not matter}

We end the section by showing the proof of Lemma~\ref{LemmaSmallTimes}.

\begin{lemma}(Fuk-Nagaev inequality, \cite[Theorem 5.1]{BergerFukNagaev}) \label{LemmaFukNagaev}
Let $(X_i)_{i \ge 1}$ be a family of independent identically distributed random variables with regularly varying tails and tail parameter $\displaystyle \gamma \in (0, 1)$ under some probability measure $P$. Let $\displaystyle \mathscr{S}(n) \coloneqq \sum_{i = 1}^{\floor{n}} X_i$ and $\displaystyle \mathscr{M}(n) \coloneqq \max_{i \in \{1,\cdots,\floor{n}\}} X_i$, then there exists a constant $c>0$ such that, for all $y \le x$
\begin{equation}
    P \left( \mathscr{S}(n) > x, \, \mathscr{M}(n) < y \right) \stackrel{\textnormal{F-N}}{\le} \left( c n \frac{y}{x} L(y) y^{-\gamma}\right)^{x/y},
\end{equation}
where $L(y)$ is the slowly varying function associated to the tail of $X_1$.
\end{lemma}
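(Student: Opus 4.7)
The bound is proved by the classical exponential Chernoff route for truncated sums, optimized for regularly varying tails. First, I introduce the truncated variables $\bar X_i \coloneqq X_i \mathds{1}_{\{X_i \le y\}}$ and notice that on the event $\{\mathscr{M}(n) < y\}$ one has $\mathscr{S}(n) = \bar{\mathscr{S}}(n) \coloneqq \sum_{i=1}^{\floor n} \bar X_i$, so that
\[
P\bigl(\mathscr{S}(n) > x,\, \mathscr{M}(n) < y\bigr) \le P\bigl(\bar{\mathscr{S}}(n) > x\bigr).
\]
For any $h > 0$, setting $\lambda = h/y$, Markov's inequality and independence give
\[
P\bigl(\bar{\mathscr{S}}(n) > x\bigr) \le e^{-h x / y} \, \bigl(E\bigl[e^{h \bar X_1/y}\bigr]\bigr)^{\floor n}.
\]

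\textbf{MGF bound for the truncated variable.} Since $\bar X_1/y \in [0,1]$, convexity of $u \mapsto e^{hu}$ on $[0,1]$ gives $e^{hu} \le 1 + (e^h - 1) u$, hence
\[
E\bigl[e^{h \bar X_1/y}\bigr] \le 1 + \frac{e^h - 1}{y} \, E[\bar X_1].
\]
The regular variation of the tail, $P(X_1 > t) = L(t) t^{-\gamma}$ with $\gamma \in (0,1)$, combined with Karamata's theorem applied to $\int_0^y L(t) t^{-\gamma} dt$, yields $E[\bar X_1] \le C\, y\, L(y) y^{-\gamma}$ for some $C = C(\gamma)$. Using $1 + u \le e^u$ and raising to the power $\floor n$ then gives
\[
\bigl(E[e^{h \bar X_1/y}]\bigr)^{\floor n} \le \exp\!\bigl(C\,n\,(e^h-1)\,L(y) y^{-\gamma}\bigr).
\]

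\textbf{Optimization in $h$.} Plugging this into the Chernoff estimate one gets
\[
P\bigl(\bar{\mathscr{S}}(n) > x\bigr) \le \exp\!\left(-\frac{h x}{y} + C n (e^h - 1) L(y) y^{-\gamma}\right).
\]
Writing $a = x/y$ and $b = C n L(y) y^{-\gamma}$, the exponent is $-h a + b(e^h - 1)$. When $a \le e b$, the RHS of the target inequality (with $c = e C$) is $\ge 1$ and there is nothing to prove; when $a > e b$, the minimizer is $h = \log(a/b)$, at which
\[
-h a + b(e^h - 1) = a\bigl(1 - \log(a/b)\bigr) - b \le a \log(e b/a) = \frac{x}{y} \log\!\left(\frac{e C n L(y) y^{-\gamma} y}{x}\right).
\]
Exponentiating yields the stated bound with the constant $c = e C$.

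\textbf{Main obstacle.} The only delicate analytic input is the estimate $E[\bar X_1] \le C\, y\, L(y) y^{-\gamma}$: this is where $\gamma < 1$ is used crucially, via Karamata's theorem, to absorb the slowly varying factor and conclude that the truncated first moment is controlled by $y$ times the tail at $y$. For $\gamma \ge 1$ this fails and one would be forced to center by $\floor n E[\bar X_1]$, producing a different form of inequality. Everything else is the standard Chernoff--optimize pattern.
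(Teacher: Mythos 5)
Your proof is correct, and it is worth noting that the paper does not actually prove this lemma: it imports it wholesale by citing \cite[Theorem 5.1]{BergerFukNagaev}, so there is no internal argument to compare against. What you supply is the standard self-contained derivation — truncate at level $y$, observe that on $\{\mathscr{M}(n)<y\}$ the sum equals the truncated sum, apply Chernoff with $\lambda = h/y$, bound the truncated moment generating function by convexity, control $E[\bar X_1]$ via Karamata, and optimize $h=\log(a/b)$ — and every step checks out, including the reduction to the case $x/y > e\,C n L(y)y^{-\gamma}$ where the stated bound would otherwise be trivially at least $1$. Two small points you should make explicit. First, the argument uses nonnegativity of the $X_i$ in two places: the inequality $e^{hu}\le 1+(e^h-1)u$ is only valid for $u\in[0,1]$, and it fails for $u<0$, so $\bar X_1/y\in[0,1]$ genuinely requires $X_1\ge 0$; this is harmless here because the lemma is applied to the regeneration increments $\tau_1^{(i)}$, which are positive, but the hypothesis should be recorded. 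Second, Karamata's theorem gives $\int_0^y P(X_1>t)\,dt \sim \tfrac{1}{1-\gamma}\,y\,L(y)y^{-\gamma}$ only as $y\to\infty$; to obtain a single constant $C$ valid for all $y$ one combines this with the trivial bound $E[\bar X_1]\le y$ together with the fact that $P(X_1>y)$ is bounded below on any compact $y$-range (regular variation forces $P(X_1>y)>0$ for all $y$). Since the lemma is only invoked in the paper with $x,y$ polynomially large in $n$, this is cosmetic, but it is the one place where "for all $y\le x$" needs a word.
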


%\begin{lemma}\cite[Lemma~10.1]{Kious_Frib}\label{LemmaFriKiu}
%For all $\delta>0$ there exists $K_0<\infty$ such that for all $K>K_0$,
%\[
%\mathbb{E}_{0}^K\left[ \tau_1^{\alpha - \delta} | D = \infty \right] < \infty.
%\]
%\end{lemma}

\noindent Let us consider $\{\tau_1^{(i)}\}_{i \ge 1}$, a sequence of i.i.d.\ random variables distributed as $\tau_1$ under the measure $\Pf^K_{0}(\cdot \text{ | }D=+\infty)$, let us assume for simplicity that they can be defined under $\mathbb{P}_{0}$. In \cite[Proposition 10.3]{Kious_Frib} it is proved that the sum $\mathscr{S}(n) \coloneqq  \sum_{i = 1}^{n} \tau_1^{(i)}$ is in the domain of attraction of a completely asymmetric stable law of index $\gamma$. This implies, recalling \cite[IX.8, Theorem 1]{Feller}, that $\mathbb{P}_0^K(\tau_1 > t \text{ | }D=+\infty) = \psi(t)t^{-\gamma}$ for some slowly varying function $\psi(\cdot)$.

\begin{lemma}\label{LemmaMaxBound}
For all $\eta \in (0,1)$ set $\displaystyle \mathscr{M}(n^{1-\eta})$ as in Lemma~\ref{LemmaFukNagaev} then for all $\rho < \eta/\gamma$ and all $\varepsilon< (\eta - \rho \gamma)/3\gamma$ there exists $\nu>0$ such that 
\[
\mathbb{P}_{0}\left( \mathscr{M}(n^{1-\eta}) > n^{1/\gamma - \rho - 2\varepsilon} \right) \le Cn^{-\nu}.
\]
\end{lemma}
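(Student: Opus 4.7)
The proof is a short computation that should go through by a union bound combined with the tail estimate recalled just before the statement. Let me sketch it.

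First I would apply a union bound over the $\floor{n^{1-\eta}}$ i.i.d.\ copies of $\tau_1$:
\begin{equation*}
\mathbb{P}_{0}\!\left( \mathscr{M}(n^{1-\eta}) > n^{1/\gamma - \rho - 2\varepsilon} \right) \le n^{1-\eta}\, \mathbb{P}_0^K\!\left( \tau_1 > n^{1/\gamma - \rho - 2\varepsilon} \,\big|\, D = +\infty \right).
\end{equation*}
Then, invoking the tail asymptotics recalled just before the lemma ($\mathbb{P}_0^K(\tau_1 > t \,|\, D=+\infty) = \psi(t)\, t^{-\gamma}$ for a slowly varying $\psi$), the right-hand side becomes
\begin{equation*}
n^{1-\eta}\, \psi\!\left(n^{1/\gamma - \rho - 2\varepsilon}\right) n^{-\gamma(1/\gamma - \rho - 2\varepsilon)} = \psi\!\left(n^{1/\gamma - \rho - 2\varepsilon}\right) n^{\gamma\rho + 2\gamma\varepsilon - \eta}.
\end{equation*}
The polynomial part carries exponent $\gamma\rho + 2\gamma\varepsilon - \eta$, which is strictly negative since $\varepsilon < (\eta - \gamma\rho)/(3\gamma) < (\eta - \gamma\rho)/(2\gamma)$.

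The only non-routine point is absorbing the slowly varying factor $\psi$. For this I would invoke Potter's bound: for every $\delta > 0$ there exists $C_\delta > 0$ such that $\psi(x) \le C_\delta x^\delta$ for all $x$ large enough. Choosing $\delta = \gamma^2 \varepsilon$ gives
\begin{equation*}
\psi\!\left(n^{1/\gamma - \rho - 2\varepsilon}\right) \le C\, n^{\gamma^2\varepsilon (1/\gamma - \rho - 2\varepsilon)} \le C\, n^{\gamma\varepsilon},
\end{equation*}
for $n$ large. Plugging this back in yields
\begin{equation*}
\mathbb{P}_{0}\!\left( \mathscr{M}(n^{1-\eta}) > n^{1/\gamma - \rho - 2\varepsilon} \right) \le C\, n^{\gamma\rho + 3\gamma\varepsilon - \eta}.
\end{equation*}
The hypothesis $\varepsilon < (\eta - \gamma\rho)/(3\gamma)$ is exactly the condition that makes the exponent $\gamma\rho + 3\gamma\varepsilon - \eta$ strictly negative, so we can set $\nu \coloneqq \eta - \gamma\rho - 3\gamma\varepsilon > 0$ and the proof is done. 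The only subtlety worth flagging is the precise choice of $\delta$ in Potter's bound to match the constant $3\gamma$ appearing in the statement; any smaller choice of $\delta$ would give a weaker constraint on $\varepsilon$, and the factor $3\gamma$ is exactly what accommodates the slowly varying perturbation.
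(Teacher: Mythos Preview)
Your proof is correct and follows essentially the same approach as the paper: union bound, tail asymptotics $\mathbb{P}_0^K(\tau_1 > t \mid D=+\infty) = \psi(t)t^{-\gamma}$, then Potter's bound to absorb the slowly varying factor, arriving at the same exponent $\gamma\rho + 3\gamma\varepsilon - \eta$. The only cosmetic difference is that you are explicit about the choice $\delta = \gamma^2\varepsilon$ in Potter's bound, whereas the paper states the resulting inequality directly.
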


\begin{proof}
\begin{align*}
    \mathbb{P}_{0}\left( \mathscr{M}(n^{1-\eta}) > n^{1/\gamma - \rho - 2\varepsilon} \right) &\le n^{1 - \eta} \mathbb{P}_{0}^{K}\left( \tau_1 > n^{1/\gamma - \rho - 2\varepsilon} | D=\infty \right) \\
    &\le Cn^{1-\eta} n^{-1+\gamma\rho + 2\varepsilon \gamma} \psi(n^{1/\gamma - \rho - 2\varepsilon}) \\
    &\le Cn^{-\eta + \gamma \rho + 3\varepsilon \gamma}\le n^{-\nu},
\end{align*}
where we used the fact that a slowly varying function can always be bounded from above by any polynomial so that $\psi(n^{1/\gamma}) \le C n^{\varepsilon/\gamma}$ (Potter's bound \cite[Theorem~1.5.6]{RegularVariations}).
Finally, it holds that $\eta - \gamma \rho - 3\varepsilon \gamma > 0$ by our choice of $\varepsilon$, one has that the last inequality holds for some $\nu$ small enough.
\end{proof}

\begin{lemma}\label{LemmaSumBound}
For all $\eta \in (0, 1)$ recall $\displaystyle \mathscr{M}(n^{1-\eta})$ and $ \displaystyle \mathscr{S}(n^{1-\eta})$ from Lemma~\ref{LemmaFukNagaev} then for all $\rho < \eta/\gamma$ and all $\varepsilon< (\eta - \rho \gamma)/3\gamma$ there exists $\nu>0$ such that 
\[
\mathbb{P}_{0}\left( \mathscr{S}(n^{1-\eta}) > n^{1/\gamma - \varepsilon - \rho} \right) \le n^{-\nu}.
\]
\end{lemma}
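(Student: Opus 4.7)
The plan is to apply the Fuk-Nagaev inequality (Lemma~\ref{LemmaFukNagaev}) to the family $\{\tau_1^{(i)}\}_{i \ge 1}$, which is i.i.d.\ and regularly varying of index $\gamma \in (0,1)$ under $\mathbb{P}_{0}^{K}(\,\cdot\,|\,D = +\infty)$. We split the probability by comparing the partial sum with the maximum:
\begin{equation*}
    \mathbb{P}_{0}\left(\mathscr{S}(n^{1-\eta}) > n^{1/\gamma-\rho-\varepsilon}\right)
    \le \mathbb{P}_{0}\left(\mathscr{M}(n^{1-\eta}) > y\right)
    + \mathbb{P}_{0}\left(\mathscr{S}(n^{1-\eta}) > x,\ \mathscr{M}(n^{1-\eta}) \le y\right),
\end{equation*}
with the choices $x = n^{1/\gamma - \rho - \varepsilon}$ and $y = n^{1/\gamma - \rho - 2\varepsilon}$. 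The first term on the right-hand side is directly bounded by Lemma~\ref{LemmaMaxBound}, which gives a polynomial decay $Cn^{-\nu}$ under the assumptions $\rho < \eta/\gamma$ and $\varepsilon < (\eta - \rho\gamma)/(3\gamma)$.

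For the second term, we apply Lemma~\ref{LemmaFukNagaev} with $n$ replaced by $n^{1-\eta}$ and the above $x,y$. A direct computation of the exponent inside the parentheses gives
\begin{equation*}
    c\,n^{1-\eta}\,\frac{y}{x}\,L(y)\,y^{-\gamma}
    \le C\,n^{-\eta - \varepsilon + \gamma\rho + 2\gamma\varepsilon + \delta'},
\end{equation*}
where we used Potter's bound (as in the proof of Lemma~\ref{LemmaMaxBound}) to absorb the slowly varying function $L(y)$ into an arbitrarily small power $n^{\delta'}$. The restriction $\varepsilon < (\eta - \rho\gamma)/(3\gamma)$ yields $\gamma\rho + 2\gamma\varepsilon < (\gamma\rho + 2\eta)/3 < \eta$, so choosing $\delta'$ small enough ensures the exponent is bounded above by $-\nu''$ for some $\nu'' > 0$. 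Since $x/y = n^{\varepsilon}$, Lemma~\ref{LemmaFukNagaev} gives
\begin{equation*}
    \mathbb{P}_{0}\left(\mathscr{S}(n^{1-\eta}) > x,\ \mathscr{M}(n^{1-\eta}) \le y\right)
    \le \left(C n^{-\nu''}\right)^{n^{\varepsilon}},
\end{equation*}
which decays faster than any polynomial. Combining the two estimates completes the proof for a suitable $\nu > 0$.

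The only real obstacle is the bookkeeping of exponents; once $x$ and $y$ are chosen so that $x/y = n^\varepsilon$ is a genuine (small) power of $n$, Fuk-Nagaev becomes stretched-exponentially strong and the condition $\varepsilon < (\eta-\rho\gamma)/(3\gamma)$ is exactly what is needed to make the base strictly smaller than $1$ after absorbing $L(y)$. No further trapping estimate is required beyond Lemma~\ref{LemmaMaxBound} and Lemma~\ref{LemmaFukNagaev}.
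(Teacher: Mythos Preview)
Your proof is correct and follows essentially the same approach as the paper: the same split on $\{\mathscr{M}(n^{1-\eta}) > y\}$ with $x = n^{1/\gamma-\rho-\varepsilon}$, $y = n^{1/\gamma-\rho-2\varepsilon}$, Lemma~\ref{LemmaMaxBound} for the maximum term, and Fuk--Nagaev plus Potter's bound for the remaining term. The only cosmetic difference is that the paper absorbs the slowly varying factor using the specific power $n^{\varepsilon}$ rather than a generic $n^{\delta'}$, arriving at the exponent $2\gamma\varepsilon - \eta + \gamma\rho$; your verification that $\gamma\rho + 2\gamma\varepsilon < \eta$ is the same computation.
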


\begin{proof}
First notice that, by total probability
\begin{align*}
    \mathbb{P}_{0}\left( \mathscr{S}(n^{1-\eta}) > n^{1/\gamma - \varepsilon - \rho} \right) &= \mathbb{P}_{0}\left( \mathscr{S}(n^{1-\eta}) > n^{1/\gamma - \varepsilon - \rho}, \mathscr{M}(n^{1-\eta}) > n^{1/\gamma - \rho - 2\varepsilon} \right) \\ & +\mathbb{P}_{0}\left( \mathscr{S}(n^{1-\eta}) > n^{1/\gamma - \varepsilon - \rho}, \mathscr{M}(n^{1-\eta}) \le n^{1/\gamma - \rho - 2\varepsilon} \right).
\end{align*}
By Lemma~\ref{LemmaMaxBound} 
\[
\mathbb{P}_{0}\left( \mathscr{S}(n^{1-\eta}) > n^{1/\gamma -\varepsilon- \rho}, \mathscr{M}(n^{1-\eta}) > n^{1/\gamma - \rho - 2\varepsilon} \right) \le Cn^{-\nu},
\]
for some $\nu>0$. So we just need to bound the second term of the sum. We can appeal to Lemma~\ref{LemmaFukNagaev} with the pair
\[\quad\quad x = n^{1/\gamma - \varepsilon - \rho}, \quad\quad y=n^{1/\gamma - \rho - 2\varepsilon}.
\]
Plugging these quantities into the formula gives
\begin{align*}
    \mathbb{P}_{0}\left( \mathscr{S}(n^{1-\eta}) > n^{1/\gamma - \varepsilon - \rho}, \mathscr{M} \le n^{1/\gamma - \rho - 2\varepsilon} \right) &\le \left( C n^{1-\eta} n^{-\varepsilon} n^{-1 + \gamma \rho + 2\gamma \varepsilon} \psi(n^{1/\gamma - \rho - 2\varepsilon}) \right)^{n^\varepsilon}.
\end{align*}
Since the exponent is large we need to check that what is inside the brackets is small. This is immediate as
\[
C n^{1-\eta} n^{-\varepsilon} n^{-1 + \gamma \rho + 2\gamma \varepsilon} \psi(n^{1/\gamma - \rho - 2\varepsilon})\le  C n^{2\varepsilon \gamma - \eta + \gamma \rho} \le e^{-1},
\]
as $\psi(n^{1/\gamma}) \le Cn^{\varepsilon}$ using Potter's bound and $2\varepsilon \gamma - \eta + \gamma \rho < 0$ by our choice of $\varepsilon$, this gives a super-polynomial bound for the desired quantity.
\end{proof}

\begin{proof}[Proof of Lemma~\ref{LemmaSmallTimes}]
Let us start with the first statement. Note that, under $\Pf_0$, $\{\tau_{i + 1} - \tau_i\}_{i \ge 1}$ are independent identically distributed random variables distributed as $\tau_1$ under $\Pf^K_{0}(\cdot \, |\, D=+\infty)$. Using this fact, we can re-write
\begin{equation*}
    S^*_n(n^{-\eta}) \stackrel{(\mathrm{d})}{=} \mathrm{Inv}(n)^{-1}\sum_{i = 1}^{n^{1 - \eta}} \tau_{1}^{(i)},
\end{equation*}
where $\{\tau_{1}^{(i)}\}_{i \ge 1}$ are i.i.d.\ copies of $\tau_1$ under the measure $\Pf^{K}_{0}( \cdot \, |\, D = \infty)$. Then Lemma~\ref{LemmaSumBound} is enough to conclude together with the fact that for all $\varepsilon>0$
\begin{equation*}
    \frac{n^{1/\gamma - \varepsilon}}{\mathrm{Inv}(n)} \stackrel{n \to \infty}{\longrightarrow} 0.
\end{equation*}
\end{proof}

\section{Quenched convergence}

We start this section by proving Theorem~\ref{TheoremVarianceClockProcess}.
\begin{proof}[Proof of Theorem~\ref{TheoremVarianceClockProcess}]
The result is a straightforward consequence of Theorem~\ref{decay_term_serie} and Proposition~\ref{decay_time_serie}. In particular one just needs to notice that
\begin{align*}
\mathbf{Var}\left(E_{0}^{\omega}\left[F_2(Z_{b^n})\right]\right) = \mathbb{E}_{0}[F_2(Z_{b^n}^1)F_2(Z_{b^n}^2)] - \mathbb{E}_0[F_2(Z_{b^n}^1)]\mathbb{E}_0[F_2(Z_{b^n}^2)].
\end{align*}
The same decomposition holds for the process $W^*$ and the function $F_1$.
\end{proof}

Recall $D^d([0, T])$ is the space $\mathbb{R}^d$-valued c\`{a}dl\`{a}g functions and set $D([0, T]) = D^1([0, T])$. In the following we consider $U$ the uniform topology and $J_1$, $M_1$ two classical Skorohod topologies.

Let us recall the definition of the vector $v$ and the matrix $\Sigma$ that were introduced in \cite[Equations (11.1),(11.2)]{Kious_Frib}. In particular, $v \coloneqq \mathbb{E}_{0}^K[X_{\tau_1}| D = \infty]$ while $\Sigma$ is the covariance matrix of $X_{\tau_1}$ under $\mathbb{P}_{0}^{K}( \cdot | D = \infty)$. Moreover, let us introduce $v_0 = v / \|v\|$, $I_d$ as the $d$-dimensional identity matrix and $P_{v_0}$ as the projection matrix on $v_0$. Finally let
\[
M_d \coloneqq C_\infty^{-\gamma/2}(I_d - P_{v_0}) \sqrt{\Sigma}.
\]

\begin{proposition}\label{PropositionJointQuenchedConvergence}
Fix any $T > 0$, the followings holds for almost every environment $\omega \in \Omega$. The law of $(Y_n(t))_{0 \le t \le T}$ converges under $P^\omega_0(\cdot)$ in the $U$ topology on $D^d$ towards the distribution of $(v t)_{0 \le t \le T}$. Moreover, the law of $(Z_n(t), S_n(t))_{0 \le t \le T}$ under $P^\omega_0(\cdot)$ converges in $D^d([0, T]) \times D([0, T])$ in the $J_1 \times M_1$ topology as $n \to \infty$ towards the law of $(\sqrt{\Sigma} B_t, C_\infty \mathcal{S}_\gamma(t))_{0 \le t \le T}$, where $B$ is a standard Brownian motion and $\mathcal{S}_\gamma$ is a stable Subordinator of index $\gamma$ independent of $B$. 
\end{proposition}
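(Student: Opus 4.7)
The quenched convergence of $(Y_n(t))_{0 \le t \le T}$ to the deterministic path $(vt)_{0 \le t \le T}$ follows immediately from \cite[Lemma 11.2]{Kious_Frib} since the annealed limit is deterministic. The substance of the proof lies in the joint process $(Z_n, S_n)$.

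The plan is to execute the variance-summation method of Sznitman and Mourrat. First I would fix a countable separating family of bounded Lipschitz functionals $\{F_1^{(m)}\}$ on $(\mathbb{R}^{d+1})^k$ (for the finite-dimensional distributions of the clock-space process, evaluated at a countable dense set of times in $[0,T]$) and $\{F_2^{(m)}\}$ on $\mathcal{C}^d([0, T])$. For each such functional, applying Chebyshev's inequality to the variance bounds \eqref{EquationBoundVarianceClock} and \eqref{EquationBoundVarianceTraj} of Theorem~\ref{TheoremVarianceClockProcess}, followed by the Borel-Cantelli Lemma, yields
\[
E_0^\omega\bigl[F_1^{(m)}(W^*_{b^n}(\cdot))\bigr] - \mathbb{E}_0\bigl[F_1^{(m)}(W^*_{b^n}(\cdot))\bigr] \xrightarrow[n \to \infty]{} 0, \quad E_0^\omega\bigl[F_2^{(m)}(Z_{b^n})\bigr] - \mathbb{E}_0\bigl[F_2^{(m)}(Z_{b^n})\bigr] \xrightarrow[n \to \infty]{} 0,
\]
for $\mathbf{P}$-almost every $\omega$. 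Since the annealed laws converge to $(\sqrt{\Sigma}B, C_\infty \mathcal{S}_\gamma)$ and $\sqrt{\Sigma}B$ respectively, as established in \cite[Section 11]{Kious_Frib}, this proves quenched convergence in distribution of $W^*_{b^n}$ and $Z_{b^n}$ along the geometric subsequence $b^n$.

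Three supplementary steps close the argument. I would first remove the shift by passing from $W_n^*$ to $W_n$: the difference is $W_n(1/n) = (X_{\tau_1}/\sqrt{n}, \tau_1/\mathrm{Inv}(n))$, both components of which tend to zero $\mathbf{P}$-almost surely, since $\tau_1$ is $\mathbb{P}_0$-a.s.\ finite and $\mathrm{Inv}(n) \to \infty$. Next, I would interpolate the convergence from the subsequence $b^n$ to arbitrary $N$: for $N \in [b^n, b^{n+1}]$, monotonicity of $N \mapsto S_N(t)$ together with $\mathrm{Inv}(b^{n+1})/\mathrm{Inv}(b^n) \to b^{1/\gamma}$ produces an $M_1$-sandwich inequality whose slack vanishes as $b \downarrow 1$, while for the space component $Z_N$ one uses the tail bounds on $\chi_k$ from \cite[Lemma 6.1]{Kious_Frib} to control $\sup_{b^n \le N \le b^{n+1}} \|Z_N - Z_{b^n}\|$; the $\mathbf{P}$-null exceptional set depends only on $b$, so intersecting over $b \in \{1 + 1/k : k \ge 1\}$ allows $b \downarrow 1$. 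Finally, once finite-dimensional quenched convergence is obtained, process-level tightness (annealed, hence quenched $\mathbf{P}$-a.s.) from \cite{Kious_Frib} upgrades this to convergence in law on $D^d([0, T]) \times D([0, T])$ with the $J_1 \times M_1$ topology.

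The main obstacle is the interpolation step coupled with the correct handling of the $M_1$ topology for the clock component: because the stable subordinator has jumps, $J_1$-convergence is unattainable for $S_n$, and one must verify carefully that the $b$-sandwich inequalities degenerate in the limit $b \downarrow 1$ uniformly on a full-measure set of environments. The identification of the limit, in particular the independence of $B$ and $\mathcal{S}_\gamma$, is inherited from the annealed result since the quenched finite-dimensional distributions agree with the annealed ones on the event of full $\mathbf{P}$-measure obtained above.
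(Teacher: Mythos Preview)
Your proposal follows the same Sznitman--Mourrat scheme as the paper: variance summation along geometric subsequences via Theorem~\ref{TheoremVarianceClockProcess}, Borel--Cantelli, removal of the shift, interpolation to arbitrary $n$, and then tightness. The architecture is correct.

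One point deserves correction. You write ``process-level tightness (annealed, hence quenched $\mathbf{P}$-a.s.)'', but annealed tightness does not in general transfer to quenched tightness for almost every environment. The paper obtains quenched tightness by other means: for $S_n$, the process is monotone in $t$ for every $\omega$, so $M_1$-tightness is automatic once quenched f.d.d.\ convergence is established (\cite[Theorem~12.12.3]{whitt}); for $Z_n$, process-level quenched convergence on $\mathcal{C}^d([0,T])$ comes directly from applying \eqref{EquationBoundVarianceTraj} to functionals on the full path space and invoking \cite[Lemma~4.1]{Szn_serie} or \cite[Lemma~3.1]{Bowditch}, not merely from f.d.d.\ control; joint tightness in the product then follows from marginal tightness (\cite[Theorem~11.6.7]{whitt}). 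You should replace the ``annealed hence quenched'' step by these arguments.

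A cosmetic difference: the paper uses explicit Laplace functionals for $S_n$ and Fourier functionals for the joint process, exploiting monotonicity and continuity of the limit to pass from rational to arbitrary parameters, whereas you invoke an abstract countable separating family. Both routes work; the explicit functionals make the interpolation step from $b^{k_n}$ to $n$ slightly cleaner because the sandwich inequality \eqref{MourratInequality} plugs directly into the Lipschitz bound for the exponential.
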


\begin{proof} The convergence of $(Y_n(t))_{0 \le t \le T}$ is a consequence of \cite[Equation (11.2)]{Kious_Frib}.
We start by proving that Theorem~\ref{TheoremVarianceClockProcess} together with \cite[Lemma 11.2]{Kious_Frib} imply the fact that $S_n$ converges in the $M_1$ sense under the quenched law towards $\mathcal{S}_\gamma$, we follow the proof of \cite[Proposition~10.4]{Mourrat}. First we observe that
\[
S_n(t) = \frac{1}{\mathrm{Inv}(n)} \left( \tau_1 + \sum_{i = 1}^{\floor{nt} - 1} (\tau_{i + 1} - \tau_i) \right).
\]
We have that $\tau_1 < + \infty$ and $\tau_{n+1} - \tau_{n} < + \infty$ $\Pf$-a.s.\ by \cite[Theorem 5.1]{Kious_Frib}, this means that for $\mathbf{P}$-almost all realisation of the environment $\tau_1< + \infty$ $P^{\omega}$-a.s. Then, applying Slutsky's theorem, we only need to prove the quenched convergence of $S^*_n$ towards $\mathcal{S}_\gamma$.

In order to prove convergence of the finite dimensional distributions we introduce  
\begin{equation*}
    F(S_n^{*}(t_1),\cdots,S_n^{*}(t_m)) = \exp\left(-\lambda_1 S_n^{*}(t_1)- \lambda_2\left(S_n^{*}(t_2)- S_n^{*}(t_1)\right)\cdots-\lambda_m\left( S^{*}_n(t_m)-S_n^{*}(t_{m-1})\right)\right),
\end{equation*}
where $\lambda_1,\cdots,\lambda_m \in \mathbb{R}^m$ and $0 < t_1,\cdots,t_m \le T$. By \cite[Lemma 11.2]{Kious_Frib} we have annealed convergence, so that Laplace functionals converge, in particular
\begin{equation*}
    \mathbb{E}_{0}\left[F(S_n^*)\right] = \mathbf{E}\left[ E_{0}^\omega\left[ F(S_n^*) \right]\right] \underset{n \to +\infty}{\rightarrow} \mathbb{E}_{0}\left[ F(\mathcal{S}_\gamma) \right] = \exp\left( - t_1 \psi(\lambda_1) -  \dots - \psi(\lambda_{m})\left(t_m - t_{m-1}\right)  \right),
\end{equation*}
where $\psi$ is the Laplace exponent of the stable Subordinator. Moreover, since $F$ is Lipschitz we can apply Theorem~\ref{TheoremVarianceClockProcess}, more specifically \eqref{EquationBoundVarianceClock}, we know that for $b \in (1, 2)$
\begin{equation*}
   \sum_{k = 1 }^{+\infty}\mathbf{Var}\left[E_{0}^{\omega}(F(S^*_{b^k}))\right]< +\infty.
\end{equation*}
Therefore, by applying Chebyshev inequality and Borel-Cantelli to the sequence of events $|E_{0}^{\omega}(F(S^*_{b^k})) - \mathbb{E}_{0}\left[ F(S^*_{b^k}) \right]| > \varepsilon$ and using the annealed convergence we get that 
\begin{equation}\label{eqn:ConvergenceAlongbk}
    E_{0}^{\omega}\left[ F(S^*_{b^k}) \right] \to \mathbb{E}_{0}\left[ F(\mathcal{S}_\gamma) \right],
\end{equation}
almost surely along $b^k$. Moreover, by countable subadditivity, the convergence holds jointly on a set of full measure for all functions $F$ of the from above with $\lambda_1, \dots, \lambda_m$, $t_1, \dots, t_m$ and $b$ rationals. The convergence can be extended to any $\lambda_1, \dots, \lambda_m$ and $t_1, \dots, t_m$ by the continuity of the limit and the monotonicity of $S^*$. Let us justify this claim for the one-point marginal as extending to the f.d.d.\ follows in a straightforward manner. Fix any $(\lambda, t) \in \mathbb{R}^+ \times [0, 1]$, pick a sequence $(\lambda_{j}, t_{j})_{j \in \mathbb{N}}$ so that $\lambda_{j} \uparrow \lambda$ and $t_{j} \uparrow t$ are rational. Then, by monotonicity of $S^*$ and applying $\eqref{eqn:ConvergenceAlongbk}$ we get that for all $j$
\begin{align*}
    \limsup_{k \to \infty} E_{0}^{\omega}\left[ e^{- \lambda S^*_{b^k}(t)}  \right] \le \limsup_{k \to \infty} E_{0}^{\omega}\left[ e^{- \lambda_{j} S^*_{b^k}(t_{j})}  \right] = \mathbb{E}_{0}\left[ e^{- \lambda_{j} \mathcal{S}(t_{j})}  \right].
\end{align*}
By the fact that $t$ is almost surely a continuity point of $\mathcal{S}$, we observe that $\mathcal{S}(t_{j}) \uparrow \mathcal{S}(t)$. Hence, by the dominated convergence theorem
\begin{equation*}
    \limsup_{k \to \infty} E_{0}^{\omega}\left[ e^{- \lambda S^*_{b^k}(t)}  \right] \le \mathbb{E}_{0}\left[ e^{- \lambda \mathcal{S}(t)} \right].
\end{equation*}
Arguing symmetrically yields 
\begin{equation*}
    \liminf_{k \to \infty} E_{0}^{\omega}\left[ e^{- \lambda S^*_{b^k}(t)} \right] \ge \mathbb{E}_{0}\left[ e^{- \lambda \mathcal{S}(t)} \right],
\end{equation*}
which justifies our claim. On the set of full measure where the joint convergence holds, we want to extend the convergence to hold along $n$. Let $k_n$ be the largest integer such that $b^{k_n} \le n$. Using the definition of $F$, for any two increasing processes $h$ and $h'$ starting from 0 one gets
\begin{equation*}
    |F(h) - F(h')| \le C \max_{1 \le i \le m} |h(t_i) - h'(t_i)| \wedge 1.
\end{equation*}
Observe that
\begin{equation*}
    S^*_n(t_i) = \left( \tfrac{b^{k_n}}{n} \right)^{1/\gamma} S^*_{b^{k_n}} \left( \tfrac{n}{b^{k_n}} t_i\right).
\end{equation*}
By the definition of $k_n$ and the monotonicity of $S^*_{b^{k_n}}(t)$ the latter is smaller than $S^*_{b^{k_n}}(b t_i)$, as a consequence
\begin{equation}\label{MourratInequality}
   (b^{-\frac{1}{\gamma}}-1)S^*_{b^{k_n}}(t_i) \leq  S^*_n(t_i) - S^*_{b^{k_n}}(t_i) \le S^*_{b^{k_n}}(b t_i) - S^*_{b^{k_n}}(t_i).
\end{equation}
The quantity
\begin{equation*}
    \limsup_{n \to \infty} \left| E_{0}^\omega \left[ F(S^*_n(t_i)) \right] - E_{0}^\omega \left[ F(S^*_{b^{k_n}}(t_i)) \right] \right|,
\end{equation*}
is thus bounded from above, applying \eqref{MourratInequality}, up to a multiplicative constant, by
\begin{equation*}
    \limsup_{n \to \infty} E_{0}^\omega \left[ \max_{1 \le i \le m} \bigg(|S^*_{b^{k_n}}(b t_i) - S^*_{b^{k_n}}(t_i)|,|(b^{-\frac{1}{\gamma}}-1)S^*_{b^{k_n}}(t_i)|\bigg) \wedge 1  \right].
\end{equation*}
Using the convergence along $b^{k}$ we get that
\begin{align*}
    &\limsup_{n \to \infty} E_{0}^\omega \left[ \max_{1 \le i \le m} \bigg(|S^*_{b^{k_n}}(b t_i) - S^*_{b^{k_n}}(t_i)|,|(b^{-\frac{1}{\gamma}}-1)S^*_{b^{k_n}}(t_i)|\bigg) \wedge 1  \right] \\&= \mathbb{E}_{0} \left[ \max_{1 \le i \le m} \bigg(|\mathcal{S}_\gamma(b t_i) - \mathcal{S}_\gamma(t_i)|,|(b^{-\frac{1}{\gamma}}-1)\mathcal{S}_\gamma(t_i)| \bigg) \wedge 1 \right].
\end{align*}
By the fact that $\mathcal{S}_\gamma$ is almost surely continuous at deterministic times we get that the quantity on the right converges to $0$ as $b \to 1$ along rationals by dominated convergence theorem, since the quantity is bounded by $1$. The tightness of the process in $M_1$ follows from this f.d.d.\ convergence we just proved, using the fact that all processes involved are increasing (see \cite[Theorem 12.12.3]{whitt}).
\vspace{1ex}

The convergence of the position $Z_n(t)$ (both seen as the polygonal interpolation and in the canonical way) is proved using Theorem~\ref{TheoremVarianceClockProcess} (statement \eqref{EquationBoundVarianceTraj}) and annealed convergence towards Brownian motion in \cite[Lemma 4.1]{Szn_serie} or, alternatively, \cite[Lemma~3.1]{Bowditch}.

\vspace{1ex}
We now need to prove joint convergence of $S_n$ and $Z_n$. First notice that, by the previous two points and \cite[Theorem 11.6.7]{whitt}, tightness of the marginals is enough to have tightness in the product space. Therefore, we have that the law $(S_n, Z_n)$ is tight on $D^d([0, T]) \times D([0, T])$ endowed with the $J_1 \times M_1$-topology on a set of the environment $\Omega_1 \subset \Omega$ of full $\mathbf{P}$-measure. 

We need to prove that on a set of full measure $\Omega_2$, the law of the increments of the process $W_n = (S_n, Z_n)$ converge to the one of $W = (\mathcal{S}_\gamma, \sqrt{\Sigma} B)$. We can once again show the convergence of the process $W_n^{*}$ and then by Slutsty's theorem we obtain the convergence of the process $W_n$. Let $\lambda_1, \dots, \lambda_m \in \mathbb{R}^{d+1}$ and $0 < t_1 < \dots < t_m \le 1$ and define the function
\begin{equation}\label{EqnFourierFunctional}
    G(w) = \exp\Big( i \lambda_1 \cdot w(t_1)+i \lambda_2 \cdot \left( w(t_2) - w(t_1) \right) + \dots + i \lambda_m \cdot \left( w(t_m) - w(t_{m - 1}) \right) \Big).
\end{equation}
By the annealed convergence of \cite[Lemma 11.2]{Kious_Frib} we have that
\begin{equation*}
    \mathbb{E}_{0}\left[ G(W^{*}_n) \right] \underset{n \to +\infty}{\longrightarrow} \mathbb{E}_{0}\left[ G(W) \right].
\end{equation*}
We observe that the real part (respectively imaginary part) of $G$ is bounded and Lipschitz, and thus its positive and negative part also are bounded and Lipschitz. Thus, we can apply Theorem~\ref{TheoremVarianceClockProcess} to these functions and we obtain that for $\mathbf{P}-$almost every environment $\omega \in \Omega$,
\begin{align*}
    E^{\omega}_{0}[G(W^{*}_{b^{k}})] \underset{k \to +\infty}{\longrightarrow} \mathbb{E}_{0}[G(W)].
\end{align*}
Let us improve this convergence to hold along $n$ and not just $b^k$. The convergence along $b^k$ holds on a set of $\mathbf{P}$-probability $1$ jointly for all rational $b \in (1, 2)$. Fix $b$ rational and choose $k_n$ to be the largest integer such that $b^{k_n} \le n$. The key observation is that, due to the Lipschitz property we can bound
    \begin{equation*}
        \begin{split}
            \left| G(W^*_{n}) - G(W^*_{k_n}) \right| \le C \left( \max_{i = 0, \dots, m} \left|S^*_n(t_i) - S^*_{b^{k_n}}(t_i) \right| \wedge 1  +  \sup_{t \in [0, T]}\left\| Z^*_n(t_i) - Z^*_{b^{k_n}}(t_i) \right\| \wedge 1 \right).
        \end{split}
    \end{equation*}
We can take expectation $E^\omega$ on both sides of this expression. Let us observe that the arguments below \eqref{MourratInequality} imply that 
\begin{equation*}
    \lim_{b \to 1}\limsup_{n \to \infty}  E_{0}^\omega \left[ \max_{i = 0, \dots, m} \left|S^*_n(t_i) - S^*_{b^{k_n}}(t_i) \right| \wedge 1 \right] = 0.
\end{equation*}
Thus, we just need to show that
 \begin{equation*}
    \lim_{b \to 1}\limsup_{n \to \infty} E_{0}^\omega \left[ \sup_{t \in [0, T]}\left\| Z^*_n(t_i) - Z^*_{b^{k_n}}(t_i) \right\| \wedge 1 \right] = 0.
\end{equation*}
We omit the details here as this is showed within the proof of \cite[Lemma~3.1]{Bowditch}. Thus on a set of full measure $\Omega_2$ we have the convergence for all $\lambda_1, \dots, \lambda_m \in \mathbb{R}^{d+1}$ and $0 < t_1 < \dots < t_m \le T$ rationals. Finally, by right continuity of $W^{*}_n$ and $W$ and continuity of the Fourier functional it is possible to show that the limit holds for all $0 \le t_0 \le \dots \le t_m \le T$ and for all $\lambda_1, \dots, \lambda_m \in \mathbb{R}^{d+1}$ by arguing as we did for the Laplace functional and the process $\mathcal{S}_{\gamma}$, we omit the details. This finishes the proof as we showed that on a set of full measure $\Omega_1 \cap \Omega_2$ the sequence $W^{*}_n$ is tight and has just one possible limit $W$.
\end{proof}

We are now able to prove the main result, the method is the same as the proof of \cite[Theorem 11.2]{Kious_Frib}.
\begin{proof}[Proof of Theorem~\ref{MainTheorem}]
We fix a realisation of the environment $\omega \in \Omega$, the following convergences in distribution are meant to be understood under the quenched law $P^\omega_{0}$, we assume the result of Proposition~\ref{PropositionJointQuenchedConvergence}.
Recall that $D([0, T])$ and $D^d([0, T])$ denote the sets of c\`{a}dl\`{a}g functions in dimension one (resp.\ $d$), let us drop $[0, T]$ in the notation for simplicity. Let $D_{\uparrow}$ be the set of non-deacreasing c\`{a}dl\`{a}g functions, $D_{\uparrow\uparrow}$ the set of strictly increasing c\`{a}dl\`{a}g functions and $D_u$ the set of unbounded c\`{a}dl\`{a}g functions. Let $\mathcal{C}$ be the set of continuous functions and $U$ the uniform topology. It is a known fact that $\mathcal{S}_\gamma^{-1}$ is almost surely continuous and increasing.
First, we get that
\begin{equation}\label{firstEquationLastProof} 
    \left(Y_{n^\gamma/L(n)}\left( S_{n^\gamma/L(n)}^{-1}\left(\frac{nt}{\mathrm{Inv}\left(n^\gamma/L(n)\right)}\right) \right) \right)_{t \in [0, T]} \to \left( v C_\infty^{-\gamma} \mathcal{S}_\gamma^{-1}(t) \right)_{t \in [0, T]},
\end{equation}
in the uniform topology. This holds using the fact the right continuous inverse is continuous from $(D_{u, \uparrow\uparrow}, M_1)$ to $(\mathcal{C}, U)$ by \cite[Corollary~13.6.4]{whitt}, and if $(x_n, y_n) \to (x, y)$ in $D^d \times D_{\uparrow}$ with $(x, y) \in \mathcal{C}^d \times \mathcal{C}_{\uparrow}$, then $x_n \circ y_n \to x \circ y$ in the $U$ topology (see \cite[Theorem 13.2.1]{whitt}). Lastly, note that $(nt/\mathrm{Inv}(n^\gamma/L(n)))_{t \in [0, T]}$ converges uniformly to $(t)_{t \in [0, T]}$.

Moreover, applying the same results, it holds that
\begin{equation}\label{secondEquationLastProof} 
    \left(Z_{n^\gamma/L(n)}\left( S_{n^\gamma/L(n)}^{-1}\left(\frac{nt}{\mathrm{Inv}\left(n^\gamma/L(n)\right)}\right) \right) \right)_{t \in [0, T]} \stackrel{(\mathrm{d})}{\to} \left( C_\infty^{-\gamma/2} B_{\mathcal{S}_\gamma^{-1}(t)} \sqrt{\Sigma} \right)_{t \in [0, T]},
\end{equation}
in the $J_1$ topology. We need to ensure that $\mathbf{P}$-a.s., with high $P_0^\omega$-probability $X_{\floor{nt}}$ is close to $X_{\tau_{\left \lfloor \frac{n^\gamma}{L(n)} S_{n^\gamma/L(n)}^{-1}(nt/\mathrm{Inv}(n^\gamma/L(n)))\right\rfloor}}$. It is immediate to prove that $\tau_{\left \lfloor \frac{n^\gamma}{L(n)} S_{n^\gamma/L(n)}^{-1}(nt/\mathrm{Inv}(n^\gamma/L(n)))\right\rfloor}$ is the smallest $\tau_i$ such that $\tau_i > tn$. Then we can bound the distance $X_{\tau_{\left \lfloor \frac{n^\gamma}{L(n)} S_{n^\gamma/L(n)}^{-1}(nt/\mathrm{Inv}(n^\gamma/L(n)))\right\rfloor}} - X_{\floor{nt}}$ by a constant times the size of a regeneration block. Hence,
\begin{align*}
    M_n &\coloneqq \max_{t \in [0, T]} \left \| X_{\tau_{\left \lfloor \frac{n^\gamma}{L(n)} S_{n^\gamma/L(n)}^{-1}(nt/\mathrm{Inv}(n^\gamma/L(n)))\right\rfloor}} - X_{\floor{nt}} \right\| \\
    & \le C \max_{k = 1, \dots, \floor{nT} + 1} \left \| X_{\tau_k} - X_{\tau_{k - 1}} \right\|,
\end{align*}
by the fact that $\| X_{\tau_k} - X_{\tau_{k - 1}}\| \le C \chi_k^\alpha$ defined in \eqref{recall_chi}, applying \cite[Lemma 6.1]{Kious_Frib} and using a union bound one gets that, for all $M>0$ and $\delta>0$
\begin{equation*}
    \mathbb{P}_0\left( M_n > \delta n^{\gamma/4} \right) = \mathbf{E}\left[ P_0^\omega (M_n > \delta n^{\gamma/4}) \right] \le n^{-M}.
\end{equation*}
Markov inequality gives that
\begin{equation*}
     \mathbf{P}\left( P_0^\omega (M_n > \delta n^{\gamma/4}) > n^{-1} \right) \le n^{-2}.
\end{equation*}
Borel-Cantelli implies that $\mathbf{P}$-almost surely\ $P_0^\omega [M_n > \delta n^{\gamma/4}] \le 1/n$ for all $n$ large enough, which leads us to conclude that $\|X_{\tau_{\left\lfloor\frac{n^\gamma}{L(n)} S_{n^\gamma/L(n)}^{-1}(nt/\mathrm{Inv}(n^\gamma/L(n)))\right\rfloor}} - X_{\floor{nt}}\| \le \delta n^{\gamma/4}$ in the uniform sense with high $P_0^\omega$-probability. This implies, together with \eqref{firstEquationLastProof} and \eqref{secondEquationLastProof} and Slutsky's theorem, that statement \eqref{FirstStatementMain} holds $\mathbf{P}$-a.s. under the quenched law $P_0^\omega$, and that
\begin{equation}\label{EqnProofMainTheo}
    \left(\frac{X_{\floor{nt}} - v \frac{n^\gamma}{L(n)} S_{n^\gamma/L(n)}^{-1}\left(\frac{nt}{\mathrm{Inv}\left(n^\gamma/L(n)\right)}\right)}{\sqrt{n^\gamma/L(n)}} \right)_{t \in [0, T]} \stackrel{(\mathrm{d})}{\to} \left( C_\infty^{-\gamma/2} B_{\mathcal{S}_\gamma^{-1}(t)} \sqrt{\Sigma} \right)_{t \in [0, T]}.
\end{equation}
Finally, notice that \eqref{SecondStatementMain} follows by applying the linear transformation $(I_d - P_{v_0})$ to \eqref{EqnProofMainTheo} as $(I_d - P_{v_0}) v = 0$. All linear combinations are continuous in the $J_1$-topology at continuous functions.
\end{proof}

\appendix

\section{Extension of Markov property}
The appendix gives some results which are useful in the paper but considered as tools.

Since throughout paper we consider two random walks, we extend the Markov property when we have two Markov chains with the following general lemma.

\begin{lemma} \label{Lemma2StoppingTimes}
Consider two Markov chains $(X_n)_{n \geq 0}$ and $(Y_n)_{n \geq 0}$ which take values in a countable set $E$. Consider the product measure $P_{(x,y)} = P_x \otimes P_y$. We consider the canonical markov chain on $E^2$. We define for $k,k' \in \mathbb{N}$ the translation operator
\begin{equation}\label{eqn:DoubleTimeShift}
    \func{\theta_{k,k'}}{(E^2)^{\mathbb{N}}}{(E^2)^{\mathbb{N}}}{((x_n),(y_n))}{((x_{n+k}),(y_{n+k'}))}.
\end{equation}
Consider also $P_{x,y} = P_x \otimes P_y$ almost surely finite stopping times $\tau$ and $\rho$ for $X$ and $Y$. Consider two bounded and measurable functions $f$ and $g$ which are, respectively, measurable with respect to
    \begin{align*}
        \sigma\Big(X_0,...,X_{\tau},Y_0,...,Y_{\rho}\Big) \text{ and }\sigma\Big((X_n)_{n \geq 0},(Y_n)_{n \geq 0}\Big).
    \end{align*}
Then we have the following equality:
    \begin{align*}
        E_{x,y}\left[f g \circ \theta_{\tau,\rho}\right] = E_{x,y}\left[fE_{X_{\tau},Y_{\rho}}[g]\right].
    \end{align*}
\end{lemma}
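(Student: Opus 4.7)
The plan is to reduce the two-chain statement to the ordinary Markov property applied separately to each chain at deterministic times, exploiting the product structure $P_{x,y}=P_x\otimes P_y$. The strategy is the standard decomposition of the stopping times over their (countably many) possible values.

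First, I would write
\[
E_{x,y}\left[f\cdot g\circ\theta_{\tau,\rho}\right]=\sum_{k,k'\in\mathbb{N}}E_{x,y}\left[f\cdot\indi{\tau=k}\indi{\rho=k'}\cdot g\circ\theta_{k,k'}\right],
\]
which is justified by the fact that $\tau,\rho$ are $P_{x,y}$-almost surely finite, together with the dominated convergence theorem (since $f$ and $g$ are bounded). Because $\tau$ is a stopping time for $X$, one has $\{\tau=k\}\in\sigma(X_0,\ldots,X_k)$, and similarly $\{\rho=k'\}\in\sigma(Y_0,\ldots,Y_{k'})$. Combined with the hypothesis on $f$, this means that on the event $\{\tau=k,\rho=k'\}$ the random variable $f$ coincides with a bounded function measurable with respect to $\sigma(X_0,\ldots,X_k)\vee\sigma(Y_0,\ldots,Y_{k'})$.

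Next, I would prove the key identity
\begin{equation}\label{eqn:ProposalKey}
    E_{x,y}\bigl[g\circ\theta_{k,k'}\bigm|\sigma(X_0,\ldots,X_k)\vee\sigma(Y_0,\ldots,Y_{k'})\bigr]=E_{X_k,Y_{k'}}[g],\qquad P_{x,y}\text{-a.s.}
\end{equation}
By a monotone class / $\pi$-$\lambda$ argument, it suffices to check this for $g$ of the product form $g((x_n),(y_n))=g_1((x_n))g_2((y_n))$ with $g_1,g_2$ bounded measurable. For such $g$ the left-hand side factorizes, using the independence of $X$ and $Y$ under the product measure $P_{x,y}=P_x\otimes P_y$, into
\[
E_x\bigl[g_1((X_{n+k})_n)\bigm|\sigma(X_0,\ldots,X_k)\bigr]\cdot E_y\bigl[g_2((Y_{n+k'})_n)\bigm|\sigma(Y_0,\ldots,Y_{k'})\bigr],
\]
and each factor equals $E_{X_k}[g_1]$, $E_{Y_{k'}}[g_2]$ respectively by the ordinary Markov property applied to $X$ and $Y$ at the deterministic times $k$ and $k'$. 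The product of these two expectations equals $E_{X_k,Y_{k'}}[g]$, again using independence.

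Finally, I would plug \eqref{eqn:ProposalKey} into each summand: since $f\cdot\indi{\tau=k}\indi{\rho=k'}$ is $\sigma(X_0,\ldots,X_k)\vee\sigma(Y_0,\ldots,Y_{k'})$-measurable, the tower property yields
\[
E_{x,y}\left[f\cdot\indi{\tau=k}\indi{\rho=k'}\cdot g\circ\theta_{k,k'}\right]=E_{x,y}\left[f\cdot\indi{\tau=k}\indi{\rho=k'}\cdot E_{X_k,Y_{k'}}[g]\right].
\]
On $\{\tau=k,\rho=k'\}$ one has $X_k=X_\tau$ and $Y_{k'}=Y_\rho$, so the right-hand side equals $E_{x,y}[f\indi{\tau=k}\indi{\rho=k'}E_{X_\tau,Y_\rho}[g]]$. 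Summing over $k,k'$ and using a.s.\ finiteness of $\tau,\rho$ gives the claim. The only mild subtlety is the monotone-class extension of \eqref{eqn:ProposalKey} from product-type $g$ to arbitrary bounded $\sigma((X_n),(Y_n))$-measurable $g$; everything else is bookkeeping.
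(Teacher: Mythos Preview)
Your proposal is correct and essentially follows the same strategy as the paper: both arguments exploit the product structure $P_{x,y}=P_x\otimes P_y$ to reduce to the single-chain Markov property applied to each coordinate, and both invoke a monotone class argument to pass from product-form test functions to general ones. The only cosmetic difference is that the paper factorises both $f$ and $g$ into products and applies the Strong Markov property directly to each chain, whereas you first decompose $\{\tau=k,\rho=k'\}$ and then use the ordinary Markov property at deterministic times; the two routes are interchangeable and equally short.
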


\begin{proof}
The proof is straightforward using strong Markov property and a Monotone Class Theorem. Indeed, if $f$ is of the form $f((x_n),(y_n)) = f_1((x_n))f_2((y_n))$ with $f_1$ measurable with respect to $\sigma\big(X_0,...,X_{\tau}\big)$ and $f_2$ measurable with respect to $\sigma\big(Y_0,...,Y_{\rho}\big)$ and $g((x_n),(y_n)) = g_1((x_n))g_2((y_n))$ with $g_1$ measurable with respect to $\sigma\big((X_n)_{n \geq 0}\big)$ and $g_2$ measurable with respect to $\sigma\big((Y_n)_{n \geq 0}\big)$. Then we have that
    \begin{align*}
        E_{x,y}\left[f g \circ \theta_{\tau,\rho}\right]&= E_{x,y}\left[f_1 g_1  \theta_{\tau} \times f_2g_2 \circ \theta_{\rho}\right]\\
        & = E_{x}\left[f_1 g_1 \circ \theta_{\tau}\right]\times E_{y}\left[f_2 g_2 \circ \theta_{\rho}\right],
    \end{align*}
 where the last equality comes from the independence of the two walks. Using the Strong Markov property we get
 \begin{align*}
    E_{x}\left[f_1 E_{X_{\tau}}[g_1]\right]\times E_{y}\left[f_2 E_{Y_{\rho}}[g_2]\right] & = E_{x,y}\left[f_1f_2 E_{X_{\tau}}[g_1]E_{Y_{\rho}}[g_2]\right]\\
    & = E_{x,y}\left[f E_{X_{\tau},Y_{\rho}}[g]\right].
 \end{align*}
We conclude using a Monotone Class Theorem argument.
\end{proof}

\section{Polynomial decay}
\begin{proposition} \label{TechnicalInequality}
For any $\varepsilon > 0$, there exists $K_0 > 1$ such that for any $K \ge K_0$, any $n \in \mathbb{N}^{*}$ and any $z \in \mathbb{Z}^d$ we have 
\begin{equation*}
    \Pf_0(X_{\tau_n} = z) \leq Cn^{\varepsilon-\frac{d}{2}},
\end{equation*}
where $C$ only depends on $\lambda,\vec{\ell},d,K$.
\end{proposition}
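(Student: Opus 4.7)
The plan is to reduce the problem to bounding a convolution of i.i.d.\ regeneration increments via Fourier analysis on the lattice. Decompose
\[
X_{\tau_n} = X_{\tau_1} + \sum_{k=1}^{n-1}\Delta_k, \qquad \Delta_k \coloneqq X_{\tau_{k+1}} - X_{\tau_k}.
\]
By \cite[Theorem 5.4]{Kious_Frib}, under $\Pf_0$ the increments $(\Delta_k)_{k\ge 1}$ are i.i.d., independent of $X_{\tau_1}$, each with law $\nu$ equal to that of $X_{\tau_1}$ under $\Pf_0^K(\cdot\mid D=+\infty)$. Therefore
\[
\Pf_0(X_{\tau_n}=z) = \sum_{y}\Pf_0(X_{\tau_1}=y)\,\nu^{*(n-1)}(z-y) \le \sup_{w\in\mathbb{Z}^d}\nu^{*(n-1)}(w),
\]
so it is enough to prove $\sup_{w}\nu^{*(n-1)}(w)\le Cn^{\varepsilon-d/2}$. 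Setting $\phi(t)\coloneqq\int e^{it\cdot x}\nu(dx)$, Fourier inversion on $\mathbb{Z}^d$ yields
\[
\nu^{*(n-1)}(w) = (2\pi)^{-d}\int_{[-\pi,\pi]^d} e^{-it\cdot w}\phi(t)^{n-1}\,dt,
\]
and the problem reduces to the uniform bound $\int_{[-\pi,\pi]^d}|\phi(t)|^{n-1}dt\le Cn^{\varepsilon-d/2}$.

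Next I would extract two qualitative properties of $\phi$. First, \emph{regularity near $0$}: by \cite[Lemma 6.1]{Kious_Frib}, for $K$ large $\chi_1$ has moments of every order under $\Pf_0^K(\cdot\mid D=+\infty)$, hence so does $X_{\tau_1}$; consequently $\phi$ is $\mathcal{C}^2$ with the Taylor expansion $\phi(t) = 1 + i\,v\cdot t - \tfrac12\langle t,\Sigma t\rangle + o(|t|^2)$, where $\Sigma$ is the (positive definite) covariance matrix discussed in \cite[Section 11]{Kious_Frib}. Non-degeneracy of $\Sigma$ then provides constants $\delta,c_0>0$ such that $|\phi(t)|\le 1 - c_0|t|^2$ for $|t|\le\delta$. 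Second, \emph{aperiodicity}: I claim $|\phi(t)|<1$ for every $t\in[-\pi,\pi]^d\setminus\{0\}$, which is equivalent to showing that $\mathrm{supp}(\nu)-\mathrm{supp}(\nu)$ generates $\mathbb{Z}^d$. This can be verified by exhibiting $d+1$ concrete regeneration paths $\gamma_0,\dots,\gamma_d$ with endpoints $\gamma_0(\mathrm{end})=e_1$ and $\gamma_i(\mathrm{end})-\gamma_0(\mathrm{end}) = \pm e_i$ for $i=1,\dots,d$, each of positive $\Pf_0^K(\cdot\mid D=+\infty)$-probability; the construction follows the same $K$-good path scheme used in Lemma~\ref{LemmaDpositive} and Proposition~\ref{PositiveProbDontMeet}. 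By compactness, this then gives $c_1>0$ with $|\phi(t)|\le 1-c_1$ for $\delta\le|t|\le \pi\sqrt{d}$.

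Finally I would split the integral into three regions. On $|t|\le n^{-1/2}$, the trivial bound $|\phi|\le 1$ together with the volume gives a contribution of order $n^{-d/2}$. On $n^{-1/2}\le|t|\le\delta$, the local estimate gives $|\phi(t)|^{n-1}\le\exp(-c_0(n-1)|t|^2)$ and the resulting Gaussian integral is again of order $n^{-d/2}$. On $|t|\ge\delta$, we have $|\phi(t)|^{n-1}\le (1-c_1)^{n-1}$, which is exponentially small. Summing the three contributions yields the desired bound, in fact even with $\varepsilon=0$; the $\varepsilon$ in the statement is only a convenient slack.

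The main obstacle is the aperiodicity property: ruling out $|\phi(t)|=1$ away from $0$. For simple random walks this would require a two-step parity argument, but here the regeneration displacement has random length and orientation, so one must explicitly build enough positive-probability regeneration trajectories to span all lattice directions while respecting the strong constraints of the regeneration event ($K$-open ladder points with $D=+\infty$). Once this is secured, the rest of the argument is standard Fourier analysis.
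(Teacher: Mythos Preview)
Your approach is correct and takes a genuinely different route from the paper. The paper does \emph{not} use Fourier inversion; instead, after the same i.i.d.\ decomposition it invokes a strong approximation theorem of Zaitsev to couple the centered increments $\xi_i = X_{\tau_{i+1}}-X_{\tau_i}-v$ with i.i.d.\ Gaussian vectors $N_i$ having covariance $\Sigma$, with an error $\max_{j\le n}\|\sum_{i\le j}(\xi_i-N_i)\|\le c n^{\varepsilon}$ outside a set of polynomially small probability. The event $\{\sum \xi_i = x\}$ is then contained in $\{\sum N_i \in B(x,cn^{\varepsilon})\}$, and the Gaussian density bound gives $\Pf(\sum N_i/\sqrt{n-1}\in B(\cdot,cn^{\varepsilon}/\sqrt{n-1}))\le C n^{d\varepsilon-d/2}$. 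So the $\varepsilon$ in the statement is \emph{not} slack for the paper's argument: it is exactly the coupling error.

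The trade-off is clear. The paper's method needs only that $\Sigma$ is nondegenerate (so the Gaussian density is bounded) and that all moments of $\xi_1$ are finite; it never touches aperiodicity. Your Fourier route would yield the sharper bound $Cn^{-d/2}$, but the price is the lattice-aperiodicity step you correctly flag as the main obstacle. Your proposed construction of explicit regeneration displacements is delicate because of the strict new-maximum condition $X_j\cdot\vec{\ell}<X_{\tau_1-2}\cdot\vec{\ell}$ for $j<\tau_1-2$, which can fail for naive paths when some $e_i\cdot\vec{\ell}=0$; one has to choose the paths with a bit more care (e.g.\ by forcing intermediate points to be $K$-closed so that the ladder point is pushed forward). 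Alternatively, since the positive definiteness of $\Sigma$ already forces $\mathrm{supp}(\nu)-\mathrm{supp}(\nu)$ to generate a full-rank sublattice $L\subset\mathbb{Z}^d$, you could run the Fourier argument on $L$ rather than $\mathbb{Z}^d$ and bypass the explicit aperiodicity verification altogether.
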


\begin{remark1}
We recall that the notion of regeneration time introduced in \cite{Kious_Frib}(see the definition in Section~\ref{definition_reg_times}) depends on the value of $K$, but we always take $K$ large enough.
\end{remark1}

\begin{proof}

First, we have that
\begin{equation}
    \begin{split}\label{Xtaunz}
        \Pf_0(X_{\tau_n} = z)& =\Pf_0\left(X_{\tau_1} +\somme{i=1}{n-1}{(X_{\tau_{i+1}}-X_{\tau_i})} = z\right)\\
         &=\Pf_0\left(\somme{i=1}{n-1}{(X_{\tau_{i+1}}-X_{\tau_i} - v )} = z - X_{\tau_1} - (n-1)v\right),
    \end{split}
\end{equation}
where $v = \mathbb{E}[X_{\tau_{2}}-X_{\tau_1}]$, we introduce for $i\geq 1$, $\xi_i = X_{\tau_{i+1}}-X_{\tau_i} - v $.

The family $(\xi_i)_{i \ge 1}$ is an independent family of random variables identically distributed and independent of $X_{\tau_1}$. By \cite[Lemma 6.1]{Kious_Frib}, for any $M > 0$ there exists $K_0 > 1$ such that for any $K \ge K_0$ we have
\begin{equation*}
    \mathbb{E}_{0}\left[\|\xi_1\|^{M}\right] < +\infty.
\end{equation*}
Thus, we can write \eqref{Xtaunz} as
\begin{align}\label{Toboundxi}
\somme{y \in \mathbb{Z}^d}{}{\Pf_0(X_{\tau_1} = y)\Pf_0\bigg(\somme{i=1}{n-1}{\xi_i} = z - y - (n-1)v\bigg)}.
\end{align}
Then we only need a bound for the value of $\Pf_0\bigg(\displaystyle \somme{i=1}{n-1}{\xi_i} = x\bigg)$ with $x = z - y - (n-1)v$ uniformly for $y \in \mathbb{Z}^d$.

We define $\func{H}{\mathbb{R}}{\mathbb{R}}{x}{x^{M}}$, then we have $\mathbb{E}_0(H(\|\xi_i\|)) < +\infty $. $H$ increases faster than $x \mapsto x^{2+\delta}$ and slower than $x \mapsto e^x$. Using \cite[Theorem 3]{Zaitsev} we can construct a probability space and two family of random variables $(Y_i)$ and $(N_i)$ such that:
\begin{itemize}
\item[$\bullet$]  $(Y_i)_{i \geq 1}$ has the same law as $(\xi_i)_{i \geq 1}$ under $\Pf_0$.
\item[$\bullet$]  $(N_i)_{i \geq 1}$ is a family of independent identically distributed centered gaussian vectors with covariance matrix $\Sigma$ where $\Sigma$ is the covariance matrix of $\xi_1$.
\end{itemize}
Moreover, they satisfy
\begin{equation*}
    \Pf\left(\max_{\substack{1 \leq j \leq n}}(\|\somme{i=1}{j}{(Y_i - N_i)}\|) \geq cn^{\varepsilon}\right) \leq C\frac{n}{n^{\varepsilon M}},
\end{equation*}
where the constants $C,c > 0$ only depend on $\lambda,\vec{\ell},d,K$. We write
\begin{align}
\nonumber \Pf_0\bigg(\somme{i=1}{n-1}{\xi_i} = x\bigg) &= \Pf\bigg(\somme{i=1}{n-1}{Y_i} = x\bigg) \\
&\nonumber \leq Cn^{1-\varepsilon M} + \Pf \bigg(\somme{i=1}{n-1}{N_i} \in B_{\| \cdot \|_{1}}(x,cn^{\varepsilon})\bigg) \\
&\label{Gaussian} \leq Cn^{1-\varepsilon M} + \Pf \bigg( \frac{\somme{i=1}{n-1}{N_i}}{\sqrt{n-1}} \in B_{\| \cdot \|_{1}}\bigg(\frac{x}{\sqrt{n-1}},\frac{cn^{\varepsilon}}{\sqrt{n-1}}\bigg) \bigg).
\end{align}
Since $\displaystyle \frac{\somme{i=1}{n-1}{N_i}}{\sqrt{n-1}}$ is a centered Gaussian vector with covariance matrix $\Sigma$, the right term in \eqref{Gaussian} is bounded from above by
\begin{align*}
    C\mu\left(B_{\| \cdot \|_{1}}\bigg(\frac{x}{\sqrt{n-1}},\frac{cn^{\varepsilon}}{\sqrt{n-1}}\bigg)\right),
\end{align*}
where $\mu$ denotes the Lebesgue measure on $\mathbb{R}^d$ and $C$ is a constant which depends on $\Sigma$, hence on $\lambda,\vec{\ell},d,K$. We deduce that last term is dominated by
\begin{align*}
    Cn^{d\varepsilon-\frac{d}{2}},
\end{align*}
where $C$ is a constant which only depends on $\lambda,\vec{\ell},d,K$.

Choosing $M$ large enough such that $1-\varepsilon M < - \frac{d}{2}$ we obtain that \eqref{Toboundxi} is bounded from above by 
\begin{align*}
Cn^{1-\varepsilon M} + Cn^{d\varepsilon-\frac{d}{2}} \leq C n^{\varepsilon d-\frac{d}{2}},
\end{align*}
where the constant $C$ only depends on $\lambda,\vec{\ell},d,K$. We conclude since the result is true for all $\varepsilon > 0$.
\end{proof}

\section*{Acknowledgments}
We would like to thank Daniel Kious for encouraging this collaboration and useful comments at various stages of the project. We would like also to thank Christophe Sabot and Noam Berger for insightful discussions.

AF acknowledges the support of an NSERC Discovery Grant. During the completion of this work CS was a PhD student at the University of Bath supported by a scholarship from the EPSRC Centre for Doctoral Training in Statistical Applied Mathematics at Bath (SAMBa), under the project EP/S022945/1. 

We thank the anonymous Referee for many valuable comments that greatly improved the presentation of the paper.

\printbibliography

\end{document}